\title{On the soliton dynamics under slowly varying medium for Nonlinear Schr\"odinger equations}
\author{Claudio Mu\~noz C.}
\address{Universit\'e de Versailles Saint-Quentin-en-Yvelines \\ LMV-UMR 8100, 45 av. des Etats-Unis, 78035 Versailles cedex, France}
\email{Claudio.Munoz@math.uvsq.fr}
\subjclass[2000]{Primary 35Q51, 35Q53; Secondary 37K10, 37K40}
\keywords{NLS equation, Integrability theory, soliton dynamics, slowly varying medium}
\thanks{This research was supported in part by a CONICYT-Chile and an \emph{Allocation de Recherche} grants}
\chardef\bslash=`\\ % p. 424, TeXbook
\newtheorem{thm}{Theorem}[section]
\newtheorem{lem}[thm]{Lemma}
\newtheorem{prop}[thm]{Proposition}
\newtheorem*{thma}{{\bf Theorem A}}
\newtheorem*{thmap}{{\bf Theorem A'}}
\newtheorem*{thmb}{{\bf Theorem B}}
\newtheorem*{corc}{{\bf Corolary C}}
\theoremstyle{definition}
\newtheorem{defn}{Definition}[section]
\theoremstyle{remark}
\newtheorem{rem}{Remark}[section]
\newtheorem{Cl}{Claim}
\numberwithin{equation}{section}
\newcommand{\R}{\mathbb{R}}
\newcommand{\N}{\mathbb{N}}
\newcommand{\la}{\lambda}
\newcommand{\al}{\alpha}
\newcommand{\ga}{\gamma}
\newcommand{\re}{\operatorname{Re}}
\newcommand{\ima}{\operatorname{Im}}
 \providecommand{\abs}[1]{\lvert#1 \rvert}
 \providecommand{\norm}[1]{\lVert#1 \rVert}
\newcommand{\ve}{\varepsilon}
\newcommand{\be}{\begin{equation}}
\newcommand{\ee}{\end{equation}}
\newcommand{\ba}{\begin{equation*}}
\newcommand{\ea}{\begin{equation*}}
\newcommand{\bea}{\begin{eqnarray}}
\newcommand{\eea}{\end{eqnarray}}
\newcommand{\bee}{\begin{eqnarray*}}
\newcommand{\eee}{\end{eqnarray*}}
\newcommand{\ben}{\begin{enumerate}}
\newcommand{\een}{\end{enumerate}}
\newcommand{\nonu}{\nonumber}
\newcommand{\eval}[2][\right]{\relax
  \ifx#1\right\relax \left.\fi#2#1\rvert}
\let\abs=\envert
\let\norm=\enVert
\begin{document}
\begin{abstract}
We consider the problem of the soliton propagation, in a slowly varying medium, for a generalized, variable-coefficients nonlinear Schr\"odinger equation. We prove global existence and uniqueness of soliton-like solutions for a large class of slowly varying media. Moreover, we describe for all time the behavior of this new generalized soliton solution.
\end{abstract}
\maketitle \markboth{Soliton dynamics for perturbed NLS equations} {Claudio Mu\~noz}
\renewcommand{\sectionmark}[1]{}
%\tableofcontents
%%%%%%%%%%%%%%%%%%%%%%%%%%%%%%%%%%%%%%%%%%%%%%%%%%%%%%%%%%
%%%%%%%%%%%%%%%%%%%%%%%%%%%%%%%%%%%%%%%%%%%%%%%%%%%%%%%%%%
\section{Introduction and Main Results}

\smallskip

In this work we continue our study of soliton-propagation under an inhomogeneous medium, started in \cite{Mu2}. Now we consider the following \emph{generalized nonlinear Schr\"odinger equation} (NLS)
\be\label{gKdV0}
iu_t + u_{xx} +  f(x, |u|^2)u =0,\quad \hbox{ in }  \R_t\times \R_x.
\ee

Here $u=u(t,x)$ is a complex-valued function, and $f: \R \times \R \to \R$ a nonlinear function. This equation is a generalization of the --one dimensional-- semilinear \emph{nonlinear Schr\"odinger equation} (NLS)
\be\label{gKdV}
iu_t + u_{xx} +|u|^{m-1} u =0, \quad \hbox{ in }\R_t\times \R_x ;\quad m> 1.
\ee
Concerning the \emph{cubic nonlinear  Schr\"odinger} equation (namely the case $m=3$), it arises in Physics as a model of wave propagation in fiber optics in a nonlinear medium, and also describes the evolution of the envelope of modulated wave groups in water waves. In two dimensions, the cubic NLS also possesses an important physical meaning. 

\smallskip

The Cauchy problem for equation \eqref{gKdV} (namely, adding the initial condition $u(t=0)=u_0$) is \emph{locally well-posed} for $u_0\in H^1(\R)$ (see Ginibre and Velo \cite{GV}). In addition, solutions of (\ref{gKdV}) are invariant under translations in space, time and phase. From the No\"ether theorem, these symmetries are related to \emph{conserved quantities}, invariant under the NLS flow, usually called \emph{mass}, \emph{energy} and \emph{momentum}: 
\be\label{M}
M[u](t):= \int_\R |u|^2(t,x)\,dx = \int_\R |u_0|^2(x)\, dx =M[u](0), \quad \hbox{(Mass),}
\ee
\bea
E[u](t) &:= & \frac 12 \int_\R |u_x|^2(t,x)\,dx - \frac 1{m+1}\int_\R  |u|^{m+1}(t,x)\,dx \label{E}  \\
& & \quad =  \frac 12 \int_\R |(u_0)_x|^2(x)\,dx - \frac 1{m+1}\int_\R |u_0|^{m+1}(x)\,dx =  E[u](0), \; \hbox{(Energy) }\nonumber
\eea
and
\be\label{P}
P[u](t) := \frac 12 \ima \int_\R \bar u u_x (t,x) \, dx =\frac 12 \ima \int_\R \bar u_0 (u_0)_x (x) \, dx = P[u](0),  \; \hbox{(Momentum). }
\ee
In the case $1<m<5$, any $H^1(\R)$ solution is global in time thanks to the conservation of mass and energy (\ref{M})-(\ref{E}), and the Galiardo-Nirenberg inequality
\be\label{GNe}
\int_\R u^{p+1} \leq K(p) \big(\int_\R u^2\big)^{\frac{p+3}{4}}\big(\int_\R u_x^2\big)^{\frac{p-1}{4}}.
\ee

One of the main properties of NLS equations is the existence of localized, exponentially decaying, stable and smooth solutions called \emph{solitons}, or \emph{traveling waves}. Given four real numbers $ x_0, v_0,\ga_0$ and $c_0>0$, traveling waves are solutions of (\ref{gKdV}) of the form
\be\label{(3)}
u(t,x) := Q_{c_0}(x-x_0-v_0t ) e^{i(c_0-\frac 14 v_0^2) t}e^{i\ga_0}e^{\frac i2 v_0 x},
\ee  
with $Q_c(s):=c^{\frac 1{m-1}} Q(c^{1/2} s)$, where $Q$ is the explicit Schwartz function satisfying the second order nonlinear differential equation
\be\label{soliton}
Q'' -Q + Q^m =0,\quad Q>0, \quad Q(x) = \Big[ \frac{m+1}{2\cosh^2(\frac {(m-1)}2 x)}\Big]^{\frac 1{m-1}} \sim e^{-|x|}.
\ee
In particular, for $v_0>0$, this solution represents a \emph{solitary wave}, with \emph{invariant profile}, defined for all time moving to the right with constant velocity.

\smallskip

The study of perturbations of solitary waves lead to the introduction of the concepts of \emph{orbital and asymptotic stability}. Orbital stability of ground states for NLS equations has been widely studied during last decades; we mention the works of Cazenave and Lions \cite{CL}, Weinstein \cite{We1,We2}, Grillakis, Shatah and Strauss \cite{GSS1, GSS2}, Cuccagna \cite{Cu1}, and Martel, Merle and Tsai \cite{MMT}. See references therein for a more detailed bibliography. On the other hand, asymptotic stability of solitary waves and related scattering results have been studied in \cite{SW,TY, BP,P,Cu2,RSS}.

\smallskip

For $m\geq 5$, solitons are shown to be \emph{orbitally unstable} and the Cauchy problem for the corresponding NLS equation has finite-time blow-up solutions, see \cite{Caz} and references there in. In this work, in order to guarantee the stability of soliton solutions, \emph{we will discard high-order nonlinearities}. In other words, we will only consider the case $1<m<5$.

\smallskip

The problem we consider in this paper possesses a large physical and mathematical literature. In the next subsection we briefly describe the main results concerning the propagation of solitons in slowly varying medium. 

\subsection{Statement of the problem, historical review} The dynamical problem of soliton interaction with a slowly varying medium is by now a classical problem in nonlinear wave propagation, representing a simple model of several physical applications. By soliton-medium interaction we mean, loosely speaking, the following problem: In (\ref{gKdV0}), consider a nonlinear function $f=f(t,x,s)$, slowly varying in space and time, possibly of small amplitude, of the form
$$
f(t,x,s^2) \sim |s|^{m-1} \quad \hbox{ as } x\to \pm \infty, \quad \hbox{ for all time;}
$$
(namely (\ref{gKdV0}) behaves like a NLS equation at spatial infinity.) \emph{Consider} a solitary wave solution (note that this assertion must be actually proved) of the corresponding variable coefficient equation (\ref{gKdV0}) with this nonlinearity, at some early time. Then we expect that this solution does interact with the medium, here represented by the nonlinearity $f(t,x,s)$. In a slowly varying medium this interaction, small locally in time, may be significantly important for the long time behavior of the solution. The emerging solution after the interaction is precisely the object of study. In particular, one considers if any change in size, position, or shape, even creation or destruction of solitons, if any, after some large time, may be present.    

\medskip

Let us review some relevant works in this direction. Kaup and Newell \cite{KN1} studied, via inverse scattering methods, slowly varying perturbations of integrable equations. In particular, they considered the following perturbed NLS equation
\be\label{Kaup}
iu_t + u_{xx} + |u|^2 u = a(\ve x) u_x.
\ee
Here the additional term $a(\ve x) u_x$ is intended to describe e.g. depth variations on a surface gravity wave packet. The authors studied the case where $a(\ve x) := \ve x^2$ and showed that, for a small $\ve$, the soliton shape remains unchanged, but both velocity and position parameters evolve following a trapped trajectory of an harmonic oscillator at leading order.  

Subsequently, this problem has been addressed in several other works and for different integrable models. In  \cite{Gr2}, the author considered the time dependent NLS equation
$$
iu_t + u_{xx} + a(\ve t) |u|^2 u =0, \quad \hbox{ in } \; \R_t\times \R_x.
$$
(See \cite{Gr2} for the physical description associated to this model.) Using a perturbative analysis the author found an approximate solution up to second order in $\ve$. This approximate solution is less dispersive than the corresponding solution for the generalized KdV equation studied in \cite{KN1}, in the sense that it does not present a tail behind the soliton solution as in the gKdV case (see also \cite{Gr1, Mu2} for more details).

\smallskip

In this paper we address the problem of soliton dynamics in the case of a slowly varying, inhomogeneous medium, but constant in time. 

\subsection{Setting and hypotheses} 

Let us come back to the general equation (\ref{gKdV0}), and consider $\ve>0$ a small parameter. Along this work we will assume that the nonlinearity $f$ is sufficiently smooth and slowly varying $x$-dependent function of the power cases, independent of time:
\be\label{f}
\begin{cases}
f(x,s^2) := a_\ve(x) |s|^{m-1} , \quad  2\leq m<5, \\
a_\ve (x) : =a(\ve x); \qquad a \in C^3(\R) \hbox{ if } m< 3, \; a \in C^4(\R) \hbox{ if }m\geq 3.
\end{cases}
\ee
Concerning the function $a$ we will assume that there exist constants $K, \mu>0$ such that
\be\label{ahyp} 
\begin{cases}
1< a(r) < 2, \; a'(r)>0,  \, |a^{(k)}(r)| \leq K e^{-\mu |r|} \; \hbox{ for all } r\in \R,\ k=1,2,3,(4); \\
0<a(r)-1  \leq  Ke^{\mu r}, \; \hbox{ for all } r\leq 0, \, \hbox{ and} \\
0<2-a(r)\leq K e^{-\mu r} \; \hbox{ for all } r\geq 0.
\end{cases}
\ee
In particular, $\lim_{r\to -\infty} a(r) =1$ and $\lim_{r\to +\infty} a(r) =2$. %(cf. Fig. \ref{fig1a}).
We emphasize that the special choice ($1$ and $2$) of the limits is irrelevant for the results of this paper. The only necessary conditions are that
$$
0<a_{-\infty}=  \lim_{r\to -\infty} a(r) <\lim_{r\to +\infty} a(r) =: a_{\infty}<+\infty.
$$
Of course the decay hypothesis on $a$ in (\ref{ahyp}) can be relaxed, and the results of this paper still should hold, with more difficult proofs, for asymptotically flat potentials; but for brevity and clarity of the exposition these issues will not be considered in this work.

\medskip

Recapitulating, we will consider the following 1D \emph{aNLS} equation
\be\label{aKdV}
\begin{cases}
iu_t + u_{xx} + a_\ve(x) |u|^{m-1} u =0 \quad \hbox{ in } \R_t \times \R_x,\\
 2\leq m < 5;\quad  0< \ve\leq\ve_0;  \quad a_\ve \hbox{ satisfying } (\ref{ahyp}).% \hbox{ and } (\ref{3d1d}). 
\end{cases}
\ee

The main issue that we will study in this paper is the interaction problem between a soliton and a slowly varying medium, here represented by the \emph{potential} $a_\ve$. In other words, we intend to study for (\ref{aKdV}) whether it is possible to generalize the well-known soliton-like solution $Q$ of NLS. Of course, it is by now well-known that in the case $f(t,x,s^2) =f(s^2)$, and under reasonable assumptions (see for example Berestycki and Lions \cite{BL}), there exist soliton-like solutions, but our objective here will be the study of soliton solutions under a variable coefficient equation.

To support our beliefs, note that at least heuristically, (\ref{aKdV}) behaves at infinity as similar NLS equations:
\be\label{desc}
\begin{cases}
iu_t + u_{xx}  + |u|^{m-1} u =0 \quad \hbox{ as } x\to -\infty, \\
iu_t + u_{xx} + 2 |u|^{m-1} u  =0 \quad \hbox{ as } x\to +\infty.
\end{cases}
\ee
In particular, given $v_0>0$, one should be able to construct a soliton-like solution $u(t)$ of (\ref{aKdV}) such that 
$$
u(t,x) \sim Q(x-v_0 t)  e^{\frac i2 v_0x}e^{i(1-\frac 14 v_0^2)t}, \quad \hbox{ as } t\to -\infty,
$$
in some sense to be defined. Here $Q$ is the standard soliton solution introduced in (\ref{soliton}). 

On the other hand, after passing the interaction region, by stability of the soliton, this solution \emph{should behave} like 
\be\label{1d2Q}
 \sim  2^{-\frac 1{m-1}} Q_{c_\infty}(x-v_\infty t - \rho_\infty(t)) e^{\frac i2 v_\infty x}e^{i\ga_\infty(t)}  + \hbox{ lower order terms in $\ve$}, \ \hbox{ as } t\to +\infty,
\ee
for $\ve$ small enough. Here $c_\infty>0, v_\infty$ are unknown parameters, and $\rho_\infty(t), \ga_\infty(t)$ are \emph{small} perturbations. In fact, note that if $v =v(t)$ is a solution of (\ref{gKdV}) then $u(t) := 2^{-1/(m-1)}v(t) $ is a solution of 
\be\label{simpli}
iu_t + u_{xx} + 2|u|^{m-1} u =0 \quad \hbox{ in  } \R_t \times \R_x.
\ee
In conclusion, this heuristic suggests that even if the potential varies slowly, the soliton should experiment \emph{non trivial} transformations on its shape, scaling and phase, of the same order that of the amplitude of the potential $a$. 

\medskip

Before stating our main results, some important facts are in order. First, unfortunately equation (\ref{aKdV}) is in general not anymore invariant under scaling and spatial translations. Moreover, a nonzero solution of (\ref{aKdV}) \emph{might gain momentum}, in the sense that, at least formally, the quantity $P[u](t)$ defined in (\ref{P}) now satisfies the identity
\be\label{dPa}
 \partial_t P[u](t) = \frac \ve{m+1}  \int_\R a'(\ve x) |u|^{m+1} \geq 0.
\ee
Therefore the momentum is always a non decreasing quantity. This simple fact will have important consequences in our results, in particular we will obtain from this property the \emph{stability} and \emph{uniqueness} of our solution. The hypothesis $a'(\cdot)>0$ is crucial in our arguments, although we think it can be relaxed by considering for example a potential satisfying $a'(r)>0$ for all $|r|>r_0$. We will not pursue on these issues.

\medskip

On the other hand, the mass $M[u](t)$ defined in (\ref{M}) and the novel energy
\be\label{Ea}
E_a [u](t) :=  \frac 12 \int_\R |u_x|^2(t,x)\,dx  - \frac 1{m+1}\int_\R  a_\ve(x) |u|^{m+1}(t,x)\,dx
%& & \quad =  \frac 12 \int_\R (u_0)_x^2(x)\,dx - \frac 1{m+1}\int_\R a_\ve(x)u_0^{m+1}(x)\,dx =  E[u](0), 
\ee
remain formally constant for all time. Moreover, a simple balance of mass and energy at $\pm \infty$ allows to determine heuristically the limiting scaling and velocity parameters in (\ref{1d2Q}), if we suppose that the \emph{lower order terms} in (\ref{1d2Q}) are of zero mass at infinity. Indeed, we have (cf. Appendix \ref{AidQ})
\be\label{predM}
M[Q] \sim \frac{c_\infty^{\frac 2{m-1}- \frac 12}}{2^{\frac 2{m-1}}}M[Q],
\ee
and
\be\label{predE}
E[Q] + \frac 14 v_0^2 M[Q] \sim \frac{c_\infty^{\frac 2{m-1} +\frac 12}}{2^{\frac 2{m-1}}} E[Q] + \frac 14 v_\infty^2 \frac{c_\infty^{\frac 2{m-1} -\frac 12}}{2^{\frac 2{m-1}}}M[Q], \quad E[Q]\neq 0,
\ee
This implies that $ c_\infty \sim 2^{\frac 4{5-m}}>1$ and $v_\infty \sim (v_0^2 + 4\frac{(5-m)}{m+3}(c_\infty-1) )^{1/2}$. 

\medskip

These formal arguments suggest the following definition.
\begin{defn}[Pure generalized soliton-solution for aNLS]\label{PSS}~

Let $v_0>0$ be a fixed number. We will say that (\ref{aKdV}) admits a \emph{pure} generalized soliton-like solution (of scaling equals $1$ and velocity equals $v_0$), if there exist $C^1$ real valued functions $\rho=\rho(t),\ga= \ga(t)$ defined for all large times and a global in time $H^1(\R)$ solution $u(t)$ of (\ref{aKdV}) such that 
\bee
& & \lim_{t\to - \infty}\|u(t) -  Q(\cdot -v_0 t) e^{\frac i2 (\cdot) v_0} e^{i(1-\frac 14 v_0^2)t} \|_{H^1(\R)} = 0, \\
& & \lim_{t\to +\infty} \|u(t) - 2^{-\frac 1{m-1}} Q_{c_\infty} (\cdot - v_\infty t - \rho(t)) e^{\frac i2(\cdot ) v_\infty} e^{i\ga(t)}\|_{H^1(\R)} =0,
\eee
with $|\rho'(t)|\ll v_0$ for all large times, and where $c_\infty, v_\infty>0$ are the scaling and velocity predicted by the mass and energy conservation law, as in (\ref{predM})-(\ref{predE}).
\end{defn}

As we will see below, a standard method allows us to construct a generalized soliton solution as $t\to -\infty$, as required in the above definition; however, it is expected that any reasonable soliton-like solution would not be able to satisfy the second assertion, because of some very small dispersive effects produced by the potential $a_\ve$.

\subsection{Previous analytic results on the soliton dynamics under slowly varying medium}

The problem of describing analytically the soliton dynamics of different integrable models under a slowly varying medium has received some increasing attention during the last years. In the framework of NLS equations with non constant potential, the first result in this direction was given by Bronski and Jerrard \cite{BJ}. In this paper it is proved that in the semiclassical limit, the soliton's mass center obeys the Newton's second law with external force given by the potential's gradient. Gustafson et al. \cite{GFJS, FGJS}, Gang and Sigal \cite{GS}, and Holmer et al. \cite{HZ} have considered the dynamics of a soliton under general potentials, for short times, namely $t \sim \frac 1\ve$. Indeed, they consider, among other cases, the cubic NLS equation
$$
iu_t + \Delta u + V(\ve x)  u + |u|^2 u =0, \quad \hbox{ on }Ê\; \R_t \times \R_x,
$$
with an initial condition at time $t=0$ close enough to a solitary wave.  Here $V$ is a bounded, sufficiently smooth potential. The best result in this case states that for any $\delta>0$ and for all time $t\lesssim \delta \ve^{-1}|\log \ve| $, the solution $u(t)$ of the corresponding Cauchy problem remains close in $H^1(\R)$ to a modulated solitary wave, up to an error of order $\ve^{2-\delta}.$
In addition, the dynamical parameters of the solitary wave follow a well defined dynamical system. See also \cite{DV} for a similar result in the case of a generalized Hartree equation. From these results it seems clear that a deeper understanding of the soliton dynamics for very large times strongly depends on the specific character of the considered potential, as we will see below.

\medskip

On the other hand, there is the \emph{opposite} problem of soliton-defect interaction. In this case, one may expect the splitting of the solitary wave, see \cite{GHW,HMZ0, HMZ}. The behavior of perturbations of small solitary waves of NLS equations, and its corresponding dynamics, has been considered e.g. in  \cite{GW,SW2}.

\medskip

Finally, a related problem is the study of the interaction soliton-medium for a generalized Korteweg- de Vries equation, following the physical literature \cite{KZ, KN1, KM1, New}. Dejak, Jonsson and Sigal in \cite{SJ,DS} considered the long time dynamics of solitary waves (solitons) over slowly varying perturbations of KdV and mKdV equations. Recently Holmer et al. have improved these results, up to quadratic order in $\ve$. Finally, we recall that in the case of the generalized Korteweg- de Vries equation
$$
u_t + (u_{xx} -\la u +a_\ve(x) u^m)_x=0, \quad \hbox{ in } \R_t\times \R_x, \quad \la\geq 0,
$$
we have described in \cite{Mu2} the dynamics of a generalized soliton solution. We proved, among other things, that no pure soliton solution is present for any small $\ve>0$ and $\la>0$. In this paper, our main objective is to extend some of these results to (\ref{aKdV}) (cf. Remark \ref{1.1}). 

\subsection{Main Results}

Let
\be\label{Te}
T_\ve := \frac 1{v_0}\ve^{-1 -\frac 1{100}}>0,
\ee
and
\be\label{pm}
p_m := 
\begin{cases}
1, \quad \hbox{ if } \; m\in [2,3),\\
2, \quad \hbox{ if } \;  m\in [3,5).
\end{cases}
\ee
The first parameter can be understood as the \emph{interaction time} between the soliton and the potential. In other words, at time $t=-T_\ve$ the soliton should remain almost unperturbed, and at time $t=T_\ve$ the soliton should have completely crossed the influence region of the potential. Note that the asymptotic $v_0 \sim 0$ depending on $\ve$ is a degenerate case and it will be discarded for this work. 

Second, the parameter $p_m$ measures the degree of accuracy of the main result, based in a Taylor expansion of the nonlinearity involved. In other words, the smoother the nonlinearity, the more accurate the main result.

\smallskip

In what follows, we assume the validity of above hypotheses, namely (\ref{f}) and (\ref{ahyp}). % and (\ref{3d1d}). 
Our first result is a complete description, for all times, of the interaction soliton-potential for the aNLS equation \ref{aKdV}.

\medskip

\begin{thma}[Dynamics of a generalized soliton-solution for aNLS equation]\label{MT}~

Assume that $a(\cdot)$ satisfies (\ref{ahyp}). Let $2\leq m <5$, $v_0>0$, $\la_0 := \frac{5-m}{m+3}$ and $p_m$ be as in (\ref{pm}).
There exists a small constant $\ve_0>0$ such that for all $0<\ve<\ve_0$ the following holds. 

\ben

\item \emph{Existence of a soliton-like solution}.

There exists a unique solution $u\in C(\R, H^1(\R))$ of (\ref{aKdV}), global in time, such that 
\be\label{Minfty}
\lim_{t\to -\infty} \|u(t) - Q(\cdot -v_0t) e^{i(\cdot) v_0/2 } e^{i(1-\frac 14 v_0^2)t} \|_{H^1(\R)} =0,
\ee
with conserved mass $M[u] (t)= M[Q]$  and energy $E_a[u](t) = (\frac 14 v_0^2 -\la_0)M[Q].$

\medskip

\item \emph{Stability of interaction soliton-potential}. 
Let 
\be\label{cvinf}
\la_\infty := 2^{-\frac 1{m-1}}, \quad c_\infty := 2^{\frac 4{5-m}}\; (>1), \quad v_\infty := (v_0^2 + 4\la_0 (c_\infty -1))^{\frac 12} \; (>v_0).
\ee
There exist $K>0$, and $C^1$- functions $\rho(t), \ga(t) \in \R $ defined for all $t\geq \frac 12 T_\ve$ such that the function
$$
w(t,x) := u(t,x) - \la_\infty Q_{c_\infty}(x -v_\infty t -\rho(t)) e^{\frac i2 x v_\infty} e^{i\ga(t)},  
$$   
satisfies for all $t\geq \frac 12 T_\ve$,
\be\label{MT2}
\| w(t)\|_{H^1(\R)} + |\rho'(t)| + |\ga'(t)-c_\infty +\frac 14 v_\infty^2 | %+ \frac{|c'(t)|}{c(t)}+ |c(t) - c_\infty | 
 \leq K \ve^{p_m}.
\ee
%In other words, $u(t)$ is an \emph{almost} pure soliton-like solution in the sense of Definition \ref{PSS}, up to order $\ve^{p_m}$.
\een
\end{thma}

\medskip

\begin{rem}\label{1.1}
One may compare the above result with Theorems 1.1 and 1.2 in \cite{Mu2}, where a bound of order $\ve^{1/2}$ was showed (note that $p_m\geq 1$). Our present result is better due to the absence of a \emph{dispersive tail} behind the soliton, precisely of order $\ve^{1/2}$ in $H^1(\R)$, and present in the gKdV case. In other words, the soliton dynamics in the NLS case is less dispersive than that of the gKdV case, mainly due to the mass conservation law. Let us recall that a first mathematical treatment of the phenomena of dispersive tails can be found in \cite{MMfinal}. Let us finally recall that such dispersive elements in a soliton solution are not present in the case of a pure NLS or gKdV equation. 
\end{rem}

\begin{rem}
We do not discard the existence of very small solitons after the interaction, with size of order at most $\ve^{p_m}$ in $H^1(\R)$. This question is also related to the question of scattering modulo-solitons.
\end{rem}

\begin{rem}
Note that we have improved the preceding results in the literature in several directions: first, we obtain a full $\ve^2$ in the error term for the cubic case; second, our analysis is available beyond the Erhenfest's time $\ve^{-1} |\log \ve|$; third, we obtain a global picture of this generalized soliton solution, by proving a stability result; fourth, we are able to treat all the subcritical nonlinearities of the form $|u|^{m-1}u$; and fifth, we have realized that $\ve^2$ is formally the best possible $H^1$-bound since there exists, at the formal level, a \emph{defect} on the solution, of order $\ve^2$ (see Remark \ref{defe} for more details). 
\end{rem}

One may wonder whether Theorem A is available for other potentials. A first answer in that direction, is given in the next paragraph.

\subsection{Decreasing potential, and reflection}\label{decre} 

Pick now a potential $a(\cdot)$ smooth enough and such that there exist constants $K, \mu>0$ and $0<a_0<1$ such that
\be\label{ahypSr} 
\begin{cases}
a_0 < a(r) < 1, \; a'(r)< 0,  \, |a^{(k)}(r)| \leq K e^{-\mu |r|} \; \hbox{ for all } r\in \R,\ k=1,2,3,(4); \\
0<a(r)-a_0  \leq  Ke^{-\mu r}, \; \hbox{ for all } r\geq 0, \, \hbox{ and} \\
0<1-a(r)\leq K e^{\mu r} \; \hbox{ for all } r\leq 0.
\end{cases}
\ee
In particular, $\lim_{r\to -\infty} a(r) =1$ and $\lim_{r\to +\infty} a(r) =a_0$.

\medskip

Note that  $P[u](t)$ defined in (\ref{P}) now satisfies (\ref{dPa}) with the opposite sign, therefore the momentum is always a non increasing quantity. We claim the following 

\begin{thmap}[Dynamics of a reflected soliton-solution for aNLS equation]\label{MTref}~

Assume that $a(\cdot)$ now satisfies (\ref{ahypSr}). Let $2\leq m <5$, $v_0>0$, $\la_0 := \frac{5-m}{m+3}$ and $p_m$ be as in (\ref{pm}). Suppose in addition that 
\be\label{Smallness}
v_0^2 < 4\la_0( 1-a_0^{\frac 4{5-m}} ) . 
\ee
There exists a small constant $\ve_0>0$ such that for all $0<\ve<\ve_0$ the following holds. 

\ben

\item \emph{Existence of a soliton-like solution}.

There exists a solution $u_\#\in C(\R, H^1(\R))$ of (\ref{aKdV})-(\ref{ahypSr}), global in time, such that 
\be\label{SMinftyr}
\lim_{t\to -\infty} \|u_\#(t) - Q(\cdot -v_0t) e^{i(\cdot) v_0/2 } e^{i(1-\frac 14 v_0^2)t} \|_{H^1(\R)} =0,
\ee
with conserved mass $M[u_\#] (t)= M[Q]$  and energy $E_a[u_\#](t) = (\frac 14 v_0^2 -\la_0)M[Q]<0.$

\medskip

\item \emph{Reflection and stability of soliton-solution}. 

There exist $K=K(v_0)>0$ and $C^1$- functions $\rho(t), \ga(t) \in \R $ defined for all $t\geq K T_\ve$ such that the residual function
$$
w(t,x) := u_\#(t,x) - Q (x  + v_0 t -\rho(t)) e^{-\frac i2 x v_0} e^{i\ga(t)},  
$$   
satisfies, for all $t\geq K T_\ve$,
\be\label{SMT2r}
\| w(t)\|_{H^1(\R)} + |\rho'(t)| + |\ga'(t)-1 +\frac 14 v_0^2 | %+ \frac{|c'(t)|}{c(t)}+ |c(t) - c_\infty | 
 \leq K \ve^{p_m}.
\ee
%In other words, $u(t)$ is an \emph{almost} pure soliton-like solution in the sense of Definition \ref{PSS}, up to order $\ve^{p_m}$.
\een
\end{thmap}
 
 \begin{rem}
 Let us clarify this last result. Under small but still fixed velocities, the soliton solution is {\bf reflected} by the potential, and modulo a small defect of order $O_{H^1(\R)}(\ve^{p_m})$, it has the {\bf same scaling and opposite velocity} to the initially provided. 
 In addition, the constant $K(v_0)$ becomes unbounded as $v_0$ approaches the equality in (\ref{Smallness}). 
 \end{rem}

\begin{rem}
The \emph{uniqueness} and \emph{stability} for large times of this solution in the case $v_0^2 \geq 4\la_0( 1-a_0^{\frac 4{5-m}} ) $ is not known, mainly due to the fact that the momentum law has now the opposite sign.
\end{rem}
\bigskip

We believe that the analysis in the interaction region can be carried out in a general situation, under asymptotically flat potentials. However, stability and uniqueness properties are probably highly dependent on the considered nonlinearity.

\bigskip

\subsection{Nonexistence of pure soliton-like solution} 

An important problem arises from the above results. Is the solution $u(t)$ constructed in Theorem A above an exactly pure solitary wave for the aNLS equation? (cf. Definition \ref{PSS}.) This question is equivalent to decide whether  
$$
\lim_{t\to +\infty} \|w(t)\|_{H^1(\R)} =0.
$$
We have been unable to solve this problem, due to the lack of \emph{backwards stability} for large times\footnote{Note that in \cite{Mu2} one has backward stability for all $\la>0$.}. In other words, if we suppose $\|w(T)\|_{H^1(\R)} \leq \al$ for small $\al$ and very large time $T\gg T_\ve$, we do not know if a suitable modulation of $w(t)$ is still small (of order $\al$) at time $T_\ve$. This result is equivalent to obtain stability for a decreasing potential $a(\ve x)$.  
\medskip

However, we have formally identify a defect on the soliton solution, which makes the difference between this case and the constant coefficients case. See Remark \ref{defe} for more details.

\medskip

Finally let us recall that Theorem 1.3 in \cite{Mu2} puts in evidence the following conjecture: the presence of a non constant potential induces on any generalized solitary wave nontrivial dispersive effects, contrary to the standard NLS and gKdV equations. We believe that the same phenomenon is present in the Schr\"odinger case.

\bigskip

\begin{rem}[Time depending potentials]
As expected, our results are also valid, with easier proofs, for the following time dependent gKdV equation:
\be\label{time}
iu_t + u_{xx}  + a(\ve t) |u|^{m-1} u =0, \quad \hbox{ in } \R_t\times \R_x.
\ee
Here $a$ satisfies (\ref{ahyp}) now in the time variable. Note that this equation is invariant under scaling and space translations. In addition, the mass $M[u]$ and momentum $P[u]$ remain constants and the energy 
$$
\tilde E[u](t) := \frac 12 \int_\R |u_x|^2  -\frac{a(\ve t)}{m+1} \int_\R |u|^{m+1}
$$
satisfies
$$
\partial_t \tilde E[u](t) = -\frac{\ve a'(\ve t)}{m+1}\int_\R |u|^{m+1}.
$$ 
Furthermore, Theorem A still holds with $\la_\infty = 2^{-1/(m-1)}$, and $c_\infty = 2^{4/(5-m)}$. We left the details to the reader.
\end{rem}

\subsection{The two dimensional case} A natural question arising from the above results is their extension to higher dimensions. Very few results are valable on this topic, apart from the aforementioned works \cite{FGJS,GFJS}.

\smallskip

Here we shall consider the two dimensional case with a potential $a(\cdot ) $ depending only on one spatial variable. Indeed, let $x=(x_1,x_2)\in \R^2$. For $\ve>0$ small, consider the following aNLS equation
\be\label{aKdVN}
iu_t + \Delta u + a(\ve x_1) |u|^{m-1} u =0 \quad \hbox{ in } \R_t \times \R_x^2, \quad 2\leq m < 3.
\ee
We assume $a=a(r)$ satisfying (\ref{ahyp}). The exponent $m$ is chosen to ensure a subcritical regime in $L^2$ and global wellposedness with $L^2$ and $H^1$ data (cf. \cite{GV}). The mass $M(t)$, energy $E_a(t)$ and --vectorial-- momentum $P(t)$ in (\ref{M})-(\ref{P}) are defined in the usual way. From the above assumptions we have mass and energy formally conserved, and
\be\label{dPN}
\partial_t P[u](t) = \frac{\ve e_1}{m+1}\int_\R a'(\ve x_1) |u|^{m+1}(t,x) dx \geq0.
\ee
Here $e_1$ is the first unitary vector in $\R^2$. 

\smallskip

Concerning \emph{solitons} solutions, given $ x_0, \tilde v_0 \in \R^2$, $\ga_0\in \R$ and $c_0>0$, there exists a solution of the two-dimensional version of (\ref{gKdV}) of the form
\be\label{(N)}
u(t,x) := Q_{c_0}(x- x_0-\tilde v_0t ) e^{i(c_0-\frac 14 |\tilde v_0|^2) t}e^{i\ga_0}e^{\frac i2 \tilde v_0\cdot x},
\ee  
with $Q_c(s):=c^{\frac 1{m-1}} Q(c^{1/2} s)$. Here $Q$ is the unique (modulo translations) Schwartz function satisfying the second order nonlinear elliptic equation
\be\label{solitonN}
\Delta Q -Q + Q^m =0,\quad Q>0, \quad |Q(x)| \leq K e^{-|x|}.
\ee

For this case, we have the following positive result.

\begin{thmb}[Dynamics of a two-dimensional generalized soliton-solution]\label{TCa}~

Assume the preceding hypotheses. Let $2\leq m < 3$, and $v_0>0$.
There exists a small constant $\ve_0>0$ such that for all $0<\ve<\ve_0$ the following holds. 

\ben
\item \emph{Existence of a soliton-like solution}.

There exists a \emph{unique} solution $u\in C(\R, H^1(\R^2))$ of (\ref{aKdVN}), global in time, such that 
$$
\lim_{t\to -\infty} \|u(t) - Q(\cdot -v_0e_1 t) e^{i(\cdot) v_0 e_1 /2 } e^{i(1-\frac 14 v_0^2)t} \|_{H^1(\R^2)} =0.
$$

\medskip

\item \emph{Stability of interaction soliton-potential}.
 
Let  $\la_\infty = 2^{-\frac 1{m-1}}$, $c_\infty := 2^{2/(3-m)}$, and
\be\label{cvinfN}
v_\infty= v_\infty(v_0) := (v_0^2 + \al_0 (c_\infty -1))^{\frac 12},  \quad \hbox{ with }\;
\al_0 := \frac{4(3-m)}{m+1} \times \frac{ \int Q^{m+1} }{\int Q^2}.
\ee
There exist $K>0$ and $C^1$- functions $\ga(t) \in \R $, $\rho(t) \in \R^2$ defined for all $t\geq \frac 12 T_\ve$ such that the function
$$
w(t,x) := u(t,x) - \la_\infty Q_{c_\infty }(x -v_\infty e_1 t -\rho(t)) e^{\frac i2 x\cdot v_\infty e_1} e^{i\ga(t)}  
$$   
satisfies for all $t\geq T_\ve$,
\be\label{MT2N}
\| w(t)\|_{H^1(\R^2)} + |\rho'(t)| + |\ga'(t)-c_\infty +\frac 14 v_\infty^2 |  \leq K \ve.
\ee
\een
\end{thmb}

\begin{rem}
The proof of this theorem is close the proof of Theorem A. Note that uniqueness and stability follow from the fact that for any constant $v>0$,
\be\label{sign}
\partial_t \{ v e_1 \cdot P[u](t)\} \geq 0.
\ee
In section \ref{TC} we sketch the main lines of the proof.
\end{rem}

\begin{rem}
The restriction to the two dimensional case is a consequence of the lack of smoothness for the power nonlinearity in higher dimensions. We believe that the above results remain valid for a sufficiently smooth nonlinearity of the form $f(x, |u|^2) u$ (e.g.  $ f(x, s^2)  := a_\ve(x)( s^2 + a_0 s^4)$, with $a_0$ small enough in the one dimensional case.)
\end{rem}

Last, thanks to the invariance of (\ref{aKdVN}) with respect to Galilean boosts on the $x_2$ direction we obtain the following striking result.

\begin{corc}[Description of the soliton dynamics for a \emph{general} incident velocity]\label{CorD}~

Let $\tilde v = (\tilde v_1, \tilde v_2) \in \R^2$ be an initial velocity such that $\tilde v\cdot e_1>0$. Then Theorem B holds with the obvious modifications, and with $\ve_0$ independent of $\tilde v_2$. Moreover, the final velocity is given by $\tilde v_\infty := (v_\infty(\tilde v_1), \tilde v_2)$.
\end{corc}

\medskip

\begin{rem}
Note that in this situation one has the following \emph{refraction law} among the two velocities and the angles of incidence ($\theta_{-\infty}$) and refraction ($\theta_{+\infty}$):
$$
|\tilde v| \sin \theta_{-\infty} = |\tilde v_\infty| \sin \theta_{+\infty}.
$$ 
\end{rem}

\medskip

\begin{proof}[Proof of Corolary C]
Note that $\tilde v_1>0$. Since any solution of (\ref{aKdVN}) is invariant under the Galilean transformation 
$$
\mathcal G[u](t,x)= \mathcal G[u](t,x_1, x_2) = u(t,x_1, x_2 -\tilde v_2 t ) e^{\frac i2 x_2 \tilde v_2 }e^{-\frac i4 \tilde v_2^2 t},
$$
we may suppose without loss of generality that $\tilde v = v_0 e_1$, for $v_0 =|\tilde v|>0$.  We apply Theorem B with this new data. The conclusion follows at once.
\end{proof}

\begin{rem}
Let us remark that the extension of this result to case of a decreasing potential is direct, after the preceding results (cf. Theorem A'). Additionally, the proof of non existence of pure soliton-like solutions for this case remains an open problem. 
\end{rem}

\medskip

Before starting the computations, let us explain the main ideas behind the proof of Theorems A and B.

\subsection{Main ideas of the proof}\label{sop}

Similarly to \cite{Mu2}, the proof of our results are mainly based in the construction of a new {\bf approximate solution} of (\ref{aKdV}) in the interaction region, see e.g. \cite{Martel, MMcol1, MMMcol, MMfinal, Mu1} for similar computations. The construction requires several new computations, up to second order in $\ve$ in the best cases, in order to describe with enough accuracy the behavior of the soliton solution. We recall that this idea is essential in order to improve the existent literature, and to find a formal lower bound on the defect at infinity.

\smallskip

The method is as follows: one separates the analysis among three different time intervals: $t\ll -\ve^{-1} $, $\abs{t} \leq \frac K\ve$ and $\ve^{-1} \ll t$. On each interval the solution possesses a specific behavior, as is now described. 

\smallskip

Indeed, in the first interval of time we prove that $u(t)$ remains very close to a soliton-solution with no change in the scaling, velocity, phase and shift parameters. This result is possible for negative very large times, where the soliton is still far from the interacting region $|t|\leq \ve^{-1}$, and the potential is essentially $a\equiv 1$. The idea is to use a compactness property of the soliton solution to get exponential decay in time of the convergence at infinity in (\ref{Minfty}).

\smallskip

For the second regime, namely $\abs{t}\leq \ve^{-1}$, the soliton-potential interaction leads the dynamics of $u(t)$. The novelty here is the construction of an \emph{approximate solution} of (\ref{aKdV}) with high order of accuracy such that $(a)$ at time $t\sim -\ve^{-1}$ this solution is close to a modulated soliton solution and therefore to $u(t)$; $(b)$ it describes the soliton-potential interaction inside this interval; and $(c)$ it is close to $u(t)$ in the whole interval $[-\ve^{-1}, \ve^{-1}]$, uniformly on time, modulo a modulation on some degenerate directions.

\smallskip

Finally, for times $t\gg \ve^{-1}$, some well known stability properties allow to establish the stability of the solution $u(t)$ as a soliton-like solution, and therefore the proof of Theorem A. These arguments are easy to extrapolate to higher dimensions, giving the proof of Theorem B.

\bigskip

\bigskip

\noindent
{\bf Notation}. Along this paper, both $C, K,\mu>0$ will denote fixed constants, independent of $\ve$, and possibly changing from one line to the other.

\bigskip

Finally, some words about the organization of this paper. First in Section \ref{2} we sketch the proof of Theorem A. Section \ref{4} is devoted to the proof of the main ingredients of Theorem A. Section \ref{Ap} is devoted to the proof of Theorem A'. In Section \ref{TC} we prove Theorem B. Finally Appendices \ref{A} and \ref{B} are devoted to the construction of the soliton-like solution for negative large times and to prove the asymptotic behavior as $t\to +\infty$.

\bigskip

\section{Proof of Theorem A}\label{2}

The proof is similar to the proof of Theorem 1.2 in \cite{Mu2}, and it is based in three independent results: Propositions \ref{Tm1}, \ref{T0} and \ref{Tp1}. Assuming these three results, the proof of Theorem A is straightforward. For the proof of each Proposition, we did as follows. In Section \ref{4} we prove Proposition \ref{T0}, and in Appendices \ref{A} and \ref{B} we prove Propositions \ref{Tm1} and \ref{Tp1}.

\medskip

\noindent
{\bf Step 1. Construction of a soliton-like solution at infinity}.
First we prove the existence and uniqueness of a \emph{pure} soliton-like solution for (\ref{aKdV}) for $t\to -\infty$. See e.g. \cite{Mu2}, Theorem 1.1 for a related result.

\begin{prop}[Existence and uniqueness of a pure soliton-like solution]\label{Tm1}~

There exists $\ve_0>0$ such that for any $0<\ve < \ve_0$, there exists a unique solution $u \in C(\R, H^1(\R))$ of (\ref{aKdV}) such that 
\be\label{lim0}
\lim_{t\to -\infty} \|u(t) -  Q(\cdot -v_0 t)e^{\frac i2 (\cdot ) v_0}e^{i(1-\frac 14v_0^2)t}  \|_{H^1(\R)} =0,
\ee
with mass $M[u](t) = M[Q]$ and energy $E_a[u](t) = (\frac 14v_0^2 -\la_0)M[Q].$
Moreover, there exist constants $K,\mu>0$ such that  for all $t\leq -\frac 1{2}T_\ve$,
\be\label{minusTe}
\|u(t) -  Q(\cdot -v_0 t)e^{\frac i2 (\cdot ) v_0}e^{i(1-\frac 14v_0^2)t} \|_{H^1(\R)} \leq  K e^{\ve \mu t}.
\ee
In particular, 
\be\label{mTep}
\|u(-T_\ve) - Q(\cdot + v_0 T_\ve )e^{\frac i2 (\cdot ) v_0}e^{-i(1-\frac 14v_0^2)T_\ve } \|_{H^1(\R)} \leq K e^{- \mu \ve^{-\frac 1{100}}} \leq K \ve^{10},
\ee
provided $0<\ve<\ve_0$ small enough.
\end{prop}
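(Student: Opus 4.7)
The plan is to construct the solution as a compactness limit of backward approximations and to obtain the exponential rate via a Weinstein-type Lyapunov functional. Set $R(t,x) := Q(x-v_0 t) e^{i x v_0/2} e^{i(1-v_0^2/4)t}$, and fix a sequence $S_n \to -\infty$. For each $n$, let $u_n \in C(I_n, H^1(\R))$ be the maximal $H^1$-solution of (\ref{aKdV}) with Cauchy data $u_n(S_n) = R(S_n, \cdot)$; local existence near $S_n$ is standard (Ginibre--Velo). The goal is to extend $u_n$ to a fixed interval $[S_n, T_0]$ with $T_0 \leq -\tfrac12 T_\ve$ and to bound the residual by an exponential in $\ve|t|$, uniformly in $n$.

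The heart of the proof is a bootstrap estimate: on $[S_n, T_0]$ there exist $C^1$ modulation parameters $\vec p_n(t) = (c_n, v_n, x_n, \ga_n)$ close to $(1, v_0, v_0 t, (1-v_0^2/4)t)$ such that the decomposition
$$ u_n(t,x) = Q_{c_n(t)}(x - x_n(t))\, e^{i v_n(t) x/2} e^{i\ga_n(t)} + z_n(t,x) $$
satisfies four orthogonality conditions killing the scaling, translation, phase and Galilean directions of the linearized operator, together with
$$ \|z_n(t)\|_{H^1} + |c_n(t)-1| + |v_n(t)-v_0| \leq K e^{\ve\mu t}. $$
The monotonicity tool is the functional
$$ \mathcal F_n(t) := E_a[u_n](t) + c_n(t)\, M[u_n](t) - v_n(t)\, P[u_n](t), $$
whose quadratic expansion around the modulated soliton coincides with $\tfrac12\langle \mathbb L z_n, z_n\rangle$, with $\mathbb L$ the linearization at $Q$. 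The orthogonality conditions render $\mathbb L$ coercive on the admissible subspace, yielding $\mathcal F_n(t) - \mathcal F_n[R_{\vec p_n(t)}] \gtrsim \|z_n(t)\|_{H^1}^2$ up to cubic errors absorbed by the bootstrap. The time-defect, using the modulation equations for $\dot c_n,\dot v_n$ together with (\ref{dPa}), is controlled by
$$ \Big|\tfrac{d}{dt} \{\mathcal F_n - \mathcal F_n[R_{\vec p_n}]\}\Big| \leq C\|z_n\|_{H^1}^2 + K\ve\!\int\! |a_\ve'(x)|\, Q_{c_n}^{m+1}(x-x_n)\, dx \leq K e^{\mu\ve v_0 t}, $$
the source coming from $|a_\ve(x)-1|\leq Ke^{\mu\ve x}$ for $x\leq 0$, combined with the exponential tail of $Q$ centered at $x_n(t)\approx v_0 t\ll 0$. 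Since $z_n(S_n)=0$, integrating from $S_n$ closes the bootstrap with a constant independent of $n$.

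Once the uniform bound is in place, $\{u_n(T_0)\}$ is bounded in $H^1$; extraction gives weak $H^1$ convergence to $u^\infty(T_0)$, strengthened to strong $H^1$ convergence by the modulation-enforced localization of the profile near $x\approx v_0 T_0$. Let $u$ be the $H^1$-solution of (\ref{aKdV}) with data $u^\infty(T_0)$, global by the mass-subcritical Gagliardo--Nirenberg bound for $2\leq m<5$. Passing the uniform estimate to the limit yields (\ref{lim0}) and (\ref{minusTe}), with (\ref{mTep}) obtained by specializing to $t=-T_\ve$. For uniqueness, if $\tilde u$ is another solution satisfying (\ref{lim0}), the difference $w=u-\tilde u$ inherits $\|w(t)\|_{H^1}\to 0$ as $t\to-\infty$; the same functional, centered at a common modulation of $u$, yields $\|w(t)\|_{H^1}^2\lesssim e^{\ve\mu t}\|w\|_{L^\infty_t H^1}$, forcing $w\equiv 0$ on $(-\infty,T_0]$ and hence globally by well-posedness. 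Conservation of mass and of $E_a$ follows by continuity of these functionals along the flow and passing $t\to-\infty$, with the Pohozaev identity $E[Q]=-\tfrac{\la_0}{2}M[Q]$ identifying the limit.

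The hardest step is the bootstrap: verifying the coercivity of $\mathbb L$ (which, in the Schr\"odinger setting, decomposes into a real-part operator $L_+$ and an imaginary-part operator $L_-$, each with a small kernel whose removal requires the four-parameter modulation), propagating this coercivity to the full $H^1$-norm of $z_n$ despite the cubic nonlinear errors, and confirming that the driving source for $\mathcal F_n$ is genuinely exponentially small in $\ve|t|$---this last point crucially uses that the soliton, located at $x_n(t)\approx v_0 t$, sits deep in the region where $a_\ve\approx 1$ up to $Ke^{\mu\ve x}$.
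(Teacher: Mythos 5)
Your overall scheme (backward approximation with exact soliton data at $t=S_n$, uniform exponential estimates via modulation plus an energy-type functional, compactness to pass to the limit, and a functional argument for uniqueness) is the same as the paper's. The genuine gap is in the central bootstrap estimate. You claim $\big|\frac{d}{dt}\{\mathcal F_n-\mathcal F_n[R_{\vec p_n}]\}\big|\le C\|z_n\|_{H^1}^2+K\ve\int|a_\ve'|Q_{c_n}^{m+1}$ and then integrate from $S_n$. But the interval $[S_n,t]$ has unbounded length, and under the bootstrap hypothesis $\|z_n(s)\|_{H^1}\le Ke^{\ve\mu s}$ the quadratic term integrates to $\frac{CK^2}{2\ve\mu}e^{2\ve\mu t}$: after coercivity this returns $\|z_n(t)\|_{H^1}\lesssim \frac{K}{\sqrt{\ve\mu}}e^{\ve\mu t}$, which is \emph{worse} than the hypothesis by a factor $\ve^{-1/2}$ at the same rate, so the continuity argument cannot be closed with an $\ve$-independent constant. (The time restriction $t\le-\frac12 T_\ve$ rescues the \emph{source} term $e^{\mu\ve v_0 t}$, because you may take the bootstrap rate strictly below $\mu v_0/2$, but it does not rescue the self-interaction term, which reproduces exactly the rate $e^{2\ve\mu t}$ of the hypothesis squared.) Even the genuinely quadratic pieces carrying the prefactor $\ve$, namely those coming from $v_n\,\partial_t P[u_n]=\frac{\ve v_n}{m+1}\int a'(\ve x)|u_n|^{m+1}$, integrate to $O(K^2e^{2\ve\mu t})$ with a constant that is $O(1)$ but not small, so taking absolute values is not enough. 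This is precisely where the hypothesis $a'>0$ must be used as a \emph{sign}, not as a size: the paper works with the functional $E_a[u]+(1+\frac14 v_0^2)M[u]-v_0P[u]$ with \emph{constant} multipliers, so $E_a$ and $M$ are exactly conserved and the only variation is $-v_0(P[u](t)-P[u](-T_n))\le 0$, which is discarded by monotonicity (inequality (\ref{Ine1})); no quadratic-in-$z$ quantity is ever integrated in time. Correspondingly, the paper modulates only in translation and phase (two parameters), and controls the remaining bad direction $\re\int\tilde R_{v_0}\bar z$ through the mass conservation law (estimate (\ref{CRD})), rather than introducing time-dependent multipliers $c_n(t),v_n(t)$ whose derivatives feed quadratic terms back into the functional. (Minor: with the phase $e^{ivx/2}$ the correct mass multiplier is $c+\frac14 v^2$, not $c$.)

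Your uniqueness sketch has a second, related gap. For a difference $w=u-\tilde u$ of two solutions no orthogonality conditions can be imposed by modulation, so the quadratic form you invoke is \emph{not} coercive on $w$; the paper handles this by first proving (via the same momentum-monotonicity bootstrap) that \emph{any} solution satisfying (\ref{lim0}) already decays like $e^{\ve\mu t}$ (Proposition \ref{Uni1}), and then correcting $w$ by explicit components $b_1(t)Q$ and $b_2(t)Q'$ along the degenerate directions, estimating $b_1,b_2$ separately before applying coercivity to the corrected function. Writing "$\|w(t)\|_{H^1}^2\lesssim e^{\ve\mu t}\|w\|_{L^\infty_tH^1}$, forcing $w\equiv0$" skips exactly these two steps, which are where the work lies.
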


\begin{proof}
See Appendix \ref{A}.
\end{proof}

Note that the mass and energy identities above follow directly from (\ref{lim0}), Appendix \ref{AidQ} and the energy conservation law from Proposition \ref{Cauchy}. Hereafter, we consider \emph{the} solution $u(t)$ given by the above Proposition.

\medskip

\noindent
{\bf Step 2. Interaction soliton-potential}. The next step in the proof consists on the study of the region of time $[-T_\ve, T_\ve]$, which is the zone where the interaction soliton-potential governs the dynamics. 

Recall the definition of $\la_\infty$, $c_\infty$ and $v_\infty$ in (\ref{cvinf}), and $p_m$ in (\ref{pm}). 

\begin{prop}[Dynamics of the soliton in the interaction region]\label{T0}~

Suppose $v_0>0$. There exist a constant $\ve_0>0$ such that the following holds for any $0<\ve <\ve_0$.
Let $u=u(t)$ be a globally defined $H^1(\R)$ solution of (\ref{aKdV}) such that
\be\label{hypINT}
\| u(-T_\ve) - Q(\cdot + v_0 T_\ve)e^{\frac 12i(\cdot)v_0}  e^{-i(1-\frac 14 v_0^2)T_\ve}  \|_{H^1(\R)}\leq K \ve^{p_m}.
\ee
Then there exist $K_0=K_0(K)>0$, and  $\rho_\ve, \ga_\ve \in \R$ such that
\be\label{INT41}
\|u(T_\ve)  -\la_\infty Q_{c_\infty}(\cdot - \rho_\ve) e^{\frac i2(\cdot)v_\infty} e^{i\ga_\ve} \|_{H^1(\R)} \leq K_0 \ve^{p_m},%\quad |\rho_1'(T_\ve)|\leq K_0 \ve^{1/2}.
\ee
and
\be\label{INT42}
 \frac {99}{100}v_0 T_\ve \leq \rho_\ve \leq \frac {101}{100}(2v_\infty-v_0) T_\ve.   
\ee
\end{prop}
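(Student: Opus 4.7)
The plan is to construct an accurate modulated--soliton approximate solution on $[-T_\ve, T_\ve]$, match it to $u$ at the left endpoint, and transport the matching to the right endpoint via a monotonicity--type stability argument. More precisely, I would look for $\tilde u$ of the form
\[
\tilde u(t,x) = \bigl[ Q_{c(t)}(y) + \ve R_1(t,y) + \ve^2 R_2(t,y)\bigr] e^{i\Theta(t,x)}, \qquad y=x-\rho(t),
\]
where $\Theta(t,x)= \tfrac{1}{2}v(t) x + \gamma(t)$, the parameters $(\rho,v,c,\gamma)$ vary slowly (on the scale $\ve$), and the profiles $R_j(t,y)$ are localized corrections depending polynomially on the local values $a(\ve\rho(t))$ and its derivatives. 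Plugging $\tilde u$ into \eqref{aKdV} and writing $a_\ve(x) = a(\ve\rho) + \ve a'(\ve\rho)(x-\rho) + \dots$, one collects powers of $\ve$: the zeroth order forces $(c,v)$ to satisfy instantaneous analogues of the balance laws \eqref{predM}--\eqref{predE}, while the first (and, when $m\ge 3$, second) order gives linear inhomogeneous equations
\[
\mathcal{L}_{c(t)} R_j = F_j[c,v,\rho,a(\ve\rho),\dots]
\]
for the linearized NLS operator $\mathcal{L}_c$ around $Q_c$. Solvability requires $F_j$ to be orthogonal to $\ker \mathcal{L}_c$ (generated by $\partial_y Q_c$, $iQ_c$, $\partial_c Q_c$, $iyQ_c$), and these solvability conditions are exactly the modulation ODE system for $(\rho,v,c,\gamma)$. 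One checks that this ODE system, started with $(\rho,v,c,\gamma)=(-v_0T_\ve, v_0, 1, -(1-\tfrac14v_0^2)T_\ve)$ at $t=-T_\ve$, admits a unique solution on $[-T_\ve,T_\ve]$ satisfying $c(T_\ve)=c_\infty+O(\ve)$, $v(T_\ve)=v_\infty+O(\ve)$, and a control on $\rho(T_\ve)$ consistent with \eqref{INT42} (since $\rho'(t)\simeq v(t)$ transitions monotonically from $v_0$ to $v_\infty$).

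The approximate solution $\tilde u$ should then solve \eqref{aKdV} up to a source of size $O_{H^1}(\ve^{p_m+1})$ pointwise in $t$, so that accumulation over the interaction interval $[-T_\ve,T_\ve]$ of length $\ve^{-1-1/100}$ costs only a factor $\ve^{-1-1/100}$, keeping the total residual of order $\ve^{p_m-1/100}$, which is acceptable after modulation. I would then write $u(t) = \tilde u(t) + \eta(t)$ and adjust $(\rho,\gamma)$ (via the implicit function theorem) so that $\eta(t)$ is orthogonal to the symplectic duals of $\partial_y \tilde u$ and $i\tilde u$ for every $t\in[-T_\ve,T_\ve]$. The evolution equation for $\eta$ is a linear Schr\"odinger equation around $\tilde u$ with forcing $O(\ve^{p_m+1})$ plus nonlinear terms $O(\|\eta\|_{H^1}^2)$.

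To control $\eta$, I would build the Lyapunov functional
\[
\mathcal{F}(t) := E_a[u] + \bigl(c(t)-\tfrac14 v(t)^2\bigr) M[u] + v(t)\, P[u],
\]
tuned so that $\tilde u(t)$ is (at leading order) its critical point, with coercivity
\[
\mathcal{F}(t) - \mathcal{F}[\tilde u](t) \gtrsim \|\eta(t)\|_{H^1}^2 - O(\ve^{2p_m})
\]
under the chosen orthogonality conditions (using the spectral properties of $\mathcal{L}_{c(t)}$). The time derivative of $\mathcal{F}$ has two contributions: the slow variation of $(c,v)$, which produces terms $O(\ve\|\eta\|_{H^1}^2) + O(\ve^{p_m+1}\|\eta\|_{H^1})$, and the non--conservation of $P[u]$ via \eqref{dPa}, whose favorable sign $a'>0$ is essential (combined with $v(t)\ge v_0/2>0$) so that the offending term has a definite sign after inclusion in $\mathcal{F}$. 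Integrating over $[-T_\ve, T_\ve]$ and absorbing, a Gronwall argument yields $\|\eta(t)\|_{H^1} \lesssim \|\eta(-T_\ve)\|_{H^1} + \ve^{p_m}$, which combined with \eqref{hypINT} and $\tilde u(T_\ve)$ being a genuine modulated $Q_{c_\infty}$ (up to $O(\ve^{p_m})$) gives \eqref{INT41}. Finally, \eqref{INT42} follows from the explicit ODE $\rho'=v+O(\ve)$ with $v(t)\in[v_0,v_\infty]$ on the interaction interval.

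The main obstacle will be the case $m\in[3,5)$, where $p_m=2$ forces second--order accuracy: computing $R_2$ requires inverting $\mathcal{L}_{c(t)}$ on a source whose decay and smoothness must be tracked explicitly, and the solvability conditions at order $\ve^2$ must be compatible with those at order $\ve$ (this is where the refined modulation ODEs appear). In addition, the coercivity of $\mathcal{F}$ must remain uniform even though the scaling changes by the $O(1)$ factor $c_\infty>1$ across the interaction, which requires formulating $\mathcal{F}$ with the running parameters $(c(t),v(t))$ rather than fixed endpoints. Both difficulties are what make the $\ve^{p_m}$ bound (rather than $\ve^{1/2}$, as in the gKdV case treated in \cite{Mu2}) possible here.
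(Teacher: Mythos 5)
Your overall strategy---an approximate solution built by inverting the linearized operators on $O(\ve)$ and $O(\ve^2)$ sources, modulation ODEs arising as solvability conditions, and a bootstrap/energy argument with modulation that transports the matching from $-T_\ve$ to $T_\ve$---is the same as the paper's. But as written the argument has a genuine quantitative gap: your error accounting does not reach the stated bound. You accept a pointwise residual $O_{H^1}(\ve^{p_m+1})$ and multiply by the interval length $T_\ve\sim \ve^{-1-1/100}$, obtaining a total of order $\ve^{p_m-1/100}$, which you declare acceptable; but (\ref{INT41}) requires $K_0\ve^{p_m}$ with $K_0$ independent of $\ve$, and $\ve^{p_m-1/100}\gg \ve^{p_m}$. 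The same problem recurs in your Gronwall step: the ``slow variation'' terms $O(\ve\|\eta\|_{H^1}^2)$ integrate to $\ve\,T_\ve\sim\ve^{-1/100}$, so the Gronwall constant is not uniform in $\ve$. The missing ingredient is the exponential localization in time inherited from the hypotheses on $a$: since $a',a''$ decay exponentially and $\rho'(t)\ge \frac12 v_0>0$, the residual actually has the structure $K\ve^{p_m+1}(\ve+e^{-\mu\ve|\rho(t)|})$ and the modulation speeds $c',v'$ carry the same factor, whence $\int \|\tilde S[\tilde u](t)\|_{H^1(\R)}\,dt\le K\ve^{p_m}$ (Lemma \ref{CV}) and $\int(|c'|+|v'|)\,dt\le K$. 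Without isolating this structure, your scheme yields at best $\ve^{p_m-1/100}$, i.e.\ it does not prove the proposition as stated.

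Two further structural points. First, the ansatz $[Q_{c(t)}(y)+\ve R_1+\ve^2R_2]e^{i\Theta}$ omits the local amplitude factor: in the interaction region $a(\ve\rho(t))-1$ is of size $O(1)$, so the zeroth-order mismatch $(a(\ve\rho)-1)Q_c^m$ can neither be removed by a choice of $(c,v)$ nor pushed into $\ve R_1$; the leading profile must be the soliton of the frozen-coefficient equation, $a^{-1/(m-1)}(\ve\rho(t))\,Q_{c(t)}$, as in (\ref{defALPHA}). Second, your Lyapunov functional $E_a+(c-\frac14v^2)M+vP$ has the wrong signs: the combination adapted to $Q_c e^{ivx/2}$ is $E_a+(c+\frac14v^2)M-vP$, and with $+vP$ the monotone term $v\,\partial_tP\ge0$ enters with the unfavorable sign. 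Moreover, in the interaction region the paper does not rely on the momentum monotonicity at all: it modulates all four parameters $(c,v,\rho,\ga)$ to enforce the full orthogonality (\ref{defz1}), runs the Gronwall argument on a functional quadratic in $z=u-\tilde u$, and only afterwards recovers $|c-C|+|v-V|\lesssim\ve^{p_m}$ from the mass and energy conservation laws; your two-parameter modulation leaves the scaling direction uncontrolled in the coercivity estimate unless you add such an argument.
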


\begin{proof}
See Section \ref{4} for a proof of this Proposition and some additional but not less important properties related to this result. 
\end{proof}

We apply the above Proposition as follows. From  (\ref{mTep}), one has directly (\ref{hypINT}). Then the solution $u(t)$ satisfies (\ref{INT41}) and (\ref{INT42}). We are done.
 
\medskip

\noindent
{\bf Last step. Long time behavior}. The final step of the proof is the use of the following result.  

\begin{prop}[Stability in $H^1(\R)$]\label{Tp1}~

Suppose $2\leq m<5$. There exists $\ve_0>0$ such that if $0<\ve <\ve_0$ the following hold. Suppose that for some time $t_1\geq \frac 12 T_\ve$, $ v_0 t_1 \leq X_0 $ and $\ga_0\in \R$ and $K>0$,
\be\label{18}
\| u(t_1) -  \la_\infty Q_{c_\infty}(\cdot-X_0) e^{\frac i2 xv_\infty} e^{i\ga_0} \|_{H^1(\R)} \leq  K\ve^{p_m}.
\ee
where $u(t)$ is a global $H^1$-solution of (\ref{aKdV}). 

\smallskip
\noindent
Then there exist $K_0>0$ and $C^1$-functions $\rho_2(t), \ga_2(t) \in \R$ defined in $[t_1,+\infty)$ such that 
$$
w(t) := u(t) -   \la_\infty  Q_{c_\infty} (\cdot - v_\infty t -\rho_2(t))e^{\frac i2 (\cdot) v_\infty}  e^{i\ga_2(t)},
$$
satisfies for all $t\geq t_1$,
\be\label{S}
\| w (t) \|_{H^1(\R)} +|\rho_2'(t)| +|\ga_2'(t) -c_\infty +\frac 14 v_\infty^2  |  \leq K_0\ve^{p_m},
\ee
where, for some $K>0$,
$$
|\rho_2(t_1) + v_\infty t_1- X_0 | + |\ga_2(t_1) - \ga_0 | \leq  K \ve^{p_m}.
$$
\end{prop}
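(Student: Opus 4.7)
The plan is to adapt the classical modulation-plus-Lyapunov-functional scheme (à la Weinstein / Grillakis–Shatah–Strauss / Martel–Merle) to the present variable-coefficient setting, exploiting the fact that for $t\geq \frac12 T_\ve$ the modulated soliton is centered far to the right, where $a_\ve\equiv 2$ up to an exponentially small error. The key structural ingredient is the favorable sign in (\ref{dPa}): since $v_\infty>0$ and $a'>0$, a suitable Lyapunov functional is monotone decreasing in time.

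First I would use the implicit function theorem to set up a modulated decomposition, valid in a neighborhood of $t_1$, of the form
\[
u(t,x) \;=\; \la_\infty Q_{c(t)}\!\bigl(x-\rho(t)\bigr)\,e^{\frac i2 v(t) x}\,e^{i\ga(t)}\,+\,w(t,x),
\]
with four orthogonality conditions on $w$ (against the generators of translation, phase, $L^2$-scaling and Galilean boost at $Q_{c(t)}$) and initial data $c(t_1)\approx c_\infty$, $v(t_1)\approx v_\infty$, $\rho(t_1)\approx X_0$, $\|w(t_1)\|_{H^1}\lesssim \ve^{p_m}$. Differentiating the orthogonality conditions produces the modulation ODEs for $c'(t),v'(t),\rho'(t)-v(t),\ga'(t)-c(t)+\tfrac14 v(t)^2$; they are quadratic in $w$ modulo a forcing term involving $(a_\ve(x)-2)Q_c^{m-1}$, which is super-polynomially small in $\ve$ because $\rho(t)\geq X_0 - K\geq v_0 t_1 - K \gtrsim \ve^{-1-1/100}$, so $a_\ve(\rho(t))-2 = O(e^{-\mu \ve^{-1/100}})$. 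We set this up as a bootstrap: assume $|c(t)-c_\infty|+|v(t)-v_\infty|\leq \ve^{p_m/2}$ and $\|w(t)\|_{H^1}\leq K\ve^{p_m}$ on a maximal interval, and aim at improving the constants.

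To close the bootstrap I would use the Lyapunov functional
\[
\mathcal F(t)\;:=\;E_a[u](t)\,+\,\Bigl(c_\infty-\tfrac14 v_\infty^2\Bigr)M[u](t)\,-\,v_\infty P[u](t),
\]
which, by mass and energy conservation combined with (\ref{dPa}), satisfies
\[
\partial_t\mathcal F(t)\;=\;-\,\frac{\ve v_\infty}{m+1}\int_\R a'(\ve x)\,|u|^{m+1}(t,x)\,dx\;\leq\;0.
\]
Expanding $\mathcal F$ around the modulated soliton $R(t)$ gives
$\mathcal F(t)=\mathcal F(R(t))+\langle\mathcal L_{R(t)}w(t),w(t)\rangle+O(\|w\|_{H^1}^3)+\mathcal E_\ve(t)$, where $\mathcal L_{R(t)}$ is the standard NLS linearized operator for the \emph{coefficient-$2$} equation at $\la_\infty Q_{c(t)}$, and the discrepancy term $\mathcal E_\ve(t)=\frac{1}{m+1}\int(2-a_\ve(x))(|u|^{m+1}-|R|^{m+1})$ is bounded by $e^{-\mu\ve^{-1/100}}$ throughout the soliton core because of (\ref{ahyp}). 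By Weinstein's coercivity theorem the orthogonality conditions yield $\langle\mathcal L_{R(t)}w,w\rangle\geq \kappa\|w\|_{H^1}^2$, uniformly in the bootstrap window for $c(t)$. Combining with $\mathcal F(t)\leq \mathcal F(t_1)$, with the expansion at $t=t_1$ (controlled by hypothesis (\ref{18})), and with the mass/energy identities used to lock $c(t)$ and $v(t)$ to $c_\infty,v_\infty$ up to $O(\|w\|_{H^1}^2)$, yields
$\kappa\|w(t)\|_{H^1}^2 \leq C\ve^{2p_m}+o(\|w\|^2)+e^{-\mu\ve^{-1/100}}$, which strictly improves the bootstrap and integrates to (\ref{S}) via the modulation ODEs.

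The main obstacle, as always in this scheme, will be the closing of the bootstrap on an unbounded time interval: the modulation parameters $c(t),v(t)$ must be kept in a tiny neighborhood of $(c_\infty,v_\infty)$ for all $t\geq t_1$ at once. Two ingredients make this work here and not, for instance, in the reflected case of Theorem A': first, $v_\infty>0$ gives the correct sign in $\partial_t\mathcal F\leq 0$; second, the soliton is already located in the asymptotic region $\{a_\ve\equiv 2+\text{exp-small}\}$ from time $t_1$ onward, so the variable-coefficient nature of (\ref{aKdV}) contributes only a harmless super-polynomial error to every identity we use, letting the standard constant-coefficient orbital stability machinery run to completion.
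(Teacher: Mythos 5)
Your strategy is the same as the paper's: modulation around the boosted soliton for $t\ge t_1$, the monotonicity (\ref{dPa}) turning the Weinstein functional $E_a+\lambda M-v_\infty P$ into a non-increasing quantity, coercivity of its Hessian under orthogonality conditions, locking of the modulation parameters through the conservation laws, and a bootstrap, all using that the soliton sits in the region where $a_\ve=2+O(e^{-\mu\ve^{-1/100}})$. There is, however, a concrete error in the central object. The multiplier of the mass must carry the sign $+\tfrac14 v_\infty^2$: the boosted soliton is a critical point of $E_a+\kappa\,(c_\infty+\tfrac14 v_\infty^2)M-v_\infty P$ (with $\kappa>0$ a normalization constant), as in (\ref{Ine1}), Lemma \ref{C2}, Lemma \ref{EE3} and the functional of Proposition \ref{STAAd}; you wrote $c_\infty-\tfrac14 v_\infty^2$, i.e.\ you conflated the Lagrange multiplier with the temporal frequency of the phase. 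With your choice the expansion you invoke is not valid: the first variation of your $\mathcal F$ at the modulated soliton is a nonzero multiple ($\sim -\tfrac12 v_\infty^2$) of $\re\int R\,\bar w$, which is \emph{not} annihilated by orthogonality to the generators of translation, phase, scaling ($\Lambda Q_c$) and boost, and the Hessian equals the standard linearized operator minus an order-one multiple of the identity, so Weinstein coercivity is lost ($v_\infty$ is a fixed, possibly large, number — no smallness saves this). The slip is repairable in one line, since your functional differs from the correct one by a constant multiple of the conserved mass, so the monotonicity and the difference $\mathcal F(t)-\mathcal F(t_1)$ are unchanged once you expand the correct functional; but as written the coercivity step fails.

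A secondary, structural remark: you modulate four parameters (including the velocity) with a boost orthogonality condition, whereas the paper modulates only $(c_2,\rho_2,\tilde\ga_2)$, keeps the velocity frozen at $v_\infty$, and chooses the scaling modulation so that $\re\int \tilde R\,\bar z=0$ (see (\ref{10a})), which makes the quadratic form of Lemma \ref{surL} coercive outright while the momentum monotonicity handles the boost direction. Your variant can be made to work, but then you must control the direction $\re\int R\,\bar w$ (it is not among your orthogonality conditions) via mass conservation, lock $v(t)$ — not only $c(t)$ — via energy conservation, and make quantitative the fact that the soliton part of the functional varies only quadratically in the parameter increments; this is exactly the content of Lemma \ref{C2} combined with the mass identity (\ref{Kc2}), and it should be spelled out rather than summarized as ``lock $c(t)$ and $v(t)$''.
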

\begin{proof}
For the proof, see Appendix \ref{B}.
\end{proof}

\noindent
{\bf End of proof of Theorem A.} 

We conclude in the following form: define $t_1 := T_\ve$, $X_0 := \rho_\ve$ and $\ga_0:=\ga_\ve$. From (\ref{INT41})-(\ref{INT42}) we have (\ref{18}) and therefore (\ref{S}). By renaming $\rho(t) :=\rho_2(t)$, $\ga(t) := \ga_2(t)$, we have that from (\ref{S}) and (\ref{cvinf}) we obtain (\ref{MT2}). The proof is now complete, provided Propositions \ref{Tm1}, \ref{T0} and \ref{Tp1} are valid.

\bigskip

\section{Proof of Proposition \ref{T0}}\label{4}

The proof of Proposition \ref{T0} is divided in four steps. In the first part, we introduce some basic notation. Next, in Step 2 we construct an approximate solution $\tilde u$ solving (\ref{aKdV}) up to second order in $\ve$ in the best cases. Then in Step 3 we prove that $\tilde u$ is close to an actual solution up to order $\ve^{p_m}$ in the whole interval $[-T_\ve, T_\ve]$. Finally, in Step 4 we conclude. 

\medskip

\noindent
{\bf Step 1. Preliminaries.}  
\subsection{Cauchy Problem} First we recall the local well-posedness theory for the Cauchy problem associated to (\ref{aKdV}).

Let  $u_0\in H^1(\R)$.  We consider the following initial value problem
\be\label{Cp1}
\begin{cases}
iu_t + u_{xx}  +  a_\ve (x)|u|^{m-1} u = 0 \quad \hbox{ in } \R_t \times \R_x, \quad  2\leq m<5, \\
u(t=0) =  u_0.
\end{cases}
\ee
Following \cite{Caz}, thanks to the subcritical character of the nonlinearity and the bounds on the potential, we have the following result.

\begin{lem}[Local and global well-posedness in $H^1(\R)$, see \cite{Caz}]\label{Cauchy}~

Suppose $u_0\in H^1(\R)$. Then there exist a unique solution $u\in C(\R, H^1(\R))$ of (\ref{Cp1}). Moreover, for any $t\in \R$ the mass $M[u](t)$ and the energy $E_a[u](t)$ from (\ref{Ea}) remain constant, and the momentum $P[u](t)$ defined in (\ref{P}) obeys (\ref{dPa}). The same result is valid for $L^2(\R)$ data.
\end{lem}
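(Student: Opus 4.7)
The plan is to derive this lemma as a straightforward adaptation of the classical $H^1$ theory for semilinear Schr\"odinger equations (as presented in Cazenave's book), the only new feature being the presence of the bounded multiplier $a_\ve(x)$ in front of the nonlinearity. First I would set up the local theory via the standard Duhamel formulation
\[
u(t) = e^{it\partial_x^2} u_0 + i \int_0^t e^{i(t-s)\partial_x^2} \bigl( a_\ve(\cdot)|u|^{m-1}u\bigr)(s)\,ds,
\]
and run a contraction mapping argument in the usual Strichartz space $C([-T,T];H^1(\R))\cap L^q([-T,T];W^{1,r}(\R))$ for an admissible pair $(q,r)$ adapted to the power $m$. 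Since $a_\ve \in L^\infty(\R)$ with $\|a_\ve\|_\infty \leq 2$ and $a_\ve$ is smooth with bounded derivatives by (\ref{ahyp}), the nonlinear map $u \mapsto a_\ve(x)|u|^{m-1}u$ is locally Lipschitz from $H^1$ to $H^1$ (the factor $a_\ve$ does not disturb any Sobolev estimate), and the subcritical range $2 \le m < 5$ ensures that the Strichartz contraction closes on a time interval depending only on $\|u_0\|_{H^1}$. The same scheme applied in $L^2$ with an $L^2$-admissible pair gives the $L^2$ part.

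Next I would establish the three conservation/evolution identities by a formal computation that can be justified by the usual density argument (work first with a regularized data/equation, pass to the limit using the continuous dependence in $H^1$). For the mass, multiplying (\ref{Cp1}) by $\bar u$, integrating over $\R$, and taking imaginary parts kills the real terms $\int |u_x|^2$ and $\int a_\ve|u|^{m+1}$, giving $\partial_t M[u](t)=0$. For the energy, pairing with $\bar u_t$ and taking real parts yields
\[
\partial_t E_a[u](t) = \operatorname{Re}\int_\R \bar u_t\bigl(-u_{xx}-a_\ve|u|^{m-1}u\bigr)\,dx = 0,
\]
once one notes that $a_\ve$ is time-independent so $\partial_t \int a_\ve |u|^{m+1}$ produces no extra term. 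For the momentum, pairing with $\bar u_x$ and taking imaginary parts gives, after an integration by parts on the nonlinear term,
\[
\partial_t P[u](t) = -\frac{1}{m+1}\int_\R \partial_x\bigl(a_\ve(x)\bigr)|u|^{m+1}\,dx = \frac{\ve}{m+1}\int_\R a'(\ve x)|u|^{m+1}\,dx,
\]
which is exactly (\ref{dPa}); the sign is nonnegative by (\ref{ahyp}).

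Finally I would promote the local solution to a global one by the usual a priori $H^1$ bound. Conservation of mass gives $\|u(t)\|_{L^2} = \|u_0\|_{L^2}$, and combining the conserved $E_a$ with the Gagliardo-Nirenberg inequality (\ref{GNe}) together with $\|a_\ve\|_\infty \le 2$ yields
\[
\tfrac12\|u_x(t)\|_{L^2}^2 \le E_a[u_0] + \tfrac{2}{m+1}\,K(m)\,\|u_0\|_{L^2}^{(m+3)/2}\|u_x(t)\|_{L^2}^{(m-1)/2};
\]
since $m<5$ the exponent $(m-1)/2<2$, so this inequality forces a global bound on $\|u_x(t)\|_{L^2}$ depending only on $\|u_0\|_{H^1}$ and iteration of the local theory yields the solution on all of $\R$. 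The main (and only) point requiring care is verifying that the Strichartz estimates and the contraction argument go through unchanged in the presence of $a_\ve(x)$; this is where boundedness and smoothness of $a_\ve$ enter, and it is a routine modification of the constant-coefficient argument in \cite{Caz}. No new idea is needed, hence the reference.
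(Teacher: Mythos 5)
Your proposal is correct and follows essentially the same route as the paper, which simply defers to the standard Picard-iteration/Strichartz theory of \cite{Caz} (local theory in $H^1$ and $L^2$, conservation laws by regularization, globalization via mass/energy and Gagliardo--Nirenberg in the subcritical range), the bounded smooth factor $a_\ve$ requiring no new argument. Only a harmless sign slip in your intermediate momentum identity: the computation gives $\partial_t P[u](t)=+\frac{1}{m+1}\int_\R \partial_x\bigl(a_\ve(x)\bigr)|u|^{m+1}\,dx=\frac{\ve}{m+1}\int_\R a'(\ve x)|u|^{m+1}\,dx$, consistent with (\ref{dPa}).
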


\begin{proof}
The proof is standard, and it is based in a Picard iteration procedure. For the proof see Example 3.2.11, Theorem 4.3.1, Corollary 4.3.3 and Corollary 6.1.2 in \cite{Caz}.
\end{proof}
 
We will also need some properties of the corresponding linearized operator of (\ref{aKdV}). For the proofs, see e.g. \cite{MMcol1}.

\subsection{Spectral properties of the linear NLS operator}

Fix $c>0$,  $m=2,3$ or 4, and let
\be\label{defLy}
    \mathcal{L}_+ w(y) := - w_{yy} +  cw - m Q_c^{m-1}(y) w, \quad\hbox{ and }\quad  \mathcal{L}_- w(y) := - w_{yy} +  cw -  Q_c^{m-1}(y) w;
\ee
where $w=w(y)$.  Then one has

\begin{lem}[Spectral properties of $\mathcal{L}_{\pm}$, see \cite{MMcol2}]\label{surL}~

The operators $\mathcal{L}_{\pm}$ defined (on $L^2(\R)$) by \eqref{defLy} have as domain of definition the space $H^2(\R)$. In addition, they are self-adjoint and satisfy the following properties:
\ben
\item \emph{First eigenvalue}. There exist a unique $\lambda_m>0$ such that  $\mathcal{L}_+  Q_c^{\frac {m+1}2} =-\lambda_m Q_c^{\frac {m+1}2} $. %Indeed, $\la_m := \frac 14(m+3)(m-1)$.

\smallskip

\item The kernel of $\mathcal{L}_+$ and $\mathcal L_-$ is spawned by $Q_c'$ and $Q_c$ respectively. Moreover,
\be\label{LaQc}
\Lambda Q_c := \partial_{c'} {Q_{c'}}\big|_{ c'=c} = \frac 1c \Big[\frac 1{m-1} Q_c + \frac 12 xQ'_c \Big],
\ee
satisfies $\mathcal{L}_+ (\Lambda Q_c)=- Q_c$. Finally, the continuous spectrum of $\mathcal L_\pm$ is given by $\sigma_{cont}(\mathcal L_\pm) =[c,+\infty)$.

\smallskip

\item \emph{Inverse}. For all   $h=h(y) \in L^2(\R)$ such that $\int_\R h Q'=0$ (resp. $\int_\R h Q=0$), there exists a unique $ h_+ \in H^2(\R)$ (resp. $h_-\in H^2(\R)$)  such that $\int_\R h_+Q'=0$ (resp. $\int_\R h_- Q =0$) and $\mathcal{L}_+ h_+ = h$ (resp $\mathcal{L}_- h_- = h$). Moreover, if $h$ is even (resp. odd), then $h_{\pm}$ is even (resp. odd).

\smallskip

\item \emph{Regularity in the Schwartz space $\mathcal S$}. For $h\in H^2(\R)$,  $\mathcal{L}_\pm h \in \mathcal{S}$ implies $h\in \mathcal{S}$.

\smallskip

\item\label{6a} \emph{Coercivity}.  There exists $\nu_0>0$ such that the following is satisfied.

\ben
\item For $w=w(y)\in H^1(\R)$, define
$$
\mathcal B[w,w] := % \frac 12 \int_\R\re w  \mathcal L_+ (\re w) + \frac 12 \int_\R\ima w  \mathcal L_-( \ima w)  
 \frac 12\int_\R (|w_y|^2+ |w|^2 - Q_c^{m-1} |w|^2 - (m-1) Q_c^{m-1} (\re w)^2) 
$$
Suppose that $\ima \int_\R \bar w Q_c = \re \int_\R \bar w Q_c' =0 $. Then one has 
$$
\mathcal B[w,w] \geq \nu_0 \int_\R |w|^2  -K\abs{\re \int_\R \bar w Q_c }^2.
$$
for some $ K>0$.

\smallskip

\item Suppose now that for $v\neq 0$, and $\theta\in \R$ one has  
$$
 \re \int_\R  \bar w Q_c' e^{iyv/2}e^{i\theta} = \ima \int_\R \bar w Q_c e^{iyv/2}e^{i\theta} =0.
 $$
Then
$$
\tilde{\mathcal B}[w,w] \geq \nu_0 \int_\R |w|^2 -K\abs{\re \int_\R \bar w Q_ce^{iyv/2}e^{i\theta}}^2,
$$
where $\tilde{\mathcal B}[w,w] :={\mathcal B}[w e^{iyv/2}e^{i\theta} ,  w e^{iyv/2}e^{i\theta}]$.
\een
 
\een
\end{lem}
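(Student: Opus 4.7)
My plan is to treat items (1)--(4) as consequences of standard ODE and spectral bookkeeping, and to reserve the real work for the coercivity statement (5). Self-adjointness on $H^2(\mathbb{R})$ is clear since $\mathcal L_\pm$ are bounded perturbations of the free operator $-\partial_y^2+c$; as $Q_c^{m-1}(y)$ decays exponentially, Weyl's theorem locates the essential spectrum at $[c,+\infty)$.

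For the algebraic identities I would proceed as follows. Differentiating the soliton equation $-Q_c''+cQ_c-Q_c^m=0$ in $y$ gives $\mathcal L_+ Q_c'=0$, a direct substitution gives $\mathcal L_- Q_c = 0$, and differentiating the same equation in $c$ yields $\mathcal L_+(\Lambda Q_c)=-Q_c$. For the first eigenvalue of $\mathcal L_+$ I would compute $\mathcal L_+(Q_c^{(m+1)/2})$ pointwise via the chain rule, substituting both $Q_c''=cQ_c-Q_c^m$ and the Pohozaev-type identity $(Q_c')^2 = cQ_c^2-\tfrac{2}{m+1}Q_c^{m+1}$ (obtained by multiplying the ODE by $Q_c'$ and using decay at $\infty$); all the nonlinear contributions cancel and one is left with $\mathcal L_+(Q_c^{(m+1)/2}) = -\tfrac{c(m-1)(m+3)}{4}\,Q_c^{(m+1)/2}$. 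Sturm oscillation then identifies this positive function as the unique simple ground state, and applied to the positive $Q_c$ identifies it as the simple ground state of $\mathcal L_-$ with eigenvalue $0$. Item (3) is the Fredholm alternative for the self-adjoint $\mathcal L_\pm$ with one-dimensional kernel, with parity-preservation inherited from the commutation of $\mathcal L_\pm$ with $y\mapsto -y$. Item (4) is an elliptic bootstrap: rewriting $h_{yy}=ch-mQ_c^{m-1}h-\mathcal L_+h$ (and analogously for $\mathcal L_-$), one iterates, using the Schwartz character of $Q_c$ to promote regularity and pointwise decay to all orders.

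The hard part will be the coercivity (5). Writing $w=w_1+iw_2$, the quadratic form decouples as
\[
\mathcal B[w,w] \;=\; \tfrac12\langle\mathcal L_+ w_1,w_1\rangle + \tfrac12\langle\mathcal L_- w_2,w_2\rangle.
\]
For the $\mathcal L_-$ piece, $Q_c>0$ is the ground state with eigenvalue $0$ and a spectral gap $\nu_0>0$ sits above it, so the hypothesis $\ima\int\bar wQ_c = 0$ (equivalent to $w_2\perp Q_c$) immediately yields $\langle\mathcal L_- w_2,w_2\rangle\geq\nu_0\|w_2\|_{L^2}^2$. For the $\mathcal L_+$ piece, which carries one negative eigenvalue (along $Q_c^{(m+1)/2}$) and a zero eigenvalue (along $Q_c'$, already neutralised by $\re\int\bar wQ_c' = 0$), I would use the classical Weinstein decomposition $w_1 = \beta\,\Lambda Q_c + \tilde w_1$ with $\beta$ chosen so that $\tilde w_1 \perp Q_c$. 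This is well-defined because $\langle\Lambda Q_c,Q_c\rangle = \tfrac12\partial_c\|Q_c\|_{L^2}^2$ is nonzero precisely when $m<5$, the subcritical regime assumed throughout; moreover $\tilde w_1\perp Q_c'$ follows for free since $\Lambda Q_c$ is even and $Q_c'$ odd. Expanding $\langle\mathcal L_+ w_1,w_1\rangle$, the cross term cancels using $\mathcal L_+\Lambda Q_c=-Q_c$ together with $\tilde w_1\perp Q_c$, the diagonal $\langle\mathcal L_+\tilde w_1,\tilde w_1\rangle$ is bounded below by $\nu_0\|\tilde w_1\|_{H^1}^2$ by the standard Weinstein positivity on $\{Q_c,Q_c'\}^\perp$, and the sign-indefinite remainder is controlled by $K\beta^2 \leq K'|\re\int\bar wQ_c|^2$, giving exactly the stated inequality. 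Part (b) is then immediate from (a) by the unitary gauge substitution $w\mapsto w\,e^{iyv/2}e^{i\theta}$: the form $\tilde{\mathcal B}$ is by construction $\mathcal B$ evaluated on the gauged function, and the orthogonality conditions in (b) transport to those in (a).
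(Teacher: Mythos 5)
Your argument is correct and is essentially the proof the paper has in mind: the paper offers no proof of this lemma, deferring to the cited references (Martel--Merle, Weinstein), and your sketch --- the ODE identities for the kernels and for $\Lambda Q_c$, the explicit ground state $Q_c^{(m+1)/2}$ with $\lambda_m=\frac{c(m-1)(m+3)}{4}$, Weyl/Sturm facts for the spectrum, the Fredholm alternative with parity, and the $\Lambda Q_c$-decomposition combined with the subcriticality condition $\partial_c\int_\R Q_c^2>0$ for the coercivity --- is exactly that standard route. The only friction is cosmetic and sits in the paper's statement rather than in your proof: the mass term in $\mathcal B$ should carry the coefficient $c$ for the decoupling $\mathcal B[w,w]=\frac12\langle\mathcal L_+w_1,w_1\rangle+\frac12\langle\mathcal L_-w_2,w_2\rangle$ to hold literally, and in part (b) the phase $e^{iyv/2}e^{i\theta}$ appearing in the hypotheses matches the one in the definition of $\tilde{\mathcal B}$ only after replacing $(v,\theta)$ by $(-v,-\theta)$, which is harmless since both are arbitrary.
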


\medskip

We finish this paragraph with a last definition. We denote by $\mathcal{Y}$ the set of $C^\infty$ functions $f$ such that for all $j\in \N$ there exist $K_j,r_j>0$ such that for all $x\in \R$ we have
\be\label{Y}
|f^{(j)}(x)|\leq K_j (1+|x|)^{r_j} e^{-\frac 12|x|}.
\ee
Recall that $Q_c$ is a function in $\mathcal Y$, for $c\geq \frac 14.$

\bigskip

\noindent
{\bf Step 2. Construction of the approximate solution}.

\medskip

\noindent
We look for $\tilde u(t,x)$, an approximate solution for (\ref{gKdV0}),  carrying out a specific structure. We want  $\tilde u$ as a suitable modulation of a solitary wave, solution of the NLS equation
\be\label{orig}
iu_t +u_{xx}  + |u|^{m-1}u =0,
\ee 
plus some extra terms, of small order in $\ve$. 
Indeed, for $t\in [-T_\ve, T_\ve]$, let
$$
 \rho( t), \ga(t) \in \R, \quad c(t), v(t) >0,
$$
to be fixed later. Consider
\be\label{defALPHA}
    y:=x-  \rho(t),  \quad \hbox{and} \quad     \tilde R(t,x): = \frac{Q_{c(t)}(y)}{\tilde a(\ve \rho(t))}e^{i\Theta(t,x)},
\ee
where 
\be\label{param0}
\tilde a := a^{\frac 1{m-1}}, \quad \Theta (t,x) := \int_0^t c(s) ds  + \frac 12 v(t)x -\frac 14 \int_0^t v^2(s) ds + \ga(t).  
\ee
In addition, we will search for \emph{bounded} parameters $(c,v,\rho,\ga)$ satisfying the following constraints:
\be\label{aprio}
\frac 12 \leq c(t)\leq 2^{5}, \quad \frac 12 v_0 \leq v(t)\leq v_0 + 2^5,\quad |\rho'(t) -v(t)| \leq  \frac{v_0}{100},  \quad \ga(t) \in \R.
\ee
By now we only need these hypotheses. Later we will construct a quadruplet $(c,v,\ga,\rho)$ with better estimates, see (\ref{Imp1})-(\ref{Imp2}).

\smallskip

On the other hand, the form of the ansatz $\tilde u(t,x)$ is the sum of the soliton plus a small correction term:
\begin{equation}\label{defv} 
    \tilde u(t,x) := \tilde R(t,x)+ w(t,x),
\end{equation}
where the correction term depends on the nonlinearity we consider:
\be\label{defW}
w(t,x):=\begin{cases}
\displaystyle{ \ve (A_{1,c} (t, y) + i  B_{1,c}(t, y) ) e^{i\Theta}}, \quad \hbox{ in the case $2\leq m< 3$},\\
\displaystyle{ \sum_{k=1,2} \ve^k (A_{k,c} (t, y) + i  B_{k,c}(t, y) ) e^{i\Theta}},  \quad \hbox{ for the case $3\leq m< 5$}, 
\end{cases}
\ee
where  $A_{k,c}(t, y) := A_k (t, \sqrt{c(t)} y) $, and $A_k, B_k$ are unknown real valued functions to be determined. 

\smallskip

Let us be more precise. Given $k=1$ (for $m< 3$), or $k=1$ or $2$ for $m\geq 3$, we will search for functions $(A_{k,c}(t, y), B_{k,c}(t, y))$ such that for all $t\in [-T_\ve, T_\ve]$ and for some fixed constants $K,\mu>0$,
\be\label{IP}
\|A_{k,c}(t, \cdot) \|_{L^\infty(\R)} + \|B_{k,c}(t, \cdot)\|_{L^\infty(\R)} \leq K e^{-\mu\ve |\rho(t)|}, \qquad A_{k,c}(t, \cdot ), B_{k,c}(t, \cdot )\in \mathcal Y.
\ee

We want to measure the size of the error produced by inserting $\tilde u$ as defined in (\ref{defW}) in the equation (\ref{gKdV0}). For this purpose, let 
\be\label{2.2bis}
S[\tilde u](t,x) := i\tilde u_t + \tilde u_{xx} + a_\ve(x) |\tilde u|^{m-1} \tilde u.
\ee

The next result gives the error associated to such an approximated solution. 

\begin{prop}[Decomposition of $S{[}\tilde u{]}$]\label{prop:decomp}~

Let $\Lambda A_{c} := \partial_c A_{c}$. For every $t\in [-T_\ve, T_\ve]$, one has the following nonlinear decomposition of the error term $S[\tilde u]$.
\ben
\item \emph{Case $2\leq m< 3$}. 
\bea\label{Sm}
S[\tilde u](t,x) & : = & \mathcal F_0(t, y) e^{i\Theta} + \tilde S[\tilde u](t,x) \nonu \\
& := &   \Big[ \mathcal F_0(t, y) +  \ve \mathcal F_1(t,y) +  \ve^2 \mathcal F_2(t, y)  + \ve^3 f(t) \mathcal F_c(y) \Big] e^{i\Theta},
\eea
where 
\bea\label{F0}
\mathcal F_0(t, y)&  := & - \frac 12( v'(t) - \ve f_1(t) )y\tilde u + i( c'(t)- \ve f_2(t) ) \partial_c \tilde u \nonumber \\
& & \quad   - ( \ga'(t) + \frac 12 v'(t)\rho (t) ) \tilde u +  i(\rho'(t) -v(t))\partial_\rho \tilde u,
\eea
with $f_1(t)=f_1(c(t),\rho(t))$ and $ f_2(t) =f_2(c(t),v(t),\rho(t))$ given by
\be\label{f1f2}
 f_1(t) :=\frac{8 a'(\ve \rho(t)) c(t)}{(m+3)a(\ve \rho( t))},  \qquad f_2(t)  :=\frac{4 a'(\ve \rho( t)) c(t) v(t)}{(5-m)a(\ve \rho(t))}; 
\ee
\be\label{F1}
\mathcal F_1(t, y)  :=  F_1(t, y) + i G_1(t, y)  - \big[  \mathcal L_+ (A_{1,c})  + i \mathcal L_- (B_{1,c}) \big],
\ee
and
\bea
& & F_1(t, y) :=  \frac{a' (\ve \rho( t))}{\tilde a^m (\ve \rho(t))}yQ_c(y)\big[ Q_c^{m-1}(y) -\frac {4c(t)}{m+3}\big], \label{F1G1} \\
& &  G_1(t, y) := \frac{a' (\ve \rho( t)) v(t) }{\tilde a^m(\ve \rho( t))}\big[ \frac{4c(t)}{5-m}\Lambda Q_c (y)-\frac{1}{m-1}Q_c(y)\big].\label{G1F1}
\eea
Furthermore, suppose that $(A_{1,c}, B_{1,c})$ satisfy (\ref{IP}). Then
\be\label{tSH2}
\|\ve^2 (\mathcal F_2(t, \cdot)+ \ve f(t) \mathcal F_c )e^{i\Theta} \|_{H^1(\R)}\leq K\ve^2(e^{-\ve\mu |\rho(t)|} +\ve),
\ee
uniformly in time.
\medskip

\item \emph{Case $3\leq m<5$}. Define $\partial_\rho \tilde u := \partial_\rho \tilde R - w_y$. Here one has the improved decomposition 
\bea\label{S2}
S[\tilde u](t,x)  &  :=  &  \mathcal F_0(t, y)  e^{i\Theta} + \tilde S[\tilde u](t,x) \nonu \\
& := &  \big[ \mathcal F_0(t, y) +  \ve \mathcal F_1(t,y) +  \ve^2 \mathcal F_2(t, y) + \ve^3 \mathcal F_3(t, y)  +   \ve^4 f(t) \mathcal F_c(y) \big] e^{i\Theta},
\eea
with $\mathcal F_0$ given now by 
\bea\label{F0mod}
\mathcal F_0(t, y)&  := & - \frac 12(v'(t) - \ve f_1(t) ) y\tilde u + i( c'(t)- \ve f_2(t)) \partial_c \tilde u \nonumber \\
& & \quad   - (\ga'(t) + \frac 12 v'(t)\rho (t) -\ve^2 f_3(t)) \tilde u +  i(\rho'(t) -v(t) -\ve^2 f_4(t))\partial_\rho \tilde u,
\eea 
with $f_1, f_2$ as in (\ref{f1f2}), and for $\al_{(\cdot)}$, $\beta_{(\cdot)} \in \R$, 
\be\label{f3}
\quad f_3(t) = f_3(c(t), v(t),\rho(t)) := ( \al_{I}  + \al_{II} \frac{v^2(t)}{c(t)} ) \frac{a'' }{a}(\ve \rho(t))  + (  \al_{III}  + \al_{IV} \frac{v^2(t)}{c(t)} )\frac{a'^2}{a^2}(\ve \rho(t)) ,% \quad \al_k := \frac 1{2\theta M[Q]} \int_\R \Lambda Q F_2^k,
\ee
and
\be\label{f4}
f_4(t)= f_4(c(t),v(t),\rho(t)) := \big\{ \beta_I \frac{a'' }{a}(\ve \rho(t)) +\beta_{II} \frac{a'^2 }{a^2 }(\ve \rho(t))\big\} \frac{v(t)}{c(t)}. %\quad \beta_k := -\frac 1{M[Q]} \int_\R yQ G_2^k,
\ee
In addition,
\be\label{F12}
\mathcal F_k(t, y)  :=  F_k(t, y) + i G_k(t, y)  - \big[  \mathcal L_+ (A_{k,c})  + i \mathcal L_- (B_{k,c}) \big],  \quad k=1,2;
\ee
with $F_1$, $G_1$ given by (\ref{F1G1})-(\ref{G1F1}), and
\bea\label{F2}
F_2 & :=& \frac{a''}{2 \tilde a^{m}  } y^2 Q_c^m + m\frac{a' }{a}Q_c^{m-1}y A_{1,c} - \frac 12 f_1yA_{1,c} - \frac 1\ve ( B_{1,c})_t - f_2 \Lambda B_{1,c} \nonu\\
& & \qquad +\frac 12 (m-1)\tilde a Q_c^{m-2} (mA_{1,c}^2 +B_{1,c}^2 )  - \frac{f_3(t)}{\tilde a} Q_c ,
%& & \qquad + a^{1/2}Q_c (3A_{1,c}^2 +B_{1,c}^2 ).
\eea
and   
\bea\label{G2}
G_2 & :=  & \frac 1\ve (A_{1,c})_t + f_2 \Lambda A_{1,c}  +\frac{a' }{a}Q_c^{m-1} y B_{1,c} - \frac 12 f_1 y B_{1,c} \nonu \\
& & \qquad  + (m-1) \tilde a  Q_c^{m-2}  A_{1,c}B_{1,c}  -  \frac{f_4(t)}{\tilde a} Q_c'  ; %+ 2  a^{1/2}Q_c  A_{1,c}B_{1,c};
\eea
Moreover, suppose that $(A_{k,c}, B_{k,c})$ satisfy (\ref{IP}) for $k=1$ and $2$. Then 
\be\label{4p5}
\|\ve^3 (\mathcal F_3(t, \cdot)  + \ve f(t) \mathcal F_c) e^{i\Theta}\|_{H^1(\R)}\leq K\ve^3 (e^{-\ve\mu |\rho(t)|} +  \ve),
\ee
uniformly in time.
\een

\end{prop}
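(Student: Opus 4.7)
The approach is a direct Taylor expansion of $S[\tilde u]$ in powers of $\ve$, organised so that all the modulation freedom is absorbed into $\mathcal F_0$ and the rest of the error is sorted by order. The plan is: (i) compute $i\tilde u_t$ and $\tilde u_{xx}$ on the ansatz (\ref{defv})--(\ref{defW}); (ii) Taylor-expand the potential $a_\ve(x) = a(\ve\rho + \ve y)$ around $\ve\rho$ and the nonlinear term $|\tilde u|^{m-1}\tilde u$ around $R_0 := Q_c/\tilde a(\ve\rho)$; (iii) collect terms by powers of $\ve$, using the soliton equation (\ref{soliton}) to eliminate the zero-th order, and choosing $f_1,f_2$ (and later $f_3,f_4$) so that the resulting remainders $\mathcal F_k$ lie in the range of $\mathcal L_\pm$. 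This is the same framework used in \cite{Mu2,MMcol1,MMMcol}; what is new is only the bookkeeping, since the NLS phase $\Theta$ couples real and imaginary parts.

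First I would carry out the derivatives. Writing $y = x - \rho(t)$, the time derivative $i\tilde u_t$ generates (after factoring $e^{i\Theta}$) exactly the modulation skeleton $-\frac 12 v'(t) y \tilde u + i c'(t) \partial_c \tilde u - (\ga'(t) + \frac12 v'(t)\rho(t))\tilde u + i(\rho'(t) - v(t)) \partial_\rho \tilde u$, plus terms $\ve^k (A_{k,c})_t e^{i\Theta}$, $\ve^k (B_{k,c})_t e^{i\Theta}$ of size $O(\ve^{k+1})$ once one uses the a priori modulation law $c' = \ve f_2 + O(\ve^2)$, $\rho' = v + O(\ve^2)$. The spatial derivative is handled by the phase-stripping identity $(g e^{i\Theta})_{xx} = (g_{yy} + i v g_y - \tfrac14 v^2 g) e^{i\Theta}$. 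Combining $i\tilde u_t$ and $\tilde u_{xx}$, the $c(t)$-contribution from $\partial_t \Theta$ and the $v^2/4$ from the phase merge to give the standard linear NLS part $-g_{yy} + c g$ acting on $R_0, A_k, iB_k$.

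For the nonlinear term I would expand $|R_0+\ve A_1 + i\ve B_1 + \cdots|^{m-1}(R_0+\ve A_1 + i\ve B_1 + \cdots)$ to first order in $\ve$ for $m<3$, and to second order for $m\geq 3$; the real-part linearisation yields $m Q_c^{m-1}A_1/\tilde a^{m-1}(\ve\rho)$ (hence $-\mathcal L_+ A_{1,c}$ after absorbing the common factor $1/\tilde a(\ve\rho)$), while the imaginary-part linearisation yields only $Q_c^{m-1}B_1/\tilde a^{m-1}(\ve\rho)$ (giving $-\mathcal L_- B_{1,c}$). The Taylor expansion of $a_\ve(x)$ around $\ve\rho$ contributes the polynomial-in-$y$ source terms: at zero-th order one gets $Q_c^m/\tilde a(\ve\rho)$, which together with the NLS linear part collapses by (\ref{soliton}) to $0$; at first order in $\ve$ one gets $\ve y a'(\ve\rho) Q_c^m/\tilde a^m(\ve\rho) e^{i\Theta}$, whose real and imaginary components furnish precisely $F_1$ and $G_1$ in (\ref{F1G1})--(\ref{G1F1}) after sliding the phase factor through, provided $f_1,f_2$ are chosen so that $\int F_1 Q_c' = \int G_1 Q_c = 0$, the Fredholm conditions for $\mathcal L_+ A_{1,c} = F_1$, $\mathcal L_- B_{1,c} = G_1$. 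A short computation gives (\ref{f1f2}). For $m\geq 3$ one repeats this one order higher: $a''y^2/2$, cross terms $a' y A_1, a' y B_1$, time-derivatives $(A_1)_t/\ve$ and $(B_1)_t/\ve$ reduced via the chain rule, and the quadratic self-interactions $(m-1)\tilde a Q_c^{m-2}(mA_1^2+B_1^2)/2$ and $(m-1)\tilde a Q_c^{m-2}A_1 B_1$ assemble into $F_2, G_2$ as in (\ref{F2})--(\ref{G2}); the corrections $\ve^2 f_3, \ve^2 f_4$ placed into $\mathcal F_0$ are then chosen exactly so that the new Fredholm conditions $\int F_2 Q_c' = \int G_2 Q_c = 0$ hold, which yields (\ref{f3})--(\ref{f4}).

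The remainder bounds (\ref{tSH2}) and (\ref{4p5}) follow because every higher-order Taylor term is a polynomial in $y$ times $Q_c^{m-j}$ (hence in $\mathcal Y$, giving exponential decay in $y$ uniformly in $c\in[\tfrac12,2^5]$), multiplied either by an extra $\ve$ or by a factor $a^{(k)}(\ve\rho)$ bounded by $K e^{-\mu\ve|\rho(t)|}$ thanks to (\ref{ahyp}); the $H^1$ norm of such a product is uniformly $O(e^{-\mu\ve|\rho|} + \ve)$, and after multiplication by $\ve^2$ (resp. $\ve^3$) one recovers the claimed estimate. The main obstacle I expect is not any single calculation but the bookkeeping of cross terms in the case $m\geq 3$: one must track the real/imaginary split through every Taylor level and verify that the specific coefficients appearing in (\ref{f3})--(\ref{f4}) are indeed what the Fredholm conditions on $\mathcal F_2$ demand. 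This is where the structural choice of placing $f_3, f_4$ into $\mathcal F_0$ rather than into a correction equation becomes essential, since otherwise one cannot solve for $(A_{2,c}, B_{2,c})$ in the exponentially decaying class (\ref{IP}) required in the subsequent steps.
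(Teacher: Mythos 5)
Your proposal is correct and follows essentially the same route as the paper's Appendix C: a direct computation of $i\tilde u_t+\tilde u_{xx}+a_\ve|\tilde u|^{m-1}\tilde u$ on the ansatz, Taylor expansion of $a(\ve x)$ about $\ve\rho(t)$ and of the nonlinearity about $Q_c/\tilde a(\ve\rho)$, collection by powers of $\ve$ into $\mathcal F_0,\mathcal F_1,\mathcal F_2$ with the zeroth order killed by the soliton equation, and remainder bounds from the exponential decay of $\mathcal Y$-functions and of $a^{(k)}(\ve\rho)$ (the paper merely organizes the same computation as $S[\tilde R]+\mathcal L[w]+\tilde N[w]$ in three claims). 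Note only that in the paper the choice of $f_1,\dots,f_4$ via the Fredholm conditions belongs to the later Lemmas \ref{lem:omega1} and \ref{lem:omega2}, while Proposition \ref{prop:decomp} itself is just the algebraic rearrangement with these functions as given, so that part of your discussion is extra rather than needed here.
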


\begin{proof}
See Appendix \ref{CDE}.
\end{proof}

\begin{rem}[Important notation]
In what follows, and in order to simplify the notation, we will assume that decomposition (\ref{S2}) is valid for all $2\leq m<5$, with the obvious modifications; in particular, we assume
$f_3 \equiv f_4 \equiv 0$ in (\ref{f3})-(\ref{f4}), for the case $2\leq m<3$. This simplification will be useful in Proposition \ref{prop:I} below, where a stability results is proved.
\end{rem}

From (\ref{Sm})-(\ref{S2}) we see that in order to improve the accuracy of $\tilde u$ as a solution of (\ref{aKdV}), we have to eliminate some terms $\mathcal F_k$.  The next subsection is devoted to the proof of the following assertion: we can choose dynamical parameters $(c,v,\rho, \ga)$ in the interval $[-T_\ve, T_\ve]$ in such a way that $\mathcal F_0(t, \cdot) \sim 0$.

\subsection{Existence for a simplified dynamical system}

Our first result concerns the existence of solutions of the differential system involving the evolution of velocity, scaling and phase parameters. This system is given by imposing the condition $\mathcal F_0 \equiv 0$.

\medskip

We we are able to prove existence and long time behavior for an approximated differential system given by $\mathcal F_0 \equiv 0$. Indeed, 

\begin{lem}[Existence of approximated dynamical parameters, case $2\leq m<5$]\label{ODE}~

Let $v_0>0, \la_0, a(s) $ be as in Theorem A and  (\ref{ahyp}). There exists a unique solution $(C,V,P,G)$ defined for all $t\geq -T_\ve$ with the same regularity than $a(\ve \cdot)$, of the following nonlinear system of differential equations (cf. (\ref{F0}))
\be\label{c}
\begin{cases}
\displaystyle{V'(t) = \ve f_1(C(t),U(t))}, &  V(-T_\ve) = v_0, \\
\displaystyle{C'(t) = \ve f_2(C(t),V(t), U(t)), } &  C(-T_\ve) =1, \\
\displaystyle{U'(t) = V(t),} &  U(-T_\ve) = -v_0 T_\ve, \\
\displaystyle{H'(t) =  - \frac 12V'(t) U(t),} & H(-T_\ve) = 0.
\end{cases}
\ee
In addition, for all $t\in [-T_\ve, T_\ve]$,
\ben
\item $C(t)$ is strictly increasing with $1 \leq C(t) \leq C(T_\ve)$ and $ C(T_\ve) =  c_\infty + O(\ve^{10})  =2^{\frac 4{5-m}} + O(\ve^{10})$.
\item $V(t)$ is strictly increasing with $v_0\leq V(t) \leq V(T_\ve)$, and where 
$$
V(T_\ve) =  v_\infty + O(\ve^{10})= (v_0^2 + 4\la_0 (c_\infty-1))^{1/2}+ O(\ve^{10}),
$$
$$
v_0T_\ve \leq  U(T_\ve) \leq (2v_\infty -v_0)T_\ve.
$$
\een
\end{lem}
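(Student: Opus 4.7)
The plan is to exploit two hidden conservation laws in the system \eqref{c} and reduce the analysis of the endpoint values to the known behavior of $a$ at $\pm\infty$. Local existence and uniqueness are immediate: since $f_1,f_2$ inherit $C^3$ regularity from $a$ via \eqref{ahyp}, Cauchy--Lipschitz produces a unique maximal solution $(C,V,U,H)$ on a neighborhood of $-T_\ve$. What remains is to upgrade this to a global solution, establish monotonicity, and read off the asymptotic values at $T_\ve$.

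First I would derive the conservation laws. Using $U'=V$ one computes $\tfrac{d}{dt}\log a(\ve U) = \ve a'(\ve U)V/a(\ve U)$, and dividing the equation for $C$ by $C$ reveals $(\log C)' = \tfrac{4}{5-m}(\log a(\ve U))'$; integrating from $-T_\ve$ and using $C(-T_\ve)=1$ yields the closed form
\[
C(t) = \Bigl(\frac{a(\ve U(t))}{a(\ve U(-T_\ve))}\Bigr)^{\!\!4/(5-m)}.
\]
A second comparison between $(V^2)' = 2VV'$ and $C'$ shows that $(V^2)'  = 4\la_0\, C'$, hence
\[
V(t)^2 = v_0^2 + 4\la_0\bigl(C(t)-1\bigr).
\]
Since $1<a<2$, the first identity bounds $C$ uniformly, the second bounds $V$, and therefore $U'=V$ is bounded; no finite-time blow-up is possible, and the solution extends to all of $\R$ with the same regularity as $a(\ve\,\cdot)$.

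Strict monotonicity of $C$ and $V$ follows immediately from $a'>0$ together with $C,V>0$, which force $C'=\ve f_2>0$ and $V'=\ve f_1>0$ on the whole interval. For the endpoint asymptotics I would use the defining relation $T_\ve = \ve^{-1-1/100}/v_0$: at $t=-T_\ve$ one has $\ve U = -\ve^{-1/100}$, and the exponential decay $|a(r)-1|\leq Ke^{\mu r}$ for $r\leq 0$ gives $a(\ve U(-T_\ve)) = 1 + O(e^{-\mu\ve^{-1/100}}) = 1 + O(\ve^{10})$. The lower bound $V\geq v_0$ then forces $U(T_\ve) \geq -v_0 T_\ve + 2v_0 T_\ve = v_0 T_\ve$, so $\ve U(T_\ve) \geq \ve^{-1/100}$, and the symmetric decay $|2-a(r)|\leq Ke^{-\mu r}$ for $r\geq 0$ yields $a(\ve U(T_\ve)) = 2 + O(\ve^{10})$. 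Plugging these values back into the closed form gives $C(T_\ve) = 2^{4/(5-m)} + O(\ve^{10}) = c_\infty + O(\ve^{10})$, and the second conservation law then yields $V(T_\ve)^2 = v_0^2 + 4\la_0(c_\infty-1) + O(\ve^{10}) = v_\infty^2 + O(\ve^{10})$. The upper bound $U(T_\ve) \leq (2v_\infty-v_0)T_\ve$ comes from integrating $V\leq V(T_\ve)$ on $[-T_\ve,T_\ve]$, with the $O(\ve^{10})T_\ve$ error absorbed harmlessly.

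I do not anticipate a serious obstacle: the two conservation laws do all the heavy lifting, and the only delicate point is propagating the exponentially small error $a(\ve U(-T_\ve))-1$ through the exponent $4/(5-m)$, which is harmless precisely because $\ve U(\pm T_\ve)$ diverges like $\ve^{-1/100}$ as $\ve\to 0$. The component $H$ plays no role in the asymptotics and is recovered by a direct quadrature once $V$ and $U$ are known.
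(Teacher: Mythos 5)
Your proposal is correct and follows essentially the same route as the paper: integrating $C'/C=\frac{4}{5-m}\,\frac{d}{dt}\log a(\ve U)$ to get the closed form for $C$, deducing $V^2=v_0^2+4\la_0(C-1)$, and then reading off the endpoint values from the exponential decay of $a$ at $\pm\infty$, exactly as in the paper's proof of Lemma \ref{ODE}. The only cosmetic difference is at the bound $U(T_\ve)\leq (2v_\infty-v_0)T_\ve$, where instead of absorbing an $O(\ve^{10})T_\ve$ error one can simply note that the closed form gives $C(t)<c_\infty$ hence $V(t)<v_\infty$ strictly, so the inequality holds exactly.
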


\begin{rem}
Note that $(C(t), V(t), U(t), H(t))$ satisfy (\ref{aprio}) and therefore is a admissible set of parameters for $\tilde u.$
\end{rem}

\begin{proof}[Proof of Lemma \ref{ODE}]
The existence of a local solution of (\ref{c}) is consequence of the Cauchy-Lipschitz-Picard theorem. 

Now, in order to prove global existence of such a solution, we derive some a priori estimates. Note that from the first equation in (\ref{c}) we have $C$ strictly increasing in time with $C(t) \geq 1$, $t\in [-T_\ve, T_\ve]$.   
Moreover, after integration, we have
\be\label{c10}
C(t)  = \frac{a^{4/(5-m)}(\ve U(t))}{a^{4(5-m)}(-\ve v_0 T_\ve) } = a^{4/(5-m)}(\ve U(t))(1+ O(\ve^{10})).
\ee
Since $1\leq a \leq 2$, one has that $C$ is bounded and globally well defined with
$$
1\leq C(t) < c_\infty = 2^{\frac 4{5-m}}, \quad t\geq -T_\ve.
$$
Moreover, from the hypothesis on $a$ (cf. (\ref{ahyp})), it is easy to see that $C(T_\ve) = c_\infty + O(\ve^{10})$.

On the other hand, from the second equation in (\ref{c}), we have $V$ strictly increasing in time. Replacing (\ref{c10}), and after multiplication by $V(t)$, one has
$$
V(t) V'(t) = \frac 8{m+3} a^{\frac{m-1}{5-m}}(\ve U(t)) a'(\ve U(t)) V(t) a^{-\frac 4{5-m}}(-\ve v_0 T_\ve)).
$$
After integration in $[-T_\ve, t)$ we obtain $V^2(t) = v_0^2 + 4\la_0 [ C(t) - 1].$
This relation implies the global existence of $V$ and the uniform bound
$$
v_0 \leq V(t) < v_\infty :=  (v_0^2 + 4\la_0 (c_\infty -1))^{1/2}; \quad t\geq -T_\ve.
$$
In addition, one has $V(T_\ve) = v_\infty + O(\ve^{10}).$

\end{proof}

\bigskip

In order to construct a reasonable approximate solution describing the interaction we need to improve the error term $S[\tilde u]$ from Proposition \ref{prop:decomp} to the second order in $\ve$. This is the objective of the next subsection.

\subsection{Resolution of the first order system}\label{5}

In this paragraph we eliminate the term $\mathcal F_1$ in (\ref{Sm})-(\ref{S2}). According to Proposition \ref{prop:decomp}, this can be done for any $2\leq m<5$. We are then reduced to find $(A_{1,c}(t, y), B_{1,c}(t, y))$ satisfying, for all $(t,y)$,
$$
 (\Omega_1)  
\begin{cases}
\mathcal L_+ A_{1,c} (t, y) = F_1(t,y),\\
\mathcal L_- B_{1,c} (t, y) = G_1(t,y).
\end{cases}
$$
When solving problem $(\Omega_1)$, a key property will be the separability between the variables $t$ and $y$ on the source terms $F, G$. This is a surprising property, not necessarily true for more complicated nonlinearities others than pure powers. Let us recall that this property is also present in the case of generalized KdV equations, see \cite{Mu2}.

\subsubsection{Resolution of the linear problem $(\Omega_1)$}

Recall that from Proposition \ref{prop:decomp} and (\ref{F12}) the system $(\Omega_1)$ is more explicitly given by
\be\label{O1a}
 (\Omega_1)  
\begin{cases}
\displaystyle{ \mathcal{L}_+ A_{1,c}(t, y) =\frac {a'}{\tilde a^{m}} (\ve \rho( t)) yQ_c(y)(Q_c^{m-1}(y) -\frac{4c(t)}{m+3} )} ,\\
\displaystyle{ \mathcal{L}_- B_{1,c}(t, y) = \frac{1}{5-m} \frac {a'}{\tilde a^{m}} (\ve \rho( t)) v(t) (Q_c(y) + 2 y Q_c'(y)). }  
\end{cases}
\ee
This system is solvable, as shows the following

\medskip

\begin{lem}[Resolution of $(\Omega_1)$]\label{lem:omega1}~

Suppose $(c(t),v(t),\rho(t), \ga(t))$ satisfy (\ref{aprio}) for $t\in [-T_\ve, T_\ve]$. Then both right hand sides in (\ref{O1a}) are in $\mathcal Y$, and  there exists a unique solution $(A_{1,c}(t, y), B_{1,c}(t, y))$ of $(\Omega_1)$ satisfying (\ref{IP}), given by 
\bea\label{O1}
& & A_{1,c}(t, y) = \frac {a'(\ve \rho (t))}{(m+3) \tilde a^{m}(\ve  \rho(t)) c(t) } \big\{  c(t) y( y Q_c'(y) - Q_c(y))   + \xi Q_c'(y)  \big\},\nonu \\
& & B_{1,c} (t, y)  = - \frac {a'(\ve \rho( t))v(t)}{2(5-m) \tilde a^{m}(\ve \rho( t))c(t) } ( c(t) y^2  +\chi )Q_c(y). 
\eea
for $\xi, \chi$ given by\footnote{See Appendix \ref{AidQ} for more details on the computations.}
$$
\xi := - \frac{ \int_\R (\frac 12Q^2  + y^2 Q'^2)}{\int_\R Q'^2} = -\frac{m+7}{2(m-1)} + \chi, \quad \chi :=  -\frac{\int_\R y^2 Q^2}{\int_\R Q^2}.
$$
Moreover, $A_{1,c}(t, \cdot) $ is odd and $B_{1,c}(t, \cdot) $ is even, and satisfy
\bea\label{Or1}
& & \int_\R A_{1,c}(t,y) Q_c'(y)dy = \int_\R A_{1,c}(t,y) Q_c (y)dy= 0, \nonu \\
& & \int_\R B_{1,c} (t,y)Q_c ' (y)dy= \int_\R B_{1,c} (t,y)Q_c(y)dy =0.
\eea
\end{lem}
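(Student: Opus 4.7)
The plan is to verify the formulas \eqref{O1} by direct computation, using \lemref{surL} for existence and uniqueness. First, the prefactors $a'(\ve\rho(t))/\tilde a^m(\ve\rho(t))$ are bounded by $Ke^{-\mu\ve|\rho(t)|}$ by \eqref{ahyp}, while the $y$-dependent factors are polynomial multiples of $Q_c$ or $Q_c^m$, hence in $\mathcal Y$; in particular both sources in \eqref{O1a} lie in $\mathcal Y$.

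The next step is to check the solvability conditions against $\ker\mathcal L_+=\mathrm{span}\{Q_c'\}$ and $\ker\mathcal L_-=\mathrm{span}\{Q_c\}$. For $\mathcal L_-$, integration by parts gives directly $\int(Q_c+2yQ_c')Q_c\,dy=\int Q_c^2+\int y(Q_c^2)'\,dy=0$. For $\mathcal L_+$, integrating by parts twice reduces $\int yQ_c(Q_c^{m-1}-\tfrac{4c}{m+3})Q_c'\,dy$ to $-\tfrac{1}{m+1}\int Q_c^{m+1}+\tfrac{2c}{m+3}\int Q_c^2$, which vanishes thanks to the standard Pohozaev identity $\int Q_c^{m+1}=\tfrac{2(m+1)c}{m+3}\int Q_c^2$ (obtained by multiplying $-Q_c''+cQ_c-Q_c^m=0$ by $Q_c$ and by $yQ_c'$). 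Items 3 and 4 of \lemref{surL} then provide unique $H^2$ solutions $A_{1,c}, B_{1,c}$ orthogonal to their respective kernels and lying in $\mathcal Y$.

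To identify these solutions with the explicit expressions \eqref{O1}, I would compute the action of $\mathcal L_{\pm}$ on the proposed ansatz. For $\mathcal L_-$, using $\mathcal L_-Q_c=0$ and $Q_c''=cQ_c-Q_c^m$ one finds $\mathcal L_-(y^2Q_c)=-2(Q_c+2yQ_c')$, matching the second source up to the claimed prefactor. For $\mathcal L_+$, I would compute separately $\mathcal L_+(y^2Q_c')=-2Q_c'-4yQ_c''$ (using $-Q_c'''+cQ_c'-mQ_c^{m-1}Q_c'=0$, obtained by differentiating the soliton equation) and $\mathcal L_+(yQ_c)=-2Q_c'-(m-1)yQ_c^m$; combining via $Q_c''=cQ_c-Q_c^m$ yields $\mathcal L_+[cy(yQ_c'-Q_c)]=c(m+3)[yQ_c^m-\tfrac{4c}{m+3}yQ_c]$, matching the first source. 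The kernel contributions $\xi Q_c'$ and $\chi Q_c$ are then tuned to enforce $\int A_{1,c}Q_c'\,dy=0$ and $\int B_{1,c}Q_c\,dy=0$; a rescaling $z=\sqrt{c}\,y$ shows that the resulting constants are $c$-independent and equal to the values printed in the lemma.

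Finally, parity and the pointwise bounds are essentially automatic. The $\mathcal L_+$-source is odd and the $\mathcal L_-$-source is even in $y$, so by the parity statement of \lemref{surL} the function $A_{1,c}$ is odd and $B_{1,c}$ even, yielding the remaining two orthogonalities $\int A_{1,c}Q_c\,dy=\int B_{1,c}Q_c'\,dy=0$ for free. The estimate \eqref{IP} follows by combining $|a'(\ve\rho(t))|\leq Ke^{-\mu\ve|\rho(t)|}$ with uniform $L^\infty$-control on $Q_c$, $Q_c'$ and their polynomial multiples, since $c(t)$ remains in a compact subset of $(0,\infty)$ by \eqref{aprio}. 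The only mildly delicate bookkeeping is the scaling argument that produces the $c$-free expressions for $\xi$ and $\chi$ in the statement.
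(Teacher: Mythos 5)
Your proposal is correct and follows essentially the same route as the paper: verify that the sources lie in $\mathcal Y$, check the two solvability conditions $\int_\R F_1 Q_c'=\int_\R G_1 Q_c=0$ (via the Pohozaev-type identities of Appendix \ref{AidQ}), invoke Lemma \ref{surL} for existence, uniqueness and parity, and confirm the explicit formulas. The only difference is that you spell out the ``simple verification'' the paper omits — the computations $\mathcal L_-(y^2Q_c)=-2(Q_c+2yQ_c')$, $\mathcal L_+(y^2Q_c')$, $\mathcal L_+(yQ_c)$, and the scaling argument fixing $\xi,\chi$ — and these details are accurate.
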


\medskip

\begin{proof}
From (\ref{aprio}) we have  $F_1, G_1\in \mathcal Y$.  Using Lemma \ref{surL}, we have the existence of the required solution provided the following two orthogonality conditions
$$
\int_\R F_1(t,y) Q_c'(y)dy = \int_\R G_{1}(t,y) Q_c(y)dy =0,
$$ 
are valid for all $t\in [-T_\ve, T_\ve]$. This is an easy computation. Indeed, up to a function of time, we have (cf. Appendix \ref{AidQ})
$$
\int_\R F_1 Q_c' = \int_\R yQ_c' Q_c(Q_c^{m-1} -\frac{4c}{m+3} ) =c^{2\theta+1}\big[   -\frac{1}{m+1} \int_\R Q^{m+1} + \frac{2}{m+3} \int_\R Q^2\big] =0.
$$
On the other hand,
$$
\int_\R G_1 Q_c =  \int_\R Q_c  (\frac{4c}{5-m} \Lambda Q_c -\frac 1{m-1} Q_c) = \big[ \frac{4\theta }{5-m} -\frac{1}{m-1} \big]c^{2\theta}\int_\R Q^2 =0.
$$
The fact that $A_{1,c}, B_{1,c}$ in (\ref{O1}) solve $(\Omega_1)$ is a simple verification. This finishes the proof.
\end{proof}

\begin{rem}
Note that (\ref{O1}) can be written as follows (we skip the dependence on $t$ of $v$ and $c$, and the dependence on $\ve \rho(t)$ of the function $a$)
\be\label{O1mod}
A_{1,c}(t, y) = \frac {a'}{\tilde a^{m}} c^{\frac 1{m-1}-\frac 12}A_{1}(\sqrt{c} y), \quad % \big\{  c(\ve t) y( y Q_c'(y) - Q_c(y))   + \al Q_c'(y)  \big\},\nonu \\
 B_{1,c} (t, y)  =  \frac {a' v}{\tilde a^{m}} c^{\frac 1{m-1}-1}B_{1}(\sqrt{c} y), %( c(\ve t) y^2  +\beta )Q_c(y). 
\ee
for some $A_1, B_1 \in \mathcal Y$ not depending on $c$. More precisely, 
\be\label{AB1}
A_1 (y) :=\frac 1{m+3} (y(yQ' -Q) +\xi Q') , \quad B_{1}(y) := -\frac{1}{2(5-m)}(y^2 +\chi )Q .
\ee
\end{rem}

\subsection{Improvement of the approximate solution}

In this paragraph we consider the case $m\geq 3$. Our objective is to profit of the smoothness of the nonlinearity in this case (see Proposition \ref{prop:decomp}) to go beyond on the computations and solve one more linear system --denoted by $(\Omega_2)$--, and equivalent to solve $\mathcal F_2 \equiv 0$ in (\ref{S2}). As a consequence, the error term $S[\tilde u]$ in (\ref{S2}) will become or order $\sim \ve^3$ (see (\ref{4p5}) and Lemma \ref{CV} below.)

\subsubsection{Improved description of $F_2$ and $G_2$} 

Before solving the second order system $\mathcal F_2 \equiv 0$, we need to simplify some useless terms appearing in the description of $F_2$ and $G_2$ given in (\ref{F2})-(\ref{G2}). Indeed, note that terms like  $(A_{1,c})_t$ or $(B_{1,c})_t$ can be expressed by using the system of equations given by $\mathcal F_0 \equiv 0$, as we state now. 

\medskip

\begin{Cl}[Simplified description of $F_2$ and $G_2$]\label{11}~

We have
\be\label{F2t}
F_2(t,y)  :=  \tilde F_2(t,y) +  O_{H^1(\R)} (\ve |\rho' -v -\ve^2 f_4 | e^{-\ve\mu |\rho(t)|}) 
\ee
and
\be\label{G2t}
G_2(t,y)  =  \tilde G_2(t,y) +  O_{H^1(\R)} (\ve |\rho' -v -\ve^2 f_4| e^{-\ve\mu |\rho(t)|}) 
\ee
where $\tilde F_{2}(t, \cdot)$ is even and $\tilde G_2(t, \cdot )$ is odd. More precisely, they have the form
\be\label{F2new}
\tilde F_2(t,y)  = \frac{a''}{\tilde a^{m}} ( F_{2,c}^I (y)+ \frac{v^2}c F_{2,c}^{II}(y) ) + \frac{a'^2 }{\tilde a^{2m-1} } ( F_{2,c}^{III}(y) + \frac{v^2}{c}F_{2,c}^{IV}(y))   -  \frac{f_3}{\tilde a} Q_c(y),  
\ee
\be\label{G2new}
\tilde G_2(t,y)  :=   \frac{a'' v}{\tilde a^m} G_{2,c}^I(y) + \frac{a'^2 v}{\tilde a^{2m-1}} G_{2,c}^{II}(y) - \frac{f_4}{\tilde a} Q_c'(y) ; %+ 2  a^{1/2}Q_c  A_{1,c}B_{1,c};
\ee
with $F_{2,c}^{(\cdot)} (y) = c^{\frac 1{m-1}} F_{2}^{(\cdot)}(\sqrt{c} y)$, and $G_{2,c}^{(\cdot)} (y)= c^{\frac 1{m-1} -\frac 12} G_{2}^{(\cdot)}(\sqrt{c} y)$. Finally, $F_2^{(\cdot)}$ and $G_2^{(\cdot)}$ are explicitly given below. 
\end{Cl}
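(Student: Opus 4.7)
The plan is to substitute the chain-rule expansion of $(A_{1,c})_t$ and $(B_{1,c})_t$ (taking into account their dependence on $c(t)$, $v(t)$ and $\ve\rho(t)$) into the definitions (\ref{F2})--(\ref{G2}), and then invoke the dynamical relations imposed by $\mathcal F_0\equiv 0$ (cf. (\ref{F0mod})): $c' = \ve f_2$, $v' = \ve f_1$ and $\rho' = v + \ve^2 f_4 + \delta$, where the residual $\delta:=\rho'(t) - v(t) - \ve^2 f_4(t)$ is kept as an error. Using the factorizations (\ref{O1mod}), $A_{1,c}$ has no explicit $v$-dependence while $B_{1,c}$ is linear in $v$, so that $\tfrac 1\ve(A_{1,c})_t = f_2\,\Lambda A_{1,c} + \rho'\,\partial_r[a'/\tilde a^m]|_{r=\ve\rho}\cdot(\cdots)$ and an analogous identity for $\tfrac 1\ve(B_{1,c})_t$ holding, with an extra $f_1\,\partial_v B_{1,c}$ contribution. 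Splitting $\rho' = (v + \ve^2 f_4) + \delta$ then separates a \emph{main} piece, producing the four channels in (\ref{F2new})--(\ref{G2new}), from a remainder $\delta\,\partial_r[a'/\tilde a^m]\cdot(\cdots)$ of size $O_{H^1(\R)}(|\delta|\,e^{-\mu\ve|\rho(t)|})$ by (\ref{ahyp}), matching the statement.

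Next I group the resulting terms according to their scalar prefactor. Using $\tilde a = a^{1/(m-1)}$ one has the algebraic identity
\begin{equation*}
\frac{d}{dr}\Bigl(\frac{a'(r)}{\tilde a^m(r)}\Bigr) = \frac{a''(r)}{\tilde a^m(r)} - \frac{m}{m-1}\frac{a'^2(r)}{\tilde a^{2m-1}(r)},
\end{equation*}
so every $r$-derivative hitting the prefactor of $A_{1,c}$ or $B_{1,c}$ lives entirely in the two channels $a''/\tilde a^m$ and $a'^2/\tilde a^{2m-1}$. The remaining summands of (\ref{F2})--(\ref{G2}) each carry an explicit factor $a'/a$ (through $f_1$, $f_2$) or $\tilde a$ (through the quadratic terms in $A_{1,c}, B_{1,c}$); combining those with the $a'/\tilde a^m$ hidden inside $A_{1,c}, B_{1,c}$ and using $a = \tilde a^{m-1}$, each product collapses again into the same two channels. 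The $-f_3 Q_c/\tilde a$ and $-f_4 Q_c'/\tilde a$ terms are already of the desired shape. Finally, the $c$-scalings $c^{1/(m-1)}$ in $F_{2,c}^{(\cdot)}$ and $c^{1/(m-1)-1/2}$ in $G_{2,c}^{(\cdot)}$ follow by tracking how $\sqrt c$ enters through $Q_c(y) = c^{1/(m-1)}Q(\sqrt c\,y)$ and the representation (\ref{O1mod}); reading off the coefficient of each of the four channels yields the explicit profiles $F_2^I,\ldots,G_2^{II}$.

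For the parity assertions, I invoke Lemma \ref{lem:omega1}: $A_{1,c}(t,\cdot)$ is odd and $B_{1,c}(t,\cdot)$ is even in $y$, parities preserved by $\partial_c = \Lambda$, $\partial_v$ and $\partial_r$, since these act only on scalar prefactors. Summand by summand, $y^2 Q_c^m$, $yA_{1,c}$, $(B_{1,c})_t$, $\Lambda B_{1,c}$, $A_{1,c}^2$, $B_{1,c}^2$, $Q_c$ in (\ref{F2}) are all even, hence $\tilde F_2(t,\cdot)$ is even; while $(A_{1,c})_t$, $\Lambda A_{1,c}$, $yB_{1,c}$, $A_{1,c}B_{1,c}$, $Q_c'$ in (\ref{G2}) are all odd, so $\tilde G_2(t,\cdot)$ is odd. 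The main obstacle will be the purely algebraic bookkeeping required to verify that after substitution \emph{every} term collapses into exactly one of the four channels $a''/\tilde a^m$, $a'^2/\tilde a^{2m-1}$, $f_3/\tilde a$, $f_4/\tilde a$, with no orphan piece left behind; this is dictated by the displayed identity for $(a'/\tilde a^m)'$ together with the relation $a = \tilde a^{m-1}$, but requires a careful term-by-term check, in particular to confirm that the $2f_2\,\Lambda A_{1,c}$-type double-counting produced by $c' = \ve f_2$ recombines with the $\rho$-derivative piece through the identity above.
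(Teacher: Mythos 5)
Your proposal is correct and follows essentially the same route as the paper: expand the time derivatives of $A_{1,c},B_{1,c}$ through their prefactors, use the identity $(a'/\tilde a^{m})'=a''/\tilde a^{m}-\tfrac{m}{m-1}\,a'^2/\tilde a^{2m-1}$ to sort every term into the channels $a''/\tilde a^m$ and $a'^2/\tilde a^{2m-1}$, absorb the residue $\rho'-v-\ve^2 f_4$ into the stated $O_{H^1}$ error, and read off the parities from Lemma \ref{lem:omega1}. The $2f_2\Lambda A_{1,c}$ double-counting you flag as the main obstacle does not in fact arise: in (\ref{F2})--(\ref{G2}) the subscript $(\cdot)_t$ denotes the derivative in the explicit time variable only (i.e.\ through $\ve\rho(t)$ and $v(t)$), the $c(t)$-dependence having already been extracted as the separate $f_2\Lambda$-terms and the $(c'-\ve f_2)\partial_c\tilde u$ contribution to $\mathcal F_0$, so only the $\rho'$- and $v'$-derivatives of the prefactors remain to be expanded, exactly as in your ``main piece plus $\delta$-remainder'' splitting.
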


\begin{rem}
Note that the small order terms in $|\rho' -v -\ve^2 f_4|$ above can be added to $\mathcal F_0$ in Proposition \ref{prop:decomp}. In what follows, we adopt this convention.
\end{rem}

\begin{proof}
First, in order to simplify the notation, let $\rho_1'(t) :=\rho'(t) -v(t)-\ve^2 f_4(t) $. Note that from (\ref{O1mod}) we have
\bee
(A_{1,c})_t & = & \frac {\ve \rho'}{\tilde a^{m} }\big[ a''  - \frac {m a'^2 }{(m-1) a }  \big] c^{\frac 1{m-1} -\frac 12} A_1(\sqrt{c} y) \\
& =& \frac {\ve v}{\tilde a^{m} }\big[ a''  - \frac {m a'^2 }{(m-1) a }  \big] c^{\frac 1{m-1} -\frac 12} A_1(\sqrt{c} y) + O_{H^1(\R)}(\ve |\rho_1'(t) | e^{-\ve\mu |\rho(t)|}), 
\eee 
and similarly, using (\ref{f1f2}),
\bee
(B_{1,c})_t & = &  \frac{\ve }{\tilde a^m} \big[  a'' v^2 + a' f_1   -\frac{m}{m-1} \frac{a'^2 v^2 }{a}  \big] c^{\frac 1{m-1} -1} B_1(\sqrt{c} y) + O(\ve |\rho'_1(t)| e^{-\ve\mu |\rho(t)|})\\
& =&  \frac{\ve }{\tilde a^m} \big[  a'' v^2 + \frac{8 a'^2 c}{(m+3)a}   -\frac{m}{m-1} \frac{a'^2 v^2 }{a}  \big] c^{\frac 1{m-1} -1} B_1(\sqrt{c} y) + O(\ve |\rho'_1(t)| e^{-\ve\mu |\rho(t)|}).
\eee
In addition, we replace (\ref{O1}) in $\tilde F_2$, $\tilde G_2$. We obtain
\bee
\tilde F_2(t,y) &  = & \frac{a''}{\tilde a^{m}} (\ve \rho(t)) \big[ F_{2,c}^I (y)+ \frac{v^2(t)}{c(t)} F_{2,c}^{II}(y) \big] \nonu\\
& &  + \frac{a'^2 }{\tilde a^{2m-1} } (\ve \rho(t))\big[ F_{2,c}^{III}(y) + \frac{v^2(t)}{c(t)}F_{2,c}^{IV}(y)\big]   -  \frac{f_3(t)}{\tilde a(\ve \rho(t))} Q_c(y),  
\eee
with $F_{2,c}^{(\cdot)} (y) = c^{\frac 1{m-1}} F_{2}^{(\cdot)}(\sqrt{c} y)$, and $F_2^I(y) := \frac 12 y^2 Q^m(y)$, $F_2^{II} (y):= -B_1(y),$
$$
F_{2}^{III}(y) := (mQ^{m-1}(y) -\frac 4{m+3}) yA_1(y) +\frac m2 (m-1) Q^{m-2}(y)A_1^2(y) -\frac{8}{(m+3)}B_1(y);
$$
$$
F_2^{IV}(y) := \frac 12 (m-1) Q^{m-2}(y) B_1^2(y) -\frac 2{5-m}yB_1' (y) - \frac {m-8}{5-m} B_1(y).
$$
Note that each term above is even and thus orthogonal to $Q'$.

On the other hand, 
$$
\tilde G_2 (y) :=   v(t) \big[ \frac{a'' }{\tilde a^m}(\ve \rho(t)) G_{2,c}^I(y) + \frac{a'^2 }{\tilde a^{2m-1}} (\ve \rho(t))G_{2,c}^{II}(y) \Big]   -\frac{f_4(t)}{\tilde a(\ve \rho(t))} Q_c'(y);%+ 2  a^{1/2}Q_c  A_{1,c}B_{1,c};
$$
with $G_{2,c}^{(\cdot)} (y)= c^{\frac 1{m-1} -\frac 12} G_{2}^{(\cdot)}(\sqrt{c} y)$ and $G_2^I (y):= A_1(y)$,
$$
G_2^{II}(y) := \frac{m-6}{5-m}A_1(y) +(Q^{m-1}(y) -\frac 4{m+3}) yB_1(y)  + \frac{2}{5-m}yA_1' (y)+ (m-1)Q^{m-2}(y)A_1(y)B_1(y).
$$
The proof is complete.
\end{proof}

\bigskip

\subsubsection{Resolution of a modified linear problem $(\Omega_2)$}

From Proposition \ref{prop:decomp}, more precisely (\ref{F12}), and the above Claim, we want to solve the modified system $(\tilde{\Omega}_2)$ given by
\be\label{O2}
(\tilde{\Omega}_2)
\begin{cases}
\mathcal{L}_+  A_{2,c}(t,y) = \tilde F_2(t,y),\\
\mathcal{L}_- B_{2,c}(t,y) = \tilde G_2(t,y),
\end{cases}
\ee
where $\tilde F_2$ and $\tilde G_2$ are given in (\ref{F2t})-(\ref{G2t}). The particular choice of $f_3(t)$ and $f_4(t)$ done in (\ref{f3})-(\ref{f4}) will allow us to find a unique solution of this linear system satisfying suitable orthogonality conditions. Recall the terms $F_2^{(\cdot)}$ and $G_2^{(\cdot)}$ introduced in the above Claim, and $\theta = \frac 1{m-1} -\frac 14.$

\begin{lem}[Resolution of $(\tilde \Omega_2)$]\label{lem:omega2}~

Suppose $m\geq 3$ and $f_3(t)$, $f_4(t)$ given by (\ref{f3})-(\ref{f4}), with 
\be\label{albe}
\al_{(\cdot)} := \frac 1{2\theta M[Q]} \int_\R \Lambda Q F_2^{(\cdot)}, \qquad \beta_{(\cdot)} := -\frac 1{M[Q]} \int_\R yQ G_2^{(\cdot)}.
\ee
There exists a unique solution $(A_{c,2}(t, y), B_{c,2}(t, y))$ of $(\tilde \Omega_2)$ satisfying (\ref{IP}). In addition, $A_{2,c}$ is even and $B_{2,c}$ is odd and satisfy the following decomposition:
\be\label{A2}
A_{2,c}(t, y) = \frac{a''}{\tilde a^{m}} ( A_{2,c}^I(y) + \frac {v^2}c A_{2,c}^{II}(y)  ) + \frac{a'^2}{\tilde a^{2m-1}} ( A_{2,c}^{III}(y) + \frac{v^2}c A_{2,c}^{IV}(y))  + \frac{f_3}{\tilde a} \Lambda Q_c,
\ee
with $A_{2,c}^{(\cdot)} (y) = c^{\frac 1{m-1} -1} A_{2}^{(\cdot)}(\sqrt{c} y)$, $A_{2}^{(\cdot)} $ even, and
\be\label{B2}
B_{2,c}(t, y) = \frac{a''v}{\tilde a^{m}} B_{2,c}^I(y) +\frac{a'^2v}{\tilde a^{2m-1}} B_{2,c}^{II}(y) +  \frac{f_4}{2\tilde a} y Q_c,
\ee
with $B_{2,c}^{(\cdot)} (y) = c^{\frac 1{m-1} -\frac 32} B_{2}^{(\cdot)}(\sqrt{c} y)$ and $B_{2}^{(\cdot)} $ odd.
Moreover, both $A_{2,c}$ and $B_{2,c}$ satisfy
\bea\label{Or2}
& & \int_\R A_{2,c}(t,y) Q_c'(y) dy =\int_\R A_{2,c} (t,y)Q_c(y) dy  = 0, \nonu \\
& &  \int_\R B_{2,c} (t,y)Q_c' (y) dy=\int_\R B_{2,c}(t,y) Q_c (y) dy=0.
\eea
\end{lem}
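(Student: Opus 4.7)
The plan is to reduce $(\tilde\Omega_2)$ to a solvability problem via Lemma~\ref{surL}(3), construct $A_{2,c}, B_{2,c}$ component by component in accordance with the decompositions (\ref{A2})-(\ref{B2}), and finally fix $f_3,f_4$ (equivalently the $\alpha_{(\cdot)}$ and $\beta_{(\cdot)}$) by requiring the orthogonality conditions (\ref{Or2}). The $c$-dependence claimed in (\ref{A2})-(\ref{B2}) will come from a rescaling $y\mapsto\sqrt{c}\,y$, as in (\ref{O1mod}).

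First, I would record that by Claim~\ref{11} each summand of $\tilde F_2$ is even in $y$ and each summand of $\tilde G_2$ is odd, while $Q_c'$ is odd and $Q_c$ is even. Hence the two solvability conditions $\int_\R \tilde F_2 Q_c'=0$ and $\int_\R \tilde G_2 Q_c=0$ hold automatically by parity, and Lemma~\ref{surL}(3)-(4) provides, for every source appearing in (\ref{F2new})-(\ref{G2new}) other than the $Q_c$ and $Q_c'$ pieces, a unique preimage in $\mathcal Y$ with the same parity. Call these $A_{2,c}^{I},\ldots,A_{2,c}^{IV}$ (even) and $B_{2,c}^{I}, B_{2,c}^{II}$ (odd), and define $A_{2,c}, B_{2,c}$ by (\ref{A2})-(\ref{B2}). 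The extra $\Lambda Q_c$ and $yQ_c$ additions exactly absorb the $-\tfrac{f_3}{\tilde a}Q_c$ and $-\tfrac{f_4}{\tilde a}Q_c'$ terms of $\tilde F_2,\tilde G_2$: by (\ref{LaQc}) one has $\mathcal L_+\Lambda Q_c=-Q_c$, and a direct computation
\begin{equation*}
\mathcal L_-(yQ_c) = -yQ_c''-2Q_c' + c\,yQ_c - Q_c^{m-1}yQ_c = -2Q_c',
\end{equation*}
using the profile equation $Q_c''-cQ_c+Q_c^m=0$, gives $\mathcal L_-(\tfrac12 yQ_c)=-Q_c'$ as required.

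The delicate point is the orthogonality (\ref{Or2}). The two conditions $\int_\R A_{2,c}Q_c'=0$ and $\int_\R B_{2,c}Q_c=0$ hold automatically by parity. For $\int_\R A_{2,c}Q_c=0$, self-adjointness of $\mathcal L_+$ together with $\mathcal L_+\Lambda Q_c=-Q_c$ yields, for each component,
\begin{equation*}
\int_\R A_{2,c}^{(\cdot)}\,Q_c \;=\; -\int_\R A_{2,c}^{(\cdot)}\,\mathcal L_+\Lambda Q_c \;=\; -\int_\R F_{2,c}^{(\cdot)}\,\Lambda Q_c,
\end{equation*}
while the $\tfrac{f_3}{\tilde a}\Lambda Q_c$ piece contributes $\tfrac{f_3}{\tilde a}\int_\R \Lambda Q_c\,Q_c = \tfrac{f_3}{\tilde a}\,\partial_c(\tfrac12 M[Q_c]) = \tfrac{f_3}{\tilde a}\,\theta\,c^{2\theta-1}M[Q]$. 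Matching the resulting linear combination with the prefactors $a''/\tilde a^m$ and $a'^2/\tilde a^{2m-1}$ of (\ref{f3}) forces the four $\alpha_{(\cdot)}$ to the values in (\ref{albe}) after the $\sqrt{c}\,y$-rescaling (which absorbs the $c^{2\theta-1}$ factor into the $c$-independent integrals over $Q$). Analogously, for $\int_\R B_{2,c}Q_c'=0$ one uses the integration by parts $\int_\R yQ_c\,Q_c' = -\tfrac12 M[Q_c]$ to reduce the condition to a linear equation on $\beta_I,\beta_{II}$, uniquely solved by (\ref{albe}).

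The remaining claims are essentially bookkeeping. Since each $F_2^{(\cdot)}, G_2^{(\cdot)}\in\mathcal Y$ (being polynomial multiples of $Q, A_1, B_1$, which themselves lie in $\mathcal Y$ by Lemma~\ref{lem:omega1}), Lemma~\ref{surL}(4) gives $A_2^{(\cdot)}, B_2^{(\cdot)}\in\mathcal Y$; combined with the exponential decay of $a', a''$ from hypothesis (\ref{ahyp}), this yields the pointwise bound (\ref{IP}). The main obstacle I anticipate is keeping track of the $c$-scaling: verifying that $\mathcal L_+$ applied to $c^{1/(m-1)-1}A_2^{(\cdot)}(\sqrt c y)$ reproduces $c^{1/(m-1)}F_2^{(\cdot)}(\sqrt c y)$ with the correct power, and that the definitions of $\alpha_{(\cdot)},\beta_{(\cdot)}$ in (\ref{albe}) are indeed $c$-independent. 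This is a direct but careful verification, facilitated by writing $\Lambda Q = \tfrac1{m-1}Q + \tfrac12 yQ'$ and consistently extracting the scaling prefactor from each integral.
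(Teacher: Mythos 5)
Your proposal is correct and follows essentially the same route as the paper: parity of $\tilde F_2$ (even) and $\tilde G_2$ (odd) gives the solvability conditions for Lemma~\ref{surL}(3), and the orthogonality (\ref{Or2}) is checked exactly as you do, via self-adjointness together with $\mathcal L_+\Lambda Q_c=-Q_c$ and $\mathcal L_-(yQ_c)=-2Q_c'$, the choice of $\al_{(\cdot)},\beta_{(\cdot)}$ in (\ref{albe}) being precisely what makes the $f_3$ and $f_4$ contributions cancel the remaining integrals. Your additional remarks on the $\sqrt{c}$-rescaling and the $\mathcal Y$ membership are consistent with the paper's bookkeeping.
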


\medskip

\begin{rem}\label{defe}
Note that thanks to the introduction of $f_3(t)$ and $f_4(t)$ in (\ref{f3})-(\ref{f4}), and from (\ref{defW}), (\ref{Or1}) and (\ref{Or2}) one has
\be\label{Or3}
\int_\R w(t,x) Q_c(y)e^{-i\Theta} = \int_\R w(t,x) Q_c'(y)e^{-i\Theta} =0. 
\ee
Let us remark that $f_3(t)$ and $f_4(t)$ formally represent the {\bf lack of symmetry}, or the {\bf defect} of the soliton solution, with respect to the pure soliton solution considered in Definition \ref{PSS}. Indeed, a formal integration of the dynamical system present in (\ref{S2}) and (\ref{ODE}) gives us
$$
\rho(t) \sim U(t) + \hbox{ non-zero order $\ve$ correction,}
$$
where this last correction comes from the integral of the term $\ve^2 f_4(t)\neq 0$. Therefore, the solution $u(t)$ should differ from a hypothetic pure soliton solution, at the formal level, by an $O(\ve)$ correction term in their trajectories. 
Finally, let us recall that in this case, and similarly to \cite{MMfinal}, the main order in the defect of the soliton solution is present on the trajectory (and phase), rather than in the scaling, as in \cite{MMcol1,MMMcol,Mu1}.
\end{rem}

\begin{rem}
The exact value of $\al_{(\cdot)}$ and $\beta_{(\cdot)}$ can be computed explicitly but their are not necessary  for our purposes. Nevertheless, it is simple to see that from Claim \ref{11}, Lemma \ref{surL}, (\ref{AB1}) and Appendix \ref{AidQ} one has
$$
\al_I = \frac{1}{2(m+1)M[Q]}\int_\R y^2 Q^{m+1} >0; \quad \al_{II}=-\frac{(m-1)}{2M[Q]}\int_\R y^2 Q^2<0;
$$
and
$$
\beta_I = \frac{1}{(m+3)M[Q]} \big[ \frac 52 \int_\R y^2 Q^2 + \frac \xi 2 \int_\R Q^2\big] = \frac{-1}{m+3}(\frac{7+m}{2(m-1)} +4\chi)>0,
$$
for $m\in [3, 5)$.
\end{rem}

\begin{proof}
Note that $\tilde F_2(t, \cdot )$ is even and $\tilde G_2(t, \cdot )$ is an odd function, and both functions are in $\mathcal Y$, uniformly in time. Therefore $\tilde F_2$ is orthogonal to $Q_c'$, and $\tilde G_2$ is orthogonal to $Q_c$. From Lemma \ref{surL} part (3), we obtain the conclusion. 

Moreover, note that $\mathcal L_+ \Lambda Q_c =-Q_c$, $\mathcal L_- (yQ_c) =-2Q_c'$. Thanks to the choice of $f_3$ and $f_4$ one has (see Appendix \ref{AidQ} for more details on the computations) 
\bee
\int_{\R} A_{2,c} Q_c & =&  -\int_\R \mathcal L_+ A_{2,c} \Lambda Q_c = -\int_\R \tilde F_{2} \Lambda Q_c \\
& =&-\frac{a''}{\tilde a^{m}}  \int_\R \Lambda Q_c( F_{2,c}^I  + \frac{v^2}c F_{2,c}^{II}) - \frac{a'^2 }{\tilde a^{2m-1} } \int_\R \Lambda Q_c ( F_{2,c}^{III} + \frac{v^2}{c}F_{2,c}^{IV})   +  \frac{f_3}{\tilde a} \int_\R \Lambda Q_c Q_c  \\
& =& - \frac{c^{2\theta -1}}{\tilde a}\Big[  \frac{a''}{a} ( \al_I  +\al_{II} \frac{v^2}c ) + \frac{a'^2 }{a^2 }  ( \al_{III} +  \al_{IV}\frac{v^2}{c} )   -  f_3 \Big]  \\
& =& 0.
\eee
Similarly,
\bee
\int_{\R} B_{2,c} Q_c' &  = & -\frac 12 \int_\R \mathcal L_- B_{2,c}  y Q_c = -\frac 12 \int_\R \tilde G_{2} yQ_c  \\
& =&   - \frac{a'' v}{2\tilde a^m} \int_\R yQ_c G_{2,c}^I - \frac{a'^2 v}{2\tilde a^{2m-1}} \int_\R yQ_c G_{2,c}^{II} + \frac{f_4}{2\tilde a} \int_\R yQ_c Q_c'  \\
& =&   \frac{c^{2\theta}}{2\tilde a} \Big[  (\beta_I \frac{a'' }{ a}  + \beta_{II} \frac{a'^2}{ a^2} )\frac{v}{c}  - f_4  \Big] \\
& =& 0.
\eee
The proof is complete.
\end{proof}

From Proposition \ref{prop:decomp} and the singular behavior of the nonlinearity $|z|^{m-1} z$ around $z=0$ for $2\leq m <4$, $m\neq 3$, we cannot perform a new expansion to improve our estimates. We stop here the search of an approximate solution for the case $3\leq m<5$.

\subsection{Error estimates}

As a consequence of Proposition \ref{prop:decomp} and Lemma \ref{ODE} and Lemma \ref{lem:omega1}, we have the following estimates on the error associated to the approximate solution $\tilde u$. Recall the definition of $\tilde S[\tilde u]$ and $p_m$ given in (\ref{Sm})-(\ref{S2}) and (\ref{pm}) respectively.

\begin{lem}[Estimation of the error $\tilde S{[}\tilde u{]}$]\label{CV}~

There exist constants $\ve_0, K>0$ such that for all $0<\ve <\ve_0$ the following holds. The error associated to the function $\tilde u$ satisfies  
\be\label{SH2}
\| \tilde S[\tilde u](t) \|_{H^1(\R)} \leq K\ve^{p_m +1}(\ve + e^{-\ve\mu |\rho(t)|}),
\ee
and the following integral estimate holds
\be\label{integra}
\int_\R\| \tilde S[\tilde u](t) \|_{H^1(\R)} dt \leq K\ve^{p_m}.
\ee
\end{lem}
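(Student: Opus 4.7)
The plan is to assemble the pointwise-in-time error estimates provided by Proposition \ref{prop:decomp}, specialized to the choice of dynamical parameters and profiles already fixed in the preceding subsections, and then integrate in time using the a priori lower bound on $\rho'(t)$ to handle the exponentially decaying contribution.

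First, I would recall the cancellations built into the construction: Lemma \ref{ODE} produces $(c,v,\rho,\gamma)$ that zero out the leading part of $\mathcal F_0$ in Proposition \ref{prop:decomp}; Lemma \ref{lem:omega1} produces $(A_{1,c},B_{1,c})$ killing $\mathcal F_1$ for all $2\le m<5$; and for $3\le m<5$, Lemma \ref{lem:omega2}, together with the choices (\ref{f3})--(\ref{f4}) of $f_3,f_4$, also eliminates $\mathcal F_2$. Consequently, in the definition $\tilde S[\tilde u] := S[\tilde u] - \mathcal F_0(t,y)e^{i\Theta}$, what remains is
\[
\tilde S[\tilde u] = \bigl[\varepsilon^2 \mathcal F_2 + \varepsilon^3 f(t)\mathcal F_c\bigr]e^{i\Theta} \quad (2\le m<3),
\]
\[
\tilde S[\tilde u] = \bigl[\varepsilon^3 \mathcal F_3 + \varepsilon^4 f(t)\mathcal F_c\bigr]e^{i\Theta} \quad (3\le m<5).
\]
The pointwise bound (\ref{SH2}) is then an immediate consequence of (\ref{tSH2}) and (\ref{4p5}) respectively, since $p_m+1=2$ in the subquadratic case and $p_m+1=3$ in the smooth case; in both regimes one reads off $\|\tilde S[\tilde u](t)\|_{H^1} \le K\varepsilon^{p_m+1}(\varepsilon + e^{-\varepsilon\mu|\rho(t)|})$. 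Here the uniform $H^1$-boundedness of the Schwartz profiles $\mathcal F_c$ and the boundedness of the coefficient $f(t)$ come from the a priori bounds (\ref{aprio}) on $(c,v,\rho,\gamma)$ together with the hypotheses (\ref{ahyp}) on $a$.

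For the integrated estimate (\ref{integra}), I split the right-hand side of (\ref{SH2}) into two contributions. The spatially constant piece, after integration on $[-T_\varepsilon,T_\varepsilon]$ and using (\ref{Te}), gives
\[
\int_{-T_\varepsilon}^{T_\varepsilon} K\varepsilon^{p_m+2}\,dt \;\le\; 2K\varepsilon^{p_m+2}T_\varepsilon \;=\; \frac{2K}{v_0}\,\varepsilon^{p_m+1-\frac1{100}} \;\le\; K'\varepsilon^{p_m},
\]
for $\varepsilon$ sufficiently small; this is precisely where the choice of the exponent $1/100$ in the definition of $T_\varepsilon$ is exploited. For the exponentially decaying piece, I perform the change of variables $t\mapsto r=\rho(t)$: thanks to the a priori bound (\ref{aprio}) one has $\rho'(t) \ge v_0/2>0$, so $dt\le 2v_0^{-1}\,dr$, and
\[
\int_{-T_\varepsilon}^{T_\varepsilon} e^{-\varepsilon\mu|\rho(t)|}\,dt \;\le\; \frac{2}{v_0}\int_\R e^{-\varepsilon\mu|r|}\,dr \;=\; \frac{4}{v_0\varepsilon\mu},
\]
which contributes $\le K v_0^{-1}\mu^{-1}\varepsilon^{p_m}$. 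Summing the two pieces yields (\ref{integra}).

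The only mildly subtle point is the compatibility of the simplified ODE system (\ref{c}) (which only cancels the leading part of $\mathcal F_0$, ignoring the $f_3,f_4$ pieces and the $O(|\rho'-v-\varepsilon^2 f_4|e^{-\varepsilon\mu|\rho|})$ remainders from Claim \ref{11}) with our definition of $\tilde S[\tilde u]$. However, by the convention adopted just after Claim \ref{11}, those remainders are absorbed into $\mathcal F_0$ from the outset, and hence are automatically subtracted off in passing from $S[\tilde u]$ to $\tilde S[\tilde u]$; they therefore never enter the estimates above.
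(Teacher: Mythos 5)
Your proposal is correct and follows essentially the same route as the paper's (much terser) proof: cancel $\mathcal F_1$ (and $\mathcal F_2$ when $m\geq 3$) via Lemmas \ref{lem:omega1}--\ref{lem:omega2}, read off \eqref{SH2} from \eqref{tSH2} and \eqref{4p5}, and obtain \eqref{integra} by integrating the constant piece over $[-T_\ve,T_\ve]$ and the exponential piece via the monotonicity $\rho'(t)\geq \frac 12 v_0>0$ from \eqref{aprio}. Your filled-in details (the role of the $\ve^{-1/100}$ in $T_\ve$, the change of variables $r=\rho(t)$, and the convention absorbing the $|\rho'-v-\ve^2 f_4|$ remainders into $\mathcal F_0$) are exactly what the paper leaves implicit.
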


\begin{proof}
First we prove the case $2\leq m< 3$. Here $p_m =1$. From Proposition \ref{prop:decomp} and Lemma \ref{lem:omega1} we have
$$
\tilde S[\tilde u] = \ve^2 [\mathcal F_2(t, y) + \ve f(t) \mathcal F_c(y)]e^{i\Theta},
$$
From estimate (\ref{tSH2}) we have (\ref{SH2}) in this case.

Let us consider the case $m\geq3$, with $p_m=2$. Here we invoke Proposition \ref{prop:decomp} and Lemmas \ref{lem:omega1} and \ref{lem:omega2} to get
$$
\tilde S[\tilde u] = \ve^4 [\mathcal F_4(t, y) + \ve f(t) \mathcal F_c(y)]e^{i\Theta},
$$
From (\ref{4p5}), the rest of the proof and (\ref{integra}) are direct since from (\ref{aprio}) one has $\rho'(t) \geq \frac 12 v_0>0$.
\end{proof}

\medskip

\subsection{Recomposition of the solution}

In this subsection we will show that $\tilde u$ at time $t=- T_\ve$ behaves as a modulated soliton. We begin with some $H^1$-estimates.
\smallskip

\begin{lem}[First estimates on $\tilde u$]~

Suppose $0<\ve<\ve_0$ small enough, and $(c,v,\rho, \ga)$ satisfying (\ref{aprio}). Then the following auxiliary estimates hold.
\ben
\item \emph{Decay away from zero}. Suppose $f_c=f_c(y)\in \mathcal Y$. Then there exist $K,\mu>0$ constants such that for all $t\in [-T_\ve, T_\ve]$
\be\label{Est1}
\norm{a'(\ve x) f_c(y)}_{H^1(\R)} \leq K e^{-\mu\ve|\rho(t)|}.
\ee
\item \emph{Almost soliton-solution}. The following estimates hold for all $t\in [-T_\ve, T_\ve]$.
\be\label{Est2}
\tilde u _{xx} - (c(t)+\frac 14v^2(t)) \tilde u  + a_\ve |\tilde u|^{m-1} \tilde u -iv(t) \tilde u_x = O_{H^1(\R)}(\ve e^{-\mu\ve |\rho(t)|}),
\ee
and
\be\label{Est2a}
i\tilde u_{t} + iv\tilde u_x  + (c(t)+\frac 14 v^2(t)) \tilde u = O_{H^1(\R)}(\ve e^{-\mu\ve |\rho(t)|}).
\ee
\een
\end{lem}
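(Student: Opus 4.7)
For (\ref{Est1}), the plan is to exploit the exponential decay of $a'$ afforded by (\ref{ahyp}) together with the localization of $f_c\in\mathcal Y$ around $y=0$, i.e.\ around $x=\rho(t)$. I would split the real line into $\{|y|\leq |\rho|/2\}$ (where $|x|\geq |\rho|/2$, so that $|a'(\ve x)|\leq Ke^{-\mu\ve|\rho|/2}$ and $f_c$ is uniformly $L^2$) and its complement (where the polynomial growth of $f_c$ is crushed by $e^{-|y|/2}\leq e^{-|\rho|/4}$, which for $\ve$ small enough is much smaller than $e^{-\mu\ve|\rho|}$). The same splitting, combined with the analogous decay of $a''$ from (\ref{ahyp}) and of $f_c'$ from the definition of $\mathcal Y$, controls the derivative $\partial_x[a'(\ve x) f_c(y)]$, giving the $H^1$ bound.

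For (\ref{Est2}), I would write $\tilde u=\phi\, e^{i\Theta}$ with $\phi = Q_c(y)/\tilde a(\ve\rho)+w\,e^{-i\Theta}$. Since $\Theta_x=v/2$ and $\Theta_{xx}=0$, a direct differentiation shows that
\begin{equation*}
\tilde u_{xx}-(c+v^2/4)\tilde u + a_\ve|\tilde u|^{m-1}\tilde u - iv\tilde u_x = \bigl[\phi_{yy}-c\phi + a_\ve|\phi|^{m-1}\phi\bigr]e^{i\Theta}.
\end{equation*}
For the leading part $\phi_0:=Q_c/\tilde a(\ve\rho)$, the soliton equation $Q_c''=cQ_c-Q_c^m$ and the normalization $\tilde a^{m-1}=a$ combine to give
\begin{equation*}
\phi_{0,yy}-c\phi_0 + a_\ve|\phi_0|^{m-1}\phi_0 = \frac{a(\ve x)-a(\ve\rho)}{a^{m/(m-1)}(\ve\rho)}\, Q_c^m(y).
\end{equation*}
Taylor-expanding $a(\ve x)-a(\ve\rho)=\ve y\, a'(\ve\rho)+\tfrac12\ve^2 y^2 a''(\xi)$ and applying (\ref{Est1}) term by term yields $O_{H^1}(\ve e^{-\mu\ve|\rho|})$. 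The contribution of $w=O_{H^1}(\ve e^{-\mu\ve|\rho|})$, which follows from (\ref{IP}) together with the fact that both $A_{1,c}$ and $B_{1,c}$ carry the prefactor $a'(\ve\rho)$ (cf.\ (\ref{O1mod})), enters linearly in $\phi_{yy}-c\phi$ and through the Taylor expansion of $|z|^{m-1}z$ around $z=\phi_0>0$; each produces contributions of the same order.

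For (\ref{Est2a}), an analogous factoring using $\Theta_t=c+\tfrac{v'}{2}x-\tfrac{v^2}{4}+\gamma'$ and $x=y+\rho$ gives
\begin{equation*}
i\tilde u_t+iv\tilde u_x+(c+v^2/4)\tilde u = \bigl[i(\phi_t+v\phi_x)-\tfrac{v'}{2}y\phi -\bigl(\tfrac{v'}{2}\rho+\gamma'\bigr)\phi\bigr]e^{i\Theta}.
\end{equation*}
Invoking the dynamical relations from Lemma \ref{ODE} --- precisely $\rho'=v$ and $\gamma'=-\tfrac{v'}{2}\rho$, which are exactly what the vanishing of $\mathcal F_0$ in (\ref{F0}) encodes --- eliminates the last parenthesis. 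Passing to intrinsic coordinates $\Phi(t,y):=\phi(t,y+\rho(t))$ turns $\phi_t+v\phi_x$ into $\Phi_t+(v-\rho')\Phi_y=\Phi_t$. The latter is driven by $c'=\ve f_2$, by $\ve\rho'\,\tilde a'(\ve\rho)$, and by the time derivative of the intrinsic $w$; each of these factors contains at least one $\ve a'(\ve\rho)=O(\ve e^{-\mu\ve|\rho|})$, so $\Phi_t=O_{H^1}(\ve e^{-\mu\ve|\rho|})$. Finally $v'=\ve f_1=O(\ve e^{-\mu\ve|\rho|})$ handles $\tfrac{v'}{2}y\phi$ in $H^1$.

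The principal obstacle lies in (\ref{Est2a}): the coarse a priori bound $|\rho'-v|\leq v_0/100$ from (\ref{aprio}) alone is not enough --- one must really use the refined identities $\rho'=v$ and $\gamma'+\tfrac{v'}{2}\rho=0$ provided by the dynamical system (\ref{c}), so this estimate is intrinsically tied to the specific parameters produced by Lemma \ref{ODE}. A recurrent secondary technicality is that terms proportional to $\rho$ or to $y$ multiplied by $v'\sim\ve e^{-\mu\ve|\rho|}$ must be absorbed by observing that $\ve|\rho|e^{-\mu\ve|\rho|}\leq e^{-\mu'\ve|\rho|}$ for any $\mu'<\mu$, i.e.\ the linear factor is swallowed by a slightly smaller exponential decay rate.
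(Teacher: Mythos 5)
Your argument is correct, and for (\ref{Est1}) and (\ref{Est2}) it coincides with the paper's: (\ref{Est1}) is the standard splitting $\{|y|\leq|\rho|/2\}$ versus its complement (the paper simply cites Lemma 4.7 of \cite{Mu2}), and for (\ref{Est2}) the paper likewise reduces everything to $\frac 1{\tilde a}(Q_c''-cQ_c+Q_c^m)e^{i\Theta}+(\frac{a_\ve(x)}{a_\ve(\rho)}-1)\frac{Q_c^m}{\tilde a}e^{i\Theta}$ plus the $O_{H^1}(\ve e^{-\mu\ve|\rho|})$ contribution of $w$, exactly as you do. Where you diverge is (\ref{Est2a}): the paper does not recompute anything, but writes $i\tilde u_t+iv\tilde u_x+(c+\frac14 v^2)\tilde u=S[\tilde u]-\{\tilde u_{xx}-(c+\frac14 v^2)\tilde u+a_\ve|\tilde u|^{m-1}\tilde u-iv\tilde u_x\}$ and invokes (\ref{Est2}) together with the error bound already established for $S[\tilde u]$, whereas you expand $\Theta_t$ directly and cancel terms using the relations $\rho'=v$, $\ga'=-\frac12 v'\rho$, $v'=\ve f_1$, $c'=\ve f_2$ from (\ref{c}). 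The two routes are equivalent in substance --- the paper's shortcut silently requires that the $\mathcal F_0$-part of $S[\tilde u]$ be $O_{H^1}(\ve e^{-\mu\ve|\rho|})$, which under the bare hypothesis (\ref{aprio}) it is not, so your closing observation that (\ref{Est2a}) is genuinely tied to the parameters of Lemma \ref{ODE} (or to modulated parameters satisfying (\ref{garho})--(\ref{cc}) up to admissible errors) is a correct and worthwhile precision; note that this is exactly how the estimate is actually deployed in Lemma \ref{Ka}, where the residuals $\ga_1',v_1',c_1',\rho_1'$ are carried along explicitly. The paper's approach buys brevity by reusing Proposition \ref{prop:decomp} and Lemma \ref{CV}; yours buys transparency about which parameter identities are being consumed.
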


\begin{proof}
(\ref{Est1}) is a classical result, see \cite{Mu2} Lemma 4.7 for a complete proof. On the other hand, to prove (\ref{Est2}), note that after some simplifications, and by using (\ref{defv})-(\ref{IP}) with  $\|w(t)\|_{H^1(\R)} \leq K\ve e^{-\ve\mu |\rho(t)|}$, we have
\bee
& & \tilde u _{xx} - (c+\frac 14v^2) \tilde u  + a_\ve |\tilde u|^{m-1} \tilde u -iv \tilde u_x \\
& & \qquad =  \frac 1{\tilde a}(Q_c''  -cQ_c  + Q_c^m)e^{i\Theta}+ (\frac{a_\ve(x)}{a_\ve(\rho)} -1)\frac{Q_c^m}{\tilde a} e^{i\Theta} + O_{H^1(\R)}( \ve e^{-\mu\ve |\rho(t)|}) \\
& & \qquad =  O_{H^1(\R)}(\ve e^{-\mu\ve |\rho(t)|}).
\eee
Let us prove (\ref{Est2a}). From the definition of $S[\tilde u]$ and estimate (\ref{SH2}),
\bee
i\tilde u_t + iv\tilde u_x + (c+\frac 14 v^2)\tilde u & = & S[\tilde u]  - \{ \tilde u _{xx} - (c+\frac 14v^2) \tilde u  + a_\ve |\tilde u|^{m-1} \tilde u -iv \tilde u_x \}    \\
& =& O_{H^1(\R)}(\ve e^{-\mu\ve |\rho(t)|}).
\eee
The proof is complete.
\end{proof}

The next result describes the behavior of the almost solution $\tilde u$ at the endpoint $t=-T_\ve$.

\begin{lem}[Behavior at $t = - T_\ve$]\label{atpmT}~

There exist constants $K,\ve_0>0$ such that for every $0<\ve <\ve_0$ the following holds. Let $\hat u(t) := \tilde u (t; C(t),V(t),  U(t), H(t))$ be the approximate solution constructed in Section 3 Step 2 with modulation parameters $(C,V,U,H)$ given by Lemma \ref{ODE}. Then one has
%\ben
%\item 
%\emph{Closeness to a pure soliton at time $t=-T_\ve$}. 
\be\label{mTe}
\| \hat u(-T_\ve) -  Q (\cdot + v_0T_\ve ) e^{\frac i2 (\cdot)v_0} e^{i\ga_{-1}} \|_{H^1(\R)} \leq K \ve^{10},
\ee
with 
\be\label{gam1}
\ga_{-1} := - \int_{-T_\ve}^0 C(s) ds  + \frac 14 \int_{-T_\ve}^0 V^2(s) ds.
\ee
\end{lem}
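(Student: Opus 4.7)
The plan is a direct evaluation of $\hat u$ at $t = -T_\ve$, combining the explicit initial data from Lemma \ref{ODE} with the fact that at this early time the soliton center $U(-T_\ve) = -v_0 T_\ve$ lies so deep in the left asymptotic region that $a_\ve$ is exponentially close to its limit value $1$. Since $T_\ve = \frac{1}{v_0}\ve^{-1-1/100}$, we have $\ve\,|U(-T_\ve)| = \ve^{-1/100}$, so every quantity that decays like $e^{-\mu\ve|\rho|}$ is in fact bounded by $e^{-\mu\ve^{-1/100}}$, which for $\ve$ small enough is much smaller than $\ve^{10}$. This observation drives the entire argument.

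First I would substitute the initial values from Lemma \ref{ODE}, namely $C(-T_\ve)=1$, $V(-T_\ve)=v_0$, $U(-T_\ve)=-v_0 T_\ve$ and $H(-T_\ve)=0$. With $y = x - U(-T_\ve) = x + v_0 T_\ve$ this gives $Q_{C(-T_\ve)}(y) = Q(x + v_0 T_\ve)$. The phase $\Theta$ defined in (\ref{param0}) reduces at $t = -T_\ve$ to
\[
\Theta(-T_\ve, x) = -\int_{-T_\ve}^{0}\! C(s)\,ds + \tfrac 12 v_0 x + \tfrac 14\int_{-T_\ve}^{0}\! V^2(s)\,ds = \tfrac 12 v_0 x + \ga_{-1},
\]
which is exactly the phase appearing in (\ref{mTe}).

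Second I would control the prefactor $\tilde a(\ve U(-T_\ve))^{-1}$. Since $\ve U(-T_\ve) = -\ve^{-1/100}\to -\infty$, the second line of hypothesis (\ref{ahyp}) yields $|a(\ve U(-T_\ve)) -1|\leq K e^{-\mu\ve^{-1/100}}$, hence $\tilde a(\ve U(-T_\ve)) = 1 + O(e^{-\mu\ve^{-1/100}})$. Therefore
\[
\tilde R(-T_\ve, x) = Q(x+v_0 T_\ve)\,e^{\frac i2 v_0 x}e^{i\ga_{-1}} + O_{H^1(\R)}\!\bigl(e^{-\mu\ve^{-1/100}}\bigr).
\]
Simultaneously, the correction $w(-T_\ve,\cdot)$ is bounded in $H^1(\R)$ using (\ref{IP}): each $A_{k,C}(-T_\ve,\cdot)$ and $B_{k,C}(-T_\ve,\cdot)$ has $H^1$ norm at most $K e^{-\mu\ve|U(-T_\ve)|} = K e^{-\mu\ve^{-1/100}}$, so $\|w(-T_\ve)\|_{H^1(\R)} \leq K\ve\, e^{-\mu\ve^{-1/100}}$.

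Finally I would combine the two estimates and absorb the resulting error using the elementary fact that $e^{-\mu\ve^{-1/100}} \leq K\ve^{10}$ for all $\ve$ smaller than some $\ve_0$. This yields (\ref{mTe}). There is essentially no obstacle here: the lemma is a boundary-value reading of the construction, and everything reduces to verifying that at $t = -T_\ve$ all modulation parameters coincide with those of the standard soliton up to exponentially small terms, thanks to the smallness of the potential's deviation from $1$ in the far-left region and the exponential-in-$\ve|\rho|$ decay of the correction $w$.
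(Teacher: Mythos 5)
Your proposal is correct and follows essentially the same route as the paper: decompose $\hat u(-T_\ve)=\tilde R(-T_\ve)+w(-T_\ve)$, use the initial data $C(-T_\ve)=1$, $V(-T_\ve)=v_0$, $U(-T_\ve)=-v_0T_\ve$, $H(-T_\ve)=0$ to identify the phase with $\frac i2 v_0 x+i\ga_{-1}$, bound $w(-T_\ve)$ via (\ref{IP}), and absorb all terms of size $e^{-\mu\ve^{-1/100}}$ into $K\ve^{10}$. Your explicit treatment of the prefactor $\tilde a(\ve U(-T_\ve))^{-1}=1+O(e^{-\mu\ve^{-1/100}})$ simply spells out a step the paper leaves implicit.
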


\begin{proof}
By definition,
$$
\hat u (-T_\ve) - Q(\cdot +v_0 T_\ve) e^{\frac i2 (\cdot)v_0} e^{i\ga_{-1}} = \tilde R(-T_\ve) - Q(\cdot +v_0 T_\ve) e^{\frac i2 (\cdot)v_0} e^{i\ga_{-1}} + w(-T_\ve).
$$
From (\ref{defW}), (\ref{IP}) and Lemma \ref{ODE} we have
$$
\| w(\pm T_\ve) \|_{H^1(\R)}   \leq K \ve e^{-\mu \ve^{-\frac 1{100}}} \leq K \ve ^{10},
$$
for $\ve$ small enough. Since from (\ref{c1}) $U(-T_\ve) = v_0 T_\ve $, $V(-T_\ve) =v_0$ and $C(- T_\ve) =1$, we have
$$
\| \tilde R(-T_\ve) - Q(\cdot +v_0 T_\ve) e^{\frac i2 (\cdot)v_0} e^{i\ga_{-1}} \|_{H^1(\R)} \leq K \ve^{10}, 
$$
as desired. %Estimate (\ref{pTe})-(\ref{gap1}) is totally analogous. We skip the details. 
\end{proof}

Resuming, we have constructed an approximate solution $\tilde u$ formally describing the interaction soliton-potential. In the next subsection we will show that a suitable modification of the the \emph{solution} $u$ constructed in Theorem \ref{Tm1} actually behaves like $\tilde u$ inside the interaction region $[-T_\ve, T_\ve]$.

\bigskip

%%%%%%%%%%%%%%%%%%%%%%%%%%%%%%%%%%%%%%%%
%%%%%%%%%%%%%%%%%%%%%%%%%%%%%%%%%%%%%%%%

\noindent
{\bf Step 3. Stability results.} 

\medskip

In this paragraph our objective is to prove that the approximate solution $\tilde u(t)$ describes the dynamics of the soliton in the interaction interval $[-T_\ve, T_\ve]$. We will prove the following result, cf. Propositions 5.1 in \cite{Mu2} for a similar result for a gKdV equation.

\begin{prop}[Exact solution close to the approximate solution $\tilde u$]\label{prop:I}~

Let $2\leq m<5$, $p_m$ defined in (\ref{pm}). There exists $\ve_0>0$ such that the following holds for any $0<\ve <\ve_0$.
Suppose that for $\hat u (-T_\ve)$ as defined in Lemma \ref{atpmT} one has
\be\label{hypINTa}
\| u(-T_\ve) - \hat u(-T_\ve) \|_{H^1(\R)}\leq K \ve^{p_m},
\ee
with $u=u(t)$ the $H^1(\R)$ solution of (\ref{aKdV}) constructed in Proposition \ref{Tm1}. Then there exist $K_0=K_0(m,K)>0$ and  $C^1$-functions $c,v, \rho, \ga :[-T_\ve,T_\ve ]\rightarrow \R$ such that, for all $t\in [-T_\ve,T_\ve]$,
\be\label{INT41a}
\|u(t) -\tilde u(t; c(t), v(t), \rho(t), \ga(t)) \|_{H^1(\R)} \leq K_0 \ve^{p_m},
\ee
and
\be\label{garho}
 |\rho'(t) -v(t) -\ve^2 f_4(t) |+ |\ga'(t) -\frac 12 v'(t)\rho(t) -\ve^2 f_3(t) |\leq K_0 \ve^{p_m},
\ee
\be\label{cc}
|v'(t) - \ve f_1(t)|+|c'(t) - \ve f_2(t)| \leq K_0 ( \ve^{2p_m} + \ve^{p_m+1} ).
\ee
\end{prop}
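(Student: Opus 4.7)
\medskip

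\textbf{Plan of proof.} The strategy follows the classical modulation plus bootstrap scheme (see e.g. \cite{Mu2}), adapted to the NLS conservation laws and to the higher-order ansatz $\tilde u$ constructed in Step 2. The four modulation parameters $(c,v,\rho,\ga)$ are used to impose four orthogonality conditions on the remainder $z(t):=u(t)-\tilde u(t;c(t),v(t),\rho(t),\ga(t))$, matching the four generalized kernel directions of $\mathcal L_{\pm}$. More precisely, writing $y:=x-\rho(t)$ and $\Theta$ as in (\ref{param0}), I would impose
\be\label{orthoplan}
\re\!\int \bar z \, Q_c'(y)e^{-i\Theta} = \ima\!\int \bar z \, Q_c(y)e^{-i\Theta} = \re\!\int \bar z \, y Q_c(y)e^{-i\Theta} = \ima\!\int \bar z \, \Lambda Q_c(y) e^{-i\Theta} = 0.
\ee
An implicit function argument near $t=-T_\ve$, together with (\ref{hypINTa}) and Lemma \ref{atpmT}, produces such a decomposition on a maximal interval $[-T_\ve,T^*]$, with initial data $(c,v,\rho,\ga)(-T_\ve)=(1,v_0,-v_0T_\ve,\ga_{-1})+O(\ve^{p_m})$.

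\medskip

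The equation for $z$ reads $iz_t+z_{xx}+a_\ve(|u|^{m-1}u-|\tilde u|^{m-1}\tilde u)=-\mathcal F_0e^{i\Theta}-\tilde S[\tilde u]$. Differentiating (\ref{orthoplan}) in time and projecting onto each of the four directions yields a $4\times 4$ almost-diagonal linear system for the \emph{modulated} parameters $(c'-\ve f_2,\ v'-\ve f_1,\ \rho'-v-\ve^2 f_4,\ \ga'-\tfrac12 v'\rho-\ve^2 f_3)$. The diagonal entries are essentially $\int Q^2,\int Q'^2,\ldots$ and hence invertible, while the source terms split as an $O(\|\tilde S[\tilde u]\|_{H^1})=O(\ve^{p_m+1}(\ve+e^{-\mu\ve|\rho|}))$ contribution coming from (\ref{SH2}), plus a quadratic contribution $O(\|z\|_{H^1}^2)$ from the nonlinearity. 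This yields (\ref{cc}) and (\ref{garho}) as soon as $\|z(t)\|_{H^1}\le K^*\ve^{p_m}$ on $[-T_\ve,T^*]$.

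\medskip

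To close the bootstrap $\|z(t)\|_{H^1}\le K^*\ve^{p_m}$ I would use a time-dependent Weinstein-type Lyapunov functional
\be\label{lyap}
\mathcal H[u](t):=E_a[u](t)+\bigl(c(t)+\tfrac14 v^2(t)\bigr)M[u](t)-v(t)P[u](t),
\ee
together with the analogous functional evaluated on $\tilde u$. By Lemma \ref{Cauchy} and the crucial sign (\ref{dPa}) ensured by $a'>0$, the derivative of $\mathcal H[u]$ reduces to the modulation contributions $(c'+\tfrac12 vv')M-v'P$ plus the \emph{negative} term $-\tfrac{\ve v}{m+1}\int a'(\ve x)|u|^{m+1}$; the first two are integrable in $t$ by (\ref{cc})--(\ref{garho}), while the last is a good sign term. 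Expanding
\be\label{quad}
\mathcal H[u](t)-\mathcal H[\tilde u](t)=\langle \mathcal H'[\tilde R],z\rangle +\tfrac12\mathcal B[z,z]+O(\|z\|_{H^1}^3)+O(\ve\|w\|_{H^1}\|z\|_{H^1}),
\ee
the choice of $(c,v)$ makes the linear term vanish to leading order, and the orthogonality conditions (\ref{orthoplan}) together with Lemma \ref{surL}(5)(b) give $\mathcal B[z,z]\ge \nu_0\|z\|_{H^1}^2-K|\re\!\int\bar z\, Q_ce^{iyv/2}e^{i\theta}|^2$. The last scalar product is controlled via the \emph{mass identity} $M[u]-M[\tilde u]=2\re\!\int\bar{\tilde u}z+\|z\|_{L^2}^2$ combined with the computation of $M[\tilde u](t)-M[\tilde u](-T_\ve)$ (which is $O(\ve^{p_m})$ by (\ref{Or1})--(\ref{Or2}) and Lemma \ref{ODE}), exactly as in \cite{Mu2}.

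\medskip

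The main obstacle is the coercivity step: ensuring that the quadratic form $\mathcal B[z,z]$ dominates \emph{all} the cross terms appearing in (\ref{quad}), in particular the interaction terms between $z$ and the first and second order correctors $w$ from (\ref{defW}). Because $\|w\|_{H^1}\lesssim \ve$, these cross terms are $O(\ve \ve^{p_m}\|z\|_{H^1})$, which is borderline for $p_m=2$ and requires a careful use of the extra orthogonality (\ref{Or3}) together with the sharper bound $\|w(t)\|_{H^1}\lesssim \ve e^{-\mu\ve|\rho(t)|}$ to gain integrability in time. Once this is achieved, a standard Gronwall-type argument combining the Lyapunov identity, the coercivity, and the integral estimate (\ref{integra}) yields $\|z(t)\|_{H^1}^2\le K(\|z(-T_\ve)\|_{H^1}^2+\ve^{2p_m})\le K\ve^{2p_m}$, strictly improving the bootstrap assumption and extending the decomposition to all of $[-T_\ve,T_\ve]$, which proves (\ref{INT41a}).
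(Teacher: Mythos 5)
Your scheme diverges from the paper's at the choice of orthogonality conditions, and this is where it breaks. Writing $z=(\eta_1+i\eta_2)e^{i\Theta}$, your four conditions amount to $\eta_1\perp Q_c',\,yQ_c$ and $\eta_2\perp Q_c,\,\Lambda Q_c$. When you differentiate the last two to extract $v'-\ve f_1$ and $c'-\ve f_2$, the linearized flow contributes the terms $\int\eta_2\,\mathcal L_-(yQ_c)=-2\int\eta_2 Q_c'$ and $\int\eta_1\,\mathcal L_+(\Lambda Q_c)=-\int\eta_1 Q_c$, and neither scalar product vanishes under your conditions: they are only $O(\|z\|_{L^2})$. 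So your ``almost-diagonal system'' yields at best $|c'-\ve f_2|+|v'-\ve f_1|\lesssim \ve^{p_m}$, not the quadratic bound (\ref{cc}) that the proposition asserts and that the rest of the argument needs (it is what keeps $M[\tilde u]$ essentially constant). The paper's Lemma \ref{DEFZ} instead imposes the \emph{full complex} orthogonality $\int\bar z\,Q_ce^{i\Theta}=\int\bar z\,Q_c'e^{i\Theta}=0$, i.e. $\eta_1,\eta_2\perp Q_c,Q_c'$; this makes $\int\eta_1Q_c$ and $\int\eta_2Q_c'$ vanish identically, which is precisely what produces (\ref{c1}) and hence (\ref{cc}), and it simultaneously removes the bad direction from the coercivity (Lemma \ref{Coer2}), so no mass-identity argument is needed inside the bootstrap. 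Your rescue of the residual direction $\re\int\bar z\,Q_ce^{i\Theta}$ by mass conservation is, moreover, circular over the interaction interval: the drift of $M[\tilde u]$ is driven by $(c'-\ve f_2)\int\Lambda Q_c\,Q_c$, which with your conditions you can only estimate through that very scalar product; the resulting inequality $X(t)\le K\ve^{p_m}+K\int_{-T_\ve}^t X$ over a time length $\sim\ve^{-1-\frac1{100}}$ gives an exponentially large Gronwall factor and does not close.

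The second gap is the energy step. You propose to expand $\mathcal H[u]-\mathcal H[\tilde u]$, differences of (almost) conserved quantities with time-dependent multipliers. Since $\tilde u$ is not an exact solution, the time derivative of this difference contains, besides the good-signed momentum term, linear-in-$z$ contributions such as $\ve v\int a'(\ve x)|\tilde u|^{m-1}\re(\tilde u\bar z)$ (from $\partial_tP[u]-\partial_tP[\tilde u]$) and terms driven by $\mathcal F_0$ through $\partial_tM[\tilde u]$, $\partial_tE_a[\tilde u]$; these are of size $\ve e^{-\mu\ve|\rho|}\|z\|$ pointwise and integrate only to $O(\ve^{p_m})$, which swamps a closing estimate at the level $\|z\|_{H^1}^2\lesssim\ve^{2p_m}$ unless cancellations are exhibited --- and the direction that would kill them, $\re\int\bar z\,\tilde u$, is exactly the one your orthogonality conditions leave free. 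The paper avoids this entirely: it runs the estimate on the functional $\mathcal F(t)$ of (\ref{F}), which is quadratic and higher order in $z$, computes $\mathcal F'$ from the $z$-equation and cancels the dangerous terms algebraically ((\ref{Mir1})--(\ref{Mir2})), obtaining $|\mathcal F'|\lesssim \ve e^{-\mu\ve|\rho|}\|z\|^2+\|\tilde S[\tilde u]\|_{H^1}\|z\|+\|z\|^4$, integrable to $\ve^{2p_m}$ by Lemma \ref{CV}; mass and energy conservation are used only at the very end, to compare $(c,v)$ with $(C,V)$. As written, your argument establishes neither (\ref{cc}) nor the bootstrap closure behind (\ref{INT41a}).
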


\noindent
{\bf Proof of Proposition \ref{prop:I}}.
Let $K^*>1$ be a constant to be fixed later. Since $\|u(-T_\ve)-\hat u(-T_\ve)\|_{H^1(\R)}\leq K \ve^{p_m} $, by continuity in time in $H^1(\mathbb{R})$, there exists $-T_\ve<T^*\leq T_\ve$ with
\bee
    T^*& := & \sup\big\{T\in [-T_\ve,T_\ve], \hbox{ such that for all }  t\in [-T_\ve,T], \hbox{ there exists }  \rho(t), \ga (t) \in \mathbb{R},  \\
    & & \quad \quad \quad  \hbox{ with }  \|  u(t)  -\tilde u(t; C(t), V(t), \rho(t), \ga(t))\|_{H^1(\R)}\leq K^* \ve^{p_m } \big\}.
\eee
The objective is to prove that $T^*=T_\ve$ for $K^*$ large enough. To achieve this, we argue by contradiction, assuming that $T^*<T_\ve $ and reaching a contradiction with the definition of $T^*$ by proving some independent estimates for $  \|  u(t)  -\tilde u(t; C(t), V(t), \rho(t), \ga(t))\|_{H^1(\R)}$ on $[-T_\ve,T^*]$, for a special modulation parameters $\rho(t), \ga(t)$.

\subsubsection{Modulation} 
By using the Implicit function theorem we will construct some modulation parameters and estimate their variation in time:
 
\begin{lem}[Modulation in time]\label{DEFZ}~

Assume $0<\ve<\ve_0(K^*)$ small enough.
There exist \emph{unique} $C^1$ functions $ c(t),v(t), \rho(t), \ga(t)$ such that, for all $t\in [-T_\ve,T^*]$, the function
\be\label{defz}
z(t) :=u(t) -\tilde u(t; c(t),v(t), \rho(t), \ga(t)), 
\ee
satisfies
\be\label{defz1}
\int_\R \bar z(t,x) Q_c(y)e^{i\Theta}  dx = \int_\R \bar z(t,x)Q_c'(y)e^{i\Theta}dx =0, 
\ee
and
\bea
& & |\rho(-T_\ve) - U(-T_\ve)| + |\ga(-T_\ve)-H(-T_\ve)| \nonu \\ 
&  & \qquad +|c(-T_\ve)-C(-T_\ve)| +|v(-T_\ve) -V(-T_\ve)| +\|z(-T_\ve)\|_{H^1(\R)}\leq K \ve^{p_m }.\label{minTe}
\eea
Moreover, we have,  for all $t\in [-T_\ve,T^*]$,
\be\label{TRANS3}
\|z(t)\|_{H^1(\R)} + |c(t) -C(t) | + |v(t) -V(t)| \leq  K K^* \ve^{p_m }.
\ee
In addition, $z(t)$ satisfies the following equation
\bea\label{Eqz1}
& &  iz_t +  z_{xx}  + a_\ve [ |\tilde u +z|^{m-1}(\tilde u +z) - |\tilde u|^{m-1} \tilde u] + \tilde S[\tilde u] + i (c' -\ve f_2) \partial_c \tilde u \nonumber  \\
& & \qquad -\frac12(v' -\ve f_1) y\tilde u   + i(\rho' -v - \ve^2 f_4) \partial_\rho \tilde u  - (\ga' + \frac 12 v' \rho -\ve^2 f_3 )\tilde u =0.
\eea
Finally, there exist $K,\mu>0$ independent of $K^*$ such that for every $t\in [-T_\ve, T^*]$
\bea\label{rho1}
& & |\rho'(t) -v(t) - \ve^2 f_4(t)| + |\ga'(t) -\frac 12 v'(t) \rho(t) -\ve^2 f_3(t)| \leq \nonumber \\
& & \qquad  \leq K\Big[ \|z(t)\|_{L^2(\R)} +  \ve e^{-\mu\ve|\rho(t)| } \|z(t)\|_{L^2(\R)} +  \|z(t)\|_{L^2(\R)}^2 + \| \tilde S[\tilde u](t)\|_{L^2(\R)}\Big], 
\eea
and
\be\label{c1}
|v'(t) -  \ve f_1(t) | +|c'(t) -  \ve f_2(t) | \leq K\Big[  \ve e^{-\mu\ve|\rho(t)| } \|z(t)\|_{L^2(\R)} +  \|z(t)\|_{L^2(\R)}^2 + \|\tilde S[\tilde u](t)\|_{L^2(\R)}\Big].
\ee
\end{lem}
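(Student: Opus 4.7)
\textbf{Proof proposal for Lemma \ref{DEFZ}.} The existence and uniqueness of the modulation parameters is a standard application of the implicit function theorem. Define, for $u\in H^1(\R)$ close to $\hat u(-T_\ve)$, the functional
\[
\Phi(u; c, v, \rho, \gamma) := \left( \re\!\int_\R \overline{(u - \tilde u)}\, Q_c(y) e^{i\Theta}\, dx,\ \ima\!\int_\R \overline{(u - \tilde u)}\, Q_c(y) e^{i\Theta}\, dx,\ \text{(same with } Q_c' \text{)}\right),
\]
evaluated at $y=x-\rho$ and $\Theta$ as in (\ref{param0}). At the reference point $u = \tilde u$ with $(c,v,\rho,\gamma) = (C,V,U,H)$ one has $\Phi=0$, and a direct computation of the Jacobian with respect to $(c,v,\rho,\gamma)$ produces a block-diagonal matrix whose leading order is controlled by the nonzero quantities $\int Q^2$, $\int Q'^2$, $\int y Q Q'$ and $\int \Lambda Q \cdot Q$ (all up to powers of $c$); these are computed in Appendix \ref{AidQ}. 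Non-degeneracy of this matrix together with (\ref{hypINTa}) and Lemma \ref{atpmT} yields the $C^1$ modulation parameters and the initial bound (\ref{minTe}); the uniform bound $\|z(t)\|_{H^1(\R)} + |c-C|+|v-V| \le KK^* \ve^{p_m}$ in (\ref{TRANS3}) follows by combining the definition of $T^*$ with the quantitative implicit function theorem applied to $\Phi$.

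The equation (\ref{Eqz1}) for $z = u - \tilde u$ is obtained simply by subtraction: since $u$ solves (\ref{aKdV}) and $\tilde u$ satisfies $S[\tilde u] = \mathcal F_0 e^{i\Theta} + \tilde S[\tilde u]$ by Proposition \ref{prop:decomp}, the modulation equations and the explicit form of $\mathcal F_0$ in (\ref{F0mod}) identify exactly the four parameter terms appearing on the right-hand side of (\ref{Eqz1}). The only care needed is to correctly track the replacement $\partial_\rho \tilde u = \partial_\rho \tilde R - w_y$ and to collect all contributions linear in $(c'-\ve f_2)$, $(v'-\ve f_1)$, $(\rho'-v-\ve^2 f_4)$ and $(\gamma' - \tfrac12 v'\rho - \ve^2 f_3)$.

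The estimates (\ref{rho1})--(\ref{c1}) are the main technical point. The plan is to differentiate the two orthogonality conditions (\ref{defz1}) in $t$, substitute (\ref{Eqz1}) for $z_t$, and integrate by parts to move derivatives off $z$. This yields a $4\times 4$ linear system for $(c'-\ve f_2, v'-\ve f_1, \rho'-v-\ve^2 f_4, \gamma' - \tfrac12 v'\rho - \ve^2 f_3)$ whose coefficient matrix, by the orthogonality choice made in Lemmas \ref{lem:omega1}--\ref{lem:omega2} and the spectral identities of Lemma \ref{surL}, is again non-degenerate at leading order (a diagonal perturbation of the same matrix used above). The forcing terms are of three types: $\tilde S[\tilde u]$, the linear terms $\langle z, \cdot\rangle$ coming from the self-adjoint part of $\mathcal L_\pm$ acting through the cross terms, which generically produce a factor $\ve e^{-\mu \ve |\rho(t)|}$ after integration by parts against the rapidly decaying $Q_c, Q_c'$ that differ from pure eigenfunctions of $\mathcal L_\pm$ only through terms of size $O(\ve)$ from the potential, and the quadratic term $a_\ve[|\tilde u + z|^{m-1}(\tilde u+z) - |\tilde u|^{m-1}\tilde u - m|\tilde u|^{m-1}z \cdots]$, bounded by $\|z\|_{L^2}^2$ using Sobolev embedding. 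Inverting the leading matrix and collecting these contributions gives (\ref{rho1}); the sharper bound (\ref{c1}) on $(v', c')$ comes from the fact that the leading $\|z\|_{L^2}$ contribution on the right-hand side vanishes thanks to the evenness/oddness of $A_{k,c}, B_{k,c}$ combined with the orthogonality (\ref{Or3}) against $Q_c$ and $Q_c'$, leaving only the $\ve$-small or quadratic remainders.

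The principal obstacle is the bookkeeping of the derivation of (\ref{c1}): one must verify that every ``large'' term proportional to $\|z\|_{L^2}$ in the equations for $v'$ and $c'$ cancels thanks to parity and to the orthogonality built into the approximate solution, so that only $O(\ve\|z\|_{L^2} + \|z\|_{L^2}^2 + \|\tilde S[\tilde u]\|_{L^2})$ survives. This cancellation is precisely what makes the bootstrap argument in Proposition \ref{prop:I} work.
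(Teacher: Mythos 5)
Your overall strategy is exactly the one the paper intends (the paper's own proof is only a sketch: implicit function theorem for the decomposition, direct subtraction for \eqref{Eqz1}, and differentiation of the orthogonality conditions followed by substitution of $z_t$ for \eqref{rho1}--\eqref{c1}), and most of your outline would go through. However, the justification you give for the single most delicate point --- why \eqref{c1} carries no term of size $O(\|z\|_{L^2})$ while \eqref{rho1} does --- is misattributed, and as stated it would not produce the cancellation. Writing $z=\zeta e^{i\Theta}$, $\zeta=\zeta_1+i\zeta_2$, the linearized flow is $\partial_t\zeta_1=\mathcal L_-\zeta_2+\cdots$, $\partial_t\zeta_2=-\mathcal L_+\zeta_1+\cdots$. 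Differentiating $\re\int\bar z\,Q_ce^{i\Theta}=0$ produces the equation that determines $c'-\ve f_2$ (since only $\partial_c\tilde u$ pairs nontrivially with $Q_c$ there), and its linear-in-$z$ forcing is $\int\zeta_2\,\mathcal L_-Q_c$; differentiating $\ima\int\bar z\,Q_c'e^{i\Theta}=0$ determines $v'-\ve f_1$ with forcing $\int\zeta_1\,\mathcal L_+Q_c'$. These vanish \emph{identically} because $\mathcal L_-Q_c=0$ and $\mathcal L_+Q_c'=0$ (Lemma \ref{surL}, part (2)) --- i.e.\ because the orthogonality directions in \eqref{defz1} are chosen in the kernels of the adjoint linearized operator. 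The parity of $A_{k,c},B_{k,c}$ and the orthogonality \eqref{Or3} play no role in this cancellation; they only guarantee that the leading $4\times4$ coefficient matrix stays diagonal and that the $O(\ve)$ corrections $w$ do not pollute the modulation directions. By contrast, the other two differentiated conditions pair $\zeta$ against $\mathcal L_+Q_c=-(m-1)Q_c^m\neq0$ and $\mathcal L_-Q_c'=(m-1)Q_c^{m-1}Q_c'\neq0$, which is exactly why $\rho'$ and $\ga'$ retain the $K\|z\|_{L^2}$ term in \eqref{rho1}.

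With that correction, the remaining bookkeeping is as you describe: the $\ve e^{-\mu\ve|\rho|}\|z\|_{L^2}$ remainders in \eqref{c1} come from the three $O(\ve e^{-\mu\ve|\rho(t)|})$ discrepancies between the true linearization around $\tilde u$ with potential $a_\ve(x)$ and the model operators $\mathcal L_\pm$ (namely $a_\ve(x)/a_\ve(\rho)-1$ localized by $Q_c^{m-1}$, the corrections $w$ which carry a factor $a'(\ve\rho)$, and the time derivatives $\partial_tQ_c=c'\Lambda Q_c$ hitting $z$), the quadratic remainder is controlled by $\|z\|_{L^2}^2$ as you say, and $\tilde S[\tilde u]$ enters linearly. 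So the gap is not in the architecture of the argument but in the identification of the structural reason the bootstrap-critical estimate \eqref{c1} holds; if you tried to verify it using only parity and \eqref{Or3}, the terms $\int\zeta_2\mathcal L_-Q_c$ and $\int\zeta_1\mathcal L_+Q_c'$ would be left unaccounted for.
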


\begin{proof}
The proof of (\ref{defz})-(\ref{TRANS3}) is by now well-know and it is a consequence of an Implicit Function Theorem application. See e.g. \cite{MMcol1} for a detailed proof.
On the other hand, the proof of (\ref{Eqz1}) follows after a simple calculation using (\ref{aKdV}).

\smallskip

The proof of  (\ref{rho1}) and (\ref{c1}) follow from (\ref{defz1})-(\ref{Eqz1}) after taking time derivative and replacing $z_t$. We skip the details.
\end{proof}

\medskip

\subsubsection{Improvement of (\ref{aprio})}

In this paragraph we prove that the parameters $(c(t),v(t),\rho(t),\ga(t))$ constructed in Lemma \ref{DEFZ} satisfy the assumptions required in (\ref{aprio}). Recall that under these hypotheses, all the results of Section 3, Step 2 are valid. In particular, one has (\ref{SH2}).

First of all, from (\ref{minTe}) we have
\be\label{Imp1}
\frac 9{10} \leq c(t)\leq \frac{11}{10}c_\infty <2^5, \qquad \frac 9{10} v_0 \leq v(t) < v_0 + 2^5. 
\ee
On the other hand, from (\ref{rho1}) we have for $\ve$ small,
\be\label{Imp2}
|\rho'(t) -v(t)| \leq K^*\ve^{p_m} \leq \frac{v_0}{100}.
\ee
We are done.

\subsubsection{Energy functional for $z$}\label{EFz}

Consider the $H^1(\R)$ functional
\bea\label{F}
\mathcal F(t) & := & \frac 12 \int_\R |z_x|^2 +\frac12 (c+\frac 14 v^2) \int_\R |z|^2- \frac 12 v \ima \int_\R \bar z z_x \nonumber  \\
& & \quad - \frac{1}{m+1}\int_\R a_\ve(x) [|\tilde u+ z|^{m+1} -|\tilde u|^{m+1} - (m+1)|\tilde u|^{m-1}\re \{ \tilde u\bar z\}] .
\eea

\begin{lem}[Modified coercivity for $\mathcal F$]\label{Coer2}~

There exist $K,\nu_0>0$, independent of $K^*$ and $\ve$ such that for every $t\in [-T_\ve, T_\ve]$
$$
\mathcal F(t) \geq \nu_0 \|z(t)\|_{H^1(\R)}^2 - K \ve (e^{-\mu\ve |\rho(t)|} + 1)\|z(t)\|_{L^2(\R)}^2 - K  \|z(t)\|_{L^2(\R)}^3. 
$$ 
In particular, for $\ve$ small enough, one has
$$
\mathcal F(t) \geq \frac 9{10} \nu_0 \|z(t)\|_{H^1(\R)}^2 . 
$$ 
\end{lem}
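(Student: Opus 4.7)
The strategy is to recast $\mathcal F(t)$ as a local quadratic form via the gauge transformation $\tilde z(t,y) := z(t,x) e^{-i\Theta(t,x)}$, reduce the resulting quadratic functional to the linearization $\mathcal B[\tilde z, \tilde z]$ appearing in Lemma~\ref{surL}(5) up to small errors, apply the spectral coercivity there, and finally upgrade the $L^2$ lower bound to an $H^1$ one.

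First I would carry out the gauge change. Using $\partial_x \Theta = v/2$, a direct computation gives $z_x = (\tilde z_y + i(v/2)\tilde z) e^{i\Theta}$, and one verifies the exact telescoping
\[
\tfrac 12\int |z_x|^2 - \tfrac v2 \ima\int \bar z z_x + \tfrac 12 (c + \tfrac 14 v^2)\int |z|^2 \;=\; \tfrac 12 \int |\tilde z_y|^2 + \tfrac c2 \int |\tilde z|^2,
\]
so all velocity-dependent cross terms cancel and the kinetic part of $\mathcal F$ matches the kinetic part of the scale-$c$ analogue of the form $\mathcal B$ in Lemma~\ref{surL}(5a).

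Second, I would Taylor-expand the nonlinear density. For $m \geq 2$, $|u|^{m+1}$ is $C^2$ in the real sense (with the convention $|u|^{m-3}(\re \bar u h)^2 \leq |u|^{m-1}|h|^2$ when $m<3$), yielding
\[
|\tilde u + z|^{m+1} - |\tilde u|^{m+1} - (m+1)|\tilde u|^{m-1}\re(\bar{\tilde u} z) = \tfrac{(m+1)(m-1)}{2}|\tilde u|^{m-3}(\re(\bar{\tilde u} z))^2 + \tfrac{m+1}{2}|\tilde u|^{m-1}|z|^2 + R,
\]
with a cubic-type remainder $|R| \lesssim (|\tilde u| + |z|)^{m-2}|z|^3$. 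Writing $\tilde u = \tilde R + w$ with $\tilde R = (Q_c/\tilde a) e^{i\Theta}$ and $\|w\|_{H^1} \leq K\ve e^{-\ve\mu|\rho|}$ from (\ref{IP}), I may substitute $|\tilde u| \to Q_c/\tilde a$ and $\bar{\tilde u} z \to (Q_c/\tilde a)\tilde z$ modulo an error of size $\ve e^{-\ve\mu|\rho|}\|z\|_{L^2}^2$. Next, replacing $a_\ve(x)/\tilde a^{m-1}(\ve\rho) = a_\ve(x)/a_\ve(\rho)$ by $1$, with pointwise error $O(\ve|y|)$ against the exponential weight $Q_c^{m-1}$, the nonlinear contribution to $\mathcal F$ becomes
\[
\tfrac{m-1}{2}\int Q_c^{m-1}(\re \tilde z)^2 + \tfrac 12 \int Q_c^{m-1}|\tilde z|^2 + O(\ve(1+e^{-\ve\mu|\rho|}))\|z\|_{L^2}^2 + O(\|z\|_{L^2}^3).
\]
The remainder $\int a_\ve R$ is handled by H\"older and Sobolev: $|R| \lesssim |z|^3$ after $H^1 \hookrightarrow L^\infty$ boundedness of $\tilde u$, giving an $O(\|z\|_{L^2}^3)$ contribution.

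Combining Steps~1 and 2, $\mathcal F(t) = \mathcal B[\tilde z,\tilde z] + \text{(errors above)}$, where $\mathcal B$ is the scale-$c$ version of the quadratic form of Lemma~\ref{surL}(5a). The orthogonality conditions (\ref{defz1}) read $\int \bar{\tilde z}\,Q_c = \int \bar{\tilde z}\,Q_c' = 0$, so \emph{both} real and imaginary parts vanish; in particular the hypothesis of Lemma~\ref{surL}(5a) holds and the penalty term $K|\re \int \bar{\tilde z} Q_c|^2$ is itself zero. Thus $\mathcal B[\tilde z, \tilde z] \geq \nu_0 \|\tilde z\|_{L^2}^2 = \nu_0 \|z\|_{L^2}^2$. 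To upgrade to $H^1$, I would combine this with the trivial bound $\mathcal B[\tilde z,\tilde z] \geq \tfrac 12 \|\tilde z_y\|_{L^2}^2 - C\|\tilde z\|_{L^2}^2$ via a convex combination; since $v$ stays bounded by (\ref{aprio}), the gauge transform $z \leftrightarrow \tilde z$ is an equivalence of $H^1$ norms, producing the desired $\nu_0 \|z\|_{H^1}^2$ on the right. The second, sharper assertion then follows by absorbing $K\ve(e^{-\ve\mu|\rho|}+1)\|z\|_{L^2}^2$ into $\tfrac{1}{10}\nu_0\|z\|_{H^1}^2$ for $\ve$ small, and using $\|z\|_{L^2} \leq \|z\|_{H^1} \leq KK^*\ve^{p_m}$ from (\ref{TRANS3}) to absorb the cubic term.

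The main obstacle is the range $2 \leq m < 3$, where $|u|^{m-3}$ is singular at $u=0$; one must avoid a naive expansion and instead exploit the pointwise estimate $|u|^{m-3}(\re(\bar u h))^2 \leq |u|^{m-1}|h|^2$ to reinterpret the quadratic term, while the cubic remainder $R$ must be controlled via $H^1 \hookrightarrow L^\infty$ rather than termwise. The secondary subtlety is keeping track of the bookkeeping in Step~1: it is essential that the phase $\Theta$ carries precisely the factor $v/2$ in $x$ so that all $v$-dependent cross terms telescope, leaving the linearization at the static soliton $Q_c$.
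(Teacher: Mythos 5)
Your proposal is correct and follows essentially the same route as the paper: expand the nonlinear term of $\mathcal F$ to quadratic order around $\tilde R$, replace $a_\ve(x)/a_\ve(\rho)$ by $1$ up to an $O(\ve(1+e^{-\mu\ve|\rho|}))\|z\|_{L^2}^2$ error, and invoke the coercivity of Lemma~\ref{surL}(5) together with the orthogonality conditions (\ref{defz1}), absorbing the small and cubic errors using (\ref{TRANS3}). Your explicit gauge change $\tilde z = z e^{-i\Theta}$ reducing to $\mathcal B$ of part (5a) is exactly what the paper's functional $\tilde{\mathcal B}$ in part (5b) packages, so the two arguments coincide.
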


\begin{proof}
The proof is similar to the proof of Lemma 5.5 in \cite{Mu2}. First of all it is easy to see that 
\bee
\mathcal F(t) & = & \frac 12 \int_\R |z_x|^2 + \frac 12  (c +\frac 14 v^2 ) \int_\R  |z|^2  - \frac 12v\ima \int_\R z_x\bar z  \\
& & -  \int_\R \frac{a(\ve x) }{a(\ve \rho)} Q_c^{m-1}(y) [  |z|^2 + (m-1) [\re( e^{i\Theta} \bar z)]^2 ]  + O(\ve \|z(t)\|_{H^1(\R)}^2+ \|z(t)\|_{H^1(\R)}^3)
\eee
On the other hand, it is clear that $\abs{\ve \frac { a'(\ve \rho)}{ a (\ve \rho)} \int_\R   y Q_c^{m-1} |z|^2}\leq K  \ve e^{-\mu\ve |\rho(t)|} \|z(t)\|_{L^2(\R)}^2.$ Thus we have
\bea
 \mathcal F(t)&  = &  \frac 12 \int_\R |z_x|^2 + \frac 12  (c +\frac 14 v^2 ) \int_\R  |z|^2  - \frac 12v\ima \int_\R z_x\bar z\nonu \\
 & &   -  \int_\R Q_c^{m-1}(y) [  |z|^2 + (m-1) [\re( e^{i\Theta} \bar z)]^2 ]   \nonumber \\
& &  + O( \ve(1+ e^{-\ve\mu |\rho(t)|}) \|z(t)\|_{H^1(\R)}^2  + \|z(t)\|_{H^1(\R)}^3).  \label{Un}
\eea
Finally, from Lemma \ref{surL} and (\ref{defz1}), we have the existence of constants $K, \nu_0>0$ such that for all $t\in [-T_\ve, T^*]$
$$
(\ref{Un}) \geq \nu_0 \|z(t)\|_{H^1(\R)}^2 -K\ve (1+ e^{-\ve\mu |\rho(t)|}) \|z(t)\|_{H^1(\R)}^2-K\|z(t)\|_{H^1(\R)}^3.
$$
The proof is now complete.
\end{proof}

Now we use a coercivity argument, similar to Lemma 5.6 in \cite{Mu2} to obtain independent estimates for $\mathcal F(T^*)$.  

\begin{lem}[Estimates on $\mathcal F(T^*)$]\label{Ka}~

The following properties hold for any $t\in [-T_\ve, T^*]$.
\ben
\item First time derivative. 
\bea
& & \mathcal F'(t)  = \nonu \\
&   & = \ima \int_\R \overline{iz_t} \big\{ z_{xx} - (c+\frac 14 v^2) z + a_\ve [|\tilde u+z|^{m-1}(\tilde u+z)  - |\tilde u|^{m-1}\tilde u]  -iv z_x\big\} \label{Fp} \\
& & \quad  + \ima \int_\R a_\ve \overline{i\tilde u_t} [ |\tilde u+z|^{m-1}(\tilde u + z)  - |\tilde u|^{m-1}\tilde u -  \frac 12 (m+1) |\tilde u|^{m-1} z - \frac 12(m-1) | \tilde u|^{m-3} \tilde u^2 \bar z ] \nonumber  \\
 & & \quad + ( c' +  \frac 14 v' v) \int_\R |z|^2 -  \frac 12 v' \ima\int_\R \bar z z_x. 
\eea
\item Integration in time. There exist constants $K,\mu>0$ such that
\be\label{IntF}
\mathcal F(t) -\mathcal F(-T_\ve)  \leq   K(K^*)^4\ve^{4p_m-1  -\frac 1{100}}  + KK^* \ve^{2p_m } + K\int_{-T_\ve}^t  \ve e^{-\ve\mu |\rho(s)|}\|z(s)\|_{H^1(\R)}^2 ds. 
\ee
\een
\end{lem}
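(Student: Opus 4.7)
The plan is to derive formula (\ref{Fp}) by direct differentiation of $\mathcal F(t)$, and then obtain the bound (\ref{IntF}) by substituting equation (\ref{Eqz1}) for $iz_t$ into the first integral of (\ref{Fp}), isolating the dominant cancellation $\ima\int|z_t|^2=0$, and carefully bounding the residual terms.

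For part (1), I would differentiate each term of (\ref{F}) with respect to $t$. The quadratic-in-$z$ pieces yield, after integration by parts in $x$ and use of the identity $\re\int\bar z_t\,Q = -\ima\int\overline{iz_t}\,Q$, the bracket $z_{xx} - (c+\tfrac 14 v^2)z - ivz_x$ paired with $\overline{iz_t}$; the identity $\partial_t\,\ima\int\bar z z_x = 2\ima\int\bar z_t z_x$ produces the $-ivz_x$ contribution. Differentiating the nonlinear integrand of $\mathcal F$ by the chain rule in $(\tilde u, \overline{\tilde u}, z, \bar z)$ splits into a $z_t$-piece, which completes the first bracket by inserting $a_\ve[|\tilde u+z|^{m-1}(\tilde u+z) - |\tilde u|^{m-1}\tilde u]$, and a $\tilde u_t$-piece which, because the linear-in-$z$ subtraction is baked into the definition of $\mathcal F$, produces exactly the second line of (\ref{Fp}). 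Finally, the coefficient derivatives $c'$ and $v'$ generate the last line.

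For part (2), the central step is to substitute $iz_t$ into the first bracket of (\ref{Fp}) using (\ref{Eqz1}), which produces three families of residuals: (a) a self-interaction $-\ima\int|z_t|^2=0$ that vanishes; (b) cross products of $\overline{iz_t}$ with the source term $\tilde S[\tilde u]$ and with the four modulation errors $(c'-\ve f_2)$, $(v'-\ve f_1)$, $(\rho'-v-\ve^2 f_4)$, $(\ga' + \tfrac 12 v'\rho - \ve^2 f_3)$; and (c) the leftover $\overline{iz_t}\cdot[-(c+\tfrac 14 v^2)z - ivz_x]$ corresponding to the terms absent from (\ref{Eqz1}). For family (b) I would eliminate the $z_t$-dependence via integration by parts in time, obtaining boundary contributions of size $\|z(\pm T_\ve)\|_{H^1}\|F\|_{H^1}$ where $F$ is either $\tilde S[\tilde u]$ or a modulation-error factor; Lemma \ref{CV} together with (\ref{rho1})--(\ref{c1}) and the bootstrap $\|z\|_{H^1}\leq KK^*\ve^{p_m}$ then control the bulk integrals. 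Family (c), combined with the third line of (\ref{Fp}), reorganizes as $\frac{d}{dt}\{\tfrac 12(c+\tfrac 14 v^2)\|z\|^2 - \tfrac 12 v\,\ima\int\bar z z_x\}$ plus an $O(|c'|)\|z\|^2$ remainder, so that the perfect-time-derivative part integrates to boundary contributions already accounted for, leaving only lower-order residuals.

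Assembling the estimates: the source-type cross products satisfy $(\sup_t\|z\|_{H^1})\int\|\tilde S\|_{H^1}\,dt \leq KK^*\ve^{p_m}\cdot K\ve^{p_m} = KK^*\ve^{2p_m}$ via (\ref{integra}), producing the second term of (\ref{IntF}); the exponentially localized quadratic pieces from (\ref{rho1})--(\ref{c1}) produce the last integral $K\int\ve e^{-\ve\mu|\rho|}\|z\|_{H^1}^2\,dt$; and the cubic-and-higher contributions, combined with the interval length $O(T_\ve)=O(\ve^{-1-1/100})$ and $\|z\|_{H^1}\leq KK^*\ve^{p_m}$, produce the $K(K^*)^4\ve^{4p_m-1-1/100}$ term. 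For the second line of (\ref{Fp}), I would Taylor-expand its bracket into a quadratic-plus-higher-order remainder in $z$ localized where $\tilde u$ lives, and use (\ref{Est2a}) to replace $i\tilde u_t = -iv\tilde u_x - (c+\tfrac 14 v^2)\tilde u + O_{H^1}(\ve e^{-\ve\mu|\rho|})$, reducing this inner product to soliton-localized integrals handled alongside the modulation terms. The main technical obstacle is precisely the bookkeeping of this $\tilde u_t$-contribution: ensuring that none of its quadratic-in-$z$ pieces accumulates a bad factor of $T_\ve$ requires careful use of (\ref{Est2a}) and of the structure of (\ref{F}), in particular the fact that the linear-in-$z$ piece of the potential-energy Taylor expansion has been subtracted so that what remains is genuinely quadratic and absorbable into the last integral term of (\ref{IntF}).
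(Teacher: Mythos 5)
Your part (1) is fine and matches the paper (straightforward differentiation of (\ref{F}); the $\tilde u_t$-line is handled via (\ref{Est2a}) just as you say), and your final order-counting for (\ref{IntF}) is the right bookkeeping. The gap is in the central substitution of part (2). You insert (\ref{Eqz1}) into the \emph{bracket} of (\ref{Fp}) while keeping $\overline{iz_t}$ as the multiplier. This produces your family (c),
$$
\ima \int_\R \overline{iz_t}\,\big[-(c+\tfrac 14 v^2)z - ivz_x\big] \;=\; \frac{d}{dt}\Big\{\tfrac 12 (c+\tfrac 14 v^2)\|z\|_{L^2(\R)}^2 - \tfrac 12 v \ima\int_\R \bar z z_x\Big\} + O(|c'|+|v'|)\,\|z\|_{H^1(\R)}^2,
$$
which you propose to integrate to ``boundary contributions already accounted for.'' They are not: the boundary value at time $t$ is of size $C\|z(t)\|_{H^1(\R)}^2$ with $C$ of order $\frac 12(c+\frac 14 v^2)\geq \frac12$, a constant unrelated to (and in general much larger than) the coercivity constant $\nu_0$ of Lemma \ref{Coer2}. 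The resulting inequality $\nu_0\|z(t)\|_{H^1}^2 \leq \mathcal F(t) \leq \mathcal F(-T_\ve) + C\|z(t)\|_{H^1}^2 + \cdots$ is vacuous; equivalently, you end up controlling only $\mathcal F$ minus its mass/momentum correction, which is precisely the non-coercive part of the functional. (Recovering $\|z(t)\|_{L^2}^2$ and $\ima\int\bar z z_x(t)$ separately from conservation laws is not immediate either: the momentum is only monotone, and its cross term with $\tilde u$ is not annihilated by (\ref{defz1}).) A second, related problem: your family (b) still carries $\overline{iz_t}$ against $\tilde S[\tilde u]$ and the modulation errors, and eliminating it by integration by parts in time requires bounds on $\partial_t \tilde S[\tilde u]$ and on $c'',v'',\rho'',\ga''$, none of which are available from Lemmas \ref{CV} and \ref{DEFZ}.

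The paper substitutes the other way around. Writing the bracket as $L[z]+N[z]$ (linear plus nonlinear part in $z$), equation (\ref{Eqz1}) gives $\overline{iz_t} = -\overline{L[z]+N[z]} - \overline{(c+\frac 14 v^2)z+ivz_x} - \overline{\tilde S[\tilde u]} - \overline{[\hbox{modulation errors}]}$, and this expression is inserted for the \emph{multiplier}. Then the self-interaction $-\ima\int_\R |L[z]+N[z]|^2$ vanishes, no $z_t$ survives anywhere, and in the cross term $\ima\int_\R\{(c+\frac 14 v^2)z+ivz_x\}\overline{L[z]+N[z]}$ the purely kinetic part $\bar z_{xx}-(c+\frac 14 v^2)\bar z+iv\bar z_x$ of $\overline{L[z]}$ cancels identically after integration by parts, leaving only $\tilde u$-localized and genuinely nonlinear contributions (these are (\ref{Fp0})--(\ref{Fp4b})), each of which is small and is then estimated exactly as in your final paragraph. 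So the estimates you assemble at the end are the right ones, but the algebraic step that makes every surviving term estimable in the first place is missing from your argument.
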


\begin{proof}
First of all, (\ref{Fp}) follows after derivation in time.
Let us  consider (\ref{IntF}). In order to simplify the computations, let  $v_1' := \frac 12 (v' -\ve f_1)$, $c_1' := c' -\ve f_2$, $\rho_1' := \rho' -v -\ve^2 f_4$ and $\ga_1' := \ga' +\frac 12 v' \rho -\ve^2 f_3$. In addition, consider
$$
L[z]:=  z_{xx} - (c+\frac 14 v^2) z   + \frac 12 a_\ve [ (m+1) |\tilde u |^{m-1} z +(m-1)|\tilde u|^{m-3}\tilde u^2\bar z] -ivz_x,
$$
and
$$
N[z] :=a_\ve(x)[  |\tilde u+z|^{m-1}(\tilde u + z)  - |\tilde u|^{m-1}\tilde u -  \frac 12 (m+1) |\tilde u|^{m-1} z - \frac 12(m-1) | \tilde u|^{m-3} \tilde u^2 \bar z].
$$
Replacing (\ref{Eqz1}) in (\ref{Fp}) we get
\bea
& &\!\!\!\!\!\! \mathcal F'(t) =\nonu \\
&  &\!\!\!\!\!\! =  \ima \int_\R  \{ (c+\frac 14 v^2) z + iv z_x\}  \{  \frac 12 a_\ve[ (m+1) |\tilde u |^{m-1} \bar z +(m-1)|\tilde u|^{m-3}\overline{\tilde u}^2 z] +\overline{N[z]}  \}  \label{Fp0} \\
& & -  \ima \int_\R \overline{\tilde S[\tilde u]}  L[z]   -   \ima \int_\R\overline{  L[z]} [ \ga_1'  \tilde u+ v_1' y \tilde u - c'_1 i\partial_c \tilde u  - \rho_1' i\partial_\rho \tilde u]  \nonu   \\
& &  -  \ima \int_\R \overline{ N[z]} [  i\tilde u_t -\tilde S[\tilde u] + \ga_1' \tilde u + v_1' y \tilde u - c'_1  i\partial_c \tilde u - \rho_1'i\partial_\rho \tilde u ] \label{Fp4}\\
& & +  ( c' +  \frac 14 v' v) \int_\R |z|^2 -  \frac 12 v' \ima\int_\R \bar z z_x. \label{Fp4b}
\eea 
From (\ref{c1}), 
$$
|(\ref{Fp4b})|\leq K \ve e^{-\ve\mu |\rho(t)|} \|z(t)\|_{H^1(\R)}^2 + K \|z(t)\|_{H^1(\R)}^4.
$$
On the other hand, note that $L[z] = L_Q [z] + O(\ve e^{-\ve\mu|\rho(t)|}\|z(t)\|_{H^1(\R)})$, with
$$
L_Q[z] :=  z_{xx} - (c+\frac 14 v^2) z   + \frac 12  Q_c^{m-1}(y) [ (m+1)  z +(m-1) e^{2i\Theta} \bar z] -ivz_x.
$$
Therefore a simple computation using (\ref{defz1}) and (\ref{c1})-(\ref{rho1}) gives us
$$
\abs{  \ima \int_\R\overline{  L[z]} [ \ga_1'  \tilde u+ v_1' y \tilde u - c'_1 i\partial_c \tilde u  - \rho_1' i\partial_\rho \tilde u]} \leq  K \ve e^{-\ve\mu|\rho(t)|} \|z(t)\|_{H^1(\R)}^2.
$$
We also have
\be\label{E1b}
\abs{ \ima \int_\R \overline{\tilde S[\tilde u]}  L[z]  } \leq K \|z(t)\|_{L^2(\R)} \|\tilde S[\tilde u](t)\|_{H^1(\R)}.
\ee
Next, note that from (\ref{2.2bis}), Proposition \ref{prop:decomp} and (\ref{Est2}) one has
$$
 i\tilde u_t -\tilde S[\tilde u] + \ga_1' \tilde u + v_1' y \tilde u - c'_1  i\partial_c \tilde u - \rho_1'i\partial_\rho \tilde u  =   -(c+\frac 14 v^2)\tilde u -  iv \tilde u_x + O_{H^1(\R)}(\ve e^{-\ve\mu|\rho(t)|}),
$$
therefore we obtain
\bee
& &(\ref{Fp0})+(\ref{Fp4})= \\
&  & = \ima \int_\R   \{ (c+\frac 14 v^2) z + iv z_x\}  \{ \frac 12 a_\ve[ (m+1) |\tilde u |^{m-1} \bar z +(m-1)|\tilde u|^{m-3}\overline{\tilde u}^2 z] +\overline{N[z]} \} \\
& & + \ima  \int_\R \{ (c+\frac 14 v^2) \tilde u + iv \tilde u_x\} \overline{N[z]}  +  O(\ve e^{-\ve\mu|\rho(t)|}\|z(t)\|_{H^1(\R)}^2).
\eee
Now we claim that
\be\label{Mir1}
 \ima \int_\R   \frac 12 a_\ve z [ (m+1) |\tilde u |^{m-1} \bar z +(m-1)|\tilde u|^{m-3}\overline{\tilde u}^2 z] +  \ima  \int_\R \tilde u  \overline{N[z]} = \ima \int_\R \bar zN[z],
\ee
and
\bea\label{Mir2}
& &  \ima \int_\R   \frac 12 a_\ve iz_x [ (m+1) |\tilde u |^{m-1} \bar z +(m-1)|\tilde u|^{m-3}\overline{\tilde u}^2 z] +  \ima  \int_\R i\tilde u_x  \overline{N[z]} = \nonumber  \\
& & \qquad =  -\ima \int_\R i\bar z_x  N[z] + O(\ve e^{-\ve\mu|\rho(t)|} \|z(t)\|_{H^1(\R)}^2).
\eea
Assuming these two identities, we have finally
$$
(\ref{Fp0})+(\ref{Fp4})  =   O(\ve e^{-\ve\mu|\rho(t)|}\|z(t)\|_{H^1(\R)}^2).
$$
Therefore
\bee
\abs{\mathcal F'(t)} & \leq&   K \ve e^{-\ve\mu |\rho(t)|}\|z(t)\|_{L^2(\R)}^2 +  K \|\tilde S[\tilde u](t)\|_{H^1(\R)} \|z(t)\|_{L^2(\R)} + K \|z(t)\|_{L^2(\R)}^4 \\
&\leq & K \ve e^{-\ve\mu |\rho(t)|}\|z(t)\|_{L^2(\R)}^2 +  K K^* e^{-\mu\ve|\rho(t)|} \ve^{1+ 2p_m} + K (K^*)^4 \ve^{4p_m}. 
\eee
After integration between $-T_\ve$ and $t$  we obtain (\ref{IntF}). 

\smallskip

Let us prove (\ref{Mir1}) and (\ref{Mir2}). First of all, note that (\ref{Mir1}) is consequence of the identity
$$
 \ima  \big\{ \frac 12 a_\ve  z [ (m+1) |\tilde u |^{m-1} \bar z +(m-1)|\tilde u|^{m-3}\overline{\tilde u}^2 z] - \overline{\tilde u}N[z] \big\} = \ima\{ \bar zN[z] \}.
$$
On the other hand, (\ref{Mir2}) is an easy consequence of the following identity
\bee
& & \re \Big\{ \frac 12 a_\ve z_x [ (m+1) |\tilde u |^{m-1} \bar z +(m-1)|\tilde u|^{m-3}\overline{\tilde u}^2 z] +  \overline{\tilde u}_x  N[z] \Big\} =\\
& &   \qquad = -\re \{z_x \overline{N[z]} \} + \frac{a_\ve}{m+1} \partial_x\Big\{ |\tilde u+z|^{m+1} -|\tilde u|^{m+1} -(m+1)|\tilde u|^{m-1}\re \{\tilde u \bar z\} \Big\};
\eee
and integration by parts  and (\ref{Est1}).
\end{proof}

\noindent
{\bf End of proof of Proposition \ref{prop:I}.} 
Using Gronwall's inequality (see e.g. \cite{Mu2} for more details) in (\ref{IntF}), the fact that $\rho'(t) \geq \frac 12 v_0>0$, estimate (\ref{TRANS3}), and Lemma \ref{Coer2} we conclude that for some large constant $K>0$, but independent of $K^*$ and $\ve$,
\bee
\|z(t)\|_{H^1(\R)}^2 \leq  K\ve^{2p_m } +  K(K^*)^4\ve^{4p_m-1  -\frac 1{100}} + KK^* \ve^{2p_m }.
\eee
From this estimate and taking $\ve$ small, and $K^*$ large enough, we obtain that for all $t\in [-T_\ve, T^*]$,
$$
\|z(t)\|_{H^1(\R)}^2 \leq  \frac 12 (K^*)^2 \ve^{2p_m }.
$$
Next, from the mass conservation law and (\ref{minTe}) one has
$$
|c(t) -C(t)|\leq K \ve^{p_m} + K(K^*)^2\ve^{2p_m} \leq K\ve^{p_m}, 
$$
and from the energy conservation law and once again (\ref{minTe})
$$
|v(t) -V(t)|\leq K \ve^{p_m} + K(K^*)^2\ve^{2p_m} \leq K\ve^{p_m}. 
$$
Therefore, for $K^*$ large enough and all $\ve>0$ small,
$$
\| u(T^*) - \tilde u(C(T^*),V(T^*),\rho(T^*),\ga(T^*)) \|_{H^1(\R)} \leq \frac 23 K^* \ve^{p_m}.
$$
This estimate contradicts the definition of $T^*$, and therefore $T^* =T_\ve$. In addition, from (\ref{TRANS3}) we obtain (\ref{INT41a}). Finally (\ref{garho}) and (\ref{cc}) are consequence of (\ref{rho1})-(\ref{c1}). The proof of Proposition \ref{prop:I} is now complete.

\bigskip

\noindent
{\bf Final Step. Conclusion and Proof of Proposition \ref{T0}.} Now we prove the main result of this section, which describes the core of interaction soliton-potential. 

\begin{proof}[Proof of Proposition \ref{T0}]
Consider $u(t)$ a solution of (\ref{aKdV}) satisfying (\ref{hypINTa}). We first compare $u(t)$ with the approximate solution $\hat u(t)$ from Lemma \ref{atpmT}, at time $t=-T_\ve$.

\subsubsection{Behavior at $t= - T_\ve$} We claim that a suitable modification of $u$ matches with our approximate solution $\hat u(t)$.  Indeed, for $\ga_{-1}$ introduced in (\ref{gam1}), let
$$
v(t,x) := u(t,x)e^{i\tilde\ga}, \qquad \tilde\ga :=  (1- \frac 14 v_0^2) T_\ve  + \ga_{-1},
$$
which still satisfies (\ref{aKdV}).  From (\ref{hypINTa}) and (\ref{mTe}) we have that
$$
\|v(-T_\ve ) - \hat u(-T_\ve) \|_{H^1(\R)} \leq K \ve^{10}.
$$

\subsubsection{Behavior at $t= T_\ve$} Thanks to the above estimate we can invoke Proposition \ref{prop:I} to obtain the existence of $K_0,\ve_0>0$ such that for all $0<\ve<\ve_0$
$$
\| v( T_\ve )  - \tilde u(T_\ve, c(T_\ve), v(T_\ve),\rho(T_\ve) , \ga(T_\ve))  \|_{H^1(\R)} \leq K_0\ve^{p_m},%-\rho_1(-T_\ve)
$$
with $|c(T_\ve) - C(T_\ve)| + |v(T_\ve) - V(T_\ve)| \leq K_0 \ve^{p_m}$.
On the other hand, note that from Lemma \ref{ODE}, (\ref{defALPHA}), (\ref{defv}), (\ref{defW}), (\ref{IP}) and the last estimates
$$
\| \tilde u(T_\ve, c(T_\ve), v(T_\ve),\rho(T_\ve) , \ga(T_\ve)) - \la_\infty Q_{c_\infty} (\cdot -\rho(T_\ve)) e^{\frac i2(\cdot ) v_\infty} e^{i\bar\ga(T_\ve)}  \|_{H^1(\R)} \leq K K_0\ve^{p_m},
$$
where
$$
|\rho(T_\ve) -U(T_\ve) | \leq \frac{T_\ve}{100}, \quad \bar{\ga}(T_\ve) :=  \int_0^{T_\ve} c(s) ds  -\frac 14\int_0^{T_\ve} v^2(s)ds + \ga(T_\ve).
$$
The proof of this last estimate is similar to the proof of Lemma \ref{atpmT}. Therefore 
$$
\| v( T_\ve ) - \la_\infty Q_{c_\infty}(\cdot  - \rho( T_\ve))e^{\frac i2 (\cdot )v_\infty}  e^{i \bar{\ga}(T_\ve) }  \|_{H^1(\R)} \leq K K_0\ve^{p_m},
$$
Returning to the original function $u$, we obtain that
$$
\| u(T_\ve) -\la_\infty Q_{c_\infty}(\cdot  -\rho( T_\ve)) e^{\frac i2 (\cdot )v_\infty}  e^{i(\bar{\ga}(T_\ve)-\tilde \ga)} \|_{H^1(\R)} \leq K K_0\ve^{p_m}.
$$
Finally, note that  $ \frac {99}{100}v_0 T_\ve \leq \rho(T_\ve) \leq \frac{101}{100}(2v_\infty - v_0)T_\ve. $ By defining $\rho_\ve := \rho( T_\ve)$, and $\ga_\ve :=  \bar{\ga}(T_\ve)-\tilde \ga,$ we obtain (\ref{INT41})-(\ref{INT42}). This finishes the proof.
\end{proof}

%%%%%%%%%%%%%%%%%%%%%%%

\bigskip

%%%%%%%%%%%%

\section{Decreasing potential and reflection}\label{Ap}

In this section we sketch the proof of Theorem A'.  First of all, it is not difficult to prove the following result, by following Proposition \ref{Tm1}.
 
 \begin{prop}[Existence of a pure soliton-like solution]\label{STm1r}~

There exists $\ve_0>0$ such that for any $0<\ve < \ve_0$, there exists a solution $u_\# \in C(\R, H^1(\R))$ of (\ref{aKdV})-(\ref{ahypSr})  such that 
\be\label{lim0r}
\lim_{t\to -\infty} \|u_\#(t) -  Q(\cdot -v_0 t)e^{\frac i2 (\cdot ) v_0}e^{i(1-\frac 14v_0^2)t}  \|_{H^1(\R)} =0,
\ee
with mass $M[u_\#](t) = M[Q]$ and energy $E_a[u_\#](t) = (\frac 14v_0^2 -\la_0)M[Q]<0.$
Moreover, there exist constants $K,\mu>0$ such that  for all $t\leq -\frac 1{2}T_\ve$,
\be\label{minusTer}
\|u_\#(t) -  Q(\cdot -v_0 t)e^{\frac i2 (\cdot ) v_0}e^{i(1-\frac 14v_0^2)t} \|_{H^1(\R)} \leq  K e^{\ve \mu t}.
\ee
In particular, 
\be\label{mTepr}
\|u_\#(-T_\ve) - Q(\cdot + v_0 T_\ve )e^{\frac i2 (\cdot ) v_0}e^{-i(1-\frac 14v_0^2)T_\ve } \|_{H^1(\R)} \leq K e^{- \mu \ve^{-\frac 1{100}}} \leq K \ve^{10},
\ee
provided $0<\ve<\ve_0$ small enough.
\end{prop}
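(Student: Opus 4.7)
The plan is to adapt, with essentially no modifications, the construction and estimates used to prove Proposition \ref{Tm1} in Appendix \ref{A}. The crucial observation is that hypotheses (\ref{ahyp}) and (\ref{ahypSr}) impose exactly the same asymptotic behavior at $x\to -\infty$: in both cases $a(r)\to 1$ with $|a(r)-1|\leq Ke^{\mu r}$. Since $v_0>0$, the soliton profile $R(t,x):=Q(x-v_0t)e^{\frac i2(\cdot)v_0}e^{i(1-\frac14 v_0^2)t}$ is essentially supported near $x\sim v_0 t$, which for $t\leq -\frac12 T_\ve$ lies deep in the region where $a(\ve x)\equiv 1+O(e^{\mu\ve v_0 t})$. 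Thus, on the left asymptotic interval, the equation (\ref{aKdV})-(\ref{ahypSr}) is exponentially close to the standard NLS equation (\ref{gKdV}), for which $R$ is an exact solution. The sign change in the momentum identity (\ref{dPa}) plays no role here because we are constructing the solution backwards in time: the forward-in-time arguments that do depend on the sign of $a'$ are used only in the interaction and large-time regimes.

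First, I would take a sequence $S_n\to -\infty$ and define $u_n\in C(\R,H^1(\R))$ as the global solution of (\ref{aKdV})-(\ref{ahypSr}) given by Lemma \ref{Cauchy} with initial data $u_n(S_n)=R(S_n)$. The heart of the argument is to establish a uniform backward estimate of the form
\begin{equation}\label{unifBack}
\|u_n(t)-R(t)\|_{H^1(\R)}\leq K e^{\ve\mu t},\qquad t\in[S_n,-\tfrac12 T_\ve],
\end{equation}
with constants $K,\mu>0$ independent of $n$. This is achieved by a modulation-plus-Lyapunov argument: decompose $u_n(t)=R_{c(t),v(t),\rho(t),\gamma(t)}(t)+z_n(t)$ with the orthogonality conditions of Lemma \ref{DEFZ}, and control $z_n$ via a functional analogous to $\mathcal F$ of Section \ref{EFz}. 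The coercivity of Lemma \ref{Coer2} gives a lower bound by $\|z_n\|_{H^1}^2$, while the time derivative of $\mathcal F$ generates source terms proportional to $(a_\ve-1)$ evaluated on the soliton support, which by (\ref{ahypSr}) are bounded by $Ke^{\ve\mu v_0 t}$. A Gronwall argument backwards in time, together with the fact that $z_n(S_n)=0$, then yields (\ref{unifBack}).

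Next, I would extract a subsequence converging (locally uniformly in time, weakly in $H^1$ and strongly in $H^1_{\mathrm{loc}}$) to a limit $u_\#\in C(\R,H^1(\R))$ solving (\ref{aKdV})-(\ref{ahypSr}). Passing (\ref{unifBack}) to the limit gives (\ref{minusTer}), which in turn yields (\ref{lim0r}). The mass identity is immediate from $L^2$ conservation applied along $u_n$ and (\ref{lim0r}). For the energy, write $E_a[u_\#](t)$ at a fixed time and evaluate its limit as $t\to-\infty$ using (\ref{lim0r}), the dominated convergence theorem, and the exponential decay $|a(\ve x)-1|$ on the support of $R(t)$; the identities of Appendix \ref{AidQ} then give
\begin{equation*}
E_a[u_\#](t)=\frac12\int |R_x|^2-\frac1{m+1}\int|R|^{m+1}+o(1)=\Bigl(\tfrac14 v_0^2-\la_0\Bigr)M[Q],
\end{equation*}
and since $\la_0>\tfrac14 v_0^2$ under assumption (\ref{Smallness}), this quantity is strictly negative. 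Finally, (\ref{mTepr}) follows from (\ref{minusTer}) at $t=-T_\ve$, using (\ref{Te}) and taking $\ve_0$ small enough.

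The main obstacle, as in the original Proposition \ref{Tm1}, is establishing the uniform backward exponential estimate (\ref{unifBack}) with constants independent of the truncation time $S_n$. This requires care in setting up the modulation equations analogous to (\ref{rho1})-(\ref{c1}) and checking that the modulation parameters $(c(t),v(t),\rho(t))$ remain in a small neighborhood of $(1,v_0,v_0 t)$ on the whole interval $[S_n,-\tfrac12 T_\ve]$, so that $\rho(t)$ stays sufficiently negative to guarantee the exponential smallness $e^{\ve\mu\rho(t)}$ of the source terms driving the Lyapunov inequality.
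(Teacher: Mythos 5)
Your overall architecture (approximating solutions $u_n$ with exact soliton data at times $S_n\to-\infty$, a uniform backward estimate, then compactness and passage to the limit) is exactly the scheme the paper uses, following Proposition \ref{Tm1} and Appendix \ref{A}. However, there is a genuine gap at the heart of the uniform estimate: your assertion that ``the sign change in the momentum identity (\ref{dPa}) plays no role here'' is false, and the paper explicitly identifies this sign as the one point where the proof of Proposition \ref{Tm1} must be modified. In the bootstrap Proposition \ref{Uealpha}, the Weinstein-type functional contains the term $-v_0 P[u](t)$, and since $P[u]$ is \emph{not} conserved one must control its variation over the unbounded interval $[S_n,t]$. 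For the increasing potential this is done purely by sign: $\partial_t P[u]\geq 0$ and $v_0>0$ give the one-sided inequality (\ref{Ine1}) with no need to estimate the \emph{size} of the variation. For the decreasing potential (\ref{ahypSr}) that inequality flips, so one must bound $v_0\,|P[u](t)-P[u](S_n)| = \frac{v_0\ve}{m+1}\left|\int_{S_n}^t\int_\R a'(\ve x)|u|^{m+1}\right|$ in absolute value. Writing $u=R+z$, the contribution of the non-localized part $z$ is of size $\ve\int_{S_n}^t\|z(s)\|_{H^1}^{m+1}\,ds$ (the factor $a'(\ve x)$ gives no decay off the soliton support), and under the weak a priori bound $\|z\|_{H^1}\leq 2\al$ this grows like $\ve\al^{m+1}(t-S_n)$, unbounded as $S_n\to-\infty$. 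Your Gronwall argument, whose sources you describe as ``proportional to $(a_\ve-1)$ evaluated on the soliton support,'' therefore does not close as written.

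The fix, stated in the remark following Proposition \ref{STm1r}, is to replace the bootstrap hypothesis (\ref{Ue1alpha}) by the stronger, exponentially decaying assumption $\|u_n(t)-R(t)\|_{H^1}\leq 2Ke^{\ve\mu t}$, as in the multi-soliton construction of \cite{Martel}; only then is $\ve\int_{S_n}^t\|z\|_{H^1}^{m+1}\lesssim e^{(m+1)\ve\mu t}\leq e^{2\ve\mu t}$ summable and of the right order to be reabsorbed by the coercivity. (This same loss of monotonicity is why uniqueness becomes an open problem in the reflected case.) Your proposal points in roughly the right direction --- absolute-value bounds plus Gronwall with exponentially decaying sources --- but it hides precisely the step that fails without the strengthened hypothesis; you need to make that strengthened bootstrap explicit and verify that the momentum variation is then controlled.
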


\begin{rem}
Note that from this result, we have proved the first part of Theorem A', namely (\ref{SMinftyr}).
\end{rem}

\begin{rem}
The main difference between Propositions \ref{Tm1} and \ref{STm1r} is in the proof of Proposition \ref{Uealpha}. Now condition (\ref{Ue1alpha}) must be replaced by the stronger assumption
$$
\|u_n(t) - Q(\cdot - v_0 t)e^{\frac i2 (\cdot) v_0}e^{i(1-\frac 14 v_0^2)t} \|_{H^1(\R)}\leq 2K e^{\ve \mu t}.
$$
This assumption is intended to restore the lack of monotonicity of the momentum in the good direction. Note that a similar condition is present in the construction of $N$ solitary waves for NLS equations, see \cite{Martel}.  

Finally, from this last fact the proof of  {\bf uniqueness} of this solution is an open issue. 
\end{rem}

\bigskip

The next step is the study of the interaction soliton-potential. This is the part of the proof where we need some completely new computations. Our objective is to prove the following result.

\begin{prop}[Dynamics of the soliton in the interaction region]\label{ST0r}~

Suppose $v_0>0$ satisfying the smallness condition
$$
v_0^2 < 4\la_0( 1-a_0^{\frac 4{5-m}} ), \qquad \hbox{ $\la_0$ given in Theorem A.} 
$$
There exist constants $K_0,\ve_0>0$ such that the following holds for any $0<\ve <\ve_0$. There exist $\rho_\ve, \ga_\ve \in \R$ such that $u_\#$ defined in Proposition \ref{STm1r} satisfies
$$
\|u_\#(\tilde T_\ve)  - Q(\cdot - \rho_\ve) e^{-\frac i2(\cdot)v_0} e^{i\ga_\ve} \|_{H^1(\R)} \leq K_0 \ve^{p_m},%\quad |\rho_1'(T_\ve)|\leq K_0 \ve^{1/2}.
$$
and $ \rho_\ve \sim -v_0T_\ve$ and $\tilde T_\ve \sim T_\ve$, independent of $\ve$.
\end{prop}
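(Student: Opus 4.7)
The overall strategy mirrors that of Proposition \ref{T0}: construct an approximate solution $\tilde u$ on a bounded-in-$\varepsilon$ interaction interval $[-T_\ve,\tilde T_\ve]$, then transfer proximity of $u_\#$ to $\tilde u$ through a Gronwall--energy argument analogous to Proposition \ref{prop:I}. The decisive new ingredient is the analysis of the ODE system (\ref{c}) under the decreasing potential (\ref{ahypSr}), which now produces a trajectory that decelerates, reaches a turning point, and returns.

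\emph{ODE analysis for the reflection.} Following Lemma \ref{ODE}, one still derives the two conservation identities
\begin{equation*}
C(t) \;=\; \frac{a^{4/(5-m)}(\varepsilon U(t))}{a^{4/(5-m)}(-\varepsilon v_0 T_\varepsilon)}\bigl(1+O(\varepsilon^{10})\bigr),\qquad V^{2}(t) \;=\; v_0^{2} + 4\lambda_0\bigl(C(t)-1\bigr).
\end{equation*}
Under (\ref{ahypSr}) we have $a_0<a<1$ and $a'<0$, so along the initial trajectory $U$ increases, $C$ decreases (monotonically, from $1$ toward $a_0^{4/(5-m)}$), and $V$ decreases. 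The smallness condition (\ref{Smallness}) is precisely $v_0^{2}<4\lambda_0(1-a_0^{4/(5-m)})$, which forces $V^{2}$ to vanish \emph{before} $C$ can reach $a_0^{4/(5-m)}$. Hence there is a finite turning time $t_{*}\in(-T_\varepsilon,+\infty)$ with $V(t_{*})=0$, at which $U$ attains its maximum $U_{*}$ and $C$ attains its minimum $C_{*}=1-v_0^{2}/(4\lambda_0)>a_0^{4/(5-m)}$. Past $t_{*}$, because $V'(t_{*})=\varepsilon f_1=\tfrac{8\varepsilon a' C}{(m+3)a}<0$, the velocity continues to decrease through zero. The conservation identities, together with the $O(\varepsilon^{10})$-smallness of $a^{4/(5-m)}(-\varepsilon v_0 T_\varepsilon)-1$, make the trajectory time-symmetric about $t_{*}$ modulo $O(\varepsilon^{10})$: $U$ retraces its path, and for some $\tilde T_\varepsilon$ with $\tilde T_\varepsilon/T_\varepsilon$ bounded and bounded below by a constant depending on $(v_0,a_0,m)$ but independent of $\varepsilon$, one obtains $V(\tilde T_\varepsilon)=-v_0+O(\varepsilon^{10})$, $C(\tilde T_\varepsilon)=1+O(\varepsilon^{10})$ and $U(\tilde T_\varepsilon)\sim -v_0 T_\varepsilon$. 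The divergence of $\tilde T_\varepsilon/T_\varepsilon$ as $v_0^{2}$ approaches the threshold in (\ref{Smallness}) matches the blow-up of $K(v_0)$ asserted in Theorem A'.

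\emph{Approximate solution and stability.} With these modulation parameters $(C,V,U,H)$ in hand, define $\tilde u$ exactly as in Section \ref{4}, Step 2, and solve the same elliptic systems $(\Omega_1)$ and $(\tilde\Omega_2)$. The decompositions of Proposition \ref{prop:decomp}, Lemma \ref{lem:omega1}, Claim \ref{11}, and Lemma \ref{lem:omega2} are insensitive to the sign of $a'$; the only change is that the profile functions $A_{k,c},B_{k,c}$ pick up the opposite overall sign. Lemma \ref{CV} thus still yields $\|\tilde S[\tilde u](t)\|_{H^{1}}\leq K\varepsilon^{p_m+1}(\varepsilon+e^{-\varepsilon\mu|U(t)|})$. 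The stability proof (Proposition \ref{prop:I}) is then applied on $[-T_\varepsilon,\tilde T_\varepsilon]$. The modulation equations of Lemma \ref{DEFZ} are driven by $c(t)$ (bounded away from $0$) and not by $v(t)$, so they remain nonsingular through the turning point. The coercivity of Lemma \ref{Coer2} requires only $c>0$, and the energy identity for $\mathcal F$ from Lemma \ref{Ka} needs only $\int_{-T_\varepsilon}^{\tilde T_\varepsilon}\varepsilon e^{-\mu\varepsilon|U(s)|}\,ds=O(1)$; this integral is still finite (now contributed by two passages through the support of $a'$, one before and one after $t_{*}$), using that $|U'(t)|=|V(t)|$ is bounded away from zero except in an $O(\varepsilon^{-1/2})$-window near $t_{*}$ — a window whose length is negligible against $\varepsilon^{-1}$.

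\emph{Main obstacle and conclusion.} The principal subtlety is the passage through $V=0$: in the previous monotone case, one used $\rho'(t)\geq \tfrac{1}{2}v_0>0$ to convert temporal Gronwall bounds into bounded spatial integrals. Here this must be replaced by a direct bound on $\int \varepsilon e^{-\mu\varepsilon|U|}\,dt$, obtained by splitting the interval at $t_{*}$, changing variables $t\mapsto U(t)$ on each monotone half (where $|dU/dt|=|V|$ is bounded below away from a small neighbourhood of $t_*$), and observing that the neighbourhood of $t_*$ contributes $O(\varepsilon^{-1/2})\cdot O(1)$ which is harmless relative to the $e^{-\mu\varepsilon|U_*|}=O(1)$ factor. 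Once this replacement is in place, the Gronwall step of Proposition \ref{prop:I} gives $\|z(t)\|_{H^{1}}\leq K_0\varepsilon^{p_m}$ uniformly on $[-T_\varepsilon,\tilde T_\varepsilon]$, and evaluating at $t=\tilde T_\varepsilon$ together with the endpoint values $C(\tilde T_\varepsilon)=1$, $V(\tilde T_\varepsilon)=-v_0$, $U(\tilde T_\varepsilon)\sim -v_0 T_\varepsilon$ yields the desired estimate with $\rho_\varepsilon:=\rho(\tilde T_\varepsilon)$ and $\gamma_\varepsilon$ the corresponding accumulated phase, exactly as at the end of Section \ref{4}.
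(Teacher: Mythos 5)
Your route is essentially the paper's: the parabola $C(t)=c_0+V^2(t)/(4\la_0)$ from Lemma \ref{ODEr} produces the turning point under (\ref{Smallness}), the approximate solution and the stability argument of Section \ref{4} are reused verbatim on $[-T_\ve,\tilde T_\ve]$ after relaxing the a priori assumptions (\ref{aprio}), and the one genuinely new point is the bound $\int \ve e^{-\mu\ve|\rho(t)|}\,dt=O(1)$ obtained by splitting the time interval at the turning point. This is exactly the structure of Lemma \ref{KKK} and the paper's proof of Proposition \ref{ST0r}.

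There is, however, a concrete quantitative error in that key new step. Near the turning time $t_*$ one has $V(t)\approx \ve\kappa_0\,(t-t_*)$ with $\kappa_0=f_1(c_0,U(t_*))<0$ a fixed constant (since $|\ve U(t_*)|\le K(v_0)$), because $V'=\ve f_1=O(\ve)$. Hence the set where $|V|$ fails to be bounded below by a \emph{fixed} constant $\nu$ is an interval of length $\sim 2\nu/(\ve|\kappa_0|)=O(\ve^{-1})$, not $O(\ve^{-1/2})$ as you claim. If you excise only an $O(\ve^{-1/2})$-window, then on its complement you merely get $|V|\gtrsim \ve^{1/2}$, and the change of variables $dt=dU/|V|$ yields $\int \ve e^{-\mu\ve|U|}\,dU/|V|=O(\ve^{-1/2})$; fed into the Gronwall step of Proposition \ref{prop:I} this produces a factor $e^{K\ve^{-1/2}}$ and destroys the estimate $\|z\|_{H^1}\le K_0\ve^{p_m}$. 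The correct splitting (as in Lemma \ref{KKK}) takes the window $[t_*-\nu/\ve,\,t_*+\nu/\ve]$ with $\nu$ small but fixed: its contribution to the integral is at most $\ve\cdot 2\nu/\ve=2\nu=O(1)$ because the integrand is bounded by $\ve$, while outside it $|V|\ge c\nu$ and the change of variables gives $O(1/\nu)$. With this correction your argument closes. A more minor point: your assertion that the conservation identities ``force'' a finite turning time silently discards the scenario $U\to U_\infty<\infty$ with $V\to 0^+$; as in Lemma \ref{KKK}, this is excluded because then $V'\to \ve f_1(C_\infty,U_\infty)<0$, a fixed negative quantity, contradicting $V>0$ for all time.
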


\begin{rem}
From the proof of this result it will be clear that in the case $v_0^2 > 4\la_0( 1-a_0^{\frac 4{5-m}} )$ the soliton exits by the right hand side of the potential. However, no stability result for large time is known in the regime of a decreasing potential, so we do not know the asymptotic behavior of this solution. The proof of this result will require some new ideas.
\end{rem}

\bigskip

Let us assume for the moment the validity of Proposition \ref{ST0r}, in order to describe the last step of the proof, namely the asymptotic analysis.

\subsection{Stability of the reflected soliton solution}

The final step towards the proof of Theorem \ref{MTref} is a stability result for a reflected soliton. Let us recall that this result is consequence of the good sign of the following derivative:
\be\label{dPneg}
\partial_t \{ (-v_0) P[u_\#](t)\} \geq 0,
\ee
which does not hold for the case of final velocity $v_\infty>0$.

\begin{prop}[Stability in $H^1(\R)$, reflected case]\label{STAAd}~

Suppose $2\leq m<5$. There exists $\ve_0>0$ such that if $0<\ve <\ve_0$ the following hold. Suppose that for some time $t_1\geq \frac 12 T_\ve$, $ X_0 \leq -v_0 t_1 $ and $\ga_0\in \R$ and $K>0$,
\be\label{S18Ad}
\| u_\#(t_1) -  Q(\cdot-X_0) e^{-\frac i2 xv_0} e^{i\ga_0} \|_{H^1(\R)} \leq  K\ve^{p_m}.
\ee
where $u_\#(t)$ is the global $H^1$-solution of (\ref{aKdV}) with decreasing potential (\ref{ahypSr}). 

\smallskip
\noindent
Then there exist $K_0>0$ and $C^1$-functions $\rho(t), \ga(t) \in \R$ defined in $[t_1,+\infty)$ such that 
$$
w(t) := u_\#(t) -  Q (\cdot +v_0 t -\rho_2(t))e^{-\frac i2 (\cdot) v_0}  e^{i\ga(t)},
$$
satisfies for all $t\geq t_1$,
\be\label{SSAd}
\| w (t) \|_{H^1(\R)} +|\rho_2'(t)| +|\ga_2'(t) -1 +\frac 14 v_0^2  |  \leq K_0\ve^{p_m},
\ee
where, for some $K>0$,
$$
|\rho_2(t_1) - v_0 t_1- X_0 | + |\ga_2(t_1) - \ga_0 | \leq  K \ve^{p_m}.
$$
\end{prop}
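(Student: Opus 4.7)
\medskip

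\noindent\textbf{Proof plan.} The plan is to mimic the stability argument of Proposition \ref{Tp1}, replacing the outgoing soliton parameters $(c_\infty, v_\infty)$ by the reflected ones $(1, -v_0)$. The \emph{new} ingredient is the sign observation \eqref{dPneg}: although \eqref{dPa} now gives $\partial_t P[u_\#](t) \leq 0$ (because $a'<0$ under \eqref{ahypSr}), the quantity $-v_0\, P[u_\#](t)$ is \emph{non-decreasing} since $v_0>0$. This is precisely the monotonicity needed to run a Weinstein-type energy estimate adapted to a leftward-moving soliton.

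\medskip

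\noindent\textbf{Modulation.} Starting from \eqref{S18Ad}, I would use the implicit function theorem to construct, on a maximal bootstrap interval $[t_1, T^*)$ defined by $\|z(t)\|_{H^1(\R)} \leq K^* \ve^{p_m}$, the decomposition
\begin{equation*}
u_\#(t,x) = Q\bigl(x + v_0 t - \rho(t)\bigr)\, e^{-\frac{i}{2}xv_0}\, e^{i\gamma(t)} + z(t,x),
\end{equation*}
imposing the two real orthogonality conditions that project $z$ away from the kernel directions $Q'$ and $iQ$ of the linearized operator. Differentiating these conditions in time and substituting \eqref{aKdV} produces modulation ODEs of the form
\begin{equation*}
|\rho'(t)| + \bigl|\gamma'(t) - 1 + \tfrac{1}{4}v_0^2\bigr| \leq K\bigl(\|z(t)\|_{L^2(\R)} + \ve\, e^{-\mu\ve|v_0 t - \rho(t)|}\bigr),
\end{equation*}
the exponential factor encoding the smallness of $a_\ve'$ seen by a soliton whose center $\rho(t)-v_0 t$ lies far to the left of the transition zone $\{|x|\lesssim \ve^{-1}\}$.

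\medskip

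\noindent\textbf{Monotone functional, coercivity and bootstrap.} Next I would introduce the Weinstein-type functional
\begin{equation*}
\mathcal G(t) := E_a[u_\#](t) + M[u_\#](t) + v_0\, P[u_\#](t),
\end{equation*}
for which mass and energy conservation together with \eqref{dPa} give $\mathcal G'(t) = v_0\, \partial_t P[u_\#](t) \leq 0$, hence $\mathcal G(t) \leq \mathcal G(t_1)$. Expanding $\mathcal G(t)$ around the soliton part $R(t,x) = Q(x+v_0 t-\rho)\, e^{-ixv_0/2}\, e^{i\gamma}$, the linear terms vanish (the soliton equation $Q''-Q+Q^m=0$ is exactly the Euler--Lagrange identity for $E + M + v_0 P$ at $R$), while the quadratic part reproduces the form $\tilde{\mathcal B}[z,z]$ of Lemma \ref{surL}(5) with $c=1$, $v=-v_0$; cubic remainders and errors due to the spatial variation of $a_\ve$ near $R$ contribute only $O(\|z\|_{H^1}^3 + \ve\, e^{-\mu\ve|v_0 t-\rho(t)|}\|z\|_{H^1}^2)$. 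Coupling the coercivity statement of Lemma \ref{surL} with the orthogonality conditions above and with mass conservation (which controls the scalar Weinstein defect $|\re\!\int \bar z\, Q\, e^{-ixv_0/2}e^{i\gamma}|$ by $O(\ve^{p_m})$), I obtain
\begin{equation*}
\nu_0\|z(t)\|_{H^1(\R)}^2 \leq K \ve^{2p_m} + K\!\int_{t_1}^t \ve\, e^{-\mu\ve|v_0 s - \rho(s)|}\,\|z(s)\|_{H^1}^2\, ds.
\end{equation*}
Since $|\rho'|\ll v_0$ throughout the bootstrap and $t_1 \geq \tfrac{1}{2}T_\ve$, one has $|v_0 s - \rho(s)| \geq \tfrac12 v_0 s$ for $s\geq t_1$, so the time kernel above is integrable with a small constant. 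A Gronwall argument then strictly improves the bootstrap, forcing $T^* = +\infty$ and giving \eqref{SSAd} by integrating the modulation ODEs.

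\medskip

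\noindent\textbf{Main obstacle.} The delicate point, and the reason the argument works only \emph{forward} in time, is that \eqref{dPneg} is a one-sided inequality: there is no way to propagate the estimate backwards from $t$ to $t_1$. This is consistent with, and explains, the absence of uniqueness for $u_\#$ noted after Proposition \ref{STm1r}. It is also what dictates the constraint $t_1 \geq \tfrac{1}{2}T_\ve$ in the hypothesis, since the reflected soliton must already be well beyond the interaction zone in order for the exponential factor $e^{-\mu\ve|v_0 t - \rho(t)|}$ to provide an integrable-in-time gain; otherwise the exponentially small but cumulative potential-induced forcing could in principle contradict the bootstrap.
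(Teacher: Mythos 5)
Your strategy is exactly the one the paper uses: the paper's proof of Proposition \ref{STAAd} is only a sketch which says to repeat the Weinstein argument of Proposition \ref{Tp1} with a monotone functional built from $E_a$, $M$ and $v_0 P$, whose time-derivative is $\leq 0$ by (\ref{dPneg}); your modulation, coercivity and bootstrap steps, and your use of mass conservation to control the remaining $Q$-direction, all match the template of Appendices \ref{A} and \ref{B}.

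There is, however, one concrete error that would make the energy step fail as written: your functional $\mathcal G = E_a + M + v_0 P$ has the wrong coefficient on the mass. The correct combination, obtained from the one in Proposition \ref{Tp1} (namely $E_a + (c+\tfrac14 v^2)M - vP$) by specializing to the reflected parameters $c=1$, $v=-v_0$, is $E_a + (1+\tfrac14 v_0^2)M + v_0 P$ -- this is the paper's $\tilde{\mathcal F}$ with $\tilde c=1$. With your coefficient the Euler--Lagrange identity at $R = Q(x+v_0t-\rho)e^{-ixv_0/2}e^{i\gamma}$ does \emph{not} hold (the stationarity condition forces the mass multiplier to absorb the $\tfrac14 v_0^2$ coming from $|R_x|^2$), so the linear term in the expansion of $\mathcal G$ around $R$ does not vanish, and the quadratic part is not the coercive form $\tilde{\mathcal B}$ of Lemma \ref{surL} but a shifted operator $-\partial_y^2 + (1-\tfrac14 v_0^2) - \dots$ whose kernel is no longer spanned by $Q'$ and $iQ$; your orthogonality conditions then do not yield coercivity, and the leftover linear term, of size $O(\ve^{p_m})$ by mass conservation, would at best degrade the bound to $\|z\|_{H^1}\lesssim \ve^{p_m/2}$. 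The fix is immediate (restore the $\tfrac14 v_0^2$), and with it the rest of your argument, including the sign discussion of $v_0 P$ and the forward-only nature of the estimate, is correct.
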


\bigskip

\noindent
{\bf End of proof of Theorem A'.}
The second part of Theorem A', namely \ref{SMT2r}, follows from (\ref{SSAd}) in the preceding proposition, after defining $\rho(t) := \rho_2(t)$ and $\ga(t) := \ga_2(t)$. The proof is now complete, provided Proposition \ref{STAAd} holds.

\begin{proof}[Proof of Proposition \ref{STAAd}]
The proof of this result is based in a standard Weinstein argument, similar to the proof of Proposition \ref{Tp1}. The main difference is now given by the introduction of the Weinstein functional
$$
\tilde{\mathcal F}(t) := E_a[u](t) +  (\tilde c + \frac 14 v_0^2) M[u](t)  + v_0 P[u](t),
$$ 
for a suitable positive constant $\tilde c$ (e.g. one may take $\tilde c =1$). From (\ref{dPneg}), this functional satisfies $\tilde{\mathcal F}'(t) \leq 0$, which is the key property to establish the stability result.  For a complete proof, see e.g. \cite{Mu4}.

\end{proof}

\bigskip

The next step is the proof of Proposition \ref{ST0r}. In order to obtain this result, the first step is a detailed study of the dynamical system (\ref{c}) for the case of a decreasing potential. That is the objective of the next subsection.

\subsection{Study of a dynamical system, revisited}

Similarly to Proposition \ref{T0}, the dynamics of a soliton soliton is mainly described by its velocity $V(t)$, position $U(t)$, scaling $C(t)$ and phase $H(t)$. The dynamical system governing these variables is the same as in Lemma \ref{ODE}, with the key difference on the sign of the derivative of the potential $a$. Indeed, recall that 
\be\label{f1f2r}
 f_1(C,U) =\frac{8 a'(\ve U) C}{(m+3)a(\ve U)},  \qquad f_2(C,V,U)  =\frac{4 C V a'(\ve U) }{(5-m)a(\ve U)}.
\ee
Our first result is as follows:

\begin{lem}[Existence of approximated dynamical parameters, case $2\leq m<5$]\label{ODEr}~

Let $v_0>0, \la_0, a(s) $ be as in Theorem \ref{MTref} and  (\ref{ahypSr}). There exists a unique solution $(V,C, U, H)$ defined for all $t\geq -T_\ve$ with the same regularity than $a(\ve \cdot)$, of the following nonlinear system of differential equations 
\be\label{crS}
\begin{cases}
\displaystyle{V'(t) = \ve f_1(C(t),U(t))}, &  V(-T_\ve) = v_0, \\
\displaystyle{C'(t) = \ve f_2(C(t),V(t), U(t)), } &  C(-T_\ve) =1, \\
\displaystyle{U'(t) = V(t),} &  U(-T_\ve) = -v_0 T_\ve, \\
\displaystyle{H'(t) =  - \frac 12V'(t) U(t),} & H(-T_\ve) = 0.
\end{cases}
\ee
In addition, for all $t\geq -T_\ve$, $C(t), V(t)$ are strictly decreasing with 
\be\label{c10}
C(t)  = \frac{a^{4/(5-m)}(\ve U(t))}{a^{4(5-m)}(-\ve v_0 T_\ve) } = \frac{a^{4/(5-m)}(\ve U(t))}{a^{4(5-m)}(-\ve^{-1/100}) },
\ee
and satisfy the parabola
\be\label{parab}
C(t) = c_0  +\frac{V^2(t)}{4\la_0}, \quad  c_0 := 1-\frac{v_0^2}{4\la_0}<1.
\ee
\end{lem}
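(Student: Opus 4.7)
The plan is to follow the proof of Lemma \ref{ODE}, modified only by the change of sign of $a'$. Local existence and uniqueness of $(V,C,U,H)$ on a maximal interval $[-T_\ve, T^*)$ follows from the Cauchy--Lipschitz--Picard theorem applied to the smooth right-hand side of (\ref{crS}). The two closed-form identities (\ref{c10}) and (\ref{parab}) will then provide the a priori bounds that yield $T^*=+\infty$.

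To derive (\ref{c10}), I divide the scaling equation in (\ref{crS}) by $C$ and use $U'=V$ to obtain
\[
\frac{C'(t)}{C(t)} = \frac{4}{5-m}\,\frac{d}{dt}\log a(\ve U(t)),
\]
which, together with $C(-T_\ve)=1$, integrates directly to (\ref{c10}). For (\ref{parab}), a direct computation of $(V^2)' = 2VV'$ using (\ref{crS}) and (\ref{f1f2r}) yields the algebraic relation
\[
(V^2)'(t) = \frac{16\,\ve\, V(t)\, a'(\ve U(t))\, C(t)}{(m+3)\,a(\ve U(t))} = \frac{4(5-m)}{m+3}\,C'(t) = 4\la_0\,C'(t);
\]
integration from $-T_\ve$ then produces $V^2 - v_0^2 = 4\la_0(C-1)$, i.e.\ (\ref{parab}).

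By (\ref{c10}) and (\ref{ahypSr}) one has $a_0^{4/(5-m)}(1+O(\ve^{10})) \leq C(t) \leq 1+O(\ve^{10})$, so $C$ is uniformly bounded and bounded away from zero on $[-T_\ve,T^*)$. Inserting this into (\ref{parab}) bounds $V^2(t)$ by $v_0^2$. These uniform bounds, together with the global boundedness of $a$ and $a'$ from (\ref{ahypSr}), rule out finite-time blow-up for $(V,C,U,H)$, hence $T^*=+\infty$. Monotonicity is then immediate: $V'(t) = 8\ve a'(\ve U(t))C(t)/((m+3)a(\ve U(t)))$ is strictly negative since $a'<0$, so $V$ strictly decreases; and since $(V^2)' = 4\la_0 C'$ with $V'<0$, the sign of $C'$ agrees with the sign of $V$, so $C$ strictly decreases as long as $V>0$, which is the regime relevant to the interaction.

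The main conceptual subtlety, central for Proposition \ref{ST0r} but already implicit here, is the compatibility of the two identities. Under hypothesis (\ref{Smallness}), which rearranges to $c_0 > a_0^{4/(5-m)}$, the value $C=c_0$ at which (\ref{parab}) forces $V=0$ lies strictly inside the range $[a_0^{4/(5-m)},1]$ described by (\ref{c10}); hence $V$ vanishes at a unique reflection time $t_{\mathrm{refl}}\in(-T_\ve,+\infty)$, after which $V<0$, $U'<0$, and the soliton is reflected with $V\to -v_0$ and $C\to 1$ as $t\to+\infty$, precisely the picture underlying Theorem~A'.
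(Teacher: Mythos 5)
Your proof is correct and follows essentially the same route as the paper: Cauchy--Lipschitz--Picard for local existence, integration of the scaling equation to obtain the closed form (\ref{c10}), the identity $(V^2)'=4\la_0 C'$ yielding the parabola (\ref{parab}), and the resulting uniform bounds on $C$ and $V$ to conclude global existence and monotonicity. One small wording slip: since $C'=\ve f_2$ carries the sign of $-V$ (because $a'<0$), the sign of $C'$ is \emph{opposite} to that of $V$ rather than equal to it, though your stated conclusion that $C$ decreases precisely while $V>0$ is the correct one.
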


\begin{proof}
The existence of a local solution of (\ref{crS}) is consequence of the Cauchy-Lipschitz-Picard theorem. 

Now, in order to prove global existence of such a solution, we derive some a priori estimates. Note that from the first equation in (\ref{c}) we have $C$ \emph{strictly decreasing} in time with $C(t) \leq 1$, $t\geq -T_\ve$.   
Moreover, after integration, we have (\ref{c10}). Since $\frac 12 <a < 1$, one has that $C$ is bounded and globally well defined with
\be\label{c11}
\frac{a_0^{4/(5-m)}}{a^{4(5-m)}(-\ve^{-1/100}) }  \leq  C(t) < 1 , \quad t\geq -T_\ve.
\ee

On the other hand, from the second equation in (\ref{crS}), we have $V$ \emph{strictly decreasing} in time. Replacing (\ref{c10}), and after multiplication by $V(t)$, one has
$$
V(t) V'(t) = \frac 8{m+3} a^{\frac{m-1}{5-m}}(\ve U(t)) a'(\ve U(t)) V(t) a^{-\frac 4{5-m}}(-\ve^{-1/100}).
$$
After integration in $[-T_\ve, t)$ we obtain  (\ref{parab}). This last relation and the fact that $C(t)\leq 1$ implies the global existence of $V$ and the uniform bound
$$
\abs{V(t)} \leq v_0, \quad t\geq -T_\ve.
$$
The proof is complete.
\end{proof}

\medskip
Now we describe the behavior of $(C,V,U)$ for large times.  Interestingly enough, here the long time behavior may be different depending on the initial velocity $v_0$. Recall that $c_0 = 1-\frac{v_0^2}{4\la_0}$. 

\medskip

\noindent
{\bf First case: $c_0 <  a_0^{4/(5-m)}$.} In this case, the soliton is formally refracted by the potential.

\begin{lem}[Long time behavior, refracting case]\label{FirstC}~

Suppose $c_0 < a_0^{4/(5-m)}$. Then $\lim_{t\to +\infty} (C(t),V(t),U(t)) =(c_\infty, v_\infty,+\infty)$, with
\be\label{K3}
c_\infty =a_0^{\frac 4{5-m}} (1+O(\ve^{10})), \quad \hbox{ and }\quad v_\infty = [ 4\la_0(c_\infty -c_0) ]^{1/2}>0.
\ee
Moreover, there exists $-T_\ve<\tilde T_\ve<K(v_0)T_\ve$ such that $U(\tilde T_\ve) = -U(-T_\ve).$
\end{lem}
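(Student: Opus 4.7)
The plan is to leverage the conservation relation (\ref{parab}) together with the explicit formula (\ref{c10}) in a simple four-step bootstrap; everything else is a routine consequence.

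First I would bound $V$ away from zero. Since $a(r) \geq a_0$ for all $r$ and the normalizing denominator $a^{4/(5-m)}(-\ve^{-1/100}) = 1 + O(\ve^{10})$ by the exponential decay from (\ref{ahypSr}) at $-\infty$, formula (\ref{c10}) gives $C(t) \geq a_0^{4/(5-m)}(1+O(\ve^{10}))$ for every $t \geq -T_\ve$. Substituting this into the parabola (\ref{parab}) yields
\[ V^2(t) \geq 4\la_0\bigl(a_0^{4/(5-m)} - c_0\bigr) + O(\ve^{10}) =: v_*^2 > 0, \]
where the strict positivity uses \emph{exactly} the refracting hypothesis $c_0 < a_0^{4/(5-m)}$. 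Since $V(-T_\ve)=v_0>0$ and $V$ is continuous, continuity then forces $V(t) \geq v_* > 0$ uniformly in $t \geq -T_\ve$.

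Second, from $U' = V \geq v_*$ and $U(-T_\ve)=-v_0 T_\ve$, I would obtain $U(t) \geq -v_0 T_\ve + v_*(t+T_\ve) \to +\infty$. Combined with the tail bound $0 < a(r) - a_0 \leq Ke^{-\mu r}$ for $r \geq 0$ from (\ref{ahypSr}), formula (\ref{c10}) then forces $C(t) \to c_\infty := a_0^{4/(5-m)}(1+O(\ve^{10}))$, and the parabola (\ref{parab}) gives $V^2(t) \to 4\la_0(c_\infty - c_0)$. The sign of the limit is fixed by Step~1, so $V(t) \to v_\infty = [4\la_0(c_\infty - c_0)]^{1/2} > 0$, establishing (\ref{K3}).

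Third, for the existence of $\tilde T_\ve$: since $U$ is strictly increasing (because $V > 0$) from $-v_0 T_\ve$ at $t=-T_\ve$ to $+\infty$, the intermediate value theorem produces a unique $\tilde T_\ve > -T_\ve$ with $U(\tilde T_\ve) = v_0 T_\ve = -U(-T_\ve)$. The two-sided bound $v_* \leq V \leq v_0$ then sandwiches the elapsed time,
\[ 2T_\ve \;\leq\; \tilde T_\ve + T_\ve \;\leq\; \frac{2v_0}{v_*}\,T_\ve, \]
giving $\tilde T_\ve \leq K(v_0) T_\ve$ with $K(v_0) := 2v_0/v_* - 1$.

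No step is genuinely hard; the only point to watch is that the refracting hypothesis $c_0 < a_0^{4/(5-m)}$ is precisely what prevents the parabola from dipping to $V=0$, which is why $v_* > 0$ in Step~1. I would finish with the observation that $v_*^2 = v_0^2 - 4\la_0(1-a_0^{4/(5-m)}) \searrow 0$ as $v_0$ approaches the threshold in (\ref{Smallness}) from above, so $K(v_0) \to +\infty$ in that limit, matching the predicted divergence of the constant in the remark following Theorem A'.
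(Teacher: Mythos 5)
Your proof is correct and follows essentially the same route as the paper: the explicit formula (\ref{c10}) for $C$, the parabola (\ref{parab}), the refracting hypothesis to keep $V$ positive, and integration of $U'=V$ to bound the exit time. The only (cosmetic) difference is that you extract the uniform lower bound $V\geq v_*>0$ directly from the two identities, whereas the paper rules out $\lim_{t\to+\infty}V(t)=0$ by contradiction before passing to the limit; your closing remark that $v_*\to 0$ as $v_0$ approaches the threshold correctly accounts for the blow-up of $K(v_0)$.
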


\begin{proof}
We prove that for $\ve$ small $\lim_{t\to +\infty } C(t) = a_0^{\frac 4{5-m}} (1+O(\ve^{10})). $ Indeed, note that from (\ref{c10}) and (\ref{parab}) we have
$$
0\leq a_0^{\frac 4{5-m}} -c_0 <\frac{V^2(t)}{4\la_0};
$$
thus $V(t)>0$ for all $t\geq -T_\ve$. Moreover, if $\lim_{+\infty}V(t) =0$, we have from (\ref{c10})
$$
\lim_{t\to +\infty}C(t) =c_0\leq a_0^{\frac 4{5-m}}, 
$$ 
a contradiction with (\ref{c11}) (after taking $\limsup$). In concluding we have $\lim_{+\infty}V(t) >0$, and $\lim_{+\infty} U(t) =+\infty$. Passing to the limit in (\ref{c10}), one obtains
$$
\lim_{t\to +\infty} C(t) = \frac{a_0^{4/(5-m)}}{a^{4/(5-m)}(-\ve^{-1/100})} > a_0^{4/(5-m)}>c_0.
$$
This proves the first assertion in (\ref{K3}). The last assertion in (\ref{K3}) follows from (\ref{parab}); we get
$$
\lim_{t\to +\infty} V(t) = [ 4\la_0( \lim_{t\to +\infty} C(t) -c_0) ]^{1/2}>0.
$$
Now, let us define the {\bf exit time} $\tilde T_\ve > -T_\ve$ such that $U(\tilde T_\ve) = -U(-T_\ve)$. Note that this time is unique since $U$ is a strictly increasing function. We have
\be\label{Ueee}
-U(-T_\ve)  =  U(-T_\ve) + \int_{-T_\ve}^{\tilde T_\ve } V(t)dt  \geq    U(-T_\ve) + V(+\infty) (\tilde T_\ve +T_\ve), 
\ee
then, if $c_0<  a_0^{4/(5-m)}$ we have $\tilde T_\ve \leq KT_\ve$, since $V(+\infty)=v_\infty$ is bounded by below independent of $\ve$.
\end{proof}

\medskip

\noindent
{\bf Second case: $c_0 > a_0^{4/(5-m)}$.} In this case, the soliton is formally reflected by the potential.

\begin{lem}[Long time behavior, reflecting case]\label{KKK}~

Suppose now $c_0 >  a_0^{4/(5-m)}$. Then there exists a unique $t_0>-T_\ve$, satisfying $V(t_0)=0$  and  $t_0\leq K(v_0)T_\ve $ for some constant $K(v_0)>0$ but independent of $\ve$. Moreover, one has  
$$
\lim_{t\to +\infty} (C(t),V(t),U(t)) =(1, v_0,-\infty).
$$
Moreover, there exist $\tilde T_\ve >-T_\ve $ and $\tilde K(v_0)>0$ such that $U(\tilde T_\ve) = U(-T_\ve)$, with $\tilde T_\ve \leq \tilde K(v_0)T_\ve$.
\end{lem}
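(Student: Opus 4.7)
The plan is to exploit the conservation law (\ref{parab}), combined with the global monotonicity $V'(t)<0$ (valid because $a'<0$ everywhere under (\ref{ahypSr})), to trap the trajectory in a compact region of phase space. The hypothesis $c_0>a_0^{4/(5-m)}$ is exactly what guarantees that the parabola $V^2=4\lambda_0(C-c_0)$ meets the axis $V=0$ at a value $C=c_0$ which lies above the infimum $a_0^{4/(5-m)}/a^{4/(5-m)}(-\ve^{-1/100})$ of the range of $C$ dictated by (\ref{c10}); this forces $V$ to cross zero at some finite turning time $t_0$, producing the reflection.

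First I would establish existence and uniqueness of $t_0$. Since $V$ is strictly decreasing and $|V|\leq v_0$ by the parabola, if $V(t)>0$ on $[-T_\ve,\infty)$ then $V(t)\to V_\infty\geq 0$. The case $V_\infty>0$ is ruled out because it forces $U(t)\to+\infty$, whence (\ref{c10}) yields $C(t)\to a_0^{4/(5-m)}/a^{4/(5-m)}(-\ve^{-1/100})<c_0$, violating the parabola. The case $V_\infty=0$ is ruled out because it forces $U(t)\to U_\infty$ finite with $C_\infty=c_0$, but then $V'(t)\to \frac{8\ve c_0 a'(\ve U_\infty)}{(m+3)a(\ve U_\infty)}<0$ bounded away from $0$, incompatible with $V(t)\to 0$. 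Uniqueness of $t_0$ follows from the strict monotonicity of $V$. Passing to the limit in (\ref{c10}) at $t=t_0$ gives $\ve U(t_0)=r_0$ where $r_0$ is the unique solution of $a(r_0)=c_0^{(5-m)/4}a(-\ve^{-1/100})$, which stays uniformly bounded in $\ve$ thanks to the smallness hypothesis.

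For the quantitative bounds, I would split the trajectory into two regions. On the tail $U(t)\leq -M/\ve$ (for a large but fixed $M$), the exponential decay in (\ref{ahypSr}) gives $|V'(t)|\leq K\ve e^{-\mu M}$, so $V(t)\geq v_0/2$ and the tail portion $[-v_0 T_\ve,-M/\ve]$ is traversed in time $\leq 2T_\ve$. On the interaction region $-M/\ve\leq U\leq r_0/\ve$, the continuous function $|a'(\ve U)|$ is bounded below by some $\delta=\delta(v_0)>0$ on the compact set $[-M,r_0]$, yielding $V'(t)\leq -\delta'\ve$ and hence $V$ drops from $v_0$ to $0$ in time $\leq v_0/(\delta'\ve)\leq K(v_0)T_\ve$. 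For $t>t_0$, repeating the dichotomy argument from the previous paragraph to the strictly decreasing $V(t)<0$ shows $V(t)\to -v_0$ (so the correct reflected limit is $-v_0$, with the sign in the statement understood accordingly) and $U(t)\to-\infty$, while the same two-region split applied in reverse (exit from $[-M/\ve,r_0/\ve]$ in time $O(1/\ve)$, then nearly ballistic motion with $V\approx -v_0$) yields the existence and bound $\tilde T_\ve\leq \tilde K(v_0)T_\ve$ for the unique time after $t_0$ at which $U(\tilde T_\ve)=-v_0 T_\ve$, uniqueness following from the strict monotonicity of $U$ on $(t_0,\infty)$.

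The main obstacle I anticipate is controlling the dependence of the constants on $v_0$ in the borderline regime $v_0^2\to 4\lambda_0(1-a_0^{4/(5-m)})^-$. In that limit $c_0\to a_0^{4/(5-m)}$, so $r_0\to+\infty$ and the lower bound $\delta(v_0)$ for $|a'|$ near the turning point degenerates to zero; correspondingly $K(v_0)$ and $\tilde K(v_0)$ blow up, consistent with the remark following Theorem A'. A careful proof should keep this degeneration quantitative, which amounts to tracking how fast $|a'(r_0)|\to 0$ as $r_0\to\infty$ via the exponential tail in (\ref{ahypSr}), and confirms that away from the borderline the bounds are uniform in $v_0$.
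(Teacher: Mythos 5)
Your qualitative argument for the existence of the turning time $t_0$ is essentially the paper's: assume $V>0$ for all time, use (\ref{c10}) together with the parabola (\ref{parab}) to rule out $U\to+\infty$, and use $\lim_{t\to+\infty} V'=\ve f_1(C_\infty,U_\infty)<0$ to rule out a finite limit of $U$ with $V\to 0$. One small logical slip: $V_\infty=0$ does \emph{not} by itself force $U_\infty<+\infty$ (think $V\sim 1/t$); the paper treats the subcase $V\to 0$, $U\to+\infty$ separately, though it dies by the very parabola contradiction you already used for $V_\infty>0$, so this is trivially repaired. Where you genuinely diverge from the paper is in the quantitative bounds $t_0,\tilde T_\ve\le K(v_0)T_\ve$: the paper Taylor-expands $V$ on a window of width $\nu/\ve$ around $t_0$, obtaining $V(t_0\pm\nu/\ve)=\pm\nu\kappa_0+O(\nu^2)$ with $\kappa_0=f_1(c_0,U(t_0))<0$, and then integrates $U'=V$ against these one-sided bounds; you instead bound $|a'|$ from below on a fixed compact set $[-M,r_0]$ containing $\ve U(t_0)$. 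Both devices hinge on the same quantity (the size of $a'$ near the turning point, which degenerates as $v_0$ approaches the borderline), and your version makes the blow-up of $K(v_0)$ somewhat more transparent.

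However, one step of your tail estimate does not close as written. From $|V'(t)|\le K\ve e^{-\mu M}$ on $\{U\le -M/\ve\}$ you cannot conclude $V\ge v_0/2$ by integrating in time: the tail is traversed in time of order $T_\ve$, and $\ve T_\ve=\ve^{-1/100}/v_0\to\infty$, so the accumulated drop $K\ve e^{-\mu M}T_\ve$ is unbounded for fixed $M$ as $\ve\to 0$. The fix is to avoid time integration altogether: by (\ref{c10}) and (\ref{ahypSr}) one has $C(t)=1+O(e^{-\mu M})$ as long as $\ve U(t)\le -M$, and then (\ref{parab}) gives $V^2(t)=v_0^2+O(e^{-\mu M})$ directly --- i.e., exactly the conservation law you announce as your organizing principle, applied where it is actually needed. (Equivalently, integrate $d(V^2)/dU=2\ve f_1$ in the variable $U$ rather than in $t$.) With that repair, and with the sign correction $V\to -v_0$ at $+\infty$ that you correctly note (and which matches the paper's own proof rather than the statement), the proposal is sound.
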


\begin{proof}
First we prove the existence of $t_0> -T_\ve$ such that $V(t_0)=0$. Note that its existence implies its uniqueness. By contradiction, let us suppose that $V(t) >0$ for all $t>-T_\ve$. Then $U(t)$ is strictly increasing. Here we have two cases. First, suppose $\lim_{+\infty} V(t) >0$. Then we have
$U(+\infty) =+\infty$ and therefore $C(t)$ is strictly decreasing with
$$
\lim_{t\to+\infty}C(t) = \frac{a_0^{4/{5-m}}}{a^{4/(5-m)}(-\ve^{-1/100})}.
$$  
Passing to the limit in (\ref{parab}), one has
$$
\frac{a_0^{4/{5-m}}}{a^{4/(5-m)}(-\ve^{-1/100})} >  c_0  > a_0^{4/(5-m)},
$$
a contradiction for $\ve$ small enough.

Now suppose $\lim_{t\to +\infty} V(t) =0$. Here we have two cases: either $\lim_{t\to +\infty}U(t) =+\infty$, or $U(-T_\ve) \leq \lim_{t\to +\infty}U(t) =: U_\infty<+\infty$. For the first case, similarly to the recent analysis, one has
$$
\lim_{t\to +\infty}C(t) =: C_\infty = \frac{a_0^{4/{5-m}}}{a^{4/(5-m)}(-\ve^{-1/100})} \geq  c_0  > a_0^{4/(5-m)}.
$$
This is a contradiction for $\ve$ small enough.

In the second case, one has
$$
C_\infty =  \frac{a^{4/(5-m)}(\ve U_\infty)}{a^{4(5-m)}(-\ve^{-1/100}) } >0,
$$
and
$$
\lim_{t\to +\infty}V'(t) = \ve f_1(C_\infty, U_\infty)<0,
$$
a contradiction with $\lim_{t\to+\infty} V(t) =0$ (since $\lim_{t\to +\infty} V' (t)= \lim_{t\to +\infty} \frac{V(t)}{t} =0$.)

\smallskip

Therefore, there exists $t_0\in \R$ such that $V(t_0) =0$, with $V(t)<0$ for $t>t_0$. In addition, $C(t_0) = c_0$ and $U'(t_0)= 0.$ From (\ref{c10}) one has $|\ve U(t_0) | \leq K(v_0)$.

\smallskip

Moreover, $\lim_{t\to +\infty}U(t) = -\infty$.  Indeed, note that for $\nu>0$ small, since $V''(t) =O(\ve^2)$,
$$
V(t_0 \pm  \frac \nu\ve) = \pm V'(t_0) \frac \nu\ve + O(\nu^2) =\pm \nu f_1(c_0,U(t_0)) +  O(\nu^2) =\pm \kappa_0 \nu + O(\nu^2).
$$
with $ \kappa_0 := f_1(c_0,U(t_0)) = \frac{8c_0}{(m+3)} \frac{a' (\ve U(t_0))}{a(\ve U(t_0))}<0$. Then for $t>t_0 + \frac\nu\ve$,
\bee
U(t) & = & U(-T_\ve) + \int_{-T_\ve}^{t_0-\frac{\nu}{\ve}} V(t)dt +\int_{t_0-\frac{\nu}{\ve}}^{t_0+\frac{\nu}{\ve}} V(t)dt + \int_{t_0+ \frac{\nu}{\ve}}^{t} V(t)dt \\
& \leq  &  U(-T_\ve) + v_0 (T_\ve + t_0-\frac{\nu}{\ve})  + v_0 \frac \nu\ve  + \nu\kappa_0 ( t -t_0 - \frac{\nu}{\ve}). 
\eee
and thus $\lim_{t\to +\infty}U(t) = -\infty$. In addition, $\lim_{t\to +\infty}C(t) =a^{-\frac {4}{5-m}}(-\ve^{-1/100}) = 1+O(\ve^{10})$ and  $\lim_{t\to +\infty}V(t) = -v_0+ O(\ve^{10})$.

\smallskip

Similarly,
\bee
U(t_0) & = & U(-T_\ve) + \int_{-T_\ve}^{t_0-\frac \nu\ve} V(t)dt +  \int_{t_0-\frac \nu\ve}^{t_0} V(t)dt  \\
&\geq &  U(-T_\ve)  - \nu \kappa_0 (t_0-\frac \nu\ve +T_\ve) -K\frac \nu\ve. 
\eee
This inequality implies that $t_0 \leq K T_\ve$, with $K = K(v_0)>0$ independent of $\ve.$ Note that $K$ becomes singular as $v_0$ approaches $c(v_0)= a_0^{4/(5-m)}$.

Let us define $\tilde T_\ve> -T_\ve$ such that $U(\tilde T_\ve ) = U(-T_\ve).$ Then we have $C(\tilde T_\ve) =1$ and $V(\tilde T_\ve) = -v_0$. We finally have
\bee
0 & = & \int_{-T_\ve}^{\tilde T_\ve} v(t)dt =  \int_{-T_\ve}^{t_0-\frac{\nu}{\ve}} V(t)dt +\int_{t_0-\frac{\nu}{\ve}}^{t_0+\frac{\nu}{\ve}} V(t)dt - \int_{t_0+ \frac{\nu}{\ve}}^{\tilde T_\ve} |V(t)|dt \\
& \leq  &  v_0 (t_0-\frac{\nu}{\ve} +T_\ve) + v_0 \frac\nu\ve +   v(t_0 +\frac \nu \ve) (\tilde T_\ve -t_0 +\frac \nu\ve).
\eee
In conclusion $\tilde T_\ve \leq \tilde K(v_0) T_\ve.$
\end{proof}

\medskip

\noindent
{\bf Final case: $c_0 =  a_0^{4/(5-m)}$.} In this case, following the proof of Lemma \ref{FirstC}, we have

\medskip

\begin{lem}[Long time behavior, critical case]~

Suppose $c_0 = a_0^{4/(5-m)}$. Then $\lim_{t\to +\infty} (C(t),V(t),U(t)) =(c_\infty, v_\infty,+\infty)$, with
\be\label{K3}
c_\infty =c_0(1+O(\ve^{10})), \quad \hbox{ and }\quad v_\infty = [ 4\la_0(c_\infty -c_0) ]^{1/2}>0.
\ee
\end{lem}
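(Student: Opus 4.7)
The plan is to follow closely the proof of Lemma~\ref{FirstC}, adapting the three key steps to account for the fact that the ``refraction gap'' $a_0^{4/(5-m)}-c_0$, which drove the subcritical argument, now vanishes. It must be replaced by the genuinely smaller but still strictly positive gap
$$
c_0\bigl(a^{-4/(5-m)}(-\ve^{-1/100})-1\bigr) = O(\ve^{10}),
$$
coming entirely from the fact that, under (\ref{ahypSr}), $a(-\ve^{-1/100})<1$. The rest of the structure is identical.

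First I would verify that $V(t)>0$ for every $t\ge -T_\ve$. From (\ref{c10}) and $a(r)\ge a_0$ one has
$$
C(t)\ \ge\ \frac{a_0^{4/(5-m)}}{a^{4/(5-m)}(-\ve^{-1/100})}=: C_{\min}\ >\ a_0^{4/(5-m)}=c_0,
$$
the strict inequality following from $a(-\ve^{-1/100})<1$ in (\ref{ahypSr}). Combined with the parabolic identity (\ref{parab}), this yields $V^2(t)\ge 4\la_0(C_{\min}-c_0)>0$ for all $t$, and together with $V(-T_\ve)=v_0>0$ it guarantees $V(t)>0$ throughout. In particular $U(t)$ is strictly increasing and $C(t)$ is strictly decreasing (using $a'<0$ in $f_2$).

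Next I would argue that $U(t)\to+\infty$ as $t\to+\infty$. Since $V$ and $C$ are monotone and bounded (the latter by $C_{\min}\le C\le 1$), they admit limits $v_\infty\ge 0$ and $c_\infty\in[C_{\min},1]$. If $U$ were bounded, it would converge to some $U_\infty$, and then the ODE would give
$$
V'(t)\ \longrightarrow\ \ve f_1(c_\infty,U_\infty)\ =\ \frac{8\,\ve\, a'(\ve U_\infty)c_\infty}{(m+3)a(\ve U_\infty)}\ <\ 0,
$$
which is incompatible with the convergence of $V$. Hence $U(t)\to+\infty$.

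Finally, passing to the limit in (\ref{c10}) using $a(+\infty)=a_0$ gives
$$
c_\infty\ =\ \frac{a_0^{4/(5-m)}}{a^{4/(5-m)}(-\ve^{-1/100})}\ =\ c_0\bigl(1+O(\ve^{10})\bigr),
$$
where the last step uses the decay estimate in (\ref{ahypSr}) which forces $a(-\ve^{-1/100})=1-O(e^{-\mu \ve^{-1/100}})=1-O(\ve^{10})$. The formula $v_\infty=[4\la_0(c_\infty-c_0)]^{1/2}>0$ then drops out of (\ref{parab}); note that it gives $v_\infty = O(\ve^{5})$, the very small remnant refracted velocity. The only delicate point compared with Lemma~\ref{FirstC} is this last quantitative check: one must keep track of the $\ve$-dependence of the denominator at the left endpoint to ensure that $c_\infty>c_0$ strictly, and hence that $v_\infty>0$. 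All other steps are verbatim repetitions of the subcritical argument.
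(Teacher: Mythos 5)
Your proof is correct and follows essentially the same route as the paper, which for this lemma simply invokes the argument of Lemma \ref{FirstC}: positivity of $V$ via the parabola (\ref{parab}) and the strict lower bound $C(t)>a_0^{4/(5-m)}/a^{4/(5-m)}(-\ve^{-1/100})>c_0$ coming from the $\ve$-dependent denominator, then $U\to+\infty$ and passage to the limit in (\ref{c10}). Your explicit tracking of the $O(\ve^{10})$ gap that keeps $v_\infty>0$ (but not uniformly in $\ve$) is exactly the point the paper highlights in the remark following the lemma.
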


\begin{rem}
The main difference between this last result and Lemma \ref{FirstC} is in the fact that we cannot bound by below $v_\infty$ in a suitable way, independent of $\ve$. As a consequence, we cannot estimate the time of interaction as in Lemma \ref{FirstC}. The behavior of the soliton in this case remains an interesting open problem.
\end{rem}

\bigskip

\noindent
{\bf Proof of Proposition \ref{ST0r}}
The proof of this result is direct, just follow the lines of the proof of Proposition \ref{T0}, now applied in the interval $[-T_\ve, \tilde T_\ve]$. The main modification is in the a priori assumptions (\ref{aprio}). Now we assume that
$$
|c(t) -C(t)| + |v(t) -V(t) | + |\rho'(t) -U'(t)| \leq \ve^{1/100}.
$$
It is clear that these estimates are improved in the new version of Lemma \ref{DEFZ}.  The last issue is the integrability of the term $\ve e^{-\ve \ga |\rho(t)|}$, since $\rho'(t) \sim v(t)$ may change of sign during the dynamics. However, from the last assumptions, and a correct splitting of the interval of integration as in the proof of Lemma \ref{KKK}, it is easy to see that 
$$
\int_\R \ve e^{-\ve \ga |\rho(t)|} dt \leq K,
$$
with $K$ independent of $\ve$. We left the details to the reader.

\bigskip

\section{The two dimensional case}\label{TC}

In this section we sketch the proof of Theorem A for dimension $2$, namely Theorem B. More precisely, our objective is to  adapt the proof of Propositions \ref{Tm1}, \ref{T0} and \ref{Tp1} to the two dimensional case. Recall that $ 2\leq m< 3$.

\medskip

\noindent
{\bf Step 1. Proposition \ref{Tm1} revisited.}  The proof of this result is identical to the one dimensional case (see Appendix \ref{A}), with the novelty that $\rho_0(t)$ is now a $\R^2$-valued vector. The uniqueness follows essentially from (\ref{sign}). No additional modifications are required.

\medskip

\noindent
{\bf Step 2. Proposition \ref{T0} revisited.} Here we need to introduce several modifications on the computations.

\smallskip

First of all, the Cauchy problem (\ref{Cp1}) in the higher dimensional case is globally well posed for $1<m<3$ for $L^2$ and $H^1$ data, see \cite{Caz, GV}. The conservations laws (\ref{M}), (\ref{Ea}) and identity (\ref{dPN}) hold without modifications. 

On the other hand, (\ref{defLy}) now reads
\be\label{defLyN}
    \mathcal{L}_+ w(y) := - \Delta_y w + c w - m Q_c^{m-1}(y) w, \quad\hbox{ and }\quad  \mathcal{L}_- w(y) := - \Delta_y w +  cw -  Q_c^{m-1}(y) w;
\ee
where $w=w(y)$. Lemma \ref{surL} is also valid in higher dimensions. In particular, one has the following.  Assume that $v\in \R^2$, $v\neq 0$, $\theta\in \R$, and for $k=1, 2$, one has
$$
 \re \int_{\R^2}  \bar w \partial_{y_k} Q_c e^{iy\cdot v/2}e^{i\theta} = \ima \int_{\R^2} \bar w Q_c e^{iy\cdot v/2}e^{i\theta} = \re \int_{\R^2} \bar w Q_c e^{iy\cdot v/2}e^{i\theta}  =0.
 $$
Then
$$
\tilde{\mathcal B}[w,w] \geq \sigma_c \int_{\R^2} |w|^2,
$$
where $\tilde{\mathcal B}[w,w] $ is the standard $2$-dimensional generalization of the functional $\tilde{\mathcal B}$ defined in Lemma \ref{surL}. Finally, the space $\mathcal Y$ in (\ref{Y}) is easily generalizable to higher dimensions.

\medskip

Let us consider now the approximate solution $\tilde u$. From the fact that the potential $a$ depends only on $x_1$, the relevant dynamical system depends only on this variable. 
Indeed, for $t$ in $[-T_\ve, T_\ve]$, let
$$
c(t),  \ga(t) \in \R, \qquad v(t) =(v_1(t), v_2(t)) \in \R^2, \; \rho( t) = (\rho_1(t), \rho_2(t))\in\R^2,
$$
to be fixed later. Consider $y :=(y_1,y_2)$, where
\be\label{defALPHAN}
y:=x -  \rho(t), \quad \hbox{and} \quad     \tilde R(t,x): = \frac{Q_{c(t)}(y)}{\tilde a(\ve \rho_1(t))}e^{i\Theta(t,x)},
\ee
where, as in the one-dimensional case 
\be\label{param0N}
\tilde a := a^{\frac 1{m-1}}, \quad \Theta (t,x) := \int_0^t c(s) ds  + \frac 12 v(t)\cdot x -\frac 14 \int_0^t |v|^2(s) ds + \ga(t).  
\ee
In addition, we will search for \emph{bounded} parameters $(c,v,\ga)$ satisfying the same  constraints (\ref{aprio}), with the obvious modifications.

By now we only need these hypotheses. As in Lemma \ref{ODE} and Proposition \ref{prop:I}, we will construct a quadruplet $(c,v,\rho,\ga)$ with better estimates.

\medskip

On the other hand, the form of the ansatz $\tilde u(t,x)$ is given by (\ref{defv}), with 
\be\label{defWN}
w(t,x):= \ve (A_{1,c} (t, y) + i  B_{1,c}(t, y) ) e^{i\Theta},
\ee
with $A_{1,c}, B_{1,c} $ satisfying condition (\ref{IP}) in $\R^2$. Proposition \ref{prop:decomp} now reads
\be\label{SmN}
S[\tilde u](t,x)   =  \Big[ \mathcal F_0(t, y) +  \ve \mathcal F_1(t,y) +  \ve^2 \mathcal F_2(t, y)  + \ve^3 f(t) \mathcal F_c(y) \Big] e^{i\Theta (t, x)},
\ee
where $\mathcal F_0$ is given now by
\bea\label{F0N}
\mathcal F_0(t, y)&  := &  - \frac 12  (v'(t) - \ve f_1 (t) ) \cdot y\tilde u  +i ( c'(t)- \ve f_2(t)  ) \partial_c \tilde u \nonumber \\
& & \quad   -(\ga'(t) + \frac 12 v'(t)\cdot \rho ( t)) \tilde u  + i (\rho'(t) -v(t) ) \cdot \partial_\rho \tilde u,
\eea
$$
f_1(t) :=(\frac{4 \kappa a'(\ve \rho_1(t)) c(t)}{(m+1)a(\ve \rho_1( t))}, 0) , \quad f_2(t) :=  \frac{2 a'(\ve \rho_1( t)) c(t) v_1(t)}{(3-m)a(\ve \rho_1(t))};
$$
\be\label{F1N}
\mathcal F_1(t, y)  :=  F_1(t, y) + i G_1(t, y)  - \big[  \mathcal L_+ (A_{1,c})  + i \mathcal L_- (B_{1,c}) \big],
\ee
with
\bee
& & F_1(t, y) :=  \frac{a' (\ve \rho_1( t))}{\tilde a^m (\ve \rho_1(t))}y_1Q_c(y)\big[ Q_c^{m-1}(y) -\frac {2\kappa c(t)}{m+1}\big], \\
& &  G_1(t, y) := \frac{a' (\ve \rho_1( t)) v_1(t) }{\tilde a^m(\ve \rho_1( t))}\big[ \frac{2c(t)}{3-m}\Lambda Q_c (y)-\frac{1}{m-1}Q_c(y)\big],
\eee
and $\kappa := \frac{\int Q^{m+1}}{\int Q^2}$. Furthermore
\be
\|\ve^2 \mathcal F_2(t, \cdot) \|_{H^1(\R^2)} \leq K\ve^2 e^{-\ve\mu |\rho(t)|}; \quad  \|\ve^3 f(t) \mathcal F_c\|_{H^1(\R^2)}\leq K\ve^3,
\ee
uniformly in time, provided $(A_{1,c}, B_{1,c})$ satisfy (\ref{IP}).

\medskip

Now, let us describe the main differences on the dynamical system concerning the essentially important variables for the dynamics: $c(t), v_1(t), \rho(t)$ and $\ga(t)$. The result is the following.

\begin{lem}[Existence of dynamical parameters]\label{ODEN}~

Suppose $2\leq m< 3$. Let $v_0>0, \la_0, a(s) $ be as in Theorem C and  (\ref{ahyp}). There exists a unique solution $(c,v,\rho,\ga)$ defined for all $t\geq -T_\ve$ with the same regularity than $a(\ve \cdot)$, of the following nonlinear system of differential equations  
\be\label{cn}
\begin{cases}
\displaystyle{C'(t) = \frac{2\ve a' (\ve U_1(t)) }{(3-m)a(\ve U_1( t))} C(t) V_1(t), } &  C(-T_\ve) =1, \\
\displaystyle{V_1'(t) = \frac {4\ve \kappa }{m+1} \frac{a'  (\ve U_1( t))}{a(\ve U_1(t))} C(t)}, &  V_1(-T_\ve) = v_0, \\
\displaystyle{U_1'(t) = V_1(t),} &  U_1(-T_\ve) = -v_0 T_\ve, \\
\displaystyle{H'(t) =  - \frac 12 V_1'(t)U_1(t),} & H(-T_\ve) = 0.
\end{cases}
\ee
In addition,
\ben
\item $C(t)$ is strictly increasing with $1 \leq C(t) \leq C(T_\ve),$, with 
$$
C(T_\ve) = c_\infty  + O(\ve^{10}) = 2^{\frac 2{3 -m}} + O(\ve^{10}).
$$
\item $V(t)$ is strictly increasing with $v_0\leq V(t) \leq V(T_\ve)$, with 
$$
V(T_\ve) =  v_\infty + O(\ve^{10}) =  (v_0^2 + 4\al_0 (c_\infty-1))^{1/2}+ O(\ve^{10}),
$$
with $\al_0$ given in (\ref{cvinfN}).
\een
\end{lem}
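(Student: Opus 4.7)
The plan is to imitate the proof of Lemma \ref{ODE}, identifying the conserved algebraic relations that reduce the two-dimensional problem to essentially the same structure as the one-dimensional one. Local existence and uniqueness of a $C^k$ solution (with $k$ the regularity of $a(\ve\cdot)$) follows from the Cauchy--Lipschitz--Picard theorem applied to the autonomous system in $(C,V_1,U_1)$, since the right-hand sides are smooth functions of their arguments on the open set $\{C>0\}$; the phase $H$ is then obtained by a single quadrature. To extend the local solution globally on $[-T_\ve,+\infty)$, I will derive a priori bounds that prevent blow-up.

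First, from the first equation and the sign assumption $a'>0$ of (\ref{ahyp}), one sees that $C'(t)>0$ as long as $V_1(t)>0$; since $V_1(-T_\ve)=v_0>0$ and $V_1'(-T_\ve)>0$, $V_1$ remains positive and $C$ is strictly increasing. Dividing the first equation by $C$ gives
\[
\frac{C'(t)}{C(t)} = \frac{2\ve}{3-m}\frac{a'(\ve U_1(t))}{a(\ve U_1(t))} U_1'(t),
\]
which integrates explicitly to
\[
C(t) = \frac{a^{2/(3-m)}(\ve U_1(t))}{a^{2/(3-m)}(-\ve v_0 T_\ve)}.
\]
Combined with the bound $1<a<2$ from (\ref{ahyp}), this yields the uniform bound $1\le C(t) < 2^{2/(3-m)}$, proving global existence of $C$.

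Next, I will extract the crucial conservation law for $V_1$. Multiplying the second equation by $V_1$ and using $V_1 = U_1'$ together with the identity above for $a'U_1'/a$ in terms of $C'/C$, namely $a'(\ve U_1)U_1'/a(\ve U_1) = (3-m)C'/(2\ve C)$, one gets
\[
V_1(t)V_1'(t) = \frac{2\kappa(3-m)}{m+1}\,C'(t) = \frac{\al_0}{2}\,C'(t),
\]
where $\al_0$ is defined in (\ref{cvinfN}). Integration from $-T_\ve$ to $t$ gives the algebraic relation
\[
V_1^2(t) = v_0^2 + \al_0\bigl(C(t)-1\bigr),
\]
which, together with the uniform bound on $C$, yields $v_0\le V_1(t) < (v_0^2+\al_0(c_\infty-1))^{1/2}=v_\infty$. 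This shows that $V_1$ is globally defined, bounded and strictly increasing; $U_1$ is then globally defined by a simple time integration.

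Finally, it remains to identify the asymptotic values at $t=T_\ve$. The hypotheses (\ref{ahyp}) give exponential decay $|a(r)-1|\le Ke^{\mu r}$ for $r\le 0$ and $|a(r)-2|\le Ke^{-\mu r}$ for $r\ge 0$. Since $-\ve v_0 T_\ve = -\ve^{-1/100}$, we have $a(-\ve v_0 T_\ve) = 1 + O(e^{-\mu\ve^{-1/100}}) = 1 + O(\ve^{10})$. On the other hand, the lower bound $V_1(t)\ge v_0$ implies $U_1(T_\ve) \ge U_1(-T_\ve) + 2v_0 T_\ve = v_0 T_\ve = \ve^{-1-1/100}/1$, so $\ve U_1(T_\ve) \ge \ve^{-1/100}$ and hence $a(\ve U_1(T_\ve)) = 2 + O(\ve^{10})$. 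Plugging these into the explicit formula for $C$ gives $C(T_\ve) = 2^{2/(3-m)} + O(\ve^{10}) = c_\infty + O(\ve^{10})$, and then the conservation law yields $V_1(T_\ve) = v_\infty + O(\ve^{10})$. The main (minor) obstacle is the bookkeeping needed to propagate the exponential smallness through the formulas for $C$ and $V_1$; everything else is a transcription of the argument of Lemma \ref{ODE}, the decoupling from the $y_2$-variable being automatic because the potential $a$ depends only on $x_1$.
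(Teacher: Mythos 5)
Your proof is correct and follows essentially the same route as the paper's proof of Lemma \ref{ODE}, which is precisely the argument the paper intends to transcribe to the two-dimensional case: integrate the $C$-equation to get $C(t)=\big(a(\ve U_1(t))/a(-\ve v_0T_\ve)\big)^{2/(3-m)}$, then derive the algebraic relation $V_1^2 = v_0^2+\al_0(C-1)$ and propagate the exponential smallness of $a-1$ and $a-2$ at the endpoints. As a side remark, your conserved relation confirms the value $v_\infty=(v_0^2+\al_0(c_\infty-1))^{1/2}$ given in (\ref{cvinfN}); the factor $4\al_0$ in the displayed formula of the lemma statement is a typo carried over from the one-dimensional coefficient $4\la_0$.
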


On the other hand, the first linear system $(\Omega_1)$ is easily solvable, because
$$
\int_{\R^2} F_1 \partial_{y_i} Q_c(y) = \int_{\R^2} G_1 Q_c(y) =0.
$$
Moreover, the solution $(A_{1,c}, B_{1,c})$ satisfies (\ref{IP}). In addition, Lemma \ref{CV} now reads
$$
\|\tilde S[\tilde u](t)\|_{H^1(\R^2)} \leq K\ve^2(e^{-\ve \mu |\rho(t)|} + \ve).
$$
Similarly Lemma \ref{atpmT} holds with no major modifications.

\medskip

Let us sketch the proof of Proposition \ref{prop:I} in the higher dimensional case. As in Lemma \ref{DEFZ} we consider
$$
z(t):= u(t) - \tilde u(t, c(t),v(t),\rho(t),\ga(t)),
$$
satisfying for $k=1,2$, and for all $t\in [-T_\ve, T_\ve]$,
$$
\int_{\R^2} \bar z(t,x) Q_c(y) = \int_{\R^2} \bar z(t,x) \partial_{y_k} Q_c(y) =0,
$$
and the equation
\bee
& & iz_t + \Delta z + a_\ve(x_1) [  |\tilde u + z|^{m-1}(\tilde u + z) -|\tilde u|^{m-1}\tilde u  ] + \tilde S[\tilde u]  \\
& & \qquad \qquad \qquad  -\frac 12 (v' -\ve f_1) \cdot y \tilde u + i(c'-\ve f_2) \partial_c \tilde u + i(\rho' -v) \cdot \partial_\rho \tilde u + (\ga' +\frac 12 v' \cdot \rho) \tilde u  =0,
\eee
in addition to (\ref{rho1})-(\ref{c1}).

\medskip

Finally, the functional $\mathcal F$ in (\ref{F}) remains the same, up to the obvious modifications: we replace $z_x$ by $\nabla z$ and $v$ by its vectorial version. Following these steps, we finally conclude (\ref{INT41a}) and therefore the two dimensional version of Proposition \ref{T0}.

\medskip

\noindent
{\bf Step 3. Proposition \ref{Tp1} revisited.} The proof of this result is identical to the one dimensional case. No additional modifications to the standard ones are required.

From this analysis we conclude the proof of Theorem B.

\bigskip

\appendix

\section{Proof of Proposition \ref{Tm1}}\label{A}

In this section we sketch the proof of Proposition \ref{Tm1}. For a similar proof, see e.g. \cite{Mu2}.

\medskip

Let $(T_n)_{n\in \N}\subseteq \R$ an increasing sequence with $T_n\geq \frac 1{2}T_\ve$ for all $n$ and $\lim_{n\to +\infty} T_n =+\infty$. Consider $u_n(t)$ the solution of the following Cauchy problem  
\be\label{CPn}
\begin{cases}
i(u_n)_t + (u_n)_{xx}+ a_\ve(x)|u_n|^{m-1}u_n=0, \quad \hbox{ in }\; \R_t\times \R_x, \\
u_n(-T_n)=Q(\cdot + v_0 T_n)e^{\frac i2 (\cdot) v_0}e^{-i(1-\frac 14 v_0^2)T_n}.
\end{cases}
\ee
In other words, $u_n$ is a solution of aNLS that at time $t=-T_n$ corresponds exactly to a solitary wave. It is clear that this function is in $H^1(\R)$; moreover, there exists a uniform constant $C=C(v_0)>0$ such that
$$
\|Q(\cdot - v_0 t)e^{\frac i2 (\cdot) v_0}e^{i(1-\frac 14 v_0^2)t} \|_{H^1(\R)}\leq C.
$$
According to Lemma \ref{Cauchy}, we have that $u_n$ is globally well-defined in $H^1(\R)$. 

The next step is to establish uniform estimates starting from a fixed time $t =-\frac 1{2} T_\ve <0$ large enough such that the soliton is sufficiently away from the region where the influence of the potential $a_\ve$ is present. This is the purpose of the following

\begin{prop}[Uniform estimates in $H^1$ for large times, see also \cite{Martel1}]\label{Ue1}~

There exist constants  $K,\mu>0$ and $\ve_0>0$ small enough such that for all $0<\ve <\ve_0$ and for all $n\in \N$ we have and for all $t\in [-T_n, -\frac 1{2}T_\ve]$,
\be\label{Ue2}
\|u_n(t) - Q(\cdot - v_0 t)e^{\frac i2 (\cdot) v_0}e^{i(1-\frac 14 v_0^2)t} \|_{H^1(\R)} \leq K e^{\mu \ve t}.
\ee
In particular, there exists a constant $C>0$ such that for all $t\in [-T_n, -\frac 1{2}T_\ve]$,
\be\label{Ue3}
\|u_n(t) \|_{H^1(\R)} \leq C.
\ee
\end{prop}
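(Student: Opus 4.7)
The plan is a forward-in-time bootstrap on $[-T_n,-\tfrac 12 T_\ve]$ starting from the exact soliton datum at $t=-T_n$, where the left hand side of (\ref{Ue2}) vanishes. Pick a large $K^\star>0$ to be fixed at the end, and let $T^\star$ be the supremum of $T\in[-T_n,-\tfrac 12 T_\ve]$ such that $\|u_n(t)-Q(\cdot-v_0 t)e^{\frac i2(\cdot)v_0}e^{i(1-\frac 14 v_0^2)t}\|_{H^1(\R)}\leq K^\star e^{\mu\ve t}$ for all $t\in[-T_n,T]$. The aim is to strictly improve the bootstrap bound to constant $\tfrac 12 K^\star$ on $[-T_n,T^\star]$, which forces $T^\star=-\tfrac 12 T_\ve$ by continuity.

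On $[-T_n,T^\star]$ I would modulate $u_n$ around a pure soliton profile via the implicit function theorem, following Lemma \ref{DEFZ} but without the correction term $w$. This produces $C^1$ parameters $(c_n,v_n,\rho_n,\ga_n)(t)$ and a residual $z_n(t)=u_n(t)-Q_{c_n(t)}(\cdot-\rho_n(t))e^{i\Theta_n}$ satisfying four orthogonality conditions analogous to (\ref{defz1}), with $z_n(-T_n)=0$ and initial parameters $(1,v_0,-v_0 T_n,(1-\tfrac 14 v_0^2)(-T_n))$. The decisive observation is that the bootstrap forces $\rho_n(t)\sim v_0 t$, so that by the decay of $a-1$ postulated in (\ref{ahyp}), for every $f_c\in\mathcal Y$,
\[
\|(a_\ve(x)-1)f_c(x-\rho_n(t))\|_{H^1(\R)}\leq K e^{\mu\ve\rho_n(t)}\leq K e^{\mu'\ve t}
\]
for some $\mu'>0$. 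Thus the equation for $u_n$ differs from the standard constant-coefficient NLS only by a term exponentially small in the soliton's support.

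Inserting this exponential smallness into the derivation of the modulation equations (performed exactly as for (\ref{rho1})--(\ref{c1})) yields $|c_n'|+|v_n'|+|\rho_n'-v_n|+|\ga_n'+\tfrac 12 v_n'\rho_n|\leq K e^{\mu'\ve t}+K\|z_n\|_{H^1}^2$. For the residual I would run the Weinstein-type functional built from mass, energy and momentum as in (\ref{F}) but with $\tilde u$ replaced by the pure modulated soliton $R_n$; by Lemma \ref{surL}(5) combined with the orthogonality conditions, this $\mathcal F_n(t)$ is coercive, namely $\mathcal F_n(t)\geq \nu_0\|z_n(t)\|_{H^1}^2 - K\ve e^{\mu'\ve t}\|z_n\|_{H^1}^2 - K\|z_n\|_{H^1}^3$.

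The main obstacle is the time derivative $\mathcal F_n'(t)$: the algebra of Lemma \ref{Ka} must be redone, this time with the error source $\tilde S[\tilde u]$ replaced by the inhomogeneity $(a_\ve(x)-a_\ve(\rho_n(t)))|R_n|^{m-1}R_n$. All cubic and quartic soliton self-interaction terms still cancel via the algebraic identities (\ref{Mir1})--(\ref{Mir2}), leaving only a source bounded by $K\ve e^{\mu'\ve t}\|z_n(t)\|_{H^1}+K\ve e^{\mu'\ve t}\|z_n\|_{H^1}^2$. Using $\mathcal F_n(-T_n)=0$, integration together with Gronwall (the crucial point being that $\int_{-\infty}^t\ve e^{\mu'\ve s}ds$ is bounded uniformly in $\ve$) gives $\|z_n(t)\|_{H^1}\leq K e^{\mu'\ve t}$, and then integrating the modulation equations closes the bootstrap with constant $\tfrac 12 K^\star$ provided $K^\star$ is taken large enough. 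This proves (\ref{Ue2}) after renaming $\mu$, and (\ref{Ue3}) follows at once since $\|Q(\cdot-v_0 t)\|_{H^1}$ is a fixed constant and $e^{\mu\ve t}\leq 1$.
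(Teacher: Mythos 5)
Your strategy is sound and would yield the proposition, but it is not the route the paper takes, and the differences are worth spelling out. The paper first isolates a bootstrap (Proposition \ref{Uealpha}) on the \emph{constant} bound $2\al$ and derives the exponential decay inside it; more importantly, it modulates only in translation and phase (Lemma \ref{FM}), freezing $c=1$ and $v=v_0$, so only the two orthogonality conditions (\ref{Ortho01}) are available and the coercivity of $\mathcal F_0$ is degenerate in the $\re\int\tilde R_{v_0}\bar z$ direction; that direction is then recovered from the mass conservation law, estimate (\ref{CRD}). Moreover the paper never differentiates the Weinstein functional in this regime: it compares $E_a+(1+\frac14 v_0^2)M-v_0P$ at times $t$ and $-T_n$, using the exact conservation of $M$ and $E_a$ and the monotonicity $\partial_t P[u]\geq 0$ to get the one-sided inequality (\ref{Ine1}), together with the almost-conservation (\ref{dE01a}) for the soliton part. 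Your version --- four modulation parameters giving full coercivity outright, plus a differentiated functional \`a la Lemma \ref{Ka} integrated in time --- buys a cleaner coercivity statement at the price of redoing the algebra of Lemma \ref{Ka} in this regime and of having to check separately that $c_n-1$ and $v_n-v_0$ stay exponentially small after integrating the modulation equations. Both close. One imprecision to fix: the source $(a_\ve-1)|R_n|^{m-1}R_n$ has size $e^{\mu'\ve t}$ in $H^1$ with \emph{no} extra factor of $\ve$ (the gain comes from the decay of $a-1$ on the soliton's support, not from a derivative of $a$), so the corresponding term in $\mathcal F_n'$ is $Ke^{\mu'\ve t}\|z_n\|_{H^1}$ rather than $K\ve e^{\mu'\ve t}\|z_n\|_{H^1}$; integration then produces a factor $1/(\mu'\ve)$, which must be absorbed using $e^{\mu'\ve t}\leq e^{-\mu'\ve^{-1/100}/(2v_0)}\leq \ve^{10}$ on $t\leq -\frac12 T_\ve$ (at the cost of shrinking $\mu'$), not by the uniform boundedness of $\int\ve e^{\mu'\ve s}\,ds$ that you invoke.
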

Using Proposition \ref{Ue1} we will obtain the existence of a \emph{critical element} $u_{0,*}\in H^1(\R)$, with several interesting properties. Indeed, let us consider the sequence $(u_n(-\frac 1{2}T_\ve))_{n\in \N}\subseteq H^1(\R)$. We claim the following result.

\begin{lem}[Compactness property]\label{CompProp}~

Given any number $ \delta >0$, there exist $\ve_0>0$ and a constant $K_0>0$ large enough such that for all $0<\ve<\ve_0$ and for all $n\in \N$, 
\be\label{CP1}
\int_{|x| >K_0} |u_n|^2(-\frac 1{2}T_\ve) <\delta.
\ee
\end{lem}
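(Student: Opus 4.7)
The plan is to apply Proposition \ref{Ue1} at $t=-\tfrac{1}{2}T_\ve$ to compare $u_n(-\tfrac{1}{2}T_\ve)$ with a single explicit soliton profile, and then exploit the Schwartz-class decay of $Q$ to control the $L^2$ tail. Setting
$$\phi_\ve(x) := Q(x + \tfrac{1}{2}v_0 T_\ve)\,e^{\frac{i}{2}xv_0}\,e^{-i(1-\frac{1}{4}v_0^2)T_\ve/2},$$
Proposition \ref{Ue1} yields, for every $n$ with $T_n \geq \tfrac{1}{2}T_\ve$,
$$\|u_n(-\tfrac{1}{2}T_\ve) - \phi_\ve\|_{H^1(\R)} \leq K e^{-\mu \ve T_\ve/2} = K e^{-\mu \ve^{-1/100}/(2v_0)} \leq K\ve^{10},$$
for $\ve<\ve_0$ small enough.

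The $L^2$ triangle inequality then gives
$$\int_{|x|>K_0}|u_n|^2(-\tfrac{1}{2}T_\ve)\,dx \leq 2K^2\ve^{20} + 2\int_{|x|>K_0}Q^2(x+\tfrac{1}{2}v_0 T_\ve)\,dx,$$
whose first contribution is smaller than $\delta/2$ for $\ve$ small. For the second, I would pick $R=R(\delta)>0$ with $\int_{|y|>R}Q^2<\delta/4$ and set $K_0:=\tfrac{1}{2}v_0T_\ve + R$; the change of variables $y=x+\tfrac{1}{2}v_0T_\ve$ turns the condition $|x|>K_0$ into $y>v_0T_\ve+R$ or $y<-R$, both of which force $|y|>R$, yielding
$$\int_{|x|>K_0}Q^2(x+\tfrac{1}{2}v_0 T_\ve)\,dx \leq \int_{|y|>R}Q^2(y)\,dy < \delta/4.$$
Combining the two estimates produces $\int_{|x|>K_0}|u_n|^2(-\tfrac{1}{2}T_\ve)\,dx<\delta$ for every $n$.

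The main obstacle to a radius truly independent of $\ve$ is structural: the constant produced, $K_0=\tfrac{1}{2}v_0 T_\ve+R(\delta)=\tfrac{1}{2}\ve^{-1-1/100}+R(\delta)$, inherits the soliton center $x_\ve:=-\tfrac{1}{2}v_0T_\ve$, which escapes to $-\infty$ as $\ve\to 0$. Since the $L^2$ mass of the explicit profile $\phi_\ve$ itself is concentrated in a bounded neighborhood of $x_\ve$, no $\ve$-independent $K_0$ can satisfy $\int_{|x|>K_0}|\phi_\ve|^2<\delta$; the statement must therefore be read as uniform tightness in $n$ for each fixed $\ve\in(0,\ve_0)$, which is exactly what the subsequent weak compactness extraction producing the critical element $u_{0,*}$ demands. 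A genuinely $\ve$-independent radius is recovered only after re-centering: in the moving frame $y=x-x_\ve$ the same argument gives $\int_{|y|>R(\delta)}|u_n(-\tfrac{1}{2}T_\ve,\,y+x_\ve)|^2\,dy<\delta$ with $R$ depending only on $\delta$ and $Q$.
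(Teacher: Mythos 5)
Your proof is correct, and it is essentially the only self-contained argument available inside this paper. Since (\ref{Ue2}) holds up to the endpoint $t=-\tfrac12 T_\ve$, where $e^{\mu\ve t}=e^{-\mu\ve^{-1/100}/(2v_0)}\leq K\ve^{10}$, the sequence $u_n(-\tfrac12 T_\ve)$ lies within $O(\ve^{10})$ in $H^1(\R)$ of the single profile $\phi_\ve$, and uniform-in-$n$ tightness then follows from the exponential decay of $Q$ exactly as you write. The paper itself does not argue this way: its proof of the lemma is a one-line citation to \cite{Martel}, where such compactness statements are obtained by a genuinely different mechanism, namely almost-monotonicity of a localized mass (a cut-off weight transported with the soliton), propagated along the flow from the initial time $-T_n$ where the tail is exponentially small. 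That technique is more robust: it works when the uniform estimate is only of a fixed small size $\al$ rather than $O(\ve^{10})$, and it yields tail control relative to the soliton center without using full $H^1$ proximity to an explicit profile. Your shortcut is legitimate here precisely because Proposition \ref{Ue1} already supplies the very strong $O(\ve^{10})$ closeness at the one time the lemma concerns, so you trade Martel's monotonicity machinery for a triangle inequality; the price is that your argument would not survive in settings where only $O(\al)$ closeness is known.

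Your structural remark is also accurate and worth keeping: read literally, with $K_0$ quantified before $\ve$, (\ref{CP1}) fails for $\delta<M[Q]$, since the center $-\tfrac12 v_0 T_\ve$ of $u_n(-\tfrac12 T_\ve)$ escapes to $-\infty$ as $\ve\to 0$, forcing $\int_{|x|>K_0}|u_n|^2(-\tfrac12 T_\ve)\to M[Q]$ for any fixed $K_0$. The lemma is only invoked at fixed $\ve\in(0,\ve_0)$, to upgrade the weak $H^1(\R)$ and strong $L^2_{loc}(\R)$ convergence of the subsequence to strong convergence in $L^2(\R)$; for that purpose uniformity in $n$ with your choice $K_0=\tfrac12 v_0T_\ve+R(\delta)$ is exactly sufficient, and the genuinely $\ve$-uniform radius is recovered, as you note, only after recentering at the soliton position.
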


\begin{proof}
The proof is by now a standard result. See \cite{Martel} for the details.
\end{proof}

Let us come back to the proof of Theorem \ref{Tm1}. From (\ref{Ue3}) we have that 
$$
\|u_n(-\frac 12 T_\ve)\|_{H^1(\R)}\leq  C,
$$
independent of $n$. Thus, up to a subsequence we may suppose $u_n(-\frac 1{2} T_\ve ) \rightharpoonup u_{*,0} $ in the $H^1(\R)$ weak sense, and $u_n(-\frac 1{2} T_\ve ) \to u_{*,0}$ in $L^2_{loc}(\R)$, as $n\to +\infty$. In addition, from (\ref{CP1}) we have the strong convergence in $L^2(\R)$. 

\smallskip

Let $u_*=u_*(t)$ be the solution of (\ref{gKdV0}) with initial data $u_*(-\frac 1{2} T_\ve ) = u_{*,0}$. From Proposition \ref{Cauchy}  we also have $u_*\in C(\R,L^2(\R))$ (that is, $L^2$ local well-posedness plus conservation of mass). Thus, using the continuous dependence of $u_n$ and $u_*$, and the bound (\ref{Ue3}), we obtain $u_n(t) \to u_*(t)$ in $H^1(\R)$ for every $t \leq -\frac 1{2}T_\ve $.  Passing to the limit in (\ref{Ue2}) we obtain for all $t\leq -\frac 1{2}T_\ve$,
$$
\|u_*(t) -  Q(\cdot - v_0 t)e^{\frac i2 (\cdot) v_0}e^{i(1-\frac 14 v_0^2)t} \|_{H^1(\R)} \leq K e^{\ve \mu t},
$$
as desired. This finish the proof of the existence part of Theorem \ref{Tm1}. 
\subsection{Uniform $H^1$ estimates. Proof of Proposition \ref{Ue1}}
In this paragraph we explain the main steps of the proof of Proposition \ref{Ue1} in the $H^1$ case; for the general case the reader may consult \cite{Martel}. 

The first step in the proof is the following bootstrap property:

\begin{prop}[Bootstrap]\label{Uealpha}~

There exist constants $K,\mu, \ve_0>0$ such that for all $0<\ve<\ve_0$ the following is true. 
There exists $\al_0>0$ such that for all $0<\al<\al_0$, if for some $-T_{n,*} \in [-T_n, -\frac 1{2}T_\ve]$ and for all $t\in [-T_n, -T_{n,*}]$ one has
\be\label{Ue1alpha}
\|u_n(t) - Q(\cdot - v_0 t)e^{\frac i2 (\cdot) v_0}e^{i(1-\frac 14 v_0^2)t} \|_{H^1(\R)}\leq 2\al,
\ee
then, for all $t\in [-T_n, -T_{n,*}]$
\be\label{Ue2alpha}
\|u_n(t) - Q(\cdot - v_0 t)e^{\frac i2 (\cdot) v_0}e^{i(1-\frac 14 v_0^2)t} \|_{H^1(\R)}\leq K e^{\ve \mu t}.
\ee
\end{prop}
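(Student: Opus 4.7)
The argument is a standard modulation plus Lyapunov-functional bootstrap, adapted to take advantage of the fact that for $t\leq -\tfrac12 T_\ve$ the soliton is centered around $x\sim v_0 t$, which lies deep in the region where $a_\ve(x)-1$ is exponentially small in $\ve|t|$ thanks to (\ref{ahyp}).

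\textbf{Step 1 (Modulation).} Assuming the bootstrap hypothesis (\ref{Ue1alpha}) with $\alpha_0$ chosen small enough, the implicit function theorem (applied as in Lemma \ref{DEFZ}) yields $C^1$ parameters $c(t),v(t),\rho(t),\gamma(t)$ on $[-T_n,-T_{n,*}]$ such that
\begin{equation*}
z(t,x):=u_n(t,x)-Q_{c(t)}(x-\rho(t))\,e^{i\Theta(t,x)},\qquad \Theta=\tfrac12 v(t)x+\gamma(t),
\end{equation*}
satisfies the four orthogonality conditions analogous to (\ref{defz1}) and the initial values $c(-T_n)=1$, $v(-T_n)=v_0$, $\rho(-T_n)=-v_0 T_n$, $z(-T_n)\equiv 0$. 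Differentiating these orthogonality conditions and using Lemma \ref{surL} produces ODEs for $(c',v',\rho'-v,\gamma'-c-\tfrac14 v^2)$ whose right-hand side is bounded by $K e^{-\mu\ve|\rho(t)|}+K\|z(t)\|_{L^2}^2$, in the same spirit as (\ref{rho1})--(\ref{c1}).

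\textbf{Step 2 (Weinstein-type functional and coercivity).} Introduce
\begin{equation*}
\mathcal{F}(t):=E_a[u_n](t)+\bigl(c(t)+\tfrac14 v^2(t)\bigr)M[u_n](t)-v(t)P[u_n](t).
\end{equation*}
Expanding around the modulated soliton, the orthogonality conditions together with the soliton equation $Q_c''-cQ_c+Q_c^m=0$ eliminate the linear-in-$z$ terms, and Lemma \ref{surL}(5)(b) (applied in the Galilean-boosted frame, with phase $\Theta$) provides $\nu_0>0$ with
\begin{equation*}
\mathcal F(t)-\mathcal F(-T_n)\geq \nu_0\|z(t)\|_{H^1}^2-K\|z(t)\|_{H^1}^3-K|c(t)-1|^2-K|v(t)-v_0|^2,
\end{equation*}
up to an $O(\ve e^{-\mu\ve|\rho(t)|})$ perturbation arising from replacing $a_\ve$ by $1$ inside the quadratic form.

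\textbf{Step 3 (Control of $\mathcal F'$).} Mass and energy are conserved by Lemma \ref{Cauchy}, while the momentum satisfies (\ref{dPa}). Since the coefficients $c(t),v(t)$ themselves vary, a direct computation gives
\begin{equation*}
\mathcal F'(t)=-v(t)\cdot\frac{\ve}{m+1}\int_\R a'(\ve x)|u_n|^{m+1}(t,x)\,dx+(c'+\tfrac12 v'v)M[Q]-v'P[u_n](t).
\end{equation*}
The bootstrap hypothesis forces $|\rho(t)-v_0 t|\leq K\alpha$ and $v_0 t\leq -\tfrac12 v_0 T_\ve$, so the exponential decay of $a'$ in (\ref{ahyp}) and the spatial concentration of $u_n$ near $x=\rho(t)$ yield
\begin{equation*}
\Bigl|\int_\R a'(\ve x)|u_n|^{m+1}(t,x)\,dx\Bigr|\leq K e^{-\mu\ve|\rho(t)|}\leq K e^{\mu\ve v_0 t/2}.
\end{equation*}
The modulation bounds from Step 1 control the remaining two terms by the same exponential times $\|z\|_{H^1}^2$. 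Altogether $|\mathcal F'(t)|\leq K\ve e^{\mu\ve v_0 t/2}(1+\|z(t)\|_{H^1}^2)$, and integrating from $-T_n$ (where $z\equiv 0$ and $\mathcal F(-T_n)$ takes the reference value exactly) gives
\begin{equation*}
|\mathcal F(t)-\mathcal F(-T_n)|\leq K e^{\mu\ve v_0 t/2}+K\int_{-T_n}^t\ve e^{\mu\ve v_0 s/2}\|z(s)\|_{H^1}^2\,ds.
\end{equation*}

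\textbf{Step 4 (Closing the bootstrap).} Combining this with the coercivity of Step 2 and absorbing the cubic term (small since $\alpha$ is small) gives
\begin{equation*}
\|z(t)\|_{H^1}^2\leq K e^{\mu\ve v_0 t/2}+K\int_{-T_n}^t\ve e^{\mu\ve v_0 s/2}\|z(s)\|_{H^1}^2\,ds,
\end{equation*}
and a Gronwall argument yields $\|z(t)\|_{H^1}\leq K e^{\mu' \ve t}$ for some $0<\mu'<\mu v_0/4$. Integrating the ODEs for $c,v,\rho,\gamma$ from $-T_n$, where they take their reference values, against the same exponentially decaying forcing produces $|c(t)-1|+|v(t)-v_0|+|\rho(t)-v_0 t|+|\gamma(t)-(1-\tfrac14 v_0^2)t|\leq K e^{\mu'\ve t}$. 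Re-expanding $Q_{c(t)}(\cdot-\rho(t))e^{i\Theta(t,\cdot)}$ around the reference soliton then gives exactly (\ref{Ue2alpha}), which strictly improves (\ref{Ue1alpha}) as soon as $K e^{-\mu' \ve T_\ve/2}<\alpha$; this is automatic since $T_\ve=v_0^{-1}\ve^{-1-1/100}$, so the bootstrap closes.

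\textbf{Main obstacle.} Unlike the standard constant-coefficient case, the Weinstein functional is \emph{not} conserved because of (\ref{dPa}), so the estimate rests entirely on the quantitative smallness of $a'(\ve x)$ in the support of the soliton. The subtlety is that $\mu$ in (\ref{ahyp}) is independent of $\ve$, yet the relevant factor in the source is $e^{-\mu\ve|\rho(t)|}\sim e^{\mu\ve v_0 t}$, so one needs the interaction time $T_\ve=\ve^{-1-1/100}\gg \ve^{-1}$ and the fact that the bootstrap starts at $-T_n\leq -T_\ve/2$ to make the cumulative error smaller than $\alpha$. A secondary technical point is to verify the coercivity of Lemma \ref{surL}(5) in the Galilean frame centered at $v(t)\neq v_0$; this is precisely handled by part (5)(b) of that lemma, provided $v(t)$ stays bounded away from $0$, which in turn is guaranteed by Step 1 for $\alpha$ small.
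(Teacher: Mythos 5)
Your overall architecture (modulate, form a Weinstein-type functional combining $E_a$, $M$ and $P$, use the momentum law (\ref{dPa}), then run a coercivity/Gronwall argument) is the same as the paper's, but there is a genuine gap in Step 3. You bound $\bigl|\int_\R a'(\ve x)|u_n|^{m+1}\bigr|\leq K e^{-\mu\ve|\rho(t)|}$ by invoking ``the spatial concentration of $u_n$ near $x=\rho(t)$''. Only the soliton part $\tilde R$ is so concentrated; the remainder $z$ is not localized, and the contribution $\int_\R a'(\ve x)|z|^{m+1}$ is merely $O(\|z\|_{H^1}^{m+1})=O(\al^{m+1})$, with no exponential decay in $\ve|t|$, since $a'(\ve x)\sim a'(0)>0$ on the region $|x|\lesssim \ve^{-1}$ where $z$ may well carry mass. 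Consequently your claimed bound $|\mathcal F'(t)|\leq K\ve e^{\mu\ve v_0 t/2}(1+\|z\|_{H^1}^2)$ is false: the true bound contains an extra term $K\ve\al^{m-1}\|z\|_{H^1}^2$ whose kernel is not integrable over $[-T_n,t]$ as $T_n\to+\infty$, so the Gronwall step does not close. The paper avoids this entirely by never estimating the momentum flux in absolute value: since $a'>0$ and the functional carries $-v_0P[u]$ with $v_0>0$, the whole term $-v_0\,\partial_tP[u]\leq 0$ has a favorable sign and is simply discarded (this is the one-sided inequality (\ref{Ine1}), and it is exactly why the hypothesis $a'>0$ is declared crucial). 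Since coercivity only requires an upper bound on $\mathcal F(t)-\mathcal F(-T_n)$, the sign is all that is needed; your two-sided estimate is both unnecessary and unavailable.

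Two secondary points. First, with time-varying multipliers $c(t),v(t)$ in $\mathcal F$, the soliton contribution $\mathcal F[\tilde R](t)-\mathcal F[\tilde R](-T_n)$ is generically \emph{linear} in $c(t)-1$ and $v(t)-v_0$ (the functional is critical in $u$ at the soliton, not in the multipliers), so your Step 2 lower bound with only quadratic errors $-K|c-1|^2-K|v-v_0|^2$ is not correct as stated; the standard remedy is to freeze the multipliers at their initial values so that the parameter dependence becomes quadratic, as in (\ref{dE02}). The paper sidesteps this too: in this regime it modulates only the translation and phase $(\rho_0,\ga_0)$ (Lemma \ref{FM}), keeps $c\equiv1$, $v\equiv v_0$ fixed, and compensates for the resulting loss of one orthogonality condition by controlling the $\re\int\tilde R_{v_0}\bar z$ direction through mass conservation, see (\ref{CRD}) and the $-K|\re\int\bar wQ_c|^2$ term in Lemma \ref{surL}(5). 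Second, the almost-conservation of the soliton quantities, estimate (\ref{dE01a}), is where the exponential smallness of $a_\ve-1$ on the soliton's support legitimately enters; your intuition about that mechanism is right, it just cannot be transferred to the full solution $u_n$.
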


\begin{proof}[Proof of Proposition \ref{Ue1}, assuming the validity of Proposition \ref{Uealpha}]
Let $0<\al<\al_0$. Note that from (\ref{CPn}) there exists $t_0=t_0(n,\alpha)>0$ such that (\ref{Ue1alpha}) holds true for all $t\in [-T_n, -T_n +t_0]$. 
Now let us consider (we adopt the convention $T_{*,n}>0$)
\bee%\be\label{Tstar}
& & -\tilde T_{*,n}  :=   \sup \{ t\in [-T_n, -\frac 12T_\ve] \ | \   \hbox{ for all $t' \in [-T_n, t ] $, }\; \\
& & \qquad \qquad \qquad    \| u_n(t') -Q(\cdot - v_0 t')e^{\frac i2 (\cdot) v_0}e^{i(1-\frac 14 v_0^2)t'} \|_{H^1(\R)} \leq  2\al \}.
\eee
Assume, by contradiction, that $-\tilde T_{*,n} < -\frac 12 T_\ve$. From Proposition \ref{Uealpha}, we have
$$
\| u_n(t') -Q(\cdot - v_0 t')e^{\frac i2 (\cdot) v_0}e^{i(1-\frac 14 v_0^2)t'} \|_{H^1(\R)} \leq K e^{\mu\ve t} \leq \alpha ,
$$
for $\ve$ small enough (recall that $t\leq -\frac 12T_\ve =-\frac 1{2v_0} \ve^{-1-\frac 1{100}}$), a contradiction with the definition of $\tilde T_{*,n}$.
\end{proof}
Now we are reduced to prove Proposition \ref{Uealpha}. 

\begin{proof}[Proof of Proposition \ref{Uealpha}]
The first step in the proof is to decompose the solution preserving a standard orthogonality condition. To obtain this, without loss of generality, by taking $T_{n,*}$ larger we may suppose that for all $t\in [-T_n, -T_{n,*}]$ 
\be\label{Uer}
\|u_n(t) -  Q(x-v_0 t - r_n(t)) e^{it} e^{\frac i2 v_0 x}e^{-\frac 14i v_0^2 t} e^{i g_n(t)}  \|_{H^1(\R)}\leq 2\al,
\ee
for all smooth $r_n, g_n$ satisfying $r_n(-T_n)=g_n(-T_n)=0$ and $\abs{r_n'(t)}\leq \frac 1{t^2}$. A posteriori we will prove that this condition can be improved and extended to any time $t\in [-T_n, -\frac 12T_\ve ]$.

For notational simplicity, in what follows we will drop the index $n$ on $-T_{*,n}$ and $u_n$, if no confusion is present.

\begin{lem}[Modulation]\label{FM}~

There exist $K,\mu, \ve_0>0$ and unique $C^1$ functions $\rho_0, \ga_0:[ -T_n, -T_*]\to \R $ such that for all $0<\ve<\ve_0$ the function $z$ defined by
\be\label{Ortho0}
z(t,x) := u(t,x) - \tilde R_{v_0}(t,x) ; \qquad \tilde R_{v_0}(t,x) := Q(y) e^{i\theta},  
\ee
with
\be\label{Ortho001}
y := x-v_0 t -\rho_0(t), \qquad \theta := t +\frac 12 v_0 x -\frac 14v_0^2 t +\ga_0(t),
\ee
satisfies for all $t\in [-T_n, -T_*]$,
\be\label{Ortho01}
\re \int_\R \bar z(t,x) Q'(y)e^{i\theta} dx= \ima \int_\R\bar z(t,x) Q(y)e^{i\theta} dx=0, 
\ee
\be\label{Ortho2}
\|z(t)\|_{H^1(\R)} \leq K\alpha, \quad \rho_0(-T_n) = \ga_0(-T_n) =0.
\ee

\smallskip

In addition, $z$ satisfies the following modified Schr\"odinger equation,
\bea\label{EqZ0}
& & i z_t + z_{xx}  + a_\ve(x)|\tilde R_{v_0} + z|^{m-1} (\tilde R_{v_0} + z)  - a_\ve(x) |\tilde R_{v_0}|^{m-1} \tilde R_{v_0} \nonumber \\ 
& & \qquad\qquad  - i\rho_0' (t) Q'(y)e^{i\theta}  -\ga_0'(t) \tilde R_{v_0}  +  (a_\ve(x)-1)  |\tilde R_{v_0}|^{m-1} \tilde R_{v_0} =0,
\eea
and
\be\label{rho0}
|\rho_0'(t)| + |\ga_0'(t)| \leq K\big[ e^{\ve \mu t} + \|z(t)\|_{H^1(\R)} + \|z(t)\|_{L^2(\R)}^2\big].
\ee
\end{lem}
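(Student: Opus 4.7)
I would obtain the modulation parameters $(\rho_0(t), \gamma_0(t))$ by the implicit function theorem applied to the two orthogonality relations (\ref{Ortho01}). Freezing $t$ and viewing the left-hand sides of (\ref{Ortho01}) as a map $\Psi_t \colon \R^2 \to \R^2$ in the variables $(\rho, \gamma)$ (with $y$ and $\theta$ defined as in (\ref{Ortho001})), the problem reduces to solving $\Psi_t(\rho_0(t), \gamma_0(t)) = 0$. At the base point $u = Q(x - v_0 t)\, e^{i\theta_0}$ with $(\rho, \gamma) = (0,0)$, we have $\Psi_t(0,0) = 0$, and a direct computation using $\int_\R Q\,Q' = 0$ yields the diagonal Jacobian
\begin{equation*}
\partial_{(\rho,\gamma)} \Psi_t\big|_{(0,0)} \;=\; \mathrm{diag}\!\big(\|Q'\|_{L^2}^2,\ \|Q\|_{L^2}^2\big),
\end{equation*}
which is invertible with constants independent of $t$. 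By continuity of $\Psi_t$ in its $H^1$-argument $u$, the Jacobian remains uniformly invertible whenever $u(t)$ lies in a fixed $H^1$-ball of radius $\alpha_0 > 0$ around the base soliton.

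\textbf{Conclusion of the IFT and equation for $z$.} Under (\ref{Ue1alpha}) with $2\alpha < \alpha_0$, the IFT produces unique $C^1$ functions $\rho_0(t), \gamma_0(t)$ solving $\Psi_t(\rho_0(t), \gamma_0(t)) = 0$ on $[-T_n, -T_*]$, with the bound $\|z(t)\|_{H^1(\R)} \leq K\alpha$; since the initial datum in (\ref{CPn}) is exactly the unmodulated soliton, uniqueness forces $\rho_0(-T_n) = \gamma_0(-T_n) = 0$. Inserting $u = \tilde R_{v_0} + z$ into (\ref{aKdV}) and computing $i\partial_t \tilde R_{v_0} + \partial_x^2 \tilde R_{v_0} + |\tilde R_{v_0}|^{m-1}\tilde R_{v_0}$ directly from (\ref{Ortho0})--(\ref{Ortho001})---with the soliton identity $Q'' - Q + Q^m = 0$ cancelling the leading terms---leaves precisely $-i\rho_0'\, Q'(y)\, e^{i\theta} - \gamma_0'\, Q(y)\, e^{i\theta}$ coming from the motion of the modulation parameters. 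Subtracting and accounting for the defect $(a_\varepsilon(x)-1)|\tilde R_{v_0}|^{m-1}\tilde R_{v_0}$ produced by replacing $a_\varepsilon$ by $1$ on $\tilde R_{v_0}$ yields exactly equation (\ref{EqZ0}).

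\textbf{The modulation estimate and main obstacle.} To prove (\ref{rho0}), I would differentiate (\ref{Ortho01}) in $t$ and substitute $i z_t$ using (\ref{EqZ0}). This yields a $2\times 2$ linear system in $(\rho_0', \gamma_0')$ whose principal matrix is, modulo $O(\|z\|_{H^1(\R)})$ perturbations, the invertible diagonal matrix from Step~1. The right-hand side splits into a linear-in-$z$ piece bounded by $K\|z(t)\|_{H^1(\R)}$, a quadratic piece bounded by $K\|z(t)\|_{L^2(\R)}^2$, and the crucial forcing obtained by pairing $(a_\varepsilon - 1)|\tilde R_{v_0}|^{m-1}\tilde R_{v_0}$ against $Q(y)$ or $Q'(y)$. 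The latter is the main obstacle: to reach the \emph{sharp} rate $e^{\mu\varepsilon t}$ required in (\ref{rho0})---the rate that closes the bootstrap in Proposition~\ref{Uealpha}---I would exploit that on the support of $Q(y)$, the variable $x \sim v_0 t + \rho_0(t)$ is large negative (since $t \leq -T_\varepsilon/2$), and that by the second line of (\ref{ahyp}) one has $|a(\varepsilon x) - 1| \leq K e^{\mu\varepsilon x}$ for $x \leq 0$. Combining this pointwise bound with the exponential localization of $Q, Q'$ in $y$ (absorbing the $O(1)$ tails via the soliton's own exponential decay), the forcing is $O(e^{\mu \varepsilon t})$ in $L^2$; inverting the principal part of the linear system then gives (\ref{rho0}).
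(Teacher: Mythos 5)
Your proposal is correct and follows essentially the same route as the paper: the implicit function theorem on the two orthogonality functionals (with the same diagonal Jacobian $\mathrm{diag}(\|Q'\|_{L^2}^2,\|Q\|_{L^2}^2)$), a direct substitution using $Q''-Q+Q^m=0$ to derive (\ref{EqZ0}), and then differentiation of (\ref{Ortho01}) combined with the splitting of the forcing integral $\int (a_\ve-1)|\tilde R_{v_0}|^{m-1}\tilde R_{v_0}\,\overline{Q^{(j)}(y)e^{i\theta}}$ into $x\le 0$ (where $|a(\ve x)-1|\le Ke^{\mu\ve x}$) and $x\ge 0$ (where the soliton tail $e^{-|y|}$ supplies the decay), exactly as in the paper's estimate (\ref{dec1}). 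The only point treated slightly more explicitly in the paper is the a priori control $v_0t+\rho_0(t)\le \tfrac{9}{10}v_0 t$ coming from the bootstrap assumption on $\rho_0'$, which you invoke implicitly but which is needed to make the two-region splitting yield the rate $e^{\mu\ve t}$.
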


\begin{proof}[Proof of Lemma \ref{FM}]

The proof of (\ref{Ortho01}) is a standard consequence of the Implicit Function Theorem, the definition of $T_*$  $(=T_{*,n})$, and the definition of $u_n(-T_n)$ given in (\ref{CPn}), see for example \cite{Martel} for a detailed proof. 
Similarly, the proof of (\ref{EqZ0}) follows after a simple computation.

Now we deal with (\ref{rho0}). Taking time derivative to the first identity in (\ref{Ortho01}) and using (\ref{EqZ0}), we get 
\bee
0 & = & -\ima \int_\R \overline{iz}_t Q'(y)e^{i\theta} +\re \int_\R \bar z  (Q'(y)e^{i\theta})_t  \\
& =& \ima \int_\R \big\{ \bar z_{xx}  + a_\ve(x)|\tilde R_{v_0} + z |^{m- 1}\overline{(\tilde R_{v_0} + z)} - a_\ve(x)|\tilde R_{v_0}|^{m-1}\overline{\tilde R_{v_0}}  \big\} Q'(y)e^{i\theta}    \\
& &  + \rho_0'(t)  \int_\R Q'^2 %- \ga_0'(t) \ima\int_\R Q(y)Q'(y)  
+  \ima \int_\R (a_\ve(x)-1) |\tilde R_{v_0}|^{m-1} \overline{\tilde R_{v_0}} Q'(y)e^{i\theta} \\
& & +\re \int_\R \bar z \big\{ -(v_0 + \rho_0'(t)) Q''(y) + i(1-\frac 14 v_0^2 +\ga_0'(t))Q'(y) \big\} e^{i\theta} 
\eee
First of all, note that
\bee
& &  \ima \int_\R \big\{ z_{xx}  + a_\ve(x)| \tilde R_{v_0} + z|^{m-1} (\tilde R_{v_0} + z)  - a_\ve(x) |\tilde R_{v_0}|^{m-1} \tilde R_{v_0}  \big\} Q'(y)e^{-i\theta} =  \\
%& &\qquad = \int_\R z  \big\{ R_{(4x)} -R_{xx} + m a_\ve R^{m-1}R_{xx}]   + \int_\R R_{xx} a_\ve[(R+z)^m -R^m -mR^{m-1}z]\\
  & & \qquad \qquad = O(\|z(t)\|_{L^2(\R)} + \|z(t)\|_{L^2(\R)}^2).
\eee
On the other hand, from (\ref{ahyp}), (\ref{Ortho01}), the uniform bound on $\rho_0'(t)$ in the definition of $T_*$  and the exponential decay of $R$, we have
\be\label{dec1}
\abs{\ima \int_\R (a_\ve(x)-1) |\tilde R_{v_0}|^{m-1} \overline{\tilde R_{v_0}} Q'(y)e^{i\theta} } \leq  K e^{\ve \mu t}.
\ee
Indeed, first note that from (\ref{Uer}), by integrating between $-T_n $ and $t$ and using (\ref{Ortho01}) we get 
$$
\rho_0(t) \leq - \frac 1{T_n} -\frac 1t \leq \frac 2{T_\ve}\leq K\ve^{1+\frac 1{100}} .
$$
Thus $v_0 t +\rho_0(t) \leq v_0 t  +K \ve^{1+\frac 1{100}} \leq \frac 9{10}v_0t .$ Therefore, by possibly redefining $\mu>0$, we have from (\ref{ahyp}),
\bee
\abs{\int_\R  (a_\ve(x)-1) |\tilde R_{v_0}|^{m-1} \overline{\tilde R_{v_0}} Q'(y)e^{i\theta} }& \leq & K\int_{-\infty}^0e^{\mu \ve x}e^{- |x- v_0 t -\rho_0(t) |} dx \\
& & \qquad +  Ke^{v_0t+\rho_0(t)} \int_0^\infty e^{- x} dx  \\
&  \leq & K \exp \big[  \mu\ve (v_0t+\rho_0(t) )\big] + K  \exp \big[  \mu (v_0t+\rho_0(t) ) \big]\\
&  \leq &  Ke^{\mu\ve t}.
\eee
Finally,
\bee
& & \abs{\re \int_\R \bar z \big\{ -(v_0 + \rho_0'(t)) Q''(y) + i(1-\frac 14 v_0^2 +\ga_0'(t))Q'(y) \big\} e^{i\theta} }  \qquad \qquad \\
& & \qquad \qquad \qquad\qquad \qquad  \leq  K\|z(t)\|_{L^2(\R)} (1+|\rho_0'(t)| + |\ga_0'(t)|).
\eee
We arrive, for $\al$ small enough, to the following estimate
\be\label{Ae1}
\abs{\rho_0'(t)} \leq K( e^{\ve\mu t} + \|z(t)\|_{L^2(\R)} (1+ |\ga_0'(t)|) + \|z(t)\|_{L^2(\R)}^2).
\ee
\smallskip

Now we consider the second identity in (\ref{Ortho01}). Proceeding in a similar way as above, we obtain 
\be\label{Ae2}
\abs{\ga_0'(t)} \leq K( e^{\ve\mu t} + \|z(t)\|_{L^2(\R)} (1+|\rho_0'(t)|) + \|z(t)\|_{L^2(\R)}^2).
\ee
Collecting  estimates (\ref{Ae1})-(\ref{Ae2}) we obtain (\ref{rho0}). 
\end{proof}

\subsubsection{Almost conservation of mass, energy and momentum} 
Now let us recall that for all $-T_n\leq t\leq -\frac 12T_\ve$ we have $M[u](t)$ and $E_a[u](t)$ conserved. In addition, from (\ref{P}) we have   
$$
\partial_t P[u](t)  =  \frac \ve{m+1} \int_\R a'(\ve x) |u|^{m+1} \geq 0.
$$
Therefore
\be\label{Ine1}
E_a[u](t) - E_a[u](-T_n)  + (1+\frac 14 v_0^2)[M[u](t) -M[u](-T_n)] - v_0[P[u](t) -P[u](-T_n)]  \leq 0.
\ee
Similarly, note that in the considered region the solitary wave $\tilde R_{v_0}(t)$ is an almost solution of (\ref{aKdV}), in particular it must almost conserve the mass $M$ (\ref{M}) and the energy $E_a$ (\ref{Ea}), at least for large negative time. Indeed, arguing as in Lemma \ref{C2} (but with easier proof), one has 
\bea\label{dE01a}
 E_a[\tilde R_{v_0}](-T_n) - E_a[\tilde R_{v_0}](t) + (1 + \frac 12 v_0^2)\big[ M[\tilde R_{v_0}](-T_n)  -M[\tilde R_{v_0}](t)\big] \qquad & &  \nonumber \\
 \qquad \qquad \qquad  - v_0 \big[ P[\tilde R_{v_0}](-T_n) -P[\tilde R_{v_0}](t) \big]   \leq  K e^{\mu\ve t } ,& &  %+K e^{-\frac 1{2}\mu\ve T_\ve }  .% +  K \sup_{t\in [-T_\ve, T^*]}| \rho_1'(t) |.
\eea
for some constant $K>0$ and all time $t\in [-T_n, T_*]$.

The next step is the use the mass conservation law to provide a control of the $\tilde R_{v_0}(t)$ direction. Indeed, one has  
\be\label{CRD}
\abs{\re\int_\R \tilde R_{v_0} \bar z(t)} \leq  K\|z(-T_n)\|_{L^2(\R)}^2+ K\|z(t)\|_{L^2(\R)}^2 \leq K \sup_{t\in [-T_n, T_*]}\|z(t)\|_{L^2(\R)}^2.
\ee
for a constant $K>0$, independent of $\ve$.
On the other hand, note that
\bea\label{Expans1}
E_a[u](t) + (1 +\frac 14 v_0^2) M[u](t) -v_0 P[u](t) & = & E_a[\tilde R_{v_0}](t) + (1 +\frac 14 v_0^2) M[\tilde R_{v_0}](t) -v_0 P[\tilde R_{v_0}] \nonumber \\
& & - \re \int_\R (a_\ve-1) |\tilde R_{v_0}|^{m-1} \tilde R_{v_0} \bar z   + \mathcal F_0(t),
\eea
where $\mathcal F_0$ is the following quadratic functional
\bee
 \mathcal F_0(t) & := &  \frac 12\int_\R \big[ |z_x|^2 + (1+\frac 14v_0^2) |z|^2\big] - \frac {v_0}2 \ima\int_\R \bar zz_x \\
 & & \quad- \frac 1{m+1} \int_\R a_\ve(x) [ | \tilde R_{v_0} + z|^{m+1} -|\tilde R_{v_0}|^{m+1} - (m+1)|\tilde R_{v_0}|^{m-1} \re (\tilde R_{v_0} \bar z) ].
\eee
In addition, for any $t\in [-T_n, -T_*]$,
\be\label{dec3}
\abs{ \re\int_\R (a_\ve-1) |\tilde R_{v_0}|^{m-1} \tilde R_{v_0} \bar z }\leq K e^{\mu \ve  t}  \|z(t)\|_{L^2(\R)}. %+ \|z\|^2_{L^2(\R)} \big].
\ee
The proof of (\ref{Expans1}) is essentially an expansion of the energy-mass functional using the relation $u(t) =\tilde R_{v_0}(t) + z(t)$. The proof of (\ref{dec3}) is similar to (\ref{dec1}).

On the other hand, the functional $\mathcal F_0(t)$ above mentioned enjoys the following coercivity property: there exist $K,\la_0>0$ independent of $\ve$ such that for every $t\in [-T_n, -T_*]$
\be\label{F00}
\mathcal F_0(t) \geq \la_0 \|z(t)\|_{H^1(\R)}^2 -\Big|\re \int_\R \tilde R_{v_0}(t)  \bar z(t)\Big|^2 - K e^{\mu\ve t} \|z(t)\|_{L^2(\R)}^2 - K \|z(t)\|_{L^2(\R)}^3. 
\ee
This bound is a consequence of (\ref{Ortho01}) and Lemma \ref{surL}.

\subsubsection{End of proof of Proposition \ref{Uealpha}}
Now by using (\ref{Expans1}), (\ref{F00}), and the estimates (\ref{Ine1})-(\ref{dE01a}) and (\ref{CRD}) we finally get (\ref{Ue2alpha}). Indeed, note that 
\bee
Ke^{\mu \ve t} & \geq &  E_a[\tilde R_{v_0}](-T_n) - E_a[\tilde R_{v_0}](t) + (1 + \frac 12 v_0^2)\big[ M[\tilde R_{v_0}](-T_n)  -M[\tilde R_{v_0}](t)\big]    \\
& & \qquad \qquad  - v_0 \big[ P[\tilde R_{v_0}](-T_n) -P[\tilde R_{v_0}](t) \big]\\
&  \geq &  \mathcal  F_0(t) -   Ke^{\mu \ve t} -Ke^{\mu \ve t}\|z(t)\|_{L^2(\R)} -K\|z(t)\|_{L^2(\R)}^4. 
\eee
Finally, from (\ref{F00}) and \ref{CRD} we conclude that for some $K,\mu>0$,
$$
\|z(t)\|_{H^1(\R)} \leq K e^{\mu\ve t}.
$$
Plugging this estimate in (\ref{rho0}), we obtain that $\abs{\rho_0'(t)} \leq K e^{\mu\ve t},$
and thus after integration and by taking $\mu>0$ smaller if necessary, we get the final uniform estimate (\ref{Ue2alpha}) for the $H^1$-case. Note that we have also improved the estimate on $\rho_0'(t)$ assumed in (\ref{Uer}). This finishes the proof.
\end{proof}

\subsection{Proof of Uniqueness}\label{Uni}

First of all let us recall that the solution $u$ above constructed is in $C(\R, H^1(\R))$ and satisfies the exponential decay (\ref{minusTe}). Moreover, \emph{every} solution converging to a soliton satisfies this property. This property is crucial to obtain the uniqueness.

\begin{prop}[Exponential decay, see also \cite{Martel1, Mu2}]\label{Uni1}~

There exists $\ve_0>0$ such that for all $0<\ve<\ve_0$ the following holds. Let $v$ be a $C(\R,H^1(\R))$ solution of (\ref{gKdV0}) satisfying
$$
\lim_{t\to -\infty} \|v(t) - Q(\cdot - v_0 t)e^{\frac i2 (\cdot) v_0}e^{i(1-\frac 14 v_0^2)t} \|_{H^1(\R)} =0.
$$
Then there exist $K,\mu>0$ such that for every $t\leq -T_\ve$ one has
$$
\|v(t) -Q(\cdot - v_0 t)e^{\frac i2 (\cdot) v_0}e^{i(1-\frac 14 v_0^2)t} \|_{H^1(\R)}\leq K e^{\mu\ve t}.
$$
\end{prop}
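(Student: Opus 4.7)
The plan is to adapt the bootstrap argument of Proposition \ref{Uealpha}, replacing the exact soliton initial datum at time $-T_n$ by the convergence hypothesis $\|v(t) - R_{v_0}(t)\|_{H^1} \to 0$, and exploiting the monotonicity (\ref{dPa}) of the momentum $P$. The new observation is that, although we cannot prescribe the remainder to vanish at some finite time, the convergence assumption allows us to pass to the limit $t \to -\infty$ in the (almost) conservation identities for $M$, $E_a$ and $P$, recovering sharp sign information on the Weinstein-type functional used in the existence proof.

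First, since $v(t) - R_{v_0}(t) \to 0$ in $H^1$, the implicit function theorem as in Lemma \ref{FM} produces, on some interval $(-\infty, -T^\star]$ with $T^\star \geq T_\ve$ large enough, $C^1$ modulation parameters $\rho_0(t), \gamma_0(t)$, with $\rho_0(t), \gamma_0(t) \to 0$ at $-\infty$, and a remainder $z(t) := v(t) - \tilde R_{v_0}(t; \rho_0, \gamma_0)$ satisfying the orthogonality conditions (\ref{Ortho01}), $\|z(t)\|_{H^1} \to 0$, and the modulation estimate (\ref{rho0}). Passing to the limit $s \to -\infty$ in the identities of Lemma \ref{Cauchy} then yields, for every $t$ in the modulation interval,
\[
M[v](t) = M[Q], \qquad E_a[v](t) = E_\infty, \qquad P[v](t) \geq P_\infty,
\]
where $E_\infty$ and $P_\infty$ are the values of $E_a$ and $P$ computed on $R_{v_0}$ at $t = -\infty$ (i.e. in the region where $a_\ve \equiv 1$). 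Consequently the functional
\[
\tilde E[v](t) := E_a[v](t) + (1 + \tfrac14 v_0^2) M[v](t) - v_0\, P[v](t)
\]
is non-increasing in $t$ and satisfies $\tilde E[v](t) - \tilde E_\infty \leq 0$.

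Now expand $\tilde E[v](t)$ around the modulated soliton exactly as in (\ref{Expans1}). The soliton contribution $\tilde E[\tilde R_{v_0}](t) - \tilde E_\infty$ is $O(e^{\mu \ve t})$ thanks to the exponential closeness of $a_\ve$ to $1$ around the soliton center $v_0 t + \rho_0(t) \ll -\ve^{-1}$, cf.\ (\ref{dec1}) and (\ref{dE01a}); the cross term involving $a_\ve - 1$ is bounded by $K e^{\mu \ve t} \|z(t)\|_{L^2}$ as in (\ref{dec3}). The orthogonal mass-direction coefficient is now controlled directly by the \emph{exact} mass identity, which gives $\operatorname{Re} \int \tilde R_{v_0}\,\bar z \, dx = -\tfrac12 \|z(t)\|_{L^2}^2$, so it is absorbed at the quartic level into the coercivity bound (\ref{F00}) from Lemma \ref{surL}. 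Combining everything produces
\[
\tfrac{\nu_0}{2} \|z(t)\|_{H^1}^2 \leq K e^{\mu \ve t}(1 + \|z(t)\|_{L^2}) + K \|z(t)\|_{L^2}^3,
\]
which, using the smallness of $\|z(t)\|_{H^1}$, bootstraps (after possibly decreasing $\mu$) to $\|z(t)\|_{H^1} \leq K e^{\mu \ve t / 2}$ on $(-\infty, -T_\ve]$. Substituting back into (\ref{rho0}) gives $|\rho_0'(t)| + |\gamma_0'(t)| \leq K e^{\mu \ve t}$, and integration from $-\infty$ (combined with the phase and translation freedom of the soliton) delivers the announced decay.

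The main technical obstacle is to justify rigorously that the modulation can be extended uniformly down to $t = -\infty$ without $\rho_0, \gamma_0$ leaving the regime where the implicit function theorem applies; this will be handled by a standard continuity argument in $t$, using the bound on $|\rho_0'|+|\gamma_0'|$ derived above together with $\rho_0(t), \gamma_0(t) \to 0$ at $-\infty$. A secondary subtlety, in comparison with the existence proof, is that there the mass-direction coefficient was killed via $\|z(-T_n)\|_{L^2} = 0$ at an artificial initial time, whereas here the exact mass conservation $M[v] = M[Q]$ plays the same role in a cleaner way, and no backward integration in time is required to close the bootstrap.
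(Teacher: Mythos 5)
Your proposal is correct and takes essentially the same route as the paper: the paper's proof simply fixes $\alpha$ small so that the convergence hypothesis yields the $2\alpha$-closeness on all of $(-\infty,-\tfrac12 T_\ve]$ and then reruns the bootstrap of Proposition \ref{Uealpha}, with the artificial initial time $-T_n$ replaced by the limit $t\to-\infty$ in the mass/energy/momentum identities and with the monotonicity $\partial_t P[v]\geq 0$ supplying the sign of the Weinstein functional, exactly as you describe. Your sharper use of the exact mass identity to kill the $\tilde R$-direction is a clean variant of the paper's estimate (\ref{CRD}) and changes nothing essential.
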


\begin{proof}
Fix $\alpha>0$ small. Let $\ve_0=\ve_0(\al)>0$ small enough such that for all $0<\ve\leq \ve_0$ and $t\leq -T_\ve,$
$$
\|v(t) -Q(\cdot - v_0 t)e^{\frac i2 (\cdot) v_0}e^{i(1-\frac 14 v_0^2)t} \|_{H^1(\R)} \leq \alpha.
$$
Possibly choosing $\ve_0$ even smaller, we can apply the arguments of Proposition \ref{Uealpha} to the function $v(t)$ on the interval $(-\infty, -\frac 1{2}T_\ve]$ to obtain the desired result. Recall that a key fact to obtain this result is that 
$$
\partial_t P[v](t) \geq 0,
$$
which is not valid in the case of a pure soliton solution going to $x\sim +\infty$ as $t\to +\infty$.
\end{proof}

Now we are ready to prove the uniqueness part.

\begin{proof}[Sketch of proof of uniqueness]
Let $w(t) := v(t) -u(t)$. Then $w(t) \in H^1(\R)$ and satisfies the equation
\be\label{W}
\begin{cases}
iw_t + w_{xx} + a_\ve(x)|u+ w|^{m-1}(u+ w) - a_\ve(x)|u|^{m-1}u =0, \quad \hbox{ in } \; \R_t \times \R_x,\\
\|w(t)\|_{H^1(\R)}\leq K e^{\mu \ve t }\quad \hbox{ for all  } t\leq -\frac 1{2}T_\ve.  \quad \hbox{ (cf. Proposition \ref{Uni1}).}
\end{cases}
\ee
The idea is to prove that $w(t)\equiv 0$ for all $t\in \R$. For this purpose, one defines the second order functional
\bee
 \mathcal F_0[w](t)&  := & \frac 12\int_\R |w_x|^2 + \frac 12 (1+\frac 14v_0^2) \int_\R |w|^2   -\frac 12 v_0\ima\int_\R w_x \bar w \\
& &  -\frac 1{m+1}\int_\R a_\ve(x)[ |u+w|^{m+1} -|u|^{m+1} - (m+1) |u|^{m-1}\re(u \bar w) ].
\eee
It is easy to verify that 
\ben
\item Asymptotic at $-\infty$.
\be\label{LimF}
\lim_{t\to -\infty} \mathcal F_0[w] (t)  = 0. 
\ee
\item Lower bound. There exists $K>0$ such that for all $t\leq -\frac 1{2}T_\ve$,
$$
\mathcal F_0[w](t) \geq \tilde{\mathcal F}_0[w](t)  - K e^{\mu\ve t}\sup_{t'\leq t} \|w(t')\|_{H^1(\R)}^2,
$$
where
\bee%\be\label{LB}
\tilde{\mathcal F}_0[w](t)& := & \frac 12\int_\R |w_x|^2 + \frac 12 \int_\R(1+\frac 14v_0^2)|w|^2 -\frac 12 v_0\ima\int_\R w_x \bar w \\
& & - \int_\R a_\ve(x)[ (m-1)|u|^{m-3}(\re(u\bar w))^2 +|u|^{m-1}|w|^2 ]. %-\frac 1{m+1}\int_\R [ |u+w|^{m+1} -|u|^{m+1} - (m+1) |u|^{m-1}\re(u \bar w) ] .
\eee%\ee
\item First derivative.
\bee
& & \mathcal F_0' [w](t)   =  \ima\int_\R \overline{iw}_t \big\{ w_{xx} -(1+\frac 14 v_0^2) w  +  |u+w|^{m-1}(u+w) -|u|^{m-1}u -iv_0 w_x \big\} \nonumber \\
& &\!\!\!\!\! \!\!\!\! \!\!+ \ima \int_\R a_\ve(x) \overline{iu}_t \big\{  |u+w|^{m-1}(u+w) -|u|^{m-1}u  - \frac 12(m+1) |u|^{m-1} w  -\frac 12 (m-1) |u|^{m-3}u^2 \bar w  \big\}.%\label{diffF0}
\eee
\item Upper bound. There exists $K,\mu>0$ such that 
$$%\be\label{BoundF0}
\mathcal F_0[w](t) \leq K e^{\mu\ve t} \sup_{t'\leq t}\|w(t')\|_{H^1(\R)}^2.
$$%\ee
\een 
These estimates are proved similarly to the proof of Lemma \ref{Ka}, see also \cite{Martel1} for a similar proof. However, the functional $\mathcal F_0(t)$ is not necessarily coercive; so in order to obtain a satisfactory lower bound on $\mathcal F_0$, one has to modify the function $w$ in $(-\infty, -\frac 1{2}T_\ve]$ as follows. Let
$$%\be\label{wtilde}
\tilde w(t,x) := w(t,x) +b_1(t)Q(x-v_0t) e^{i(1-\frac 14 v_0^2)t} e^{\frac i2 v_0x}  + b_2(t) Q'(x-v_0t) e^{i(1-\frac 14 v_0^2)t} e^{\frac i2 v_0x}, 
$$
with
\bee
b_1(t) & \! := \! &  -  \frac{1}{2M[Q] } \ima\int_\R \bar w(t,x) Q(x-v_0t) e^{i(1-\frac 14 v_0^2)t}e^{\frac i2 v_0x}dx ;\\
 b_2(t) & \! : =\! & - \frac{1}{2M[Q']} \re\int_\R \bar w(t,x) Q'(x-v_0t) e^{i(1-\frac 14 v_0^2)t} e^{\frac i2 v_0x}dx.
\eee
This new function satisfies
\ben
\item Orthogonality on the $Q$ and $Q'$ directions:
$$ %\be\label{orthoQp}
\ima\int_\R \overline{\tilde w}(t) Q(x-v_0t) e^{i(1-\frac 14 v_0^2)t}e^{\frac i2 v_0x} =\re\int_\R \overline{\tilde w}(t)  Q'(x-v_0t) e^{i(1-\frac 14 v_0^2)t}e^{\frac i2 v_0x} = 0.
$$%\ee
\item Equivalence. There exists $C_1,C_2>0$ independent of $\ve$ such that
$$%\be\label{equi1}
C_1 \|w(t)\|_{H^1(\R)} \leq \|\tilde w(t)\|_{H^1(\R)} + \abs{b_1(t)} +  \abs{b_2(t)} \leq C_2 \|w(t)\|_{H^1(\R)}.
$$%\ee
Moreover,
$$%\be\label{equi2}
\tilde{\mathcal F}_0[\tilde w](t) = \tilde{\mathcal F}_0[w](t) + O(e^{\ve\mu t}\|w(t)\|_{H^1(\R)}^2).
$$%\ee
\item Control on the $Q$ direction: for some $K, \mu>0$,
$$%\be\label{ControlQ}
\abs{\re \int_\R \overline{\tilde w}(t,x) Q(x-v_0t) e^{i(1-\frac 14 v_0^2)t}e^{\frac i2 v_0x}} \leq K e^{\ve \mu t} \sup_{t'\leq t} \|w(t')\|_{H^1(\R)}. 
$$%\ee
This property is proved similarly to the proof of (\ref{c2rho2}): We use the fact that variation in time of the above quantity is of quadratic order in $\tilde w$.
\item Coercivity. There exists $\la>0$ independent of $t$ such that
$$
\tilde{\mathcal F}_0[\tilde w](t) \geq \la \| \tilde w(t)\|_{H^1(\R)}^2 -K\abs{\re \int_\R \overline{\tilde w}(t,x)Q(x-v_0t) e^{i(1-\frac 14 v_0^2)t}e^{\frac i2 v_0x}}^2.
$$
\item Sharp control. From the equivalence $w$-$\tilde w$ and the coercivity property we obtain, for some $K, \mu>0$,
\be\label{Sc}
\|\tilde w(t)\|_{H^1(\R)}  \leq  K e^{\ve \mu t/2} \sup_{t'\leq t} \|w(t')\|_{H^1(\R)},
\ee
and therefore
\be\label{Sc1}
|b_1(t)| +  |b_2(t)| \leq K e^{\ve \mu t/2} \sup_{t'\leq t} \|w(t')\|_{H^1(\R)}.
\ee
Note that the bounds on $b_1(t)$ and $b_2(t)$ are proved similarly to (\ref{rho2c2}).
\een

The proof of all these affirmations follows the argument of Proposition 6 in \cite{Martel1}, with easier proofs. 
Finally, from (\ref{Sc})-(\ref{Sc1}) we have for $\ve $ small enough and $t\leq -\frac 12T_\ve$,
$$
\|w(t)\|_{H^1(\R)} \leq  K e^{\ve\mu t} \sup_{t'\leq t} \|w(t')\|_{H^1(\R)} <\frac 12 \sup_{t'\leq t} \|w(t')\|_{H^1(\R)} .
$$
This inequality implies $w\equiv 0$, which gives the uniqueness. 
\end{proof}

\bigskip

\section{Proof of Proposition \ref{Tp1}}\label{B}

The proof of the stability result  (\ref{S}) is based in a standard Weinstein argument. Let us assume that for some $K>0$ fixed,
\be\label{48bon}
\|u(t_1)-  \la_\infty Q_{c_\infty} (\cdot  - X_0)e^{\frac i2 v_\infty x } e^{i\ga_0} \|_{H^1(\R)}\leq K \ve^{p_m},
\ee
with $\la_\infty, v_\infty, c_\infty$ defined in Theorem A, $p_m$ defined in (\ref{pm}), and $\ga_0 \in \R.$
From the local and global Cauchy theory (cf. Lemma \ref{Cauchy}), we know that $u$ is well defined for all $t\geq t_1$.

\medskip
\noindent
{\bf Step 0. Preliminars.}
In order to simplify the calculations, note that from (\ref{simpli}) the function $v(t,x):= \la_\infty^{-1} u(t,x)$ solves
$$
iv_t + v_{xx}   +  \frac{a_\ve}{2} |v|^{m-1} v=0 \quad \hbox{ on } \R_t\times \R_x.
$$
The energy is now given by
\be\label{tEa}
\tilde E_a[v] := \frac 12 \int_\R |v_x|^2  -\frac 1{m+1} \int_\R \frac{a_\ve}2|v|^{m+1};
\ee
the mass (\ref{M}) and momentum (\ref{P}) remain unchanged. In addition (\ref{48bon}) now becomes
\be\label{48bon1}
\| v(t_1)- Q_{c_\infty}(\cdot  - X_0) e^{\frac i2 xv_\infty} e^{i\ga_0}  \|_{H^1(\R)}\leq  \tilde K \ve^{p_m}.
\ee
With a slight abuse of notation we will {\bf rename} $v:=u$, $\tilde K:= K$, and we will assume the validity of (\ref{48bon1}) for $u$. In addition, and if no confusion is present, we will drop the tilde in (\ref{tEa}). The parameters $X_0$ and $c_\infty$ remain unchanged.

Let $D_0>2K$ be a large number to be chosen later,  and set 
\bea\label{Tprime}
   T^* \!\!\! & : = &   \sup\Big\{t\geq t_1 \ | \ \hbox{ for all }  \ t'\in [t_1, t), \ \hbox{ there exist } \  r_2(t'), g_2(t') \in\R  \hbox{ smooth} \nonumber \\
 & & \quad  \hbox{  such that  }   %, \tilde c_2(t')>0  
| r_2'(t')| + | r_2 (t_1)+v_\infty t_1 - X_0 | \leq \frac {v_\infty}{100}, \hbox{ and } \;   \nonumber \\
& & \;   \| u(t')- Q_{c_\infty}(\cdot -v_\infty t -r_2(t'))  \exp \big\{ \frac i2 xv_\infty -\frac i4 v_\infty^2 t  + i g_2(t) \big\}   \|_{H^1(\R)}\le D_0 \ve^{p_m} \Big\}.
\eea
Observe that $T^*>t_1$ is well-defined since $D_0>2K$, (\ref{48bon}) and the continuity of $t\mapsto u(t)$ in ${H^1(\R)}$. The objective is to prove that $T^*= +\infty$, and thus (\ref{S}). Therefore, for the sake of contradiction, in what follows {\bf we shall suppose} $T^* <+\infty$.

The first step to reach a contradiction is now to decompose the  solution on $[t_1,T^*]$ using modulation theory around the soliton. In particular, we will find some special $\rho_2(t), \ga_2(t)$ satisfying the hypothesis in (\ref{Tprime}) but with  
\be\label{onehalf}
\sup_{t\in [t_1, T^*]} \big\| u(t)- Q_{c_\infty}(\cdot - v_\infty t- \rho_2(t)) \exp \big\{ \frac i2 x v_\infty -\frac i4v_\infty^2 t +i\ga_2(t)\big\}  \big\|_{H^1(\R)}\le \frac 12 D_0 \ve^{p_m},
\ee
a contradiction with the definition of $T^*$.

\medskip
\noindent
{\bf Step 1. Modulation on the degenerate directions.} We will prove the following

\begin{lem}[Modulated decomposition]\label{3Dr}~

For $\ve>0$ small enough, independent of $T^*$, there exist  $C^1$ functions $\rho_2, c_2, \tilde \ga_2$, defined on $[t_1,T^*]$, with $c_2(t)>0$ and  such that the function $z(t)$ given by
\be\label{eta1a}
z(t,x):=u(t,x)- \tilde R(t,x),
\ee
where $\tilde R(t,x):= Q_{c_2(t) }(y) e^{i\Gamma}$, with
$$
y: = x- v_\infty t - \rho_2(t) \; \hbox{ and } \; \Gamma := \frac 12 xv_\infty + \int_{t_1}^t c_2(s) ds - \frac 14 v_\infty^2 t + \tilde \ga_2(t),
$$
satisfies for all $t\in [t_1,T^*],$
\bea 
&& \re \int_\R  \tilde R(t) \bar z(t) = \ima \int_\R \tilde R(t) \bar z(t) =  \re \int_\R Q_{c_2(t)}'(y) e^{i\Gamma} \bar z(t)=0,  \label{10a}\\ 
&& \|z(t)\|_{H^1(\R)}+ |c_2(t) - c_\infty |    \leq  K D_0\ve^{p_m},\; \hbox{ and }\label{11a}\\
&& \!\! \!\!\!\! \!\!\!\!\!\!\!\! \!\!\|z(t_1)\|_{H^1(\R)}+ |\rho_2(t_1) + v_\infty t_1 - X_0 | +   |c_2(t_1) -c_\infty |  + | \tilde \ga_2(t_1)  -\frac14 v_\infty t_1 -\ga_0| \leq  K \ve^{p_m} \label{12a},
\eea
where $K$  is not depending on $D_0$.  In addition, $z (t)$ now satisfies the following modified NLS equation
\bea\label{13a}
& &   iz_t + z_{xx}  + \frac 12a_\ve(x)\big[ |\tilde R + z|^{m-1}(\tilde R + z)  -  |\tilde R|^{m-1} \tilde R \big] \qquad\nonumber \\  
& & \qquad  + \; i c_2'(t)\Lambda Q_{c_2}e^{i\Gamma} - \tilde \ga'_2(t) Q_{c_2}e^{i\Gamma}- i\rho_2'(t)Q'_{c_2}e^{i\Gamma} +  (\frac 12a_\ve(x) -1)Q_{c_2}^m e^{i\Gamma}  =0.
\eea
Furthermore, for some constant $\mu>0$ independent of $\ve$, we have the following estimates:
\be\label{rho2c2}
|\rho_2'(t)|  \leq  K \Big[ \int_\R e^{-\mu |y|}|z|^2(t,x) dx\Big]^{\frac 12}+ K\int_\R e^{-\mu |y|}|z|^2(t,x) dx + K e^{-\mu\ve t};
\ee
\be\label{c2rho2}
\frac{|c_2'(t)|}{c_2(t)} \leq K \int_\R e^{-\mu |y|}|z|^2(t,x) dx  + Ke^{-\mu\ve t}\|z(t)\|_{H^1(\R)}, %+ K\ve e^{-\ve\mu t},
\ee
and finally
\be\label{ga2}
|\tilde \ga_2'(t)| \leq K \Big[ \int_\R e^{-\mu |y|}|z|^2(t,x) dx\Big]^{\frac 12} +  K \int_\R e^{-\mu |y|}|z|^2(t,x) dx  + K e^{-\mu\ve t}\|z(t)\|_{H^1(\R)} + K e^{-\ve\mu t}.
\ee
\end{lem}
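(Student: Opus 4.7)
The proof follows the standard modulation-theoretic scheme (cf.\ \cite{Martel,MMcol1}) adapted to the variable-coefficient setting, in four steps executed in order. First, I would construct the three parameters $(\tilde\ga_2,\rho_2,c_2)$ via the Implicit Function Theorem: define $\Phi=(\Phi_1,\Phi_2,\Phi_3)\colon H^1(\R)\times\R^3\to\R^3$ by
\begin{equation*}
\Phi_1:=\re\!\int Q_c(y)e^{i\Gamma}\overline{u-\tilde R}, \quad \Phi_2:=\ima\!\int Q_c(y)e^{i\Gamma}\overline{u-\tilde R}, \quad \Phi_3:=\re\!\int Q_c'(y)e^{i\Gamma}\overline{u-\tilde R},
\end{equation*}
and observe that at a base point $u=\tilde R$ the Jacobian is
\begin{equation*}
\partial_{(\tilde\ga_2,\rho_2,c_2)}\Phi\Big|_{u=\tilde R}=\begin{pmatrix} 0 & 0 & -\tfrac12\partial_c M[Q_c] \\ M[Q_c] & 0 & 0 \\ 0 & \|Q_c'\|_{L^2}^2 & 0 \end{pmatrix},
\end{equation*}
whose determinant is nonzero for $2\le m<5$ by the subcritical stability identity $\partial_c M[Q_c]>0$ (cf.\ Appendix \ref{AidQ}). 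The IFT produces unique $C^1$ parameters on $[t_1,T^*]$ satisfying (\ref{10a}); (\ref{12a}) follows from (\ref{48bon1}) together with the Lipschitz continuity of the IFT inversion, and (\ref{11a}) combines the continuation hypothesis in (\ref{Tprime}) with the uniform non-degeneracy of the inverse map.

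Second, to obtain (\ref{13a}), I would compute $i\tilde R_t+\tilde R_{xx}$ directly using the profile identity $Q_{c_2}''-c_2 Q_{c_2}+Q_{c_2}^m=0$, finding
\begin{equation*}
i\tilde R_t+\tilde R_{xx}=-Q_{c_2}^m e^{i\Gamma}+ic_2'\Lambda Q_{c_2}e^{i\Gamma}-i\rho_2' Q_{c_2}'e^{i\Gamma}-\tilde\ga_2' Q_{c_2}e^{i\Gamma};
\end{equation*}
then substituting $u=\tilde R+z$ into $iu_t+u_{xx}+\tfrac12 a_\ve|u|^{m-1}u=0$ and subtracting $\tfrac12 a_\ve Q_{c_2}^m e^{i\Gamma}$ produces (\ref{13a}), with the inhomogeneity $(\tfrac12 a_\ve-1)Q_{c_2}^m e^{i\Gamma}$ encoding the mismatch between $a_\ve$ and the profile's coefficient $2$. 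The estimates (\ref{rho2c2})--(\ref{ga2}) are then obtained by differentiating each identity in (\ref{10a}) in $t$, substituting for $z_t$ via (\ref{13a}), integrating by parts, and exploiting the three orthogonalities to kill the linear-in-$z$ terms coming from the symmetry generators. One obtains a $3\times 3$ linear system $M(t)(c_2',\rho_2',\tilde\ga_2')^T=b(t)$ with $M(t)=M_0+O(\|z(t)\|_{L^2})$, $M_0$ being the Jacobian above, and $b(t)$ composed of: (i) quadratic-in-$z$ contributions from the Taylor remainder of $|\tilde R+z|^{m-1}(\tilde R+z)-|\tilde R|^{m-1}\tilde R$; (ii) linear-in-$z$ remainders controlled by $\bigl(\int e^{-\mu|y|}|z|^2\bigr)^{1/2}$ through Cauchy--Schwarz and the exponential decay of $Q_c$; and (iii) a contribution from the source $(\tfrac12 a_\ve-1)Q_{c_2}^m e^{i\Gamma}$ of size $O(e^{-\mu\ve t})$ by (\ref{ahyp}), exponentially small on the soliton support because $v_\infty t+\rho_2(t)\gtrsim v_\infty t_1\ge \tfrac12 v_\infty T_\ve$. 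Inverting the system yields (\ref{rho2c2}) and (\ref{ga2}).

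The main obstacle is the \emph{quadratic} bound (\ref{c2rho2}) on $|c_2'|/c_2$, as opposed to the linear bound for $\rho_2'$ and $\tilde\ga_2'$. This sharpening rests on a delicate cancellation between two linear-in-$z$ contributions appearing when differentiating the first orthogonality: integration by parts of $\bar z_{xx}$ produces, after using $Q_{c_2}''=c_2Q_{c_2}-Q_{c_2}^m$ and $\ima\!\int\tilde R\bar z=0$, the term $-\ima\!\int Q_{c_2}^m e^{i\Gamma}\bar z$, while the Taylor linearization of the NLS nonlinearity contributes $+\tfrac12\ima\!\int a_\ve Q_{c_2}^m e^{i\Gamma}\bar z$; their sum collapses to $-\ima\!\int(1-\tfrac12 a_\ve)Q_{c_2}^m e^{i\Gamma}\bar z = O(e^{-\mu\ve t}\|z\|_{L^2})$. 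This is the infinitesimal version of the mass conservation $M[u]=M[\tilde R]+\|z\|^2$ (itself a consequence of $\mathcal L_- Q_c=0$), and making it precise is the only algebraic point requiring care; everything else is bookkeeping of the kind already performed in Lemma \ref{DEFZ}.
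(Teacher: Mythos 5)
Your proposal is correct and follows essentially the same route as the paper, which proves this lemma by referring back to the modulation arguments of Lemmas \ref{FM} and \ref{DEFZ} (Implicit Function Theorem for the decomposition, direct computation for the equation, and time-differentiation of the orthogonality relations for the parameter bounds) and explicitly skips the details. Your Jacobian, the derivation of (\ref{13a}) from $Q_{c_2}''-c_2Q_{c_2}+Q_{c_2}^m=0$, and the cancellation $-\ima\int(1-\tfrac12 a_\ve)Q_{c_2}^m e^{i\Gamma}\bar z$ explaining the quadratic bound on $c_2'$ all check out.
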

 
\begin{rem}
Note that from (\ref{11a}) and taking $\ve$ small enough we have an improved the bound on $\rho_2(t)$. Indeed, for all $t\in [t_1, T^*]$,
$$
|\rho_2' (t) | +  |\rho_2 (t_1) +v_\infty t_1 - X_0 |  \leq 2D_0\ve^{p_m}.
$$
Thus, in order to reach a contradiction, we only need to show (\ref{onehalf}). Observe that these inequalities impliy that the soliton position is far away from the interaction region. 
\end{rem}

\begin{proof}[Proof of Lemma \ref{3Dr}]
As in Lemma \ref{FM} and \ref{DEFZ}, the proof of (\ref{eta1a})-(\ref{12a}) are based in a Implicit Function Theorem application. %See Appendix \ref{Cag} for a proof.  

On the other hand, equation (\ref{13a}) is a simple computation, completely similar to (\ref{EqZ0}) and (\ref{Eqz1}).
Finally, estimates (\ref{rho2c2})-(\ref{ga2}) are similar to the proof of (\ref{rho0}). We skip the details.
\end{proof}

%Now we claim that from the definition of $T^*$ we can obtain an extra estimate on the parameter $\rho_2(t)$. We claim that for any $t\geq t_1$, 
%\be\label{boundapriori}
%\abs{\rho_2(t) -\rho_2(t_1)} \leq  \frac 1{100} (t-t_1),  \quad v_\infty t + \rho_2(t)  \geq \frac {99}{100}v_\infty t.% - (X_0 -\rho_2(t_1)).
%\ee

\medskip
\noindent
{\bf Step 2. Almost conserved quantities and monotonicity.}

\begin{lem}[Almost conservation of modified mass, energy and momentum]\label{C2}~

Consider $ M= M[\tilde R]$, $E_a=E_a[\tilde R ]$ and $P[\tilde R]$ the mass, energy and momentum of the \emph{soliton} $\tilde R$ (cf. (\ref{eta1a})). Then for all $t\in [t_1, T^*]$ we have
\bea\label{dE0}
& &  M[\tilde R](t)  =  c_2^{2\theta}(t) M[Q]; \\
& & E_a[\tilde R](t)  = c_2^{2\theta}(t) (\frac 14 v_\infty^2 - \la_0 c_2(t) ) M[Q] +O(e^{-\ve \mu t}); \label{dE01}\\
& & P[\tilde R](t) =  \frac 12v_\infty c_2^{2\theta}(t)  M[Q].\label{dE0P} 
\eea
Furthermore, we have the bound
\bea\label{dE02}
& & \big| E_a[\tilde R](t_1) -E_a[\tilde R](t)  +  (c_2(t_1) +\frac 14 v_\infty^2 ) (M[\tilde R](t_1) -M[\tilde R](t)) - v_\infty (P[\tilde R](t_1) -P[\tilde R](t)) \big|   \nonumber \\
& & \qquad \qquad \qquad  \leq K \abs{ \Big[\frac{c_2(t)}{c_2(t_1)}\Big]^{2\theta}-1}^2 +K e^{-\ve\mu t_1}. 
\eea
\end{lem}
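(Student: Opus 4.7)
The three identities \eqref{dE0}--\eqref{dE0P} reduce to direct computation using the explicit form $\tilde R(t,x) = Q_{c_2(t)}(y)e^{i\Gamma}$ together with the scaling rules $\int Q_c^2 = c^{2\theta} M[Q]$, $\int (Q_c')^2 = c^{2\theta+1}\int(Q')^2$, and $\int Q_c^{m+1} = c^{2\theta+1}\int Q^{m+1}$, where $\theta = \frac{1}{m-1}-\frac{1}{4}$. The phase $e^{i\Gamma}$ is irrelevant for the mass, so that identity is exact. For the momentum, $\tilde R_x = [Q_{c_2}' + i\tfrac{v_\infty}{2}Q_{c_2}]e^{i\Gamma}$ gives $\ima(\bar{\tilde R}\tilde R_x) = \tfrac{v_\infty}{2} Q_{c_2}^2$, again exactly. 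For the energy, $|\tilde R_x|^2 = (Q_{c_2}')^2 + \tfrac{v_\infty^2}{4} Q_{c_2}^2$, and the Pohozaev identities for $Q$ (obtained by testing $Q''-Q+Q^m=0$ against $Q$ and $yQ'$) express $\int (Q')^2$ and $\int Q^{m+1}$ as explicit multiples of $M[Q]$, yielding the coefficient $\tfrac{v_\infty^2}{4}-\lambda_0 c_2$. The only non-algebraic contribution is the exponentially small remainder, which I will bound by writing $\tfrac{a_\ve(x)}{2} - 1 = \tfrac{a_\ve(x)-2}{2}$ and using \eqref{ahyp} together with Lemma \ref{3Dr}: for $t\in [t_1,T^*]$ the soliton center lies in the region $\{x \gtrsim v_\infty t/2\}$, where $|a(\ve x)-2|\leq Ke^{-\mu\ve x}\leq Ke^{-\mu'\ve t}$; splitting the integral over $\{|y|\leq v_\infty t/4\}$ and its complement (on which $Q_{c_2}^{m+1}(y)$ itself is exponentially small) delivers the $O(e^{-\ve\mu t})$ error.

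For the combined bound \eqref{dE02}, I will substitute the three identities into the Weinstein-type functional. The $v_\infty^2$-terms cancel by an algebraic (Galilean) identity: $\tfrac{v_\infty^2}{4}$ from $E_a$ plus $\tfrac{v_\infty^2}{4}$ from $(c_2(t_1)+\tfrac{v_\infty^2}{4})M$ minus $\tfrac{v_\infty^2}{2}$ from $v_\infty P$ sum to zero. Modulo $O(e^{-\mu\ve t_1})$ the combination reduces to
\[M[Q]\Big\{c_2^{2\theta+1}(t_1)(1-\lambda_0) - c_2^{2\theta}(t)\bigl[c_2(t_1) - \lambda_0 c_2(t)\bigr]\Big\}.\]
Setting $\eta := c_2(t)/c_2(t_1)$ and factoring out $c_2^{2\theta+1}(t_1)$ this becomes $c_2^{2\theta+1}(t_1)\bigl\{(1-\lambda_0) - \eta^{2\theta}(1-\lambda_0\eta)\bigr\}$. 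Taylor expanding at $\eta=1$ produces a linear coefficient equal to $-2\theta + \lambda_0(2\theta+1)$, which vanishes identically because $\lambda_0 = \frac{5-m}{m+3} = \frac{2\theta}{2\theta+1}$. Hence the bracket is $O((\eta-1)^2)$, equivalently $O(|\eta^{2\theta}-1|^2) = O(|(c_2(t)/c_2(t_1))^{2\theta}-1|^2)$, which is precisely the stated bound.

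The main obstacle is not analytical but structural: the non-trivial cancellation exploited in the last paragraph depends on two specific facts about the Weinstein combination in \eqref{dE02}, namely the Galilean cancellation of the $v_\infty^2$ coefficients and the algebraic identity $\lambda_0(2\theta+1)=2\theta$. The latter reflects the criticality of the soliton in the mass-energy-momentum functional and is exactly what powers the Gronwall-type argument used to close the stability proof in Proposition \ref{Tp1}. Once these two cancellations are exposed, everything that remains is a Taylor expansion plus the exponential-decay estimate for $a_\ve-2$ already used in Lemma \ref{3Dr}; no further PDE input is needed.
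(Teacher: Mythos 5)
Your proposal is correct and follows essentially the same route as the paper: direct scaling computations for the three identities, the exponential decay of $1-\tfrac12 a_\ve$ on the region where the soliton sits for the error term, and a second-order Taylor expansion whose first-order coefficient vanishes by the identity $\la_0=\tfrac{2\theta}{2\theta+1}$. The paper phrases that last cancellation as a Taylor expansion of $g(y)=y^{(2\theta+1)/2\theta}$ at $y_0=c_2^{2\theta}(t_1)$ rather than in your variable $\eta=c_2(t)/c_2(t_1)$, but the two are the same computation.
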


\begin{proof}
The first and third identities, namely (\ref{dE0}) and (\ref{dE0P}), are direct computations. We consider (\ref{dE01}). Here we have
\bee
E_a[\tilde R](t) & =&\frac 12\int_\R |\tilde R_x|^2  - \frac 1{2(m+1)} \int_\R a_\ve(x) |\tilde  R|^{m+1}  \\
& =&   c_2^{2\theta}(t)  \Big[ c_2(t) (\frac 12 \int_\R Q'^2 - \frac 1{m+1} \int_\R  Q^{m+1} )  +  \frac 18 v_\infty^2 \int_\R Q^2 \Big]  \\
& & \qquad \qquad \qquad  +\frac 1{m+1} \int_\R (1-\frac {a_\ve}2 )|\tilde R|^{m+1}. 
\eee
Similarly to (\ref{dec3}), we have
$$
\abs{ \int_\R (1- \frac 12 a_\ve)|\tilde R|^{m+1}} \leq Ke^{-\mu \ve t }, %+ K \ve | \rho_1'(t) | e^{-\mu \ve|\rho(t)|},
$$
for some constants $K,\mu>0$. On the other hand, from Appendix \ref{AidQ} we have that 
$$
\frac 12 \int_\R Q'^2 - \frac 1{m+1} \int_\R  Q^{m+1} = - \frac {\la_0}2 \int_\R Q^2, \quad  \la_0= \frac{5-m}{m+3},
$$
and thus
$$
E_a[\tilde R](t) =   c_2^{2\theta}(t)  ( \frac 14 v_\infty^2 -\la_0 c_2(t) ) M[Q] + O( e^{-\mu\ve t}).
$$
Summing up (\ref{dE0}), (\ref{dE01}) and (\ref{dE0P}), we obtain
$$
E_a[\tilde R](t) + (c_2(t_1) +\frac 14 v_\infty^2 ) M[u](t) - v_\infty P[\tilde R](t) = c_2^{2\theta}(t) ( c_2(t_1) -\la_0 c_2(t) ) M[Q]  + O(e^{-\ve \mu t}).
$$
In particular,
\bee
& & E_a[\tilde R](t_1) -E_a[\tilde R](t) + (c_2(t_1) + \frac 14 v_\infty^2) ( M[\tilde R](t_1) - M[\tilde R](t) ) -v_\infty [P[\tilde R](t_1) -P[\tilde R](t) ] = \\
& & \; = \la_0 M[Q] \Big[  c_2^{2\theta +1}(t) -  c_2^{2\theta +1}(t_1)   - \frac{c_2(t_1)}{ \la_0} [ c_2^{2\theta}(t) -c_2^{2\theta}(t_1) ] \Big] %( c_2(t_1) -\la_0 c_2(t_1) ) - c_2(t)^{\frac 2{m-1} - \frac 12} ( c_2(t_1) -\la_0 c_2(t) )]
+ O(e^{-\ve \mu t_1}).
\eee
To obtain the last estimate (\ref{dE02}) we perform a Taylor development up to the second order (around $y=y_0$) of the function $g(y):= y^{\frac {2\theta+1}{2\theta}}$; and where $y:= c_2^{2\theta}(t)$ and $y_0 := c_2^{2\theta}(t_1)$. Note that $\frac{2\theta+1}{2\theta} = \frac{1}{\la_0}$ and $y_0^{1/2\theta} = c_2(t_1)$. The conclusion follows at once.
\end{proof}

Now our objective is to estimate the quadratic term involved in (\ref{dE02}). Following \cite{MMT}, we use  the mass conservation law identity. From (\ref{eta1a}) -(\ref{10a}) we have
\be\label{Kc2}
c_2^{2\theta}(t)M[Q] + \frac 12 \int_\R |z(t)|^2 = c_2^{2\theta}(t_1)M[Q] + \frac 12 \int_\R |z(t_1)|^2.
\ee
From here we obtain
\be\label{dE023}
(\ref{dE02}) \leq K \|z(t)\|_{L^2(\R)}^4 +  \|z(t_1)\|_{L^2(\R)}^4 + Ke^{-\ve\mu t} ,
\ee
for some $K,\mu>0$, independent of $D_0 $ and $\ve$.

\medskip
\noindent
{\bf Step 3. Energy estimates.} Let us now introduce the second order functional 
\bee
\mathcal F_2(t) & := & \frac 12\int_\R \Big\{ |z_x|^2 + (c_2(t_1) + \frac 14 v_\infty^2 )|z|^2 \Big\}- \frac 12 v_\infty  \ima \int_\R z_x \bar z \\
& & \quad  -\frac 1{2(m+1)} \int_\R a_\ve(x) [ |\tilde R+z|^{m+1} - |\tilde R|^{m+1} - (m+1)|\tilde R|^{m-1}\re( \tilde R\bar z) ].
\eee
This functional have the following properties.
\begin{lem}[Energy expansion]\label{EE3}~

Consider $M[u]$, $E_a[u]$ and $P[u]$ the mass, energy and momentum defined in (\ref{M}), (\ref{tEa}) and (\ref{P}). Then we have for all $t\in [t_1,T^*]$,
\bee
& & E_a[u](t) +  (c_2(t_1) + \frac 14 v_\infty^2) M[u](t)  -v_\infty P[u](t)  =  \\
& & \qquad E_a[\tilde R](t) + ( c_2(t_1) + \frac 14 v_\infty^2) M[\tilde R](t) -v_\infty P[\tilde R](t)  + \mathcal F_2(t) +  O(e^{-\mu \ve  t}\|z(t)\|_{H^1(\R)}).% - \int_\R z (a_\ve -2)R^m,  %+ (c_2(t_1)-\la) \int_\R (a_\ve^{1/m} - 2^{1/m}) R z, %- \frac m2 \int_\R (a_\ve -2) z ^2 R^{m-1} .%+ O(\|z \|_{H^1(\R)}^3).
\eee
\end{lem}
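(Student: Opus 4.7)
The plan is to simply insert the decomposition $u(t) = \tilde R(t) + z(t)$ from (\ref{eta1a}) into each of the three conserved quantities $E_a[u]$, $M[u]$ and $P[u]$, split the result into a purely soliton part, a part linear in $z$, and a part that is at least quadratic in $z$, and show that (i) the quadratic-and-higher remainder exactly equals $\mathcal F_2(t)$, and (ii) the linear-in-$z$ terms collapse to an error of size $O(e^{-\mu\ve t}\|z(t)\|_{H^1(\R)})$. The quadratic identification is direct: the contribution of $\frac12\int|u_x|^2$ produces $\re\int\overline{\tilde R_x}z_x$ plus $\frac12\int|z_x|^2$; expanding $\frac{1}{2(m+1)}\int a_\ve|\tilde R+z|^{m+1}$ to first order around $\tilde R$ produces the linear piece $\frac12\int a_\ve|\tilde R|^{m-1}\re(\tilde R\bar z)$ and packages the remainder precisely as the last integral defining $\mathcal F_2$; the momentum and mass contribute analogously. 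Thus up to the announced error we are left with the linear functional
\[
\mathcal L[z](t) := \re\int_\R \bar z\,\bigl[-\tilde R_{xx} -\tfrac12 a_\ve(x)|\tilde R|^{m-1}\tilde R + (c_2(t_1)+\tfrac14 v_\infty^2)\tilde R - iv_\infty\tilde R_x\bigr],
\]
after integrating by parts the $\re\int\overline{\tilde R_x}z_x$ term and rewriting $-\frac{v_\infty}{2}\ima\int(\overline{\tilde R}z_x+\bar z\tilde R_x) = -v_\infty\ima\int\bar z\,\tilde R_x$.

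The central computation is then to show $|\mathcal L[z](t)| \leq K e^{-\mu\ve t}\|z(t)\|_{H^1(\R)}$. The key identity is that $\tilde R = Q_{c_2(t)}(y)e^{i\Gamma}$ satisfies exactly
\[
-\tilde R_{xx} + (c_2(t)+\tfrac14 v_\infty^2)\tilde R - iv_\infty\tilde R_x - |\tilde R|^{m-1}\tilde R \;=\; 0,
\]
which follows from $Q_c''-cQ_c+Q_c^m=0$ and the explicit form of $\Gamma$. Substituting this identity into $\mathcal L[z]$ replaces the second-derivative and transport-by-$v_\infty$ terms by $|\tilde R|^{m-1}\tilde R - (c_2(t)+\tfrac14v_\infty^2)\tilde R$, so that
\[
\mathcal L[z](t) = \re\int_\R\bar z\,\bigl(1-\tfrac12 a_\ve(x)\bigr)|\tilde R|^{m-1}\tilde R \;+\; \bigl[c_2(t_1)-c_2(t)\bigr]\re\int_\R\tilde R\bar z.
\]
The second contribution vanishes identically thanks to the orthogonality condition $\re\int\tilde R\bar z = 0$ from (\ref{10a}). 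The first contribution is controlled by the fact that the soliton lives at $x=v_\infty t+\rho_2(t)$, which by the a priori bound $|\rho_2(t_1)+v_\infty t_1-X_0|+|\rho_2'|\leq\frac{v_\infty}{100}$ and $t\geq t_1\geq\frac12 T_\ve$ stays uniformly to the right of $\frac{99}{200}v_\infty T_\ve$; on the support of $|\tilde R|^{m-1}\tilde R$ the exponential decay of $Q_{c_2}$ in $y$ together with the bound $|2-a_\ve(x)|\leq K e^{-\mu\ve x}$ from (\ref{ahyp}) yields $\|(1-\tfrac12 a_\ve)|\tilde R|^{m-1}\tilde R\|_{L^2(\R)} \leq K e^{-\mu\ve t}$, possibly after decreasing $\mu$.

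Putting these pieces together gives the stated expansion with remainder $O(e^{-\mu\ve t}\|z(t)\|_{H^1(\R)})$. The only nontrivial step is the cancellation in $\mathcal L[z]$, where the interplay between the soliton equation and the orthogonality $\re\int\tilde R\bar z=0$ is essential; everything else is a bookkeeping exercise of Taylor expanding the nonlinearity and integrating by parts, completely parallel to the expansion performed for the analogous functional $\mathcal F(t)$ in Subsection \ref{EFz}. The main obstacle would be if $\rho_2(t)$ were allowed to drift into the interaction region where $a_\ve\neq 2$ is not small, but the a priori restriction in the definition of $T^*$ precisely prevents this.
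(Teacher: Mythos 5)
Your proposal follows the paper's proof essentially verbatim: expand $E_a$, $M$, $P$ around $\tilde R$, identify the quadratic-and-higher remainder with $\mathcal F_2(t)$, kill the linear part by combining the elliptic equation for $Q_{c_2}$ with the orthogonality $\re\int_\R\tilde R\bar z=0$, and absorb the residual $(1-\tfrac12 a_\ve)$ contribution into the $O(e^{-\mu\ve t}\|z\|_{H^1})$ error via the decay of $2-a$ to the right of the potential. The one slip is a sign: since $-v_\infty\ima\int_\R\bar z\,\tilde R_x=\re\int_\R\bar z\,(iv_\infty\tilde R_x)$, the transport term in your $\mathcal L[z]$ and in your ``key identity'' must carry $+iv_\infty\tilde R_x$ (as written, with $-iv_\infty\tilde R_x$, the identity fails and leaves a residue $(-2iv_\infty Q_{c_2}'+v_\infty^2Q_{c_2})e^{i\Gamma}$); the two errors compensate, and once both signs are fixed consistently the argument is exactly the paper's.
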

\begin{proof}
Using the orthogonality condition (\ref{10a}), we have
\bee
E_a[u](t) &= & E_a[\tilde R] + \re \int_\R \bar z [ -\tilde R_{xx}  - |\tilde R|^{m-1} \tilde R ]  + \frac 12 \int_\R |z_x|^2   + \re \int_\R (1- \frac{a_\ve}2 )|\tilde R |^{m-1}\tilde R \bar z  \\
 & & \quad   -\frac 1{2(m+1)} \int_\R a_\ve(x) [ |\tilde R+z|^{m+1} -|\tilde R|^{m+1} - (m+1) |\tilde R|^{m-1} \re (\tilde R \bar z) ].
\eee
Moreover, following (\ref{dec1}), we easily get
$$
\abs{\re \int_\R \bar z (1-\frac 12 a_\ve) |\tilde R|^{m-1} \tilde R } \leq K e^{-\mu \ve  t}\|z(t)\|_{H^1(\R)}.
$$
Similarly, by using (\ref{10a}),
$$
M[u](t)  = M[\tilde R] +  \frac 12\int_\R |z|^2,
$$
and
$$
P[u](t)  = P[\tilde R](t) +\ima \int_\R \tilde R_x \bar z + \frac 12 \ima \int_\R z_x\bar z.
$$
Collecting the above estimates, we have
\bee
& & E_a[u](t) +  (c_2(t_1) + \frac 14 v_\infty^2) M[u](t) -v_\infty P[u](t)= \\
& & \;  E_a[\tilde R](t) + ( c_2(t_1) + \frac 14 v_\infty^2) M[\tilde R](t) -v_\infty P[\tilde R](t)  + \mathcal F_2(t) +  O(e^{-\mu \ve  t}\|z(t)\|_{H^1(\R)}).
\eee
Here we have used (\ref{10a}), the equation satisfied by $Q_{c_2}$ and the identity
$$
\re \int_\R \bar z [ -\tilde R_{xx} - |\tilde R|^{m-1} \tilde R + iv_\infty \tilde R_x ] =0. 
$$
This concludes the proof.
\end{proof}

\begin{lem}[Modified coercivity for $\mathcal F_2$]\label{Coer3}~

There exists $\ve_0>0$ such that for all $0<\ve<\ve_0$ the following hold. There exist $K,\nu, \mu>0$, independent of $K^*$ such that for every $t\in [t_1, T^*]$
\be\label{Co3}
\mathcal F_2(t) \geq \nu \|z (t)\|_{H^1(\R)}^2   - K e^{-\mu\ve t} \|z (t)\|_{L^2(\R)}^2 +O( \|z(t)\|_{L^2(\R)}^3). 
\ee
\end{lem}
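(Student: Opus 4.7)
The plan is to reduce $\mathcal{F}_2(t)$ to the quadratic form $\tilde{\mathcal B}[z,z]$ of Lemma \ref{surL}(5)(b), then invoke the coercivity stated there together with the orthogonality conditions \eqref{10a}. The steps are as follows.

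First, I would Taylor-expand the nonlinear part of $\mathcal{F}_2$ around $\tilde R$. Pointwise one has
$$
|\tilde R + z|^{m+1} - |\tilde R|^{m+1} - (m+1)|\tilde R|^{m-1}\re(\tilde R \bar z) = \tfrac{m+1}{2}|\tilde R|^{m-1}|z|^2 + \tfrac{(m+1)(m-1)}{2}|\tilde R|^{m-3}[\re(\bar{\tilde R}z)]^2 + R(x),
$$
where $|R(x)|\leq C(|\tilde R|^{m-2}|z|^3 + |z|^{m+1})$. Using the Sobolev embedding $H^1(\R)\hookrightarrow L^\infty(\R)$ and \eqref{11a}, the integrated remainder is absorbed into the $O(\|z(t)\|_{L^2(\R)}^3)$ tolerance on the right-hand side of \eqref{Co3}.

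Second, I would replace the variable coefficient $a_\ve(x)/2$ appearing in front of the quadratic form by $1$, using the decay $|2 - a(\ve x)|\leq Ke^{-\mu \ve x}$ on the support of $Q_{c_2}^{m-1}(y)$. The bootstrap hypothesis $|\rho_2'(t)|\leq v_\infty/100$ forces the soliton center $v_\infty t + \rho_2(t)$ to grow linearly in $t$, so this localization error is of order $e^{-\mu\ve t}\|z(t)\|_{L^2(\R)}^2$, the precise form wanted. In the same spirit, the coefficient $c_2(t_1)+\tfrac14 v_\infty^2$ may be exchanged for $c_2(t)+\tfrac14 v_\infty^2$ at the cost of $|c_2(t)-c_2(t_1)|\,\|z(t)\|_{L^2(\R)}^2$; the mass identity \eqref{Kc2} together with \eqref{11a} shows $|c_2(t)-c_2(t_1)|= O(\|z(t)\|_{L^2(\R)}^2 + \|z(t_1)\|_{L^2(\R)}^2)$, again absorbable.

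Third, performing the change of variable $y = x - v_\infty t - \rho_2(t)$ and writing $z(t,x) = w(y)e^{i\Gamma(t,x)}$, the remaining quadratic form becomes
$$
\tfrac12 \int_\R(|w_y|^2 + c_2(t)|w|^2) - \tfrac12\int_\R(Q_{c_2}^{m-1}|w|^2 + (m-1)Q_{c_2}^{m-1}(\re w)^2),
$$
which is exactly $\tilde{\mathcal B}[z,z]$ in the notation of Lemma \ref{surL}(5)(b) evaluated at scaling $c_2(t)$ and velocity $v_\infty$, phase $\Gamma$. The three orthogonality conditions required by that lemma (orthogonality to $Q_{c_2}e^{i\Gamma}$ in real and imaginary parts, and to $Q_{c_2}'e^{i\Gamma}$ in real part) are precisely the conditions \eqref{10a}, so the penalty term $\re\int \bar z Q_{c_2}e^{i\Gamma}$ on the right-hand side of Lemma \ref{surL}(5)(b) vanishes. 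This yields $\tilde{\mathcal B}[z,z]\geq \nu_0\|z(t)\|_{L^2(\R)}^2$.

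Finally, I upgrade this $L^2$-coercivity to $H^1$ by holding back a small fraction of the $\tfrac12\int|z_x|^2$ kinetic term in $\mathcal F_2$ before applying the preceding steps: one writes $\mathcal F_2 = \delta\int|z_x|^2 + (\mathcal F_2 - \delta\int |z_x|^2)$, applies the above argument to the second summand to control $\|z\|_{L^2}^2$, and combines the two to recover $\|z\|_{H^1}^2$. The main obstacle I foresee is the bookkeeping of the three independent error sources—the Taylor remainder, the substitution $a_\ve/2 \leadsto 1$, and the substitution $c_2(t_1)\leadsto c_2(t)$—and in particular verifying that the exponential factor $e^{-\mu\ve t}$ arising from the potential truly matches the bootstrap position of the soliton; the rest is a now-standard application of the spectral machinery of Lemma \ref{surL}.
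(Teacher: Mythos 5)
Your proposal is correct and follows essentially the same route as the paper: Taylor-expand the nonlinear term to quadratic order with a cubic remainder, replace $a_\ve/2$ by $1$ using the exponential decay of $a_\ve-2$ in the region where the soliton (centered near $v_\infty t$) lives, and then invoke the coercivity of Lemma \ref{surL}(5) together with the orthogonality conditions \eqref{10a}. The only differences are that you spell out two details the paper leaves implicit — the harmless exchange of $c_2(t_1)$ for $c_2(t)$ via \eqref{Kc2}, and the standard retention of a fraction of the kinetic term to upgrade the $L^2$ coercivity of Lemma \ref{surL} to the $H^1$ bound — both of which are correct.
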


\begin{proof}
First of all, note that 
\bee
\mathcal F_2(t) & = & \frac 1{2}\int_\R \big\{ z_x^2 + (c_2(t_1)  + \frac 14 v_\infty^2)z^2\big\} - \frac 12 v_\infty \ima \int_\R \bar z z_x\\
& & \quad -  \int_\R  [ |\tilde R|^{m-1} |z|^2 + (m-1) |\tilde R|^{m-3} [\re(\tilde R \bar z)]^2 ]\\
& & \quad - \frac 1{2} \int_\R (a_\ve (x)-2) [ |\tilde R|^{m-1} |z|^2 + (m-1) |\tilde R|^{m-3} [\re(\tilde R \bar z)]^2 ]+ O(\|z(t)\|_{H^1(\R)}^3 )
\eee
Since $(a_\ve(x) -2)$ is exponentially decreasing along the region where the soliton $\tilde R$ is supported, we have
$$
\abs{\int_\R (a_\ve (x)-2) [ |\tilde R|^{m-1} |z|^2 + (m-1) |\tilde R|^{m-3} [\re(\tilde R \bar z)]^2 ] } \leq K e^{-\ve\mu t} \|z(t)\|_{L^2(\R)}.
$$
(cf. (\ref{dec1} for a similar computation.) From Lemma \ref{surL} and (\ref{10a}) we have for $t\geq t_1$,
$$
\mathcal F_2(t)  \geq    \nu \|z(t)\|_{H^1(\R)}^2 -Ke^{-\ve\mu t}\|z(t)\|_{L^2(\R)}^2 -K\|z(t)\|_{H^1(\R)}^3,
$$ 
as desired.
\end{proof}

\medskip
\noindent
{\bf End of the proof.} Now we prove that our assumption $T^*<+\infty$ leads inevitably to a contradiction. Indeed, from Lemmas \ref{EE3} and \ref{Coer3}, the mass and energy conservation, and the positivity of (\ref{dPa}), we have for all $t\in [t_1, T^*]$ and for some constant $K>0,$
\bee
 & & \|z(t)\|_{H^1(\R)}^2 \leq  \quad  K \mathcal F(t_1) + Ke^{-\mu \ve t_1}\sup_{t\in [t_1, T^*]} \|z(t)\|_{L^2(\R)}  + K\sup_{t\in [t_1, T^*]} \|z(t)\|_{L^2(\R)}^3 \\
 & & +  \big| E_a[\tilde R](t_1) -E_a[\tilde R](t)  +  (c_2(t_1) +\frac 14 v_\infty^2 ) (M[\tilde R](t_1) -M[\tilde R](t)) - v_\infty (P[\tilde R](t_1) -P[\tilde R](t)) \big|. 
\eee
From Lemmas \ref{3Dr} and \ref{dE023} we have
$$
\|z(t)\|_{H^1(\R)}^2  \leq  K \ve^{2p_m}  +  K \sup_{t\in [t_1, T^*]} \|z(t)\|_{H^1(\R)}^4 + K e^{-\ve\mu t_1} D_0 \ve^{p_m}. 
$$
Collecting the preceding estimates we have for $\ve>0$ small and $D_0=D_0(K)$ large enough
$$
\|z(t)\|_{H^1(\R)}^2 \leq \frac 14D_0^2 \ve^{2p_m}.
$$
This estimate together with (\ref{Kc2}) and (\ref{12a}) gives $|c_2(t)-c_\infty| \leq K \ve^{p_m}$, independent of $D_0$, which contradicts the definition of $T^*$. The conclusion is that 
 $$
 \sup_{t\geq t_1} \big\| u(t)- Q_{c_\infty}(\cdot - v_\infty t- \rho_2(t)) \exp \big\{ \frac i2 x v_\infty -\frac i4v_\infty^2 t +i\ga_2(t)\big\}\big\|_{H^1(\R)} \leq K \ve^{p_m}.
 $$
 This finishes the proof of (\ref{S}).

\begin{rem}
Note that from the proof and the mass conservation law, we have the following additional information: 
\be\label{Pc}
\|z(t)\|_{L^2(\R)}^2  =  2 (1- \la_\infty^2 c_2^{  \frac 2{m-1} -\frac 12}(t)) M[Q], \quad \hbox{ for all } t\geq t_1.
\ee
and thus % for $K=K(K_0)>0$,
$$
%c_\infty (1-K\ve^{2p_m})  \leq 
 \limsup_{t\to +\infty }c_2(t) \leq  c_\infty.
$$
We believe that there is no equality in the above property, in any case, provided $0<\ve<\ve_0$ small enough.
\end{rem}

\bigskip

\section{Proof of Proposition \ref{prop:decomp}}\label{CDE}

In this section we prove the decomposition result for the error $S[\tilde u]$ associated to the approximate solution $\tilde u$.
First of all, it is easy to verify that 
$$
S[\tilde u] = S[\tilde R] + \mathcal L [w]  + \tilde N[w], 
$$
where 
$$
\mathcal L[w] := iw_t + w_{xx}  +  \frac{a(\ve x)}{2a(\ve\rho)} Q_c^{m-1}(y) [ (m+1) w + e^{2i\Theta}(m-1) \bar w],  
$$
and
$$
\tilde N[w] := a(\ve x) \big\{ |\tilde R + w|^{m-1}(\tilde R + w) -|\tilde R|^{m-1} \tilde R -  \frac{Q_c^{m-1}(y) }{2a(\ve\rho)} [ (m+1) w +  e^{2i\Theta}(m-1) \bar w] \big\}.
$$

In the next Claim we expand the first term, $S[\tilde R]$.

\begin{Cl}[Decomposition of $S{[}\tilde R{]}$]\label{lem:SQ}~

\ben
\item Suppose $2\leq m<3$. Then one has
\be\label{eq:SQm}
S[\tilde R] = \big[  F_0^R(t, y)   + \ve  F_1^R(t, y) + \ve^2 F_2^R(t, y)  + \ve^3  f^R (t) F_c^R (y) \big] e^{i\Theta}, 
\ee
where 
\bea
F_0^R(t, y) & := &    - \frac 12 (v'(t) -  \ve f_1(t) ) \frac {yQ_c(y)}{\tilde a(\ve \rho(t))}   + i( c'(t)- \ve f_2(t)) \frac{\Lambda Q_c(y)}{\tilde a(\ve \rho(t))}  \nonu \\
& & \quad   -(\ga'(t) + \frac 12 v'(t)\rho (t)) \frac{Q_c(y)}{\tilde a(\ve \rho(t))}  \nonu \\
& & \quad - i(\rho'(t) -v(t) ) [  \frac{Q_c'(y)}{\tilde a(\ve \rho(t))} - \frac{\ve \tilde a'(\ve \rho(t))}{\tilde a^2(\ve\rho(t))} Q_c(y)]  \in \mathcal Y, \label{F0R}
\eea
$f_1, f_2$ are given by (\ref{f1f2}),
\bea
 F_1^R(t ,y) & := & \frac{a' (\ve \rho(t))}{\tilde a^m (\ve \rho(t))}yQ_c(y)\big[ Q_c^{m-1}(y) -\frac {4c(t)}{m+3}\big]  \nonu  \\
  & & \qquad + \frac{ia' (\ve \rho(t)) v(t) }{\tilde a^m(\ve \rho (t))}\big[ \frac{4c(t)}{5-m}\Lambda Q_c (y)-\frac{1}{m-1}Q_c(y)\big], \label{F1R}
\eea
and $\abs{f^R(t)}\leq K$, $ F_c^R \in \mathcal Y$. Finally, for every $t\in [-T_\ve, T_\ve]$
$$
\| \ve^2 F_2^R(t,y) + \ve^3 f^R(t)  F_c^R (y) \|_{H^1(\R)} \leq K\ve^2( e^{-\ve\mu|\rho(t)|} +\ve).
$$

\item Now suppose $3\leq m<5$. Then one has

\be\label{eq:SQ34}
S[\tilde R] = \big[   F_0^R(t, y)   + \ve F_1^R(t, y) + \ve^2 F_2^R(t, y)  +  \ve^3 F_{3}^R(t, y)  +\ve^4  f(t) F_c^R(y) \big] e^{i\Theta}, 
\ee
where $F_0^R$, is given now by the expression
\bea
F_0^R(t, y) & := &    - \frac 12 (v'(t) -  \ve f_1(t) ) \frac {yQ_c(y)}{\tilde a(\ve \rho(t))}   + i( c'(t)- \ve f_2(t)) \frac{\Lambda Q_c(y)}{\tilde a(\ve \rho(t))}  \nonu \\
& & \quad   -(\ga'(t) + \frac 12 v'(t)\rho (t) -\ve^2 f_3(t)) \frac{Q_c(y)}{\tilde a(\ve \rho(t))}  \nonu \\
& & \quad - i(\rho'(t) -v(t) -\ve^2 f_4(t)) [  \frac{Q_c'(y)}{\tilde a(\ve \rho(t))} - \frac{\ve \tilde a'(\ve \rho(t))}{\tilde a^2(\ve\rho(t))} Q_c(y)]  \in \mathcal Y, \label{F0R2}
\eea
$F_1^R$ is given by (\ref{F1R}), 
\be\label{F2R}
 F_2^R(t,y) := \frac{a''(\ve \rho(t))}{2 \tilde a^{m} (\ve \rho(t)) }y^2 Q_c^m (y) -\frac{f_3(t)}{\tilde a(\ve\rho(t))} Q_c(y) -i \frac{f_4(t)}{\tilde a(\ve\rho(t))} Q_c'(y),
\ee
and $\abs{f^R(t)}\leq K$, $ F_c^R \in \mathcal Y$. Moreover, for every $t\in [-T_\ve, T_\ve]$
$$
\| \ve^3 F_3^R (t,y) + \ve^4 f(t) F_c^R (y) \|_{H^1(\R)} \leq K\ve^3( e^{-\ve\mu|\rho(t)|} +\ve).
$$
\een
\end{Cl}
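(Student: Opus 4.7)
The strategy is a direct calculation: differentiate $\tilde R$ and use the soliton equation $Q_c''-cQ_c+Q_c^m=0$ to absorb the leading terms, then Taylor expand $a(\ve x)=a(\ve\rho+\ve y)$ around $\ve\rho$ and organize by powers of $\ve$. Concretely, I would first write
\[
i\tilde R_t = \frac{i}{\tilde a}\bigl[c'(t)\Lambda Q_c(y) - \rho'(t) Q_c'(y)\bigr]e^{i\Theta} - \frac{i\ve\rho'(t)\tilde a'(\ve\rho)}{\tilde a^2(\ve\rho)}Q_c(y)e^{i\Theta} - \Theta_t\,\tilde R,
\]
with $\Theta_t = c(t)+\tfrac12 v'(t)x-\tfrac14 v^2(t)+\ga'(t)$, and compute $\tilde R_{xx}$ noting that $\Theta_x=v/2$ and $Q_c''=cQ_c-Q_c^m$, which yields
\[
\tilde R_{xx} = \frac{cQ_c(y)-Q_c^m(y)}{\tilde a(\ve\rho)}e^{i\Theta} + \frac{iv(t)}{\tilde a(\ve\rho)}Q_c'(y)e^{i\Theta} - \frac{v^2(t)}{4}\tilde R.
\]
The nonlinearity reads $a_\ve(x)|\tilde R|^{m-1}\tilde R = \dfrac{a(\ve x)}{\tilde a^m(\ve\rho)}Q_c^m(y)\,e^{i\Theta}$, so writing $x=y+\rho$ and using $\tilde a^{m-1}=a$, the zeroth Taylor term of $a(\ve x)$ exactly cancels $-Q_c^m/\tilde a$ coming from $\tilde R_{xx}$.

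Second, the surviving $\ve^0$ contributions collapse as follows: the $\tilde R$-coefficient $-\Theta_t+c-v^2/4$ simplifies to $-\tfrac12 v'(t)x-\ga'(t)=-\tfrac12 v'(t)y-(\ga'+\tfrac12 v'\rho)$, the coefficient of $Q_c'(y)e^{i\Theta}/\tilde a$ becomes $-i(\rho'-v)$, and the coefficient of $\Lambda Q_c(y)e^{i\Theta}/\tilde a$ is $ic'$. Grouped with the $-i\ve\rho'\tilde a'/\tilde a^2\cdot Q_c$ piece from $i\tilde R_t$, this reproduces the expression for $F_0^R$ in \eqref{F0R} up to the $\ve f_j$ corrections. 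Those corrections come from the Taylor expansion
\[
\frac{a(\ve\rho+\ve y)}{a(\ve\rho)} = 1 + \ve y\,\frac{a'(\ve\rho)}{a(\ve\rho)} + \frac{\ve^2 y^2}{2}\frac{a''(\ve\rho)}{a(\ve\rho)} + \cdots,
\]
whose $\ve$-term contributes $\ve\,\tfrac{a'}{\tilde a^m}\,y\,Q_c^m\,e^{i\Theta}$ to $a_\ve|\tilde R|^{m-1}\tilde R$. The combination of this term with a partial reabsorption of $-\tfrac12 v'y\tilde R$ and $-i(\rho'-v)[Q_c'/\tilde a - \ve\tilde a'/\tilde a^2\,Q_c]$ into the $(v'-\ve f_1)$ and $(c'-\ve f_2)$ shifts — the identities being algebraic consequences of $yQ_c^m = (yQ_c^m)$ combined with $\int yQ_cQ_c'$, $\int Q_c^{m+1}$, $\int Q_c^2$ relations — produces precisely $F_1^R$ as in \eqref{F1R}. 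This is essentially a bookkeeping step, and the specific formulas for $f_1,f_2$ in \eqref{f1f2} are dictated by requiring this reabsorption.

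Third, for the $\ve^2$ term I collect the $\tfrac12\ve^2 y^2 (a''/a)\,Q_c^m/\tilde a\,e^{i\Theta}$ piece from the Taylor expansion, which gives the $y^2Q_c^m$ term in $F_2^R$. For $3\leq m<5$ the ansatz \eqref{F0R2} allows modulations $(\ga'+\tfrac12 v'\rho-\ve^2 f_3)$ and $(\rho'-v-\ve^2 f_4)$, and these extra shifts produce the $f_3/\tilde a\,Q_c$ and $if_4/\tilde a\,Q_c'$ contributions in \eqref{F2R}. For $2\leq m<3$ we treat these as part of $\ve^2 F_2^R$ (rather than introducing $f_3,f_4$), but the estimate is identical. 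The remainder $\ve^3 F_3^R + \ve^4 f^R F_c^R$ in the case $m\geq 3$ (resp. $\ve^2 F_2^R + \ve^3 f^R F_c^R$ in the case $m<3$) is the Taylor remainder in $a$ together with higher-order terms from the nonlinearity, and is estimated directly by writing $a^{(k)}(\ve\rho+\ve y)=a^{(k)}(\ve\rho)+O(\ve y)$: the polynomial growth $y^k$ is absorbed by the exponential decay of $Q_c$ (so each such term lies in $\mathcal Y$), and the $e^{-\ve\mu|\rho(t)|}$ factor comes from \eqref{ahyp} applied to $a^{(k)}(\ve\rho(t))$ which is bounded on the support of $Q_c(y)$.

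The \emph{main obstacle} is not any single identity but the combinatorial organization: one must verify that after the Taylor expansion of both $a$ and the nonlinear map $z\mapsto |z|^{m-1}z$ (the latter only contributing when $w\neq 0$, hence not here, which simplifies the computation for $S[\tilde R]$), the only terms remaining at order $\ve^0,\ve^1,\ve^2$ are those displayed in $F_0^R, F_1^R, F_2^R$, and all cross-terms arising from the $\ve\tilde a'/\tilde a^2$ piece of $i\tilde R_t$ precisely reassemble into the structure of $F_0^R$ in \eqref{F0R}–\eqref{F0R2}. Once this bookkeeping is done, the bound on the remainder reduces to a routine $H^1$ estimate using (\ref{ahyp}) and the exponential decay of elements of $\mathcal Y$.
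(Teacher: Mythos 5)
Your proposal is correct and follows essentially the same route as the paper's proof in Appendix \ref{CDE}: compute $i\tilde R_t$ and $\tilde R_{xx}$ directly, use $Q_c''-cQ_c+Q_c^m=0$ to cancel the leading nonlinear term, Taylor expand $a(\ve x)$ at $x=\rho(t)$, and regroup by powers of $\ve$ after inserting the $\ve f_1,\ve f_2$ (and $\ve^2 f_3,\ve^2 f_4$) shifts, with the remainder controlled via \eqref{ahyp} and the decay of $Q_c$. The only minor quibble is that the formulas \eqref{f1f2} for $f_1,f_2$ are not forced by the regrouping itself (which is a pure add-and-subtract identity valid for any choice) but by the later orthogonality conditions needed to solve $(\Omega_1)$; this does not affect the validity of the decomposition.
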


\begin{proof}[Proof of Claim \ref{lem:SQ}.]
We prove the worst case, namely $3\leq m<5$. The remaining case is easier to handle and we skip the details. 

Recall the definitions of $\tilde R$, $y$ and $\Theta$ from (\ref{defALPHA})-(\ref{param0}). We  have %(here $'$ denotes derivative with respect to $t$)
\bea
S[\tilde R] & = & i\tilde R_t + \tilde R_{xx}  + a_\ve(x) |\tilde R|^{m-1} \tilde R \nonu \\
&= &  -\big[\frac 12  x v'  + c + \ga' - \frac 14 v^2 \big]\frac 1{\tilde a}Q_c(y)e^{i\Theta} - \frac{i\rho'}{\tilde a} Q_c'(y)e^{i\Theta} +  \frac{ i c'}{\tilde a} \Lambda Q_c e^{i\Theta} \nonu \\
& & + \frac 1{\tilde a}\big[ Q_c'' + iv Q_c' -\frac 14 v^2 Q_c +  \frac {a(\ve x)}{a(\ve \rho)} Q_c^m  \big] e^{i\Theta}   - \frac{ i\ve a' \rho' }{(m-1)\tilde a^m} Q_c e^{i\Theta} . \label{SR}  
\eea
Now we perform a Taylor expansion of the term $a(\ve x)$ based at $x=\rho(t)$, as in \cite{Mu2}. From (\ref{SR}),
\bee
S[\tilde R] & = & \frac 1{\tilde a}\big[ \ve \frac{a' }{a}Q_c^{m-1} -\frac 12 v'  \big] yQ_c e^{i\Theta} + \frac{i}{\tilde a}\big[ c'\Lambda Q_c -\frac{ \ve a' v}{(m-1)a} Q_c\big] e^{i\Theta} -\frac 1{\tilde a}(\ga' + \frac 12 v' \rho) Q_ce^{i\Theta} \\
& &  - \frac{i}{\tilde a} (\rho' -v)\big[ Q_c'(y) + \frac{ \ve a' }{(m-1)a} Q_c \big] e^{i\Theta}  + \frac{\ve^2a''}{2 \tilde a^{m}} y^2 Q_c^m e^{i\Theta} \\
& & \qquad +  \frac{\ve^3a^{(3)}}{6 \tilde a^{m}} y^3 Q_c^m e^{i\Theta} +  \ve^4 f(t)F_c(y)e^{i\Theta} \\
& = : &  \big[ F_0^R(t, y) + \ve F_1^R(t, y) + \ve^2 F_2^R(t, y) + \ve^3 F_3^R (t,y)  + \ve^4 f^R(t)F_c^R(y)\big] e^{i\Theta}.
\eee
Additionally, we have $\abs{f(t)}\leq K$ and $ F_c^R (y)\in \mathcal Y$. In conclusion,
$$
\| \ve^3F_4(t,y) + \ve^4 f^R(t) F_c^R (y) \|_{H^1(\R)} \leq K\ve^4( e^{-\ve\mu|\rho(t)|} +\ve).
$$
This finishes the proof.
\end{proof}

Next, we consider the linear term. As above, we need to consider three different cases. Recall  that $\Lambda A_c(t,y) =\partial_c A_c(t, y) $.

\begin{Cl}[Decomposition of $\mathcal L{[}w{]}$]\label{lem:dSKdVw}~

\ben
\item Suppose $2\leq m< 3$. Then
\bea
\mathcal L[w] & = & - \ve \big[ \mathcal L_+ (A_{1,c})  + i \mathcal L_- (B_{1,c}) \big] e^{i\Theta}  -  (\ga' + \frac 12 v' \rho)w  -\frac 12  (v' -\ve f_1) y w \nonumber \\
& &   - i (\rho' -v)  w_y  + i\ve (c' - \ve f_2 )\partial_c w  + \ve^2 f^L(t)  F^{L}_c(y) e^{i\Theta}  .\label{Lwm}
\eea
Furthermore, suppose that $(A_{1,c}, B_{1,c}) $ satisfy (\ref{IP}). Then there exist $K, \mu>0$ such that
\be\label{23}
\| \ve^2 f^{L}(t)F^{L}_c e^{i\Theta}\|_{H^1(\R)}\leq K \ve^2(e^{-\ve\mu |\rho(t)|} +\ve) .
\ee

\item 
Consider now the case $3\leq m<5$. Here one has
\bea
\mathcal L[w] & = & -\sum_{k=1}^2 \ve^{k} \big[ \mathcal L_+ (A_{k,c})  + i \mathcal L_- (B_{k,c}) \big] e^{i\Theta}  -\frac12  ( v' - \ve f_1) y w + i ( c' - \ve f_2) \partial_c w   \nonumber \\
& &  \quad -  (\ga' + \frac 12 v' \rho -\ve^2 f_3 )w -  i(\rho' -v -\ve^2 f_4) w_y \nonumber \\
& & \quad + \ve^2 [ F_{2}^{L}(t, y) +  iG_{2}^{L}(t, y)] e^{i\Theta} + \ve^3 f^L(t)  F^{L}_c(y) e^{i\Theta}  .\label{Lw}
\eea
Here
\be\label{F2L}
 F_{2}^{L}(t, y) := m\frac{a' }{a}Q_c^{m-1}y A_{1,c} - \frac 12 f_1 y A_{1,c} - \frac 1\ve (B_{1,c})_t - f_2 \Lambda B_{1,c},
\ee
and
\be\label{G2L}
 G_{2}^{L}(t, y) :=  \frac 1\ve (A_{1,c})_t + f_2 \Lambda A_{1,c}  +\frac{a' }{a} Q_c^{m-1}y B_{1,c}  - \frac 12f_1y B_{1,c}.
\ee
In addition, suppose that $(A_{k,c}(t,y), B_{k,c}(t,y)) $,  satisfy (\ref{IP}) $k=1,2$. Then there exist $K, \mu>0$ such that
\be\label{34}
\| \ve^3 f^{L}(t)F^{L}_c e^{i\Theta}\|_{H^1(\R)}\leq K \ve^3(e^{-\ve\mu |\rho(t)|} +\ve) .
\ee
\een
\end{Cl}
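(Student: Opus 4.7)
Write $w = W e^{i\Theta}$ with $W = A + iB$, $A = \sum_k \ve^k A_{k,c}$, $B = \sum_k \ve^k B_{k,c}$ (the sum ranging over $k=1$ for $m<3$ and $k=1,2$ for $m\geq 3$). The nonlinear part of $\mathcal L[w]$ simplifies via the algebraic identity $(m+1)w + (m-1)e^{2i\Theta}\bar w = 2(mA+iB)e^{i\Theta}$, yielding $\frac{a(\ve x)}{a(\ve\rho)} Q_c^{m-1}(mA+iB)e^{i\Theta}$. A direct chain rule using $y=x-\rho(t)$, $\Theta_t = c-\frac 14 v^2 + \frac 12 v'x + \ga'$, and $\Theta_x = v/2$ produces
\[
\mathcal L[w] e^{-i\Theta} = iW_t + W_{yy} - cW + \frac{a(\ve x)}{a(\ve\rho)} Q_c^{m-1}(mA+iB) + i(v-\rho')W_y - \frac 12 v' y W - (\ga' + \tfrac 12 v'\rho) W.
\]

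Next, Taylor-expand $a(\ve x)/a(\ve\rho) = 1 + \ve y\,\frac{a'}{a} + \frac 12 \ve^2 y^2 \frac{a''}{a} + O(\ve^3 y^3)$ about $x=\rho$, and observe that $W_{yy} - cW + Q_c^{m-1}(mA+iB) = -\mathcal L_+ A - i\mathcal L_- B$ by the definition of $\mathcal L_\pm$ in Lemma \ref{surL}. Splitting by powers of $\ve$ yields the leading piece $-\sum_k \ve^k[\mathcal L_+ A_{k,c} + i\mathcal L_- B_{k,c}] e^{i\Theta}$ required by the claim. To match the announced modulation structure, rewrite $v' = (v'-\ve f_1) + \ve f_1$ and $c' = (c'-\ve f_2) + \ve f_2$ (and, for $m\geq 3$, shift additionally by $\ve^2 f_3$ and $\ve^2 f_4$): the "corrected" parts contribute exactly the $yw$-, $\partial_c w$-, $w$-, and $w_y$-terms of (\ref{Lwm})--(\ref{Lw}), while each subtracted piece $\ve f_j\cdot(\cdot)$ inherits a factor $a^{(j)}(\ve\rho)$ from (\ref{f1f2}), (\ref{f3}), (\ref{f4}) and hence decays like $e^{-\ve\mu|\rho|}$, so it drops into the residual $f^L(t)F_c^L(y)$.

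For $2\leq m <3$ only $A_{1,c}, B_{1,c}$ are present; the full derivative $iW_t = \ve[i(A_{1,c})_t - (B_{1,c})_t]$ is already of order $\ve^2 e^{-\ve\mu|\rho|}$ because the explicit form (\ref{O1mod}) exhibits all time dependencies via $c(t), v(t), \rho(t)$ paired with a factor $a'(\ve\rho)$, and combined with the $\ve^2$ Taylor remainder this fits directly into $\ve^2 f^L(t) F_c^L(y)e^{i\Theta}$ satisfying (\ref{23}) by (\ref{IP}) and (\ref{aprio}). For $3\leq m<5$ the expansion must be pushed one order further: the genuine $\ve^2$-coefficient of $\mathcal L[w]e^{-i\Theta}$ now receives contributions from (i) $\frac{1}{\ve}(A_{1,c})_t$ and $\frac{1}{\ve}(B_{1,c})_t$ (each of true order $\ve$ by (\ref{O1mod})), (ii) the first Taylor correction $\ve(a'/a) y Q_c^{m-1}(mA_{1,c}+iB_{1,c})$, and (iii) the discarded pieces $\frac 12 \ve f_1 y W$ and $\ve f_2 \Lambda W$ from the modulation rewriting; assembling them reproduces precisely $F_2^L$ and $G_2^L$ in (\ref{F2L})--(\ref{G2L}), while all strictly higher-order residues (higher Taylor remainder, $(A_{2,c})_t, (B_{2,c})_t$, the $\ve^2 f_3 w$ and $\ve^2 f_4 w_y$ corrections) aggregate into $\ve^3 f^L(t)F_c^L(y)e^{i\Theta}$ with bound (\ref{34}). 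The only real obstacle is bookkeeping: each leftover term must be tracked to verify decay $e^{-\ve\mu|\rho|}$, either through a factor $a^{(j)}(\ve\rho)$ or through the Schwartz decay guaranteed by (\ref{IP}), since this decay is what eventually allows the integral estimate (\ref{integra}) downstream. No analytic input beyond the chain rule, Taylor's theorem, and Lemma \ref{surL} is needed.
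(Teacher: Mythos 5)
Your proposal is correct and follows essentially the same route as the paper's proof of this Claim: a direct computation via the chain rule in the variables $(t,y)$, a Taylor expansion of $a(\ve x)/a(\ve\rho)$ about $x=\rho(t)$, identification of $\mathcal L_\pm$ from \eqref{defLy}, and the splitting $v'=(v'-\ve f_1)+\ve f_1$, $c'=(c'-\ve f_2)+\ve f_2$ (plus the $\ve^2 f_3,\ve^2 f_4$ shifts for $m\geq 3$), with the leftovers absorbed into $F_2^L, G_2^L$ and the residual $f^L F_c^L$ estimated via \eqref{IP} and the decay of $a^{(j)}$. The only difference is cosmetic: you treat $W=A+iB$ at once using the identity $(m+1)w+(m-1)e^{2i\Theta}\bar w=2(mA+iB)e^{i\Theta}$, whereas the paper computes $\mathcal L[A_ce^{i\Theta}]$ and $\mathcal L[iB_ce^{i\Theta}]$ separately by linearity.
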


\begin{proof}
From the linear character of $w$ we are reduced to handle only two different kind of terms: $\mathcal L[A_c( t, y) e^{i\Theta}]$ and $\mathcal L[ i B_c(t, y) e^{i\Theta}]$. In addition, we expand in several order of $\ve$ to consider the case $m\in [3, 5)$. Otherwise, the computations are simpler and one does not need an accurate expression for these terms. We left the details to the reader.  

First we compute $\mathcal L[A_c(t, y) e^{i\Theta}]$, for a given smooth real valued function $A$. We have (the subscript $()_t$ means derivative on the first variable)
\bee
\mathcal L[A_c(t, y) e^{i\Theta}] & =&  i (A_c)_t e^{i\Theta} + i c' \Lambda A_c e^{i\Theta}  -(\frac 12 xv'  + c -\frac 14 v^2 +\ga') A_c e^{i\Theta} -i \rho'  (A_c)_x e^{i\Theta} \\
& & \quad  +   \big[ (A_c)_{xx} + iv (A_c)_x  -\frac 14 v^2 A_c  \big] e^{i\Theta}  + \frac{ m a(\ve x)}{a(\ve \rho)} Q_c^{m-1} A_c e^{i\Theta} \\
& = &  - \mathcal L_+ (A_c) e^{i\Theta} +  (\ve \frac{m a'}{a} Q_c^{m-1}- \frac 12 v')y A_c e^{i\Theta} -  (\ga' + \frac 12 v' \rho)A_c e^{i\Theta}  \\
& & \quad  -i(\rho' -v)(A_c)_y e^{i\Theta} + i  ( (A_c)_t  +  c' \Lambda A_c) e^{i\Theta} +  \frac{ m\ve^2 a''}{2a} y^2Q_c^{m-1} A_c e^{i\Theta}   \\
& & \quad +  \frac{ m\ve^3 a^{(3)}}{6a} y^3Q_c^{m-1} A_c e^{i\Theta} +  \ve^4 f(t)  y^4Q_c^{m-1} A_c e^{i\Theta}\\
& =&  - \mathcal L_+ (A_c) e^{i\Theta} -\frac 12 ( v'- \ve f_1 ) y A_c e^{i\Theta}  -  (\ga' + \frac 12 v' \rho -\ve^2 f_3 )A_c e^{i\Theta}  \\
& & \quad  - i(\rho' -v -\ve^2f_4 ) (A_c)_y e^{i\Theta}   + i( c' - \ve f_2) \Lambda A_c e^{i\Theta}  \\
& & \quad + \frac{\ve a' }{a} mQ_c^{m-1} y A_c e^{i\Theta} - \frac \ve 2 f_1 y A_c e^{i\Theta} + i\big[ (A_c)_t  +  \ve f_2  \Lambda A_c \big] e^{i\Theta} \\
& & \quad   +  \frac{ m\ve^2 a''}{2a} y^2Q_c^{m-1} A_c e^{i\Theta}  - \ve^2 f_3 A_c e^{i\Theta}   - i \ve^2f_4 (A_c)_y e^{i\Theta}\\
& & \quad  + \ve^3  \frac{m  a^{(3)}}{6a} y^3Q_c^{m-1} A_c e^{i\Theta}  e^{i\Theta} +  \ve^4 f(t)  F^{\bf II}_c(y) e^{i\Theta},
\eee
where  $F^{\bf II}_c(y) \in \mathcal Y$ and $f(t)$ is exponentially decaying in time. Therefore,
$$
\|\ve^4 f^{\bf II}(t)F^{\bf II}_c(y)\|_{H^1(\R)}\leq K \ve^4 e^{-\mu\ve |\rho(t)|} .
$$
With a similar computation,
\bee
\mathcal{L}[ iB_c(t, y) e^{i\Theta}] & =&  -i \mathcal L_- (B_c) e^{i\Theta} - \frac i 2  ( v' -\ve f_1 ) y B_c e^{i\Theta}  - i (\ga' + \frac 12 v' \rho -\ve^2 f_3 )B_c e^{i\Theta} \\
& & \quad + (\rho' -v-\ve^2 f_4) (B_c)_y e^{i\Theta} - ( c' - \ve f_2 )  \Lambda B_c e^{i\Theta}  \\
& & \quad  + \frac{i \ve  a' }{a} Q_c^{m-1} y B_c e^{i\Theta}  -\frac i2  \ve f_1 y B_c e^{i\Theta}   -  \big[ (B_c)_t +  \ve f_2 \Lambda B_c\big] e^{i\Theta}   \\
& & \quad + i \frac{ \ve^2 a''}{2a} y^2Q_c^{m-1} B_c e^{i\Theta} +i \ve^3  \frac{a^{(3)}}{6a} y^3Q_c^{m-1} B_c e^{i\Theta} \\
& & \quad  + \ve^2 f_4(t) (B_c)_y e^{i\Theta}   -i \ve^2 f_3 B_c e^{i\Theta} + i \ve^4 g^{\bf II}(t)  G_c^{II} (y) e^{i\Theta},
\eee
with $\|\ve^4g^{\bf II}(t)  G_c^{II} (y) e^{i\Theta} \|_{H^1(\R)} \leq K\ve^4e^{-\mu\ve |\rho(t)|} .$
Collecting the above calculations, we finally obtain (\ref{Lw}). Estimate (\ref{34}) can be directly verified. 

\end{proof}

For the final term $\tilde N[w]$ we have the following

\begin{Cl}[Decomposition of  $\tilde N{[}w{]}$]\label{lem:SintIII}~
\ben
\item
Suppose that $2\leq m< 3$ and (\ref{IP}) holds for $(A_{1,c}, B_{1,c})$. Then there exists $K,\mu>0$ such that 
$$ 
\| \tilde N[w] \|_{H^1(\R)} \leq K \ve^2 e^{-\mu\ve |\rho(t)|},
$$
uniformly for every $t\in [-T_\ve, T_\ve]$. 
\item Suppose now $3\leq m<5$, and that (\ref{IP}) holds for each $(A_{k,c}, B_{k,c})$, $k=1,2$. Then one has
$$ 
\tilde N[w] =  \ve^2 (N^{2,1}(t, y) + i N^{2,2}(t, y)) e^{i\Theta} + O_{H^1(\R)}( \ve^3 e^{-\ve\mu|\rho(t)|}),
$$
with 
\be\label{N21}
N^{2,1} := \frac 12 (m-1)\tilde a(\ve \rho)Q_c^{m-2} (mA_{1,c}^2 +B_{1,c}^2 ) , \quad N^{2,2} :=   (m-1) \tilde  a (\ve \rho)Q_c^{m-2}  A_{1,c}B_{1,c}.
\ee
\een
\end{Cl}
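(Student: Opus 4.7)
The plan is to view $\tilde N[w]$ as the second-order Taylor remainder for the nonlinearity $F(z) := |z|^{m-1}z$ expanded around the real base point $\alpha(t,y) := Q_{c(t)}(y)/\tilde a(\ve\rho(t))$. The starting observation is that, writing $\tilde R = \alpha\, e^{i\Theta}$ and $w = \tilde w\, e^{i\Theta}$, one has
$$
|\tilde R+w|^{m-1}(\tilde R+w) - |\tilde R|^{m-1}\tilde R = \bigl[F(\alpha+\tilde w) - F(\alpha)\bigr]\, e^{i\Theta}.
$$
A direct computation of $DF$ at a real positive point $\alpha$ (splitting $F$ into its real and imaginary parts, computing partial derivatives at $(\alpha,0)$) yields $DF(\alpha)\beta = \tfrac12 \alpha^{m-1}\bigl[(m+1)\beta + (m-1)\bar\beta\bigr]$. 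Since $\alpha^{m-1} = Q_c^{m-1}/a(\ve\rho)$ and $e^{i\Theta}\,\overline{\tilde w} = e^{2i\Theta}\bar w$, this linear-in-$w$ piece exactly matches the term subtracted in the definition of $\tilde N[w]$, so
$$
\tilde N[w] = a(\ve x)\bigl[F(\alpha+\tilde w) - F(\alpha) - DF(\alpha)\tilde w\bigr]\, e^{i\Theta}.
$$

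For the first assertion ($2\le m<3$), the map $F$ is of class $C^2$ (because $m\ge 2$ keeps $|z|^{m-2}$ locally bounded) but fails to be $C^3$ near the origin, so I would stop at first-order Taylor with quadratic control: $\bigl|F(\alpha+\tilde w) - F(\alpha) - DF(\alpha)\tilde w\bigr| \le C|\tilde w|^2$ and similarly for its $y$-derivative. Combined with the pointwise bound $|\tilde w| \le C\ve\, e^{-\mu\ve|\rho(t)|}$ furnished by \eqref{IP} and the exponential decay in $y$ carried by $A_{1,c},B_{1,c}\in\mathcal Y$, integration in $y$ yields $\|\tilde N[w]\|_{H^1(\R)} \le K\ve^2 e^{-\mu\ve|\rho(t)|}$, as required.

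For the second assertion ($3\le m<5$), the nonlinearity is now $C^3$ in the region that matters (since $m\ge 3$ keeps $|z|^{m-3}$ bounded), so I would Taylor-expand to quadratic order with cubic remainder. A direct computation gives $\tfrac12 D^2F(\alpha)(\beta,\beta) = \tfrac{m-1}{2}\alpha^{m-2}(m\beta_1^2 + \beta_2^2) + i(m-1)\alpha^{m-2}\beta_1\beta_2$. Substituting $\tilde w = \ve(A_{1,c}+iB_{1,c}) + O(\ve^2)$ and using $\alpha^{m-2}\,a(\ve\rho) = \tilde a\, Q_c^{m-2}$, the leading $\ve^2$ contribution produces precisely $N^{2,1} + iN^{2,2}$ as stated in \eqref{N21}. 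The remaining terms are of three types and all $O_{H^1}(\ve^3 e^{-\mu\ve|\rho(t)|})$: (i) the cross contribution coming from the $\ve^2$ tail of $\tilde w$ involving $A_{2,c}, B_{2,c}$; (ii) the factor $a(\ve x)-a(\ve\rho) = O(\ve e^{-\mu\ve|\rho(t)|})$ applied to the quadratic term; and (iii) the cubic Taylor remainder $R_3$, bounded pointwise by $C|\tilde w|^3 \sup|D^3F|$.

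The main technical obstacle I foresee is the careful $H^1$ estimate of this cubic remainder when $m$ is non-integer, since then $D^3F$ is only Hölder continuous and behaves like $|z|^{m-3}$. It must be controlled via the pointwise inequality $|D^3F(\alpha+s\tilde w)| \le C(\alpha^{m-3} + |\tilde w|^{m-3})$ for $0\le s\le 1$, which requires a short case split according to whether $|\tilde w|\lesssim \alpha$ or not. Once this pointwise control and the analogous one for the $y$-derivative are in place, the exponential decay of $\alpha = Q_c/\tilde a$ in $y$ together with the amplitude factor $e^{-\mu\ve|\rho(t)|}$ from \eqref{IP} yields the claimed $H^1$ bound after integration.
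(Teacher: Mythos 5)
Your proposal is correct and follows essentially the same route as the paper: both treat $\tilde N[w]$ as the Taylor remainder of the nonlinearity expanded around the (real, after factoring out $e^{i\Theta}$) profile $Q_c/\tilde a$, stopping at first order with quadratic control for $2\le m<3$ and at second order for $3\le m<5$, where the explicit quadratic term $\tfrac12 D^2F(\alpha)(\tilde w,\tilde w)$ reproduces exactly the paper's expression $\frac{(m-1)a_\ve}{2\,a^{(m-2)/(m-1)}}Q_c^{m-2}\{e^{i\Theta}|w|^2+2\re(e^{i\Theta}\bar w)w+(m-3)e^{i\Theta}(\re(e^{i\Theta}\bar w))^2\}$ and hence $N^{2,1}+iN^{2,2}$ after substituting $w=\ve(A_{1,c}+iB_{1,c})e^{i\Theta}+O(\ve^2)$. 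Your extra care about the limited H\"older regularity of $D^2F$ and $D^3F$ for non-integer $m$ (and the case split $|\tilde w|\lesssim\alpha$ versus not, needed for the derivative estimates) only makes explicit what the paper's bound $O(Q_c^{m-2}|w|^2+|w|^3)$ leaves implicit.
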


\begin{proof}
First we prove the case $2\leq m< 3$. Recall that $w = \ve [A_{1,c} (t , y)  +i B_{1,c}(t, y) ] e^{i\Theta}$, with the functions $A_c(t, \cdot), B_c(t, \cdot) \in \mathcal Y$, real valued. Here we have
$$
\tilde N[w] = O(Q_c^{m-2} |w|^2 + |w|^3) = O_{H^1(\R)}(\ve^2 e^{-\ve\mu |\rho(t)|}),
$$
uniformly in time.

\medskip

Finally, let us consider the case $3\leq m<5$. From (\ref{defv}) we have $ w(t,x) = \sum_{k=1}^2 \ve^k (A_{k,c}(t, y)  + i B_{k,c}(t,y))e^{i\Theta}$. In order to simplify the computations, we assume $(A_{k,c}, B_{k,c})_{k=1,2}$ satisfy (\ref{IP}) on the interval $[-T_\ve, T_\ve]$ (which is indeed the case). We have
\bea\label{N2}
 \tilde N[w]  &= & \frac{(m-1)a(\ve x)}{2 a^{\frac{m-2}{m-1}}(\ve\rho)}Q_c^{m-2}(y)  \big\{  e^{i\Theta} |w|^2 + 2 \re (e^{i\Theta} \bar w) w  +     (m-3)e^{i\Theta} (\re (e^{i\Theta} \bar w))^2 \big\} \nonu \\
& & + O_{H^1(\R)}(\ve^3 e^{-\ve \mu |\rho(t)|}).
\eea
Now we replace $w$  in the above expression and we arrange the terms obtained according to the power of $\ve$ and between real and imaginary parts. We perform this computation in several steps. First, note that
$$
a(\ve x) = a(\ve\rho) + O(\ve y). 
$$
On the other hand,
$$
|w|^2 = \ve^2\{ A_{1,c}^2 + B_{1,c}^2  \} + O_{H^1(\R)}(\ve^3 e^{-\ve\mu |\rho(t)|}).
$$
Similarly $\re (e^{i\Theta} \bar w) = \ve A_{1,c} + \ve^2 A_{2,c}$. Therefore
$$
\re (e^{i\Theta} \bar w) w   =  \ve^2 (A_{1,c}^2  + iA_{1,c} B_{1,c}) e^{i\Theta} + O_{H^1(\R)}(\ve^3 e^{-\ve\mu |\rho(t)|}),
$$
and
$$
e^{i\Theta}(\re (e^{i\Theta} \bar w))^2 = \ve^2  A_{1,c}^2e^{i\Theta}  + O_{H^1(\R)}(\ve^3 e^{-\ve\mu |\rho(t)|}).
$$
Collecting these expansions and replacing in (\ref{N2}) we obtain
$$
\tilde N[w]  =   \frac 12 \ve^2 (m-1)\tilde a (\ve\rho) Q_c^{m-2} \big\{  mA_{1,c}^2 + B_{1,c}^2   + 2i A_{1,c}B_{1,c}  \big\}e^{i\Theta}  +  O_{H^1(\R)}(\ve^3 e^{-\ve\mu |\rho(t)|}). 
$$
We are done.
\end{proof}

Collecting the estimates from Claims \ref{lem:SQ}, \ref{lem:dSKdVw} and \ref{lem:SintIII}, we obtain Proposition \ref{prop:decomp}. The proof is now complete.

\bigskip

\section{Some identities related to the soliton $Q$}\label{AidQ}

This section has been taken in part from Appendix C in \cite{MMcol1}.

\begin{lem}[Identities for the soliton $Q$]\label{IdQ}~

Suppose $m>1$ and denote by $Q_c := c^{\frac 1{m-1}} Q(\sqrt{c} x)$ the scaled soliton, with $Q$ solution of $-Q'' +Q -Q^m=0$ in $\R$. Then
\ben
\item \emph{Energy}. Let $E_1 [u]:= E_{a\equiv 1}[u]$. Then
$$
E_1[Q]= -\frac 12  \la_0\int_\R Q^2 =-\la_0M[Q],  \qquad \hbox{with }\quad   \la_0 = \frac{5-m}{m+3}.
$$
\item \emph{Integrals}. Recall $\theta = \frac 1{m-1} -\frac 14$. Then
$$
\int_\R Q_c = c^{\theta-\frac 14} \int_\R Q, \quad \int_\R Q_c^{2} = c^{2\theta} \int_\R Q^2, \quad E_1[Q_c] = c^{2\theta +1}E_1[Q].
$$
and finally
$$
\int_\R Q_c^{m+1} = \frac{2(m+1)c^{2\theta +1}}{m+3} \int_\R Q^2, \quad \int_\R \Lambda Q_c = (\theta -\frac 14) c^{\theta -\frac 54} \int_\R Q, \quad  \int_\R \Lambda Q_c Q_c =\theta c^{2\theta -1} \int_\R Q^2.
$$
\item \emph{Integrals with powers}.
$$
\int_\R Q'^2 = \frac{m-1}{m+3}\int_\R Q^2, \quad \int_\R y^2 Q^{m+1} = \frac{m+1}{m+3}\big[2\int_\R y^2 Q^2 -\int_\R Q^2 \big],
$$
and
$$
\int_\R y^4 Q^{m+1} = \frac{m+1}{m+3}\big[2\int_\R y^4 Q^2 - 6\int_\R y^2 Q^2 \big], \quad \int_\R y^2 Q'^2 = \frac 2{m+3}\int_\R Q^2 +\frac{m-1}{m+3}\int_\R y^2 Q^2.
$$
\een
\end{lem}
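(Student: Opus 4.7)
The plan is to treat this as a collection of Pohozaev-type identities derived from the second order ODE $-Q'' + Q - Q^m = 0$, together with elementary scaling arguments for the $c$-dependence, so I would organize the proof into three short blocks corresponding to the three parts of the lemma.

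For part (3), which supplies the building blocks needed for (1), I would start with the two classical Pohozaev identities. Multiplying the soliton equation by $Q$ and integrating by parts yields $\int Q'^2 + \int Q^2 - \int Q^{m+1} = 0$; multiplying instead by $xQ'$ and integrating by parts gives $\tfrac12 \int Q'^2 - \tfrac12 \int Q^2 + \tfrac{1}{m+1}\int Q^{m+1} = 0$. Solving this $2\times 2$ linear system for $\int Q'^2$ and $\int Q^{m+1}$ in terms of $\int Q^2$ yields at once $\int Q'^2 = \tfrac{m-1}{m+3}\int Q^2$ and $\int Q^{m+1} = \tfrac{2(m+1)}{m+3}\int Q^2$. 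For the weighted identities in part (3) I would rerun the same idea with weights: multiplying the equation by $y^2 Q$ gives, after two integrations by parts and using $2\int yQQ' = -\int Q^2$, the relation $-\int Q^2 + \int y^2 Q'^2 + \int y^2 Q^2 - \int y^2 Q^{m+1} = 0$; multiplying by $y^3 Q'$ and collecting total derivatives produces $\tfrac32 \int y^2 Q'^2 - \tfrac32 \int y^2 Q^2 + \tfrac{3}{m+1}\int y^2 Q^{m+1} = 0$. Solving this second linear system gives the stated expressions for $\int y^2 Q^{m+1}$ and $\int y^2 Q'^2$. The quartic-weighted identity $\int y^4 Q^{m+1}$ is produced by the analogous pair of multiplications by $y^4 Q$ and $y^5 Q'$, using the already-established identity for $\int y^2 Q^2$ to eliminate the lower-order remainder.

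For part (1), once part (3) is in hand the computation is a direct substitution: $E_1[Q] = \tfrac12 \int Q'^2 - \tfrac{1}{m+1}\int Q^{m+1} = \bigl[\tfrac{m-1}{2(m+3)} - \tfrac{2}{m+3}\bigr]\int Q^2 = -\tfrac12 \lambda_0 \int Q^2$, where $\lambda_0 = \tfrac{5-m}{m+3}$. This also justifies the sign statement in Proposition \ref{Tm1}, since $\lambda_0 > 0$ in the subcritical regime $m < 5$.

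For part (2), the scaling identities are purely a change of variable. Using $Q_c(y) = c^{1/(m-1)} Q(\sqrt{c}\,y)$ and substituting $z = \sqrt{c}\,y$ in each integral gives immediately $\int Q_c = c^{\theta - 1/4}\int Q$ and $\int Q_c^2 = c^{2\theta}\int Q^2$, and the corresponding formulas for $E_1[Q_c]$ and $\int Q_c^{m+1}$ follow by combining the scaling with the identities already proved in part (3). Finally, the identities involving $\Lambda Q_c = \partial_c Q_c$ follow by differentiating the two previous scaling relations in $c$, which is justified by smoothness and Schwartz-class decay of $Q$. There is no genuine obstacle here; the only mild care needed is to verify that every integration by parts has decaying boundary terms, which is automatic because $Q \in \mathcal Y$.
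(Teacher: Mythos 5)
Your proof is correct, and since the paper itself gives no argument for this lemma (Appendix \ref{AidQ} simply defers to Appendix C of \cite{MMcol1}), the Pohozaev-multiplier identities (testing against $Q$, $xQ'$ and their weighted variants $y^2Q$, $y^3Q'$, $y^4Q$, $y^5Q'$) combined with the change of variables $z=\sqrt{c}\,y$ and differentiation in $c$ is exactly the standard route the author is implicitly invoking; I have checked that each of your linear systems solves to the stated formulas. The only caveat is on the paper's side rather than yours: the chain $E_1[Q]=-\tfrac12\la_0\int_\R Q^2=-\la_0 M[Q]$ is inconsistent with the normalization $M[u]=\int_\R|u|^2$ of (\ref{M}) by a factor of $2$, and your computation correctly establishes the first (unambiguous) equality $E_1[Q]=-\tfrac12\la_0\int_\R Q^2$.
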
 

\bigskip

{\bf Acknowledgments}. The author wishes to thank Y. Martel and F. Merle for presenting him this problem and for  their continuous encouragement during the elaboration of this work; and the DIM members at Universidad de Chile for their kind hospitality during the elaboration of part of this work.

\bigskip


\begin{thebibliography}{10}

\bibitem{BL} H. Berestycki and P.-L. Lions, \emph{Nonlinear scalar field equations. I. Existence of a ground state}, Arch. Rational Mech. Anal. \textbf{ 82}, (1983) 313--345.

\bibitem{BJ} J.C. Bronski, and R.L. Jerrard, \emph{Soliton dynamics in a potential}, Math. Res. Lett. \textbf{7} (2000), no. 2-3, 329--342. 

\bibitem{BP} V. S. Buslaev, and G. Perelman, \emph{Scattering for the nonlinear Schr\"odinger equation: states that are close to a soliton}, St. Petersburg Math. J.  \textbf{4}  (1993),  no. 6, 1111--1142.

\bibitem{Caz} T. Cazenave, \emph{Semilinear Schr\"odinger equations}, Courant Lecture Notes in Mathematics, 10. New York University, Courant Institute of Mathematical Sciences, New York; American Mathematical Society, Providence, RI, 2003. xiv+323 pp. %pp. 138.

\bibitem{CL} T. Cazenave, and P.-L. Lions, \emph{Orbital stability of standing waves for some nonlinear Schr\"odinger equations},  Comm. Math. Phys.  \textbf{85} (1982), no. 4, 549--561.

\bibitem{Cu1} S. Cuccagna, \emph{Stabilization of solutions to nonlinear Schr\"odinger equations}. Comm. Pure Appl. Math. \textbf{54} (2001), no. 9, 1110--1145. 


\bibitem{Cu2} S. Cuccagna, \emph{On asymptotic stability of ground states of NLS}. Rev. Math. Phys. \textbf{15} (2003), no. 8, 877--903. 

\bibitem{DV} K. Datchev, and I. Ventura, \emph{Solitary waves for the Hartree equation with a slowly varying potential}, preprint.

\bibitem{SJ} S. I. Dejak and B. L. G. Jonsson, \emph{Long-time dynamics of variable coefficient modified Korteweg-de Vries solitary waves}, J. Math. Phys. 47 (2006), no. 7, \textbf{072703}, 16 pp. 

\bibitem{DS} Dejak, S. I. and Sigal, I. M., \emph{Long-time dynamics of KdV solitary waves over a variable bottom}, Comm. Pure Appl. Math. \textbf{59} (2006), no. 6, 869--905.

\bibitem{GS} Gang, Z., and Sigal, I. M., \emph{Relaxation of solitons in nonlinear Schr\"odinger equations with potential}, Adv. Math. \textbf{216} (2007), no. 2, 443--490.

\bibitem{GW} Gang, Z., and Weinstein, M.I., \emph{Dynamics of Nonlinear Schr\"oodinger / Gross-Pitaevskii Equations; Mass Transfer in Systems with Solitons and Degenerate Neutral Modes}, to appear in Anal. and PDE.

\bibitem{GHW} Goodman, R. H., Holmes, P., and Weinstein, M. I., \emph{Strong soliton-defect interactions}, Physica D \textbf{161} (2004), 21--44.

\bibitem{GV} J. Ginibre, and G. Velo, \emph{On a Class of nonlinear Schr\"odinger equations. I and II}. J. Func. Anal. \textbf{32}, 1--71 (1979).

\bibitem{GSS1} M. Grillakis, J. Shatah, and W. Strauss, \emph{Stability theory of solitary waves in the presence of symmetry. I}.  J. Funct. Anal.  \textbf{74}  (1987),  no. 1, 160--197.


\bibitem{GSS2} M. Grillakis, J. Shatah, and W. Strauss, \emph{Stability theory of solitary waves in the presence of symmetry. II}.  J. Funct. Anal.  \textbf{94}  (1990),  no. 2, 308--348. 

 
\bibitem{Gr1} R. Grimshaw, \emph{Slowly varying solitary waves. I. Korteweg- de Vries equation}, Proc. Roy. Soc. London Ser. A \textbf{368} (1979), no. 1734, 359--375. 

 
\bibitem{Gr2} R. Grimshaw, \emph{Slowly varying solitary waves. II. Nonlinear Schr\"odinger equation}, Proc. Roy. Soc. London Ser. A \textbf{368} (1979), no. 1734, 377--388.


\bibitem{FGJS} S. Gustafson, J. Fr\"ohlich, B. L. G. Jonsson, and I. M. Sigal, \emph{Solitary wave dynamics in an external potential}, Comm. Math. Phys. \textbf{250} (2004), 613--642. 


\bibitem{GFJS} S. Gustafson, J. Fr\"ohlich, B. L. G. Jonsson, and  I. M. Sigal, \emph{Long time motion of NLS solitary waves in a confining potential}, Ann. Henri Poincar\'e \textbf{7} (2006), no. 4, 621--660.

\bibitem{HZ} J. Holmer, and M. Zworski, \emph{Soliton interaction with slowly varying potentials}, Int. Math. Res. Not., (2008) \textbf{2008}, art. ID rnn026, 36 pp.

\bibitem{HMZ0}  J. Holmer, J. Marzuola, and M. Zworski, \emph{Soliton Splitting by External Delta Potentials}, J. Nonlinear Sci. \textbf{17} no. 4 (2007), 349--367.

\bibitem{HMZ} J.  Holmer, J. Marzuola and M. Zworski, \emph{Fast soliton scattering by delta impurities}, Comm. Math. Phys., \textbf{274}, no.1 (2007) 187--216.

\bibitem{KM1} V.I. Karpman and E.M.  Maslov, \emph{Perturbation theory for solitons},
Soviet Phys. JETP \textbf{46} (1977), no. 2, 537--559.; translated from Z. Eksper. Teoret. Fiz. \textbf{73} (1977), no. 2, 281--29.

\bibitem{KN1} D. J . Kaup, and A. C. Newell, \emph{Solitons as particles, oscillators, and slowly changing media: a singular perturbation theory}, Proc. Roy. Soc. London Ser. A \textbf{361} (1978), 413--446.

\bibitem{KZ} M.D. Kruskal and N.J. Zabusky, \emph{Interaction of ``solitons'' in a collisionless plasma and recurrence of initial states}, Phys. Rev. Lett. \textbf{15} (1965), 240--243. 

\bibitem{Martel1} Y. Martel, \emph{Asymptotic $N$-soliton-like solutions of the subcritical and critical generalized Korteweg-de Vries equations},  Amer. J. Math.  \textbf{127}  (2005),  no. 5, 1103--1140.

\bibitem{Martel} Y. Martel, and F. Merle, \emph{Multi solitary waves for nonlinear Schr\"odinger equations}, Ann. IHP Nonlinear Anal.  \textbf{23} (2006), 849--864.

\bibitem{MMT} Y. Martel, F. Merle, and T. P. Tsai, \emph{Stability in $H^1$ of the sum of $K$ solitary waves for some nonlinear Schr\"odinger equations}, Duke Math. Journal, \textbf{133} (2006), no. 3, 405--466. 

\bibitem{MMcol1} Y. Martel and F. Merle, \emph{Description of two soliton collision for the quartic gKdV equations}, preprint.

\bibitem{MMcol2} Y. Martel and F. Merle, \emph{Stability of two soliton collision for nonintegrable gKdV equations}, Comm. Math. Phys. \textbf{286} (2009), 39--79.

\bibitem{MMfinal} Y. Martel and  F. Merle, \emph{Inelastic interaction of nearly equal solitons for the quartic gKdV equation}, preprint.

\bibitem{MMMcol} Y. Martel, F. Merle and T. Mizumachi, \emph{Description of the inelastic collision of two solitary waves for the BBM equation}, to appear in Arch. Rat. Mech. Anal.

\bibitem{Mu1} C. Mu\~noz, \emph{On the inelastic 2-soliton collision for gKdV equations with general nonlinearity}, to appear in Int. Math. Research Notices.

\bibitem{Mu2} C. Mu\~noz, \emph{On the soliton dynamics under slowly varying medium for generalized Korteweg- de Vries equations}, preprint.

\bibitem{Mu3} C. Mu\~noz, \emph{Dynamics of soliton solutions for generalized gKdV equations: lower bounds on the defect and reflection}, in preparation.

\bibitem{Mu4} C. Mu\~noz, \emph{Dynamique et collision de solitons pour quelques \'equations dispersives nonlin\'eaires}, PhD. thesis, Universit\'e de Versailles Saint--Quentin, 2010.

\bibitem{New} A. Newell, \emph{Solitons in Mathematics and Physics}, CBMS-NSF Regional Conference Series in Applied Mathematics, 48. Society for Industrial and Applied Mathematics (SIAM), Philadelphia, PA, 1985. 

\bibitem{P} G.S. Perelman, \emph{Asymptotic stability of multi-soliton solutions for nonlinear Schr\"odinger equations},  Comm. Partial Diff. Eqns. \textbf{29}, (2004)  1051--1095.

\bibitem{RSS} I. Rodnianski, W. Schlag,  and A. Soffer, \emph{Asymptotic stability of $N$ soliton states of NLS}. Preprint, 2003.

\bibitem{SW} A. Soffer, and M.I. Weinstein, \emph{Multichannel nonlinear scattering for nonintegrable equations}.  Comm. Math. Phys.  \textbf{133}  (1990),  no. 1, 119--146.

\bibitem{SW2} A. Soffer, and M.I. Weinstein, \emph{Selection of the ground state for nonlinear Schr\"odinger equations}, Reviews in Math. Phys.  \textbf{16}  (2004),  no. 8, 977--1071.

\bibitem{TY} T.-P. Tsai, and H.-T. Yau, \emph{Asymptotic dynamics of nonlinear Schr\"odinger equations: resonance-dominated and dispersion-dominated solutions}.  Comm. Pure Appl. Math.  \textbf{55}  (2002),  no. 2, 153--216.

\bibitem{We1} M.I. Weinstein, \emph{Modulational stability of ground states of nonlinear Schr\"odinger equations},  SIAM J. Math. Anal.  \textbf{16}  (1985),  no. 3, 472--491.

\bibitem{We2} M.I. Weinstein, \emph{Lyapunov stability of ground states of nonlinear dispersive evolution equations}, Comm. Pure Appl. Math. \textbf{39}, (1986) 51--68.

\end{thebibliography}
\end{document}